\documentclass[12pt]{amsart}
\usepackage[margin=2.5cm]{geometry}
\usepackage{amsmath,amssymb,amsfonts,amsthm,mathrsfs}
\usepackage[nobysame]{amsrefs} 
\usepackage{epic}
\usepackage{amsopn,amscd,graphicx}
\usepackage{color,transparent} 
\usepackage[usenames, dvipsnames]{xcolor}
\usepackage
[colorlinks, breaklinks,
bookmarks = false,
linkcolor = NavyBlue,
urlcolor = ForestGreen,
citecolor = ForestGreen,
hyperfootnotes = false
]
{hyperref}
\usepackage{scalerel}
\usepackage{stackengine,wasysym}
\usepackage{palatino, mathpazo}
\usepackage{multirow}
\usepackage[all,cmtip]{xy}
\usepackage{stmaryrd}
\usepackage{tikz-cd}
\usepackage{yhmath}
\usepackage{enumerate}
\usepackage{comment}
\usepackage[dvipsnames]{xcolor}
 1
 1
 1

\newtheorem{theorem}{Theorem}[section]
\newtheorem{corollary}[theorem]{Corollary}
\newtheorem{proposition}[theorem]{Proposition}
\newtheorem{problem}[theorem]{Problem}
\newtheorem{lemma}[theorem]{Lemma}
\newtheorem{remark}[theorem]{Remark}

\newtheorem{example}[theorem]{Example}
\theoremstyle{definition}
\newtheorem{definition}[theorem]{Definition}

\newtheoremstyle{named}{}{}{\itshape}{}{\bfseries}{.}{.5em}{\thmnote{#3}#1}
\theoremstyle{named}

\numberwithin{equation}{subsection}

\newcommand{\ol}{\overline}
\newcommand{\R}{\mathbb{R}}
\newcommand{\C}{\mathbb{C}}
\newcommand{\N}{\mathbb{N}}
\newcommand{\Z}{\mathbb{Z}}

\newcommand{\abs}[1]{\left\vert#1\right\vert}
\newcommand{\set}[1]{\left\{#1\right\}}
\newcommand{\To}{\rightarrow}
\def\cC{\mathscr{C}}

\newcommand{\vbunsec}[1]{\mathscr{C}^\infty\left(#1\right)}

\DeclareMathOperator{\supp}{supp}
\DeclareMathOperator{\Dom}{Dom}

\title[Approximation of Pseudohermitian Structures via Embeddings into Spheres]{Approximation of Pseudohermitian Structures\\ via Embeddings into Spheres}

\begin{document}

\author[Hendrik Herrmann]{Hendrik Herrmann} \address{University of
  Vienna, Faculty of Mathematics, Oskar-Morgenstern-Platz 1, 1090
  Vienna, Austria} \thanks{Hendrik Herrmann was partially supported
  the by the Austrian Science Fund (FWF) grant: The dbar Neumann
  operator and related topics (DOI 10.55776/P36884)}
\email{hendrik.herrmann@univie.ac.at or post@hendrik-herrmann.de}

\author[Chin-Yu Hsiao]{Chin-Yu Hsiao} \address{Department of
  Mathematics, National Taiwan University, Astronomy-Mathematics
  Building, No. 1, Sec. 4, Roosevelt Road, Taipei 10617, Taiwan}
\thanks{Chin-Yu Hsiao was partially supported by the NSTC Project
  113-2115-M-002 -011 -MY3} \email{chinyuhsiao@ntu.edu.tw or
  chinyu.hsiao@gmail.com}

\author[Bernhard Lamel]{Bernhard Lamel} \address{University of Vienna,
  Faculty of Mathematics, Oskar-Morgenstern-Platz 1, 1090 Vienna,
  Austria} \thanks{Bernhard Lamel was supported by the Austrian
  Science Fund (FWF) grant DOI 10.55776/I4557}
\email{bernhard.lamel@univie.ac.at}

\date{\today}

\begin{abstract}  
  Let \((X,T^{1,0}X)\) be a compact strictly pseudoconvex CR manifold
  which is CR embeddable into the complex Euclidean space. We show
  that \(T^{1,0}X\) can be approximated in
  \(\mathscr{C}^\infty\)-topology by a sequence of strictly
  pseudoconvex CR structures \(\{\mathcal{V}^k\}_{k\in \N}\) such that
  for each \(k\in \N\) we have that \((X,\mathcal{V}^k)\) is CR
  embeddable into the unit sphere of a complex Euclidean
  space. Furthermore, as a refinement of this statement, we show that
  given a one form \(\alpha\) on \(X\) such that
  \((X,T^{1,0}X,\alpha)\) is a pseudohermitian manifold (see Definition~\ref{def:PseudohermitianStructure}) we can
  approximate \((T^{1,0}X,\alpha)\) in \(\mathscr{C}^\infty\)-topology
  by a sequence of pseudohermitian structures
  \(\{(\mathcal{V}^k,\alpha^k)\}_{k\in \N}\) on \(X\) such that for
  each \(k\in \N\) we have that \((X,\mathcal{V}^k,\alpha^k)\) is
  isomorphic to a real analytic pseudohermitian submanifold of a
  sphere. A similar result for the Sasakian case was obtained earlier
  by Loi--Placini. Let \((X,T^{1,0}X,\mathcal{T})\) be a compact
  Sasakian manifold, i.e. \(\mathcal{T}\) is a real vector field transversal to \(\operatorname{Re}T^{1,0}X\), the flow of \(\mathcal{T}\) acts by CR
  automorphisms and the one form \(\alpha\) defined by
  \(\alpha(\mathcal{T})=1\) and \(\alpha(\operatorname{Re}T^{1,0}X)=0\)
  defines a pseudohermitian structure on
  \((X,T^{1,0}X)\). Loi--Placini showed that \((T^{1,0}X,\mathcal{T})\)
  can be smoothly approximated by a sequence of
  quasi-regular Sasakian structures
  \(\{(\mathcal{V}^k,\mathcal{T}^k)\}_{k\in \N}\) on \(X\) such that
  each \((X,\mathcal{V}^k,\mathcal{T}^k)\) admits a smooth equivariant
  CR embedding into a Sasakian sphere.  Applying our methods to the
  Sasakian case we show that it is possible to approximate with a
  sequence of Sasakian structures having the form
  \(\{(\mathcal{V}^k,\mathcal{T})\}_{k\in \N}\), i.e.~we can keep the
  vector field \(\mathcal{T}\). As a consequence of this refinement we
  are able to make statements on the structure of certain Sasakian
  deformations with respect to their embeddability into spheres. Further applications concerning the embedding of domains into balls and local approximation results are provided.  Our
  methods heavily depend on the semi-classical functional calculus for
  Toeplitz operators on CR manifolds and strictly pseudoconvex domains.
\end{abstract}
\maketitle
\tableofcontents
\bigskip \textbf{Keywords:} CR manifolds, Szeg\H{o} kernels, Toeplitz
operators

\textbf{Mathematics Subject Classification:} 32Vxx, 32A25

\tableofcontents
\section{Introduction and Statement of the Results}
\subsection{Introduction}
Let \((X,T^{1,0}X)\) be a compact strictly pseudoconvex CR manifold of
real dimension \(\dim_\R X=2n+1\) where \(T^{1,0}X\) denotes the CR
structure;  we
always assume that it is of hypersurface type in this paper, i.e.
$\dim_\C T^{1,0} X = n$. Answering the question whether \(X\) can be CR embedded
into the complex Euclidean space is a fundamental problem in CR
geometry. Boutet de Monvel~\cite{Bou75} showed that \(X\) can be CR
embedded into the complex Euclidean space provided that \(n\geq
2\). Rossi~\cite{R65} and Burns~\cite{Bu:77} (see also Grauert~\cite{Gr94}, Andreotti--Siu~\cite{AS70}) proved the existence of
non-embeddable \(X\) for the case \(n=1\). However,
Lempert~\cite{Lem92} (see also Marinescu--Yeganefar~\cite{MY07})
showed that for \(n=1\) we can guarantee CR embeddability of \(X\)
under the additional assumption that \(X\) carries a Sasakian
structure (see Definition~\ref{def:SasakianManifold}). A very useful
characterization for the embeddability of \(X\) follows from the works
of Boutet de Monvel \cite{Bou75}, Boutet de
Monvel--Sj\"ostrand~\cite{BS75}, Harvey--Lawson~\cite{HL75},
Burns~\cite{Bu:77}, Kohn~\cites{Koh85,Koh86}, Heunemann~\cite{Heu86},
Ohsawa~\cite{O84}, see also Catlin \cite{MR1128581}, and
Hill-Nacinovich~\cite{MR1289628} (cf.~\cites{MM07,HM17}): Given a
(connected) compact  strictly pseudoconvex CR manifold \((X,T^{1,0}X)\) the
following are equivalent.
\begin{itemize}
\item \(X\) is CR embeddable into the complex Euclidean space.
\item The Kohn Laplacian on \(X\) has closed range.
\item There is a complex manifold \(Y\) such that \(X\) is CR
  isomorphic to the boundary of a strictly pseudoconvex domain
  \(M\subset \subset Y\).
\end{itemize}
We refer the reader to Section~\ref{sec:Preliminaries} for a more detailed
explanation of this equivalence.  Assuming that the compact strictly
pseudoconvex CR manifold \((X,T^{1,0}X)\) is (CR) embeddable into the
complex Euclidean space one may ask for refined embeddings satisfying
additional properties. For example, Forn{\ae}ss~\cite{For76} showed
that \(X\) can be always CR embedded into the boundary of a strictly
convex domain in the complex Euclidean space. An important example for
a strictly convex relatively compact domain in \(\C^N\), \(N\geq 2\),
with real analytic boundary is the unit ball
\[\mathbb{B}^N=\{z\in \C^N\colon |z|^{2}<1\}.\]
Its boundary is the unit sphere
\[\mathbb{S}^{2N-1}=b\mathbb{B}^N=\{z\in \C^N\colon |z|^{2}=1\}.\]
We have that \((\mathbb{S}^{2N-1},T^{1,0}\mathbb{S}^{2N-1})\), where
\(T^{1,0}\mathbb{S}^{2N-1}\) is the CR structure induced by \(\C^N\)
(see Example~\ref{ex:PseudoHermSpheres}), is a compact real analytic
strictly pseudoconvex CR manifold which is embeddable. Spheres
are especially important from the point of view of
CR geometry, where they serve as locally flat models for
strictly pseudoconvex smooth CR structures. This is due to
work of Cartan~\cite{Cartan:1932,Cartan:1933}
for three-dimensional $X$ and work of Tanaka~\cite{Tanaka:1962} and Chern
and Moser~\cite{ChernMoser} for higher dimensions, which provided a
canonical Cartan connection and associated curvatures for strictly
pseudoconvex CR manifolds. From this point of view, spheres are
also maximally symmetric, in particular they carry plenty of different Sasakian
structures (see Remark~\ref{rmk:SasakianStructuresOnSpheres}). 
It is therefore, also in analogy to the Nash embedding theorem for
Riemannian manifolds, natural to ask whether every strictly
pseudoconvex manifold can actually be embedded into a sphere. Another analogy can
be drawn between embedding
compact embeddable strictly pseudoconvex CR manifolds into a sphere and
embedding compact complex manifolds with positive
holomorphic line bundles into complex projective space
(see Kodaira~\cite{Ko54}).

 However, it was shown by Faran~\cite{Far88} and 
Forstneri\v{c}~\cite{Fors86} that there exist real analytic strictly
pseudoconvex hypersurfaces in the complex Euclidean space which are
not CR embeddable into any finite dimensional sphere; some explicit
obstructions and examples were given by Zaitsev~\cite{zaitsev:2008}, Huang--Zaitsev~\cite{HuangZaitsev13}, Huang--Li--Xiao~\cite{HuLiXi15} and  Huang--Xiao~\cite{HuangXiao17}. Actually, it
follows from the results of Forstneri\v{c}~\cite{Fors86} that most of
the compact embeddable strictly pseudoconvex CR manifolds cannot be
embedded into finite dimensional spheres. However,
Lempert~\cite{Lem82,Lem91} proved that every compact, real analytic,
strictly pseudoconvex hypersurface in a complex Euclidean space can be
CR embedded into an infinite dimensional sphere. Furthermore, it
follows from the results of Forstneri\v{c}~\cite{Fors86} (see also
L{\o}w~\cite{Low85}) that any bounded strictly pseudoconvex domain in
a Stein manifold can be properly and holomorphically embedded into
some finite dimensional unit ball such that the embedding is
continuous up to the boundary. Our first main result is
  that embeddable CR structures which can be embedded into spheres are actually
  $\mathscr{C}^\infty$-dense in the space of embeddable CR structures.

\begin{theorem}\label{thm:MainThmIntroGeneralTeaser}
  Let \((X,T^{1,0}X)\) be a compact strictly pseudoconvex CR manifold
  which is CR embeddable into the complex Euclidean space. Then there
  exists a sequence of strictly pseudoconvex CR structures
  \(\{\mathcal{V}_k\}_{k\in \N}\) on \(X\) which approximate
  \(T^{1,0}X\) in \(\mathscr{C}^\infty\)-topology such that for each
  \(k\in \N\) there is \(N_k\in \N\) and a smooth embedding
  \(F_k\colon X\to \C^{N_k}\) which is CR with respect to
  \(\mathcal{V}_k\) and satisfies
  \(F_k(X)\subset \mathbb{S}^{2N_k-1}\). 
\end{theorem}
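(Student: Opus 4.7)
Since $(X,T^{1,0}X)$ is CR embeddable, the equivalences stated just before the theorem identify $X$ with the smooth boundary $bM$ of a strictly pseudoconvex relatively compact domain $M$ in a Stein manifold $Y$. The plan is, for each sufficiently large $k\in\N$, to produce a smooth embedding $F_k\colon X\to\C^{N_k}$ whose image lies in $\mathbb{S}^{2N_k-1}$; the pullback $\mathcal{V}_k:=F_k^{*}T^{1,0}\mathbb{S}^{2N_k-1}$ is then automatically a strictly pseudoconvex CR structure on $X$ which, by construction, is CR embeddable into a sphere. The content of the theorem then reduces to proving the $\mathscr{C}^\infty$-convergence $\mathcal{V}_k\to T^{1,0}X$.

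To construct the maps $F_k$, I would fix a positive CR line bundle $L$ on $X$ (for instance obtained by restricting a positive holomorphic line bundle from $Y$) and study the semiclassical Szeg\H{o} kernels of its tensor powers. The Szeg\H{o} projector onto $L^2$ CR sections of $L^k$ has finite-dimensional range of dimension $N_k$ of order $k^n$, and its on-diagonal kernel admits an expansion $\Pi_k(x)\sim k^n\beta_0(x)+k^{n-1}\beta_1(x)+\cdots$ in $\mathscr{C}^\infty(X)$ with smooth positive leading coefficient $\beta_0$; this is the CR analog of the Bergman kernel expansions going back to Fefferman, Boutet de Monvel--Sj\"ostrand, Catlin, Tian--Zelditch, and has been developed in the semiclassical Toeplitz framework by Hsiao and coauthors. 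Choosing an $L^2$-orthonormal basis $\{s^{(k)}_1,\ldots,s^{(k)}_{N_k}\}$ of CR sections and a smooth unitary identification of $L^k$ (passing when necessary to the unit circle bundle of $L^{-k}$ and descending in the $S^1$-equivariant picture), the candidate $F_k(x)=(s^{(k)}_j(x))_j/\sqrt{\Pi_k(x)}$ lands in $\mathbb{S}^{2N_k-1}$ by construction and is a smooth embedding for large $k$ by standard peak-section estimates separating points and $1$-jets.

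The main obstacle is the smooth convergence of the pullback CR structures. A direct computation shows that for a $(1,0)$-vector $V$ on $X$ the defect $V\overline{F_k}$ is controlled pointwise by $|\bar\partial_b\log\Pi_k|$; this is a priori only $O(1)$, because the leading coefficient $\beta_0$ is in general not constant. The resolution, and the technical heart of the argument, is to refine the construction via the semiclassical functional calculus for Toeplitz operators emphasized in the abstract: one replaces the raw $L^2$-orthonormal basis by a modified family $\{\tilde s^{(k)}_j\}$ of CR sections obtained by applying a carefully chosen spectral multiplier of a positive Toeplitz operator, whose pointwise diagonal kernel $\sum_j|\tilde s^{(k)}_j(x)|^2$ equals a constant $c_k$ up to $\mathscr{C}^\infty$-error of order $O(k^{-\infty})$. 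This asymptotic balancing, a CR analog of balanced metrics in the K\"ahler category, forces the normalization factor in $F_k$ to be asymptotically constant to all orders, so that $\bar\partial_b F_k\to 0$ in $\mathscr{C}^\infty$, the pullback structures $\mathcal{V}_k$ converge smoothly to $T^{1,0}X$, and the theorem follows.
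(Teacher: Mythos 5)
Your overall frame (realize $X$ as $bM$, build $F_k$ from spectrally localized (CR) eigenfunctions, use the on-diagonal semiclassical expansion) matches the paper's starting point, but the step where you normalize $F_k(x)=\bigl(s^{(k)}_j(x)\bigr)_j/\sqrt{\Pi_k(x)}$ and declare $\mathcal{V}_k:=F_k^*T^{1,0}\mathbb{S}^{2N_k-1}$ to be ``automatically'' a strictly pseudoconvex CR structure contains a fatal gap. Write $H_k$ for the unnormalized (genuinely CR) map and $F_k=H_k/|H_k|$. For $Z\in T^{1,0}X$ one computes $Z\overline{F_k}=-\tfrac12\,\overline{F_k}\,Z\bigl(\log|H_k|^2\bigr)$, so $dF_k(Z)$ lies in $T^{1,0}\C^{N_k}$ if and only if $Z\log|H_k|^2=0$. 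Since $\overline{F_k}$, the vectors $\overline{WH_k}$ and $\mathcal{T}\overline{H_k}$ are linearly independent for large $k$ (this is what makes $H_k$ an embedding), the induced complex tangent space $\C T F_k(X)\cap T^{1,0}\C^{N_k}$ is exactly $\{dF_k(Z):Z\in T^{1,0}X,\ Z\log|H_k|^2=0\}$, which has rank $n-1$ wherever $\partial_b\log|H_k|^2\neq0$. Approximate constancy of $|H_k|^2$ — even to $O(k^{-\infty})$ — does not repair this: the rank drops unless $|H_k|^2$ is \emph{exactly} constant. Moreover, the balancing you propose cannot be achieved by a spectral multiplier: by the expansion (see the discussion of \cite{CHH24} in Section~\ref{sec:IdeaProof}) the subleading coefficients have the form $a_j(x)=b_j(x)\int\eta(t)t^{n-j}\,dt$ with $b_j$ determined by the geometry and $\eta=|\chi|^2\geq0$, so the moments $\int\eta(t)t^{n-j}\,dt$ are strictly positive and the $x$-dependence of $a_1,a_2,\dots$ cannot be removed by choosing $\eta$. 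A Donaldson-type balanced-basis iteration would be a separate (and unproven in this setting) piece of work.

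The paper's resolution is different in exactly this respect and is worth internalizing: instead of normalizing the map (which destroys CR-ness), one keeps $F_k$ \emph{holomorphic} on a one-sided collar $X\times[0,c)\simeq V\subset\overline M$ and moves the hypersurface. Using Proposition~\ref{pro:PropertiesFkGeneral} one solves $|F_k(x,s)|^2=1$ for $s=\varphi_k(x)$ with $\|\varphi_k\|_{\mathscr{C}^m(X)}=O(k^{-2})$; the graph $X_{\varphi_k}$ is then mapped exactly into $\mathbb{S}^{2N_k-1}$, its image is the transversal intersection of a complex $(n+1)$-manifold with the sphere (so the induced CR structure automatically has rank $n$ and is strictly pseudoconvex by Lemma~\ref{lem:CRsubmanifoldsOfSpheresAreStrictlyPseudoconvex}), and transporting $T^{1,0}X_{\varphi_k}$ back to $X$ via the graph map gives $\mathcal{V}^k\to T^{1,0}X$ in $\mathscr{C}^\infty$-topology precisely because $\varphi_k\to0$ (Lemma~\ref{lem:CRStructureConvergenceGeneral}). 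In short: the deformation is absorbed into the underlying hypersurface rather than into the map, and this is the idea your proposal is missing.
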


If \((X,T^{1,0}X)\) carries a Sasakian
structure it follows from the results of Ornea--Verbitsky~\cite{OV07}
that \(X\) can be CR embedded into a strictly pseudoconvex CR manifold
which is diffeomorphic to a sphere. More precisely, they showed that
any compact Sasakian manifold admits an equivariant CR embedding into
a Sasakian manifold which is diffeomorphic to a sphere. Under the
additional assumption that the first de Rham cohomology group of \(X\)
vanishes it was proved by van Coevering~\cite{vCoe11} that \(X\) can
be
embedded into a Sasakian graph over a sphere. Recently,  Herrmann--Hsiao--Marinescu--Shen~\cite{HHMS23} showed that in general 
any compact embeddable strictly pseudoconvex CR manifold \((X,T^{1,0}X)\)
can be embedded into arbitrary small perturbations of spheres asymptotically
preserving pseudohermitian structures on \((X,T^{1,0}X)\) with respect to  Sasakian structures on spheres.

One of the results in this paper is that for the 'most'  compact
Sasakian manifolds the CR structure cannot be obtained from Sasakian
embeddings into spheres (see
Theorem~\ref{thm:ForstnericForSasakian}). However, it was shown
earlier by Loi--Placini~\cite{LP22} that given a compact Sasakian
manifold \((X,T^{1,0}X,\mathcal{T})\) there exists a sequence of
quasi-regular Sasakian structures
 \(\{(\mathcal{V}^k,\mathcal{T}^k)\}_{k\in \N}\) on \(X\) smoothly approximating 
\((T^{1,0}X,\mathcal{T})\) 
such that each \((\mathcal{V}^k,\mathcal{T}^k)\) can be obtained from a
Sasakian embedding into a sphere, that is,
\((\mathcal{V}^k,\mathcal{T}^k)\) is induced by a weighted sphere (see
Definition~\ref{def:SasakianStructureSimpleDeformation}). As a
consequence of their result it follows that given a compact strictly
pseudoconvex CR manifold \((X,T^{1,0}X)\) carrying a Sasakian structure
there exists a sequence of strictly pseudoconvex CR structures
\(\{\mathcal{V}^k\}_{k\in \N}\) on \(X\) converging to \(T^{1,0}X\) in
\(\mathscr{C}^\infty\)-topology such that each \((X,\mathcal{V}^k)\)
can be CR embedded into a finite dimensional sphere.

Therefore,  if \((X,T^{1,0}X)\) carries a Sasakian structure
Theorem~\ref{thm:MainThmIntroGeneralTeaser} can be obtained from the
results of Loi--Placini~\cite{LP22}.
Theorem~\ref{thm:MainThmIntroGeneralTeaser} is a consequence of a more
general result on the approximation of pseudohermitian structures by
those coming from real analytic CR submanifolds of spheres (see
Theorem~\ref{thm:MainThmIntroGeneral}). This is the main result of
this work and is stated in the next section.
\subsection{Approximation of Pseudohermitian Structures}
In order to state the main result
Theorem~\ref{thm:MainThmIntroGeneral} we need to introduce some
notation.
\begin{definition}\label{def:PseudohermitianStructure}
  Let \(X\) be an orientable connected smooth manifold. A
  pseudohermitian structure on \(X\) is a pair \((T^{1,0}X,\alpha)\)
  where \(T^{1,0}X\) is a strictly pseudoconvex CR structure of
  codimension one and \(\alpha\in\Omega^1(X)\) is a non-vanishing real
  one form with \(\ker \alpha=\operatorname{Re}T^{1,0}X\) such that
  the respective Levi form \(\mathcal{L}^\alpha\) (see
  Definition~\ref{Def:LeviFormCRGeneral}) is positive definite.  The
  triple \((X,T^{1,0}X,\alpha)\) is called a pseudohermitian manifold.
\end{definition}
One can show that for any strictly pseudoconvex CR manifold
\((X,T^{1,0}X)\) one has that \(\operatorname{Re}T^{1,0}X\) defines a
contact structure on \(X\). Hence, given a pseudohermitian manifold
\((X,T^{1,0}X,\alpha)\) it follows from the definition that \(\alpha\)
is a contact form for the contact structure
\(\operatorname{Re}T^{1,0}X\). Furthermore, there exists a real vector
field \(\mathcal{T}\in \vbunsec{X,TX}\) uniquely defined by the
properties
\[\mathcal{T}\lrcorner\alpha\equiv1\,\,\,\mathcal{T}\lrcorner
  d\alpha\equiv 0\] which is called the Reeb vector field associated
to \(\alpha\).
\begin{example}\label{ex:PseudoHermSpheres}
  Let \(\mathbb{S}^{2N-1}=\{z\in \C^N\mid |z|=1\}\) be the unit sphere
  in \(\C^N\), \(N\geq 2\), and denote by
  \(\iota\colon \mathbb{S}^{2N-1}\to \C^N\) the inclusion. Given a
  vector \(\beta=(\beta_1,\ldots,\beta_N)\in (\R_+)^N\) of positive
  real numbers (called weight vector) consider the smooth real one
  form
  \[\alpha_\beta=\frac{1}{2i}\frac{\sum_{j=1}^N
      \overline{z}_jdz_j-z_jd\overline{z}_j}{\sum_{j=1}^N\beta_j|z_j|^2}\]
  on \(\C^{N}\setminus\{0\}\).  Define
  \(T^{1,0}\mathbb{S}^{2N=1}=\C T\mathbb{S}^{2N-1}\cap T^{1,0}\C^N\). We have that
  \((\mathbb{S}^{2N-1},T^{1,0}\mathbb{S}^{2N-1},\iota^*\alpha_\beta)\)
  is a pseudohermitian manifold.  Furthermore, consider the vector
  field
  \[\mathcal{T}_\beta= i\sum_{j=1}^N \beta_j\left(
      z_j\frac{\partial}{\partial
        z_j}-\overline{z}_j\frac{\partial}{\partial
        \overline{z}_j}\right)\] on \(\C^N\). We have that 
   \(\mathcal{T}_\beta\big|_{\mathbb{S}^{2N-1}}\) 
    is the Reeb vector field on
  \(\mathbb{S}^{2N-1}\) associated to \(\iota^*\alpha_\beta\).
\end{example}
\begin{remark}\label{rmk:SasakianStructuresOnSpheres}
  We have that for any \(\beta=(\beta_1,\ldots,\beta_N)\in (\R_+)^N\)
  the tuple
  \((T^{1,0}\mathbb{S}^{2N-1},\mathcal{T}_\beta\big|_{\mathbb{S}^{2N-1}})\) defines
  a Sasakian structure on \(\mathbb{S}^{2N-1}\) (see Definition~\ref{def:SasakianManifold}). This fact will be
  discussed later in more detail.
\end{remark}

\begin{definition}
  Let \((X_j,T^{1,0}X_j,\alpha_j)\), \(j=1,2\), be two pseudohermitian
  manifolds. We say that \((X_1,T^{1,0}X_1,\alpha_1)\) and
  \((X_2,T^{1,0}X_2,\alpha_2)\) are isomorphic if there exists a
  diffeomorphism \(F\colon X_1\to X_2\) such that \(F\) is CR
  (i.e. \(F_*T^{1,0}X_1=T^{1,0}X_2\)) and \(F^*\alpha_2=\alpha_1\).
\end{definition}
\begin{definition}\label{def:PseudoHermSubManiSphere}
  Let \(X\) be a smooth submanifold of the sphere
  \(\mathbb{S}^{2N-1}\subset \C^N\), \(N\geq2\), with
  \(\dim_\R X=2n+1\), \(n\geq 1\). Denote by \(\iota\colon X\to \C^N\)
  the inclusion map. Given a pseudohermitian structure
  \((T^{1,0}X,\alpha)\) on \(X\) we say that \((X,T^{1,0}X,\alpha)\)
  is a pseudohermitian submanifold of \(\mathbb{S}^{2N-1}\) if
  \(T^{1,0}X=\C TX\cap T^{1,0}\mathbb{S}^{2N-1}\) and
  \(\alpha=\iota^*\alpha_\beta\) for some \(\beta\in \R_+^{N}\) . We
  say that \((X,T^{1,0}X,\alpha)\) is a real analytic pseudohermitian
  submanifold of \(\mathbb{S}^{2N-1}\) if in addition \(X\) is a real
  analytic submanifold of \(\mathbb{S}^{2N-1}\).
\end{definition}

In order to state our main result we need the following notations. Let
\(X\) be a smooth manifold and \(c>0\). Put \(Y=X\times (-c,c) \) and
denote by \(\operatorname{pr}\colon Y\to X\), \((x,s)\mapsto x\) the
projection onto the first factor. Given a smooth function
\(\varphi\colon X\to (-c,c)\) we denote by
\(X_\varphi:=\{(x,s)\in Y\colon s=\varphi(x)\}\) the graph of
\(\varphi\) over \(X\) in \(Y\). We further denote by 
\(\iota_\varphi\colon X_\varphi\to Y\) the inclusion and by
\(\kappa_\varphi\colon X\to X_\varphi\) the diffeomorphism
\(\kappa_\varphi(x)=(x,\varphi(x)) \). Given a complex structure
\(T^{1,0}Y\) on \(Y\) we denote by
\(T^{1,0}X_{\varphi}=\C TX\cap T^{1,0}Y\) the induced CR structure on
\(X_\varphi\). 
\begin{theorem}\label{thm:MainThmIntroGeneral}
  Let \((X,T^{1,0}X,\alpha)\) be a compact pseudohermitian manifold of
  dimension \(\dim_\R X=2n+1\), \(n\geq 1\), and denote by \(\mathcal{T}\) the Reeb
  vector field associated to \(\alpha\). Assume that \((X,T^{1,0}X)\)
  is CR embeddable into the complex Euclidean space and choose
  \(0<\delta_1<\delta_2\). There is a \(c>0\), a complex structure
  \(T^{1,0}Y\) on \(Y=X\times (-c,c)\), non-negative integers
  \(\{N_k\}_{k\geq k_0}\), a family of weight vectors
  \(\{\beta(k)\in\R_+^{N_k}\}_{k\geq k_0}\), with
  \(\beta(k)_j\in[\delta_1k,\delta_2k]\), \(1\leq j\leq N_k\),
  \(k\geq k_0\), a family of smooth embeddings
  \(\{F_k\colon X\times[0,c)\to \C^{N_k}\}_{k\geq k_0}\) and a family
  of smooth functions \(\{\varphi_k:X\to (0,c)\}_{k\geq k_0}\)
  satisfying
  \begin{itemize}
  \item[-] \(\kappa_0\) is a CR diffeomorphism between
    \((X,T^{1,0}X)\) and
    \((X_0,T^{1,0}X_0)\), 
  \item[-] \(F_k\) is holomorphic on \(X\times (0,c)\) for all \(k\geq k_0\),
  \item[-] \(\|\varphi_k\|_{\mathscr{C}^m(X)}=O(k^{-2})\) for all
    \(m\in \N\),
  \item[-] \(N_k=O(k^{n+1})\),
  \item[-] \((x,k)\mapsto \varphi_k(x)\) is smooth on
    \(X\times[k_0,\infty)\),
  \item[-] \(\frac{\partial \varphi_k(x)}{\partial k}\leq -k^{-4}\)
    for all \(x\in X\) and \(k\geq k_0\),
  \end{itemize}
  such that the following holds: For each \(k\geq k_0\) we have that
  \((X_{\varphi_k}, T^{1,0}X_{\varphi_k},\gamma_k)\) with
  \(\gamma_k=\iota_{\varphi_k}^*F^*_k\alpha_{\beta(k)}\) is a real
  analytic pseudohermitian manifold such that
  \(F_k(X_{\varphi_k})\subset
  \mathbb{S}^{2N_k-1}\).  
  Moreover, let \(\{(\mathcal{V}^k,\alpha^k)\}_{k\geq k_0}\) be the
  induced family of pseudohermitian structures on \(X\) defined by
  \(\mathcal{V}^k=(\kappa_{\varphi_k})^{-1}_*T^{1,0}X_{\varphi_k}\)
  and \(\alpha_k=\kappa_{\varphi_k}^*\gamma_k\).  Put
  \(G_k=F_k\circ\kappa_{\varphi_k}\) and denote by \(\mathcal{T}^k\)
  the Reeb vector field associated to \(\alpha_k\). We then have
  \begin{itemize}
  \item[(i)] \(G_k\colon X\to \C^{N_k}\) is a CR embedding with
    respect to the CR structure \(\mathcal{V}^k\),
  \item[(ii)] \(G_k(X)\subset \mathbb{S}^{N_k-1}\),
  \item[(iii)] \(G_k^*\alpha_{\beta(k)}=\alpha^k\),
  \item[(iv)]
    \(\left|1-\frac{|(G_k)_*\mathcal{T}^{k}|}{|\mathcal{T}_{\beta(k)}\circ
        G_k||}\right|_{\mathscr{C}^m(X)}=O(k^{-1})\),
    \(\left|1-\frac{\langle(G_k)_*\mathcal{T}^{k},\mathcal{T}_{\beta(k)}\circ
        G_k\rangle}{|(G_k)_*\mathcal{T}^{k}||\mathcal{T}_{\beta(k)}\circ
        G_k|}\right|_{\mathscr{C}^m(X)}=O(k^{-1})\) for any
    \(m\in\N\).
  \item[(v)] \(\|\alpha-\alpha^k\|_{\mathscr{C}^m(X)} =O(k^{-1})\) and
    \(\|\mathcal{T}-\mathcal{T}^{k}\|_{\mathscr{C}^m(X)} =O(k^{-1})\)
    for any \(m\in\N\).
  \item[(vi)] \(\mathcal{V}^k\to T^{1,0}X\) in
    \(\mathscr{C}^\infty\)-topology as \(k\to \infty\).
  \item[(vii)] \(\mathcal{V}^k,\alpha^{k}\) and \(\mathcal{T}^k\)
    depend smoothly on \(k\).
  \end{itemize}
  As a consequence \((\mathcal{V}^k,\alpha^{k},\mathcal{T}^k)\)
  converges to \((T^{1,0}X,\alpha,\mathcal{T})\) in
  \(\mathscr{C}^\infty\)-topology for \(k\to\infty\) where each
  \((X,\mathcal{V}^k,\alpha^k)\) is isomorphic to a real analytic
  pseudohermitian submanifold of a sphere.
\end{theorem}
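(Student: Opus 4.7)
The plan is to realize the embeddings \(F_k\) via Szeg\H{o}-type quasi-modes, pushed from \(X\) to a thin interior graph \(X_{\varphi_k}\) inside a one-sided complex tube so that holomorphicity of \(F_k\) on the pseudoconvex side gives the real analyticity of the images for free. Since \((X,T^{1,0}X)\) is embeddable, the equivalence recalled in the introduction realizes \(X\) as the boundary of a strictly pseudoconvex domain \(M\subset\subset\widetilde Y\) in a complex manifold. Trivializing a collar of \(X\), and matching the outer side by an almost complex extension smoothed to an honest complex structure, I produce \(c>0\) and a complex structure \(T^{1,0}Y\) on \(Y=X\times(-c,c)\) so that \(\kappa_0\) is a CR diffeomorphism onto \(X_0\) and \(X\times(0,c)\) corresponds to the interior of \(M\). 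By Boutet de Monvel--Sj\"ostrand, every CR function on \(X_0\) then extends smoothly to \(X\times[0,c)\) and holomorphically on \(X\times(0,c)\).

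\textbf{Construction of the \(F_k\) and the graphs \(\varphi_k\).} Let \(\Pi\) be the Szeg\H{o} projector for the volume form \(\alpha\wedge(d\alpha)^n\), let \(\mathcal{T}\) be the Reeb field, and consider \(D=-i\mathcal{T}\) acting on the range of \(\Pi\); it is a first-order self-adjoint Toeplitz operator whose semiclassical functional calculus (Boutet de Monvel--Guillemin, Hsiao--Marinescu) the authors flag in the abstract. For \(k\) large I choose a finite orthonormal family \(\{s_1^{(k)},\dots,s_{N_k}^{(k)}\}\) of CR functions that are, up to \(O(k^{-\infty})\), quasi-modes of \(D\) with distinct eigenvalues \(\beta(k)_j\in[\delta_1 k,\delta_2 k]\) filling the spectral window; the Weyl law for \(D\) gives \(N_k=O(k^{n+1})\) and the diagonal Szeg\H{o} expansion gives
\[
\sum_{j=1}^{N_k}\beta(k)_j\bigl|s_j^{(k)}(x)\bigr|^2=k^{n+2}\bigl(a_0(x)+O(k^{-1})\bigr),
\]
with \(a_0>0\) smooth. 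Extending each \(s_j^{(k)}\) holomorphically into \(X\times(0,c)\), I set \(F_k=(s_1^{(k)},\dots,s_{N_k}^{(k)})\colon X\times[0,c)\to\C^{N_k}\); the same Szeg\H{o} calculus shows that moving a distance \(s>0\) into the pseudoconvex side damps the weighted sum essentially by \(e^{-2ks}a_0(x)\), so the implicit function theorem produces a smooth, strictly positive \(\varphi_k\colon X\to(0,c)\), monotone and decreasing in \(k\) at the required rate, satisfying
\[
\sum_{j=1}^{N_k}\beta(k)_j\bigl|s_j^{(k)}(x,\varphi_k(x))\bigr|^2=1
\]
with \(\varphi_k=O(k^{-2})\) in every \(\mathscr{C}^m\). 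Since \(F_k\) is holomorphic on \(X\times(0,c)\), the level set \(X_{\varphi_k}\) is real analytic and \(F_k(X_{\varphi_k})\subset\mathbb{S}^{2N_k-1}\), delivering (i), (ii).

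\textbf{Pseudohermitian match.} For (iii)--(v), I compute the pullback
\[
G_k^*\alpha_{\beta(k)}=\frac{1}{2i}\sum_{j=1}^{N_k}\bigl(\overline{f^{(k)}_j}\,df^{(k)}_j-f^{(k)}_j\,d\overline{f^{(k)}_j}\bigr),\quad f^{(k)}_j=s^{(k)}_j\circ\kappa_{\varphi_k},
\]
(the denominator equals \(1\) on the sphere) and expand each term using that \(s^{(k)}_j\) is a quasi-mode of \(D\): to leading order \(\mathcal{T}\,s^{(k)}_j=i\beta(k)_j\,s^{(k)}_j+O(k^{-\infty})\), while tangential derivatives annihilate \(s^{(k)}_j\) modulo the Szeg\H{o} remainder. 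The resulting expression matches the Toeplitz symbol of the identity on the spectral window and, combined with \(\varphi_k=O(k^{-2})\) to control the graph correction, reproduces \(\alpha\) up to \(O(k^{-1})\) in every \(\mathscr{C}^m\). The same input applied to \((G_k)_*\mathcal{T}^k\) versus \(\mathcal{T}_{\beta(k)}\circ G_k\) gives the norm and angle bounds (iv); (v) and hence the CR convergence (vi) follow because the pair \((\alpha^k,\mathcal{T}^k)\) determines the CR structure through its kernel and its derivative on \(X\). The smooth dependence (vii) is inherited from the fact that the quasi-mode construction is carried out with symbols depending smoothly on the continuous parameter \(k\).

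\textbf{Main obstacle.} The hard part will be pinning the pullback of \(\alpha_{\beta(k)}\) to \(\alpha\) with error \(O(k^{-1})\) simultaneously in every \(\mathscr{C}^m\)-norm, uniformly in \(k\). A naive Cauchy--Schwarz argument on the Szeg\H{o} expansion only yields \(O(k^{-1/2})\); to reach \(k^{-1}\) one must (a) distribute the eigenvalues \(\beta(k)_j\) inside \([\delta_1 k,\delta_2 k]\) so as to cancel the subprincipal term of the Szeg\H{o} kernel against the \(\alpha\)-defining weight, (b) control the holomorphic extension of the quasi-modes into \(X\times(0,c)\) together with all their tangential derivatives at order \(k\), and (c) bound the graph-shift contribution from sitting on \(X_{\varphi_k}\) rather than on \(X_0\) without ruining the simultaneous control of the Reeb field. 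This quantitative interplay between the Boutet de Monvel--Guillemin/Hsiao--Marinescu Toeplitz calculus and the graph perturbation is what turns the soft Kodaira-type embedding picture into the precise convergence stated in the theorem.
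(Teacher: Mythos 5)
Your overall architecture — spectral-window holomorphic functions, an implicit-function-theorem graph \(\varphi_k\) solving \(|F_k|^2=1\), and pullback computations for \(\alpha_{\beta(k)}\) — matches the paper's strategy, but several steps you treat as routine are exactly where the proof lives, and as written they have genuine gaps.

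First, you build \(F_k\) from quasi-modes of the boundary Toeplitz operator \(\Pi(-i\mathcal{T})\Pi\) and then assert that their holomorphic extensions decay into the pseudoconvex side "essentially by \(e^{-2ks}a_0(x)\)". In the general (non-Sasakian) pseudohermitian case there is no such exact exponential law, and quasi-modes modulo \(O(k^{-\infty})\) do not even admit exact holomorphic extensions. The paper avoids this by working from the start with genuine eigenfunctions of the \emph{domain} Toeplitz operator \(T_R=BRB\) on \(\overline{M}\) (Bergman projection composed with a symmetrization of \(-i\hat T\)); these are honestly holomorphic on \(M\) and smooth up to the boundary, and the Hsiao--Marinescu expansion (Theorem~\ref{thm:HMMainResult}) gives the normal behavior only in the weaker form \(\int e^{-kt\tau(x,s)}(a_0+a_1)\,dt\) with \(c_1 s\le\operatorname{Re}\tau(x,s)\le c_2 s\) after renormalizing the collar (Lemma~\ref{lem:ConsequenceOfHMGoodE}). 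All of Proposition~\ref{pro:PropertiesFkGeneral} and Lemma~\ref{lem:IntegralLemma} — monotonicity in \(s\), the bound \(\varphi_k=O(k^{-2})\) in every \(\mathscr{C}^m\), and \(\mathscr{C}^m\)-control of compositions with \(\varphi_k\) — must be extracted from this integral representation, not from a closed-form exponential. Your "main obstacle" paragraph points at cancelling subprincipal terms, which is not the issue; the issue is that the \(s\)-dependence is only known through this oscillatory integral.

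Second, you give no mechanism forcing \(\varphi_k>0\). With \(|F_k(x,0)|^2=1+O(k^{-1})\) and no sign information on the error, the level set \(|F_k|^2=1\) can fall on the pseudoconcave side, where \(F_k\) is not holomorphic and real analyticity of \(X_{\varphi_k}\) and of \(F_k(X_{\varphi_k})\) is lost. The paper rescales \(\mathcal{F}_k\) by \(\sqrt{k/(k-d)}\) with \(d\) large (Lemma~\ref{lem:EnsuringFkGreaterOne}) precisely to guarantee \(h_k(x,0)>1\), hence \(\varphi_k>0\), and the same device yields \(\partial\varphi_k/\partial k\le -k^{-4}\). Relatedly, a spectral-window map alone is not known to be an embedding of the full collar \(X\times[0,c)\); the paper appends \(e^{-k}\mathcal{G}\) for a fixed holomorphic extension \(\mathcal{G}\) of a CR embedding to get injectivity for free at \(O(k^{-\infty})\) cost. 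Finally, your computation of \(G_k^*\alpha_{\beta(k)}\) discards the denominator \(\sum_j\beta(k)_j|z_j|^2\) on the grounds that it "equals \(1\) on the sphere"; it does not (the sphere condition is the unweighted sum), and in fact this denominator is of size \(\asymp k\,C^{(1)}_\chi/C^{(0)}_\chi\) and is exactly what cancels the extra power of \(k\) in the numerator to make \(\alpha^k=\alpha+O(k^{-1})\) come out (Lemma~\ref{lem:ConvergenceOfAlphaT}). As stated, your normalization for (v) does not close.
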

\begin{remark}{\color{white}{.}}
  \begin{itemize}\label{rmk:MainThmGeneral}
  \item [(1)] In Theorem~\ref{thm:MainThmIntroGeneral} we actually
    show that \(G_k(X)=F_k(X_{\varphi_k})\) is the transversal
    intersection of an \((n+1)\)-dimensional complex submanifold of
    \(\C^{N_k}\) and \(\mathbb{S}^{2N_k-1}\) (see
    Remark~\ref{rmk:RemarksProofGeneral}).
  \item [(2)] From
    \(\frac{\partial \varphi_k(x)}{\partial k}\leq -k^{-4}\) it
    follows that the family \(\{\varphi_k\}_{k\geq k_0}\) is strictly
    decreasing. Put
    \(K=\{(x,s)\in Y\colon 0\leq s < \varphi_{k_0}(x)\}\).  Then \(K\)
    is a complex manifold with boundary CR isomorphic to \(X\) and
    \(\{X_{\varphi_k}\}_{k\geq k_0}\) defines a smooth foliation of
    the set \(\operatorname{int}(K)\) of interior points of \(K\). In
    particular, \(\operatorname{int}(K)\) is foliated by strictly
    pseudoconvex hypersurfaces such that each leaf is real analytic in
    \(Y\) and CR embeddable into a sphere. Furthermore,
    \(\{X_{\varphi_k}\}_{k\geq k_0}\cup\{X_0\}\) with
    \(X_0=X\times\{0\}\) is a continuous foliation of \(K\).
  \item [(3)] By (vi) in Theorem~\ref{thm:MainThmIntroGeneral} we mean
    the convergence in the sense of
    Definition~\ref{def:BundleConvergence}. In principle, it means
    that the sections with values in a Grassmannian bundle defining
    \(\mathcal{V}^k\) and \(T^{1,0}X\) converge in
    \(\mathscr{C}^\infty\)-topology as \(k\to\infty\). In this sense
    we actually have \(\mathcal{V}^k=T^{1,0}X+O(k^{-1})\) in
    \(\mathscr{C}^\infty\)-topology.
  \item[(4)] By (vii) in Theorem~\ref{thm:MainThmIntroGeneral} we mean
    that on \(X\times[k_0,\infty)\), we have that
    \((x,k)\mapsto \mathcal{T}^k_x\) and \((x,k)\mapsto (\alpha^k)_x\)
    define smooth maps into \(TX\) and \(T^{*}X\) respectively.  The
    bundle \(\mathcal{V}^k\) depends smoothly on \(k\) in the sense of
    Definition~\ref{def:BundleConvergence}.
  \item[(5)] Let
    \(\ell^2(\C)=\{(a_j)_{j\in\N}\colon
    \sum_{j=1}^\infty|a_j|^2<\infty\}\).  Given \(N\geq 1\),
    \(1\leq m^-\leq m^+\) we denote by
    \(\operatorname{Pr}_{m^-,m^+}\colon \ell^2(\C)\to
    \C^{N+m^+-m^-+1}\) the projection
    \[(a_j)_{j\in \N}\mapsto
      (a_1,\ldots,a_N,a_{N+m^-},a_{N+m^-+1},\ldots,a_{N+m^+}).\] From
    the proof of Theorem~\ref{thm:MainThmIntroGeneral} (see
    Remark~\ref{rmk:RemarksProofGeneral}) it follows that there exist
    \(N\in \N\), a family of maps
    \(\{\hat{F}_k\colon X\times[0,c)\to \ell^2(\C)\}_{k\geq k_0}\) and
    a family \(\{(m^-_k,m^+_k)\}_{k\geq k_0}\) such that for each
    \(k\geq k_0\) we have that almost all entries of \(\hat{F}_k\) are
    zero, \(F_k=\operatorname{Pr}_{m^-,m^+}\circ \hat{F}_k\) and
    \(|F_k|^2=|\hat{F}_k|^2\). Furthermore, for any entry
    \((\hat{F}_k)_j\), \(j\in \N\), of \(\hat{F}_k\) we have that
    \((x,s,k)\mapsto (\hat{F}_k)_j(x,s)\) is smooth on
    \(X\times [0,c)\times [k_0,\infty)\). Roughly speaking, we have
    that the maps \(F_k\) and \(G_k\) in
    Theorem~\ref{thm:MainThmIntroGeneral} are induced by maps which
    depend smoothly on \(k\).
  \item[(6)] For the Sasakian case, as we see later in
    Theorem~\ref{thm:MainThmIntro}, it follows from the construction
    that \(\mathcal{T}=\mathcal{T}^k\) for all \(k\geq k_0\) and
    \((G_k)_*\mathcal{T}^k=\mathcal{T}_{\beta(k)}\circ G_k\). Hence,
    from (iv) in Theorem~\ref{thm:MainThmIntroGeneral} we find that
    \(G_k\) can be seen as an almost Sasakian embedding.
  \item[(7)] The techniques used in the proof of
    Theorem~\ref{thm:MainThmIntroGeneral} heavily rely on expansion
    results for the semi-classical functional calculus for Toeplitz
    operators on strictly pseudoconvex domains. This kind of calculus
    for abstract embeddable compact strictly pseudoconvex CR manifolds
    was first introduced by
    Herrmann--Hsiao-Marinescu--Shen~\cite{HHMS23} (see
    Herrmann--Hsiao--Li~\cite{HHL20} for the Sasakian case) and then
    considered for strictly pseudoconvex domains by
    Hsiao--Marinescu~\cite{HM23}. For the proof of
    Theorem~\ref{thm:MainThmIntroGeneral} we will combine the result
    of Hsiao--Marinescu~\cite{HM23} with the embedding machinery
    developed in Herrmann--Hsiao--Marinescu--Shen~\cite{HHMS23,
      HHMS24}.  We refer to Section~\ref{sec:IdeaProof} for more
    details.
  \end{itemize}

\end{remark}
  
From Theorem~\ref{thm:MainThmIntroGeneral} we obtain the following
corollary.
\begin{corollary}\label{cor:RealAnalyticPseudohermitianApproximation}
  Let \((X,T^{1,0}X,\alpha)\) be a compact pseudohermitian manifold
  such that \((X,T^{1,0}X)\) is CR embeddable into the complex
  Euclidean space. Then there exists a sequence of pseudohermitian
  structures \(\{(\mathcal{V}^k,\alpha^k)\}_{k\in \N}\) on \(X\) which
  approximate \((T^{1,0}X,\alpha)\) in \(\mathscr{C}^\infty\)-topology
  such that for each \(k\in \N\) we have that
  \((X,\mathcal{V}^k,\alpha^k)\) is isomorphic to a real analytic
  pseudohermitian submanifold of a sphere.
\end{corollary}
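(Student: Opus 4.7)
The plan is to obtain Corollary~\ref{cor:RealAnalyticPseudohermitianApproximation} as a direct repackaging of Theorem~\ref{thm:MainThmIntroGeneral}: the theorem already constructs the required sequence, so nothing new has to be proved; one only needs to collect the relevant items from its conclusion and verify that together they say exactly what the corollary claims.

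First, fix a compact pseudohermitian manifold \((X,T^{1,0}X,\alpha)\) with \((X,T^{1,0}X)\) CR embeddable into the complex Euclidean space, pick any \(0<\delta_1<\delta_2\), and apply Theorem~\ref{thm:MainThmIntroGeneral}. This yields \(k_0\), a family \(\{(\mathcal{V}^k,\alpha^k)\}_{k\geq k_0}\) of pseudohermitian structures on \(X\), embeddings \(G_k\colon X\to\mathbb{C}^{N_k}\), and weight vectors \(\beta(k)\in\R_+^{N_k}\). After reindexing (replacing \(k\) by \(k+k_0\)) we obtain a sequence indexed by \(k\in\N\) as required.

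Next I would check the approximation claim. Item (vi) of the theorem gives \(\mathcal{V}^k\to T^{1,0}X\) in \(\mathscr{C}^\infty\)-topology (in the sense of Definition~\ref{def:BundleConvergence}), while item (v) gives \(\|\alpha-\alpha^k\|_{\mathscr{C}^m(X)}=O(k^{-1})\) for every \(m\in\N\); combining the two shows \((\mathcal{V}^k,\alpha^k)\to (T^{1,0}X,\alpha)\) in \(\mathscr{C}^\infty\)-topology.

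Finally I would verify that each \((X,\mathcal{V}^k,\alpha^k)\) is isomorphic to a real analytic pseudohermitian submanifold of a sphere. By construction \(\mathcal{V}^k=(\kappa_{\varphi_k})^{-1}_*T^{1,0}X_{\varphi_k}\) and \(\alpha^k=\kappa_{\varphi_k}^*\gamma_k\), so \(\kappa_{\varphi_k}\) is a pseudohermitian isomorphism between \((X,\mathcal{V}^k,\alpha^k)\) and \((X_{\varphi_k},T^{1,0}X_{\varphi_k},\gamma_k)\), which is a real analytic pseudohermitian manifold by the theorem. The map \(F_k\) is holomorphic on \(X\times(0,c)\) and hence real analytic near the graph \(X_{\varphi_k}\); being an embedding with \(F_k(X_{\varphi_k})\subset\mathbb{S}^{2N_k-1}\), it identifies \(X_{\varphi_k}\) with a real analytic submanifold of the sphere whose induced CR structure equals \(\mathbb{C}TF_k(X_{\varphi_k})\cap T^{1,0}\mathbb{S}^{2N_k-1}\). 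Using \(\gamma_k=\iota_{\varphi_k}^*F_k^*\alpha_{\beta(k)}\) from the theorem, the pullback of the contact form \(\alpha_{\beta(k)}\) to \(F_k(X_{\varphi_k})\) is precisely \(\gamma_k\), so \(F_k(X_{\varphi_k})\) is a real analytic pseudohermitian submanifold of \(\mathbb{S}^{2N_k-1}\) in the sense of Definition~\ref{def:PseudoHermSubManiSphere}. Composing with \(\kappa_{\varphi_k}\) gives the composite isomorphism \(G_k=F_k\circ\kappa_{\varphi_k}\), and items (i)--(iii) of the theorem confirm that \(G_k\) is a CR embedding with \(G_k(X)\subset\mathbb{S}^{2N_k-1}\) and \(G_k^*\alpha_{\beta(k)}=\alpha^k\). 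This establishes the corollary; there is no genuine obstacle, the entire argument is bookkeeping, with the only mildly delicate point being the translation from ``graph over \(X\)'' data to ``structure on \(X\)'' data, which is handled by the diffeomorphism \(\kappa_{\varphi_k}\) built into the statement of Theorem~\ref{thm:MainThmIntroGeneral}.
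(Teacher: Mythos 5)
Your proposal is correct and matches the paper exactly: the paper derives Corollary~\ref{cor:RealAnalyticPseudohermitianApproximation} directly from Theorem~\ref{thm:MainThmIntroGeneral}, whose final sentence already asserts both the \(\mathscr{C}^\infty\)-convergence of \((\mathcal{V}^k,\alpha^k)\) and that each \((X,\mathcal{V}^k,\alpha^k)\) is isomorphic to a real analytic pseudohermitian submanifold of a sphere. Your bookkeeping (reindexing, combining items (v) and (vi), and transporting the structure via \(\kappa_{\varphi_k}\) and \(F_k\)) is exactly the intended, and only, content of the deduction.
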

\subsection{Approximation of Strictly Pseudoconvex Domains}
As already mentioned above, Forstneri\v{c}~\cite{Fors86} and
L{\o}w~\cite{Low85} showed independently that any smoothly bounded
relatively compact strictly pseudoconvex domain in a Stein manifold
can be properly and holomorphically embedded into some finite
dimensional unit ball such that the embedding is continuous up to the
boundary. In general, those embeddings cannot be chosen to extend
smoothly up to the boundary due to the following result of
Forstneri\v{c}~\cite{Fors86}.
\begin{theorem}[cf. \cite{Fors86}*{Theorem 1.1}]
  Let \(M\) be a bounded domain in \(\C^n\), \(n\geq 2\), with
  \(\mathscr{C}^s\)-boundary, \(s\geq 1\). In every neighborhood of
  \(M\) in the \(\mathscr{C}^s\)-topology on domains there exists a
  domain \(D\) with smooth real-analytic boundary such that no proper
  holomorphic map \(F\colon D\to \mathbb{B}^N\), \(N>n\), extends
  smoothly to \(\overline{D}\).
\end{theorem}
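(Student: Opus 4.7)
My plan is to reduce the statement to two ingredients implicit in the earlier discussion of the paper: $\mathscr{C}^s$-dense approximation of smooth-boundary domains by real-analytic ones, and the Forstneri\v{c}--Faran phenomenon that generic real-analytic strictly pseudoconvex hypersurfaces admit no CR embedding into any finite-dimensional sphere.

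\emph{Approximation step.} Write $M=\{\rho<0\}$ on a neighborhood of $\overline{M}$ for a strictly plurisubharmonic $\mathscr{C}^s$-defining function with non-vanishing differential on $bM$, and approximate $\rho$ in $\mathscr{C}^s$-norm by a real-analytic function $\tilde\rho$ (Whitney-type approximation on a compact neighborhood, or truncation of a power series on a Stein neighborhood). For $\tilde\rho$ sufficiently close to $\rho$, the set $\{\tilde\rho<0\}$ is a strictly pseudoconvex domain with real-analytic boundary lying in any prescribed $\mathscr{C}^s$-neighborhood of $M$. It then suffices to find such a domain with the additional non-extension property, and I may freely perturb $\tilde\rho$ further by small real-analytic terms.

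\emph{Reduction via reflection.} The Pinchuk--Webster reflection principle for proper holomorphic maps between strictly pseudoconvex domains with real-analytic boundaries says that any proper holomorphic $F\colon D\to\mathbb{B}^N$ extending $\mathscr{C}^\infty$-smoothly to $\overline{D}$, with $bD$ real-analytic, in fact extends holomorphically across $bD$. Consequently $F|_{bD}$ is a CR embedding of $bD$ into $\mathbb{S}^{2N-1}$. Hence if $bD$ is not CR embeddable into any finite-dimensional sphere, no such $F$ can exist for any $N>n$, and the theorem is proved.

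\emph{Non-embeddability is $\mathscr{C}^s$-dense.} It thus remains to exhibit, arbitrarily $\mathscr{C}^s$-close to $M$, a real-analytic strictly pseudoconvex boundary admitting no CR embedding into a finite-dimensional sphere. Fix a boundary point $p$. For each fixed $N$, existence of a germ of a CR embedding $(bD,p)\hookrightarrow(\mathbb{S}^{2N-1},q)$ imposes a countable collection of polynomial identities on the $\infty$-jet of $\tilde\rho$ at $p$ (cleanest via the Chern--Moser normal-form invariants, or equivalently by expanding the relation $|F|^2=1$ on $bD$ to all orders). These identities cut out a closed meagre subset of the Fr\'echet space of $\infty$-jets of real-analytic defining functions at $p$, and the countable union over $N$ is still meagre. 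The main obstacle is to destroy embeddability for \emph{all} $N$ with a single real-analytic perturbation: I would handle this by a diagonal construction, writing $\tilde\rho\rightsquigarrow\tilde\rho+\sum_{k}\varepsilon_k h_k$, where $h_k$ is an analytic polynomial bump supported near $p$ chosen to violate the identities associated to $\mathbb{S}^{2N_k-1}$ with $N_k\nearrow\infty$, and $\varepsilon_k>0$ decays fast enough that the sum converges to a real-analytic function of $\mathscr{C}^s$-norm below the prescribed tolerance. The perturbed domain then has the required property.
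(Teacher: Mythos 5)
First, a remark on context: the paper does not prove this statement at all --- it is quoted verbatim from Forstneri\v{c} \cite{Fors86}*{Theorem 1.1} as a known result to contrast with Theorem~\ref{thm:ContrastToForstneric}. So your attempt must be judged on its own merits, and while your overall architecture (real-analytic approximation, reduction via a reflection principle to CR non-embeddability of a boundary germ, then a genericity argument) is indeed the skeleton of Forstneri\v{c}'s proof, the central step is missing. The claim that existence of a germ of a CR embedding into $\mathbb{S}^{2N-1}$ ``imposes a countable collection of polynomial identities on the $\infty$-jet of $\tilde\rho$ at $p$'' which ``cut out a closed meagre subset'' is precisely the content of the theorem, and you assert it rather than prove it. Embeddability is an existential condition involving the unknown map $F$; to convert it into conditions on the jet of $\rho$ alone one must eliminate $F$, and that elimination is the hard part. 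It requires (a) a finite-determination/parametrization result: a nonconstant CR map germ from a strictly pseudoconvex real-analytic hypersurface into $\mathbb{S}^{2N-1}$ is determined by a finite jet, so the set of $k$-jets of defining functions admitting such a map is the image of a finite-dimensional analytic family, whose dimension is eventually dwarfed by $\dim$ of the full $k$-jet space as $k\to\infty$ --- this dimension count is what makes the locus \emph{proper}, hence nowhere dense; and (b) a closedness statement (a normal-families/compactness argument for sequences of normalized embeddings), without which your diagonal sum $\tilde\rho+\sum_k\varepsilon_k h_k$ fails: nothing prevents the tail $\sum_{j>k}\varepsilon_j h_j$ from restoring embeddability into $\mathbb{S}^{2N_k-1}$. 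Note that for fixed $N$ and small $k$ there are \emph{no} nontrivial conditions on $k$-jets (indeed, by the main theorems of this very paper, sphere-embeddable structures are dense), so meagreness cannot be read off from ``some polynomial identities'' without the quantitative jet count. The paper's own Sasakian analogue, Theorem~\ref{thm:ForstnericForSasakian}, shows exactly what the two missing ingredients look like: a closedness lemma (Lemma~\ref{lem:SNisClosed}) and a nowhere-density lemma (Lemma~\ref{lem:NonDensitySN}).

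Two secondary issues. You assume $M$ carries a strictly plurisubharmonic defining function, but the theorem is stated for an arbitrary bounded domain with $\mathscr{C}^s$-boundary, $s\geq 1$ (for $s=1$ strict plurisubharmonicity of a $\mathscr{C}^s$ defining function is not even meaningful); the standard fix is to run the whole argument at a single strictly pseudoconvex boundary point, e.g.\ a point of $bD$ at maximal distance from the origin, which suffices since a proper map that extends smoothly is CR-transversal there by the Hopf lemma and hence restricts to a nonconstant CR germ into the sphere. Also, the positive-codimension reflection principle you invoke is not Pinchuk--Webster (which is equidimensional) but essentially Forstneri\v{c}'s own later extension theorem; the statement you use is true, but it was not available for the original proof, which instead derives finite jet determination directly from differentiating $\abs{F}^2=1$ along CR vector fields. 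Finally, ``$F|_{bD}$ is a CR embedding'' should be weakened to ``a germ of a nonconstant CR map (in fact a local embedding) near a strictly pseudoconvex point'', since proper maps need not be globally injective on the boundary.
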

In contrast to the result of Forstneri\v{c} we obtain from our methods
the following.
\begin{theorem}\label{thm:ContrastToForstneric}
  Let \(Y\) be a Stein manifold of dimension \(\dim_\C Y\geq 2\) and
  \(s\in \N\cup\{\infty\}\), \(s\geq 2\). Let \(M\subset \subset Y\)
  be a relatively compact strictly pseudoconvex domain with boundary
  of class \(\mathscr{C}^s\). In every neighborhood of \(M\) in the
  \(\mathscr{C}^s\)-topology for domains there exists a strictly
  pseudoconvex domain \(D\subset \subset M\) with real analytic
  boundary and a real analytic embedding
  \(F\colon \overline{D}\to \C^N\) for some \(N\in \N\) which is
  holomorphic on \(D\) such that \(F\) restricts to a proper
  holomorphic map of \(D\) into the unit ball of \(\C^N\).  In
  particular, the restriction of \(F\) to the boundary \(bD\) of \(D\)
  defines a real analytic CR embedding of \(bD\) into the unit sphere.
\end{theorem}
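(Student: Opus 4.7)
The plan is to apply Theorem~\ref{thm:MainThmIntroGeneral} to the CR manifold $X = bM$, to transfer the resulting one-sided model complex filling $X \times [0,c)$ to a collar of $bM$ inside $\overline M$ using uniqueness of one-sided strictly pseudoconvex complex fillings, and then to read off the required real analytic subdomains $D \subset\subset M$ as the regions in $M$ bounded by the images of the real analytic hypersurfaces $X_{\varphi_k}$, with $F$ arising from the maps $F_k$ of the theorem.

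First I would equip $X = bM$ with a pseudohermitian structure $(T^{1,0}X, \alpha)$ induced by a defining function of $M$; embeddability of $X$ into complex Euclidean space is automatic since $Y$ is Stein. Theorem~\ref{thm:MainThmIntroGeneral} then provides a complex structure $T^{1,0}\tilde Y$ on $\tilde Y = X \times (-c, c)$, smooth positive functions $\varphi_k$ on $X$ with $\|\varphi_k\|_{\mathscr{C}^m(X)} = O(k^{-2})$, real analytic strictly pseudoconvex hypersurfaces $X_{\varphi_k} \subset \tilde Y$, and smooth maps $F_k\colon X \times [0,c) \to \mathbb{C}^{N_k}$ holomorphic on $X \times (0, c)$ with $F_k(X_{\varphi_k}) \subset \mathbb{S}^{2N_k-1}$. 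Invoking the uniqueness of a one-sided strictly pseudoconvex complex filling of the embeddable CR manifold $X$ --- realized concretely by CR embedding $X$ into some $\mathbb{C}^{N_0}$ and simultaneously filling both sides of its image --- I would identify, for some small $c_0 \in (0,c)$, the one-sided model neighborhood $X \times [0, c_0)$ of $X_0$ in $\tilde Y$ with a collar $W \subset \overline M$ of $bM$ via a biholomorphism $\Psi$ extending $\kappa_0^{-1}\colon X_0 \to bM$. Under $\Psi$ each $X_{\varphi_k}$ becomes a real analytic strictly pseudoconvex hypersurface $\Sigma_k \subset M$ bounding a strictly pseudoconvex domain $D_k \subset\subset M$, and the decay $\|\varphi_k\|_{\mathscr{C}^m(X)} = O(k^{-2})$ together with smoothness of $\Psi$ yields $bD_k \to bM$ in $\mathscr{C}^s$-topology, so $D_k$ lies in any prescribed $\mathscr{C}^s$-neighborhood of $M$ for $k$ large.

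For the embedding, set $\tilde F_k = F_k \circ \Psi^{-1}$; this is holomorphic on a two-sided neighborhood of $\Sigma_k$ in $Y$. Because $\dim_{\mathbb{C}} Y \geq 2$ and $\tilde F_k$ is already defined on the complement of a compact subset of the Stein (strictly pseudoconvex) domain $D_k$, the Hartogs extension theorem produces a unique holomorphic extension to $D_k$, which glues with $\tilde F_k$ on the neighborhood of $\Sigma_k$ to give a holomorphic map $\hat F_k$ on a neighborhood of $\overline{D_k}$ in $Y$; in particular $\hat F_k$ is real analytic on $\overline{D_k}$. The strong maximum principle applied to the plurisubharmonic function $|\hat F_k|^2$, which equals $1$ on $\Sigma_k$, forces $\hat F_k(D_k) \subset \mathbb{B}^{N_k}$ and, by continuity, $\hat F_k|_{D_k}$ is proper into the ball. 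That $\hat F_k$ is a real analytic embedding of $\overline{D_k}$ follows from Remark~\ref{rmk:MainThmGeneral}(1): $F_k(X_{\varphi_k})$ is the transversal intersection of an $(n+1)$-dimensional complex submanifold $Z_k \subset \mathbb{C}^{N_k}$ with $\mathbb{S}^{2N_k-1}$, so by analytic continuation $\hat F_k(\overline{D_k}) \subset Z_k$, and the smooth embedding property of $F_k$ on $X \times [0, c)$ together with dimension count ensures that $\hat F_k$ is a biholomorphism from a neighborhood of $\overline{D_k}$ onto its image in $Z_k$, hence an embedding on $\overline{D_k}$. The main obstacle I expect is the biholomorphic identification $\Psi$ of the two one-sided complex fillings; this rests on the uniqueness of the complex structure germ along an embeddable strictly pseudoconvex boundary, and on controlling $\mathscr{C}^s$-closeness of $bD_k$ to $bM$ through the smooth map $\Psi$.
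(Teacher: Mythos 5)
Your overall strategy (produce real analytic level sets of $|F_k|^2$ inside $M$ and use them as the boundaries of the domains $D$) is the right one, but two steps fail as written.

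The decisive gap is the last step. After the Hartogs extension, $\hat F_k$ is a new holomorphic map on the interior of $D_k$, and nothing you have said controls its injectivity or the injectivity of its differential away from the boundary collar. Your appeal to Remark~\ref{rmk:RemarksProofGeneral}(1) does not repair this: the complex submanifold $Z_k$ there is only a locally closed submanifold defined near $F_k(X_{\varphi_k})$ (it is $F_k(U_k)$ for a small neighborhood $U_k$ of the hypersurface), so ``by analytic continuation $\hat F_k(\overline{D_k})\subset Z_k$'' is not meaningful, and a dimension count near the boundary says nothing about the deep interior. Even granting properness and generic degree one, a proper finite degree-one holomorphic map onto its image can identify boundary branches or fail to be an immersion at interior points, so ``embedding near $bD_k$'' plus ``proper into the ball'' does not yield ``embedding of $\overline{D_k}$''. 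The paper sidesteps this entirely: since $Y$ is Stein it admits a global holomorphic embedding $G\colon Y\to\C^N$, and the map $\mathcal{F}_k$ in \eqref{eq:DefMapFkGeneral} carries the component $e^{-k}\mathcal{G}$ with $\mathcal{G}=G|_{\overline M}$, so $\mathcal{F}_k$ is by construction a global embedding of all of $\overline M$ (holomorphic on $M$, smooth up to $bM$), and $D_k=\{z\in M\colon|\mathcal{F}_k(z)|^2<1\}$ with defining function $\nu_k=|\mathcal{F}_k|^2-1$; no extension or interior injectivity argument is needed. If you want to keep your route, you must append a global Stein embedding (suitably damped) to $F_k$ before extending, which essentially reproduces the paper's construction and makes the Hartogs step unnecessary.

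Two further points. First, Theorem~\ref{thm:MainThmIntroGeneral} applies to smooth compact strictly pseudoconvex CR manifolds, so for $s<\infty$ you cannot feed $X=bM$ into it directly; the paper first replaces $M$ by a smoothly bounded strictly pseudoconvex $M_0\subset\subset M$ that is $\mathscr{C}^s$-close to $M$ (Lemma~\ref{lem:ApproximationBySmoothDomains}) and proves the exhaustion statement (Theorem~\ref{thm:ProofOnExhaustionBySphericalDom}) for $M_0$. Second, the biholomorphism $\Psi$ between the abstract one-sided filling $X\times[0,c_0)$ and a collar of $bM$ is not free: ``uniqueness of one-sided strictly pseudoconvex fillings'' up to a biholomorphism smooth up to the boundary is itself a nontrivial statement requiring proof. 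In the paper this issue does not arise because the complex structure on $X\times(-c,c)$ is defined by pulling back the structure of the ambient manifold through the collar diffeomorphism $\hat E$, so the identification is built in.
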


\begin{remark}{\color{white}{.}}
  \begin{itemize}
  \item [(1)] Theorem~\ref{thm:ContrastToForstneric} follows from the
    more general result
    Theorem~\ref{thm:ProofOnExhaustionBySphericalDom} (see also
    Corollary~\ref{cor:ContrastToForstneric}).
  \item [(2)] Replacing '\(\mathscr{C}^s\)-topology' with
    '\(\mathscr{C}^0\)-topology' in
    Theorem~\ref{thm:ContrastToForstneric}, the statement can be
    deduced from a result due to Forstneri\v{c} (see
    \cite{Fors86}*{Corollary 1.4}).
  \end{itemize}
\end{remark}

\subsection{Approximation of Sasakian Structures}
We will now consider CR embeddings into spheres in the context of
Sasakian manifolds.  Let \((X,T^{1,0}X,\mathcal{T})\) be a compact
Sasakian manifold of dimension \(\dim_\R X=2n+1\), \(n\geq 1\). In
particular, we have that \((X,T^{1,0}X)\) is a (connected) compact
strictly pseudoconvex CR manifold of hypersurface type and
\(\mathcal{T}\) is a Reeb vector field for the contact structure given
by \(\operatorname{Re}T^{1,0}X\) such that flow of \(\mathcal{T}\)
preserves the CR structure \(T^{1,0}X\) and the Levi form associated
to \(\mathcal{T}\) is positive definite (see
Section~\ref{sec:Setup}). The study of Sasakian manifolds plays an
important role in Kähler geometry as it can be seen as a CR analog of
a Kähler manifold in complex geometry. It is well known (see
Baouendi--Rothschild--Treves~\cite{BRT85}) that a Sasakian manifold is
locally CR equivalent to a real hypersurface in the complex Euclidean
space. The question whether such a Sasakian manifold can be realized
as a CR submanifold of a complex Euclidean space was first positively
answered by Lempert~\cite{Lem92} (see also
Marinescu--Yeganefar~\cite{MY07}).  Recently, it was shown by
Loi--Placini~\cite{LP22} that \((T^{1,0}X,\mathcal{T})\) can be
approximated in any \(\mathscr{C}^m\)-norm by quasi-regular Sasakian
structure which are globally embeddable into a sphere. More precisely,
they show that for any \(m\in \N\) there exists a sequence of quasi-regular
Sasakian structures
\(\{(\mathcal{V}^k,\mathcal{T}^k)\}_{k\in \N}\) on \(X\) converging to
\((T^{1,0}X,\mathcal{T})\) in \(\mathscr{C}^m\)-topology such that
each \((\mathcal{V}^k,\mathcal{T}^k)\) is induced by a weighted sphere
(see Definition~\ref{def:SasakianStructureSimpleDeformation}). From
Remark~~\ref{rmk:ThmSasakianManifoldMain}~(1) it follows that their
result immediately yields a result about the approximation of
pseudohermitian structures which are Sasakian.  We note that the
spheres \(\mathbb{S}^{2N-1}\), \(N\geq 2\), together with the strictly
pseudoconvex CR structure coming from \(\C^N\) carries plenty of
different Sasakian structures (see
Remark~\ref{rmk:SasakianStructuresOnSpheres}).  A Sasakian structure
is quasi-regular if all Reeb orbits are compact.  We will now define
the meaning of weighted sphere.

\begin{definition}\label{def:SasakianStructureSimpleDeformation}
  Given a Sasakian manifold \((X,T^{1,0}X,\mathcal{T})\), \(N\in\N\)
  and a vector \(\beta\in\R_+^N\) we say that the Sasakian structure
  is induced by a \(\beta\)-weighted sphere if there exists a CR
  embedding \(F\colon X\to \C^N\) such that
  \(F(X)\subset \mathbb{S}^{2N-1}\) and
  \(F_*\mathcal{T}=\mathcal{T}_\beta\circ F\) where \(\mathcal{T}_\beta\) is
  as in Example~\ref{ex:PseudoHermSpheres}.  We say that the Sasakian
  structure is induced by a weighted sphere if the Sasakian structure
  is induced by a \(\beta\)-weighted sphere for some \(N\in\N\) and
  \(\beta\in \R_+^N\).
\end{definition}
Let \((X,T^{1,0}X,\mathcal{T})\) be a Sasakian manifold.  Given a
smooth function \(\varphi\colon X\to \R\) with
\(\mathcal{T}\varphi\equiv 0\) we define
\begin{eqnarray}\label{eq:DefDeformedSasakianStructure}
  \mathcal{V}(\varphi)=\{Z-iZ(\varphi)\mathcal{T}\mid Z\in T^{1,0}X\}.
\end{eqnarray}
It turns out (see Section~\ref{sec:Setup}) that
\((X,\mathcal{V}(\varphi),\mathcal{T})\) is again a Sasakian manifold
provided that the \(\mathscr{C}^1\)-norm of \(d\varphi\) is
sufficiently small.  Our main theorem for Sasakian manifolds is the
following.
\begin{theorem}\label{thm:MainThmIntro}
  Let \((X,T^{1,0}X,\mathcal{T})\) be a compact Sasakian manifold of
  dimension \(2n+1\), \(n\geq 1\), and let \(C>0\) be a constant.
  There exist \(k_0>0\), a sequence of smooth functions
  \(\{\varphi_k\}_{k\in \N}\subset \mathscr{C}^\infty(X,\R)\) with
  \(\mathcal{T}\varphi_k\equiv 0\) and
  \(\|\varphi_k\|_{\mathscr{C}^m(X)}=O(k^{-2})\) for any \(m\in \N_0\),
  a sequence of positive integers \(\{N_k\}_{k\in\N}\) with
  \(N_k=O(k^{n+1})\) and a sequence of weight vectors
  \(\{\beta(k)\}_{k\in\N}\), \(\beta(k)\in\R_+^{N_k}\), with
  \(\beta(k)_j\in[C,k]\), \(1\leq j\leq N_k\), such that for each
  \(k\geq k_0\) we have that \((X,\mathcal{V}(\varphi_k),\mathcal{T})\)
  is a Sasakian manifold where
  \((\mathcal{V}(\varphi_k),\mathcal{T})\) is induced by a
  \(\beta(k)\)-weighted sphere.
\end{theorem}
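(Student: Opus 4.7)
The strategy is to adapt the proof of Theorem~\ref{thm:MainThmIntroGeneral} to the Sasakian setting by exploiting the Reeb symmetry of $\mathcal{T}$: the extra symmetry will force the graph function $\varphi_k$ to be basic and turn the generic pullback $(\kappa_{\varphi_k})^{-1}_* T^{1,0}X_{\varphi_k}$ into one of the explicit form $\mathcal{V}(\varphi_k)$ defined in~\eqref{eq:DefDeformedSasakianStructure}. Since every compact Sasakian manifold is CR embeddable (Lempert~\cite{Lem92}, Marinescu--Yeganefar~\cite{MY07}), Theorem~\ref{thm:MainThmIntroGeneral} is applicable, but we shall need to replace the generic extension of $X$ by the canonical Sasakian one: take $Y = X \times (-c,c)$ with the complex structure determined by $T^{1,0}X \subset T^{1,0}Y|_{X \times \{0\}}$ together with $\partial_s - i\mathcal{T} \in T^{1,0}Y$, which corresponds to the logarithmic chart $s = \log r$ in the K\"ahler cone over $X$.

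The Reeb flow acts holomorphically on $Y$, so the Szeg\H{o} projector on the boundary of a small strictly pseudoconvex tube around $X \times \{0\}$ and the Toeplitz operators of~\cite{HM23} commute with $-i\mathcal{T}$. Invoking the equivariant Szeg\H{o} and Toeplitz calculus of Herrmann--Hsiao--Li~\cite{HHL20}, I would simultaneously diagonalize these operators and, in the construction underlying Theorem~\ref{thm:MainThmIntroGeneral}, replace the spectral cutoff concentrated near the top eigenvalue by one supported in $[C,k]$ with respect to $-i\mathcal{T}$. The resulting $F_k \colon X \times [0,c) \to \C^{N_k}$ has components of the form $(F_k)_j = a_j\, e^{\beta(k)_j s}\, \tilde u_j(x)$, where $\tilde u_j$ is a CR function on $X$ satisfying $\mathcal{T} \tilde u_j = i \beta(k)_j \tilde u_j$ with $\beta(k)_j \in [C,k]$; a Weyl-type count of the joint spectrum gives $N_k = O(k^{n+1})$. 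The graph function $\varphi_k$ is defined implicitly by $|F_k(x,\varphi_k(x))|^2 = 1$; differentiating this equation and using $\mathcal{T}|\tilde u_j|^2 = 0$ (automatic for Reeb eigenfunctions) yields $\mathcal{T}\varphi_k \equiv 0$, so $\varphi_k$ is basic. The direct computation $(\kappa_{\varphi_k})_*(Z - iZ(\varphi_k)\mathcal{T}) = Z + Z(\varphi_k)(\partial_s - i\mathcal{T})$, valid precisely because $\mathcal{T}\varphi_k = 0$, together with $\partial_s - i\mathcal{T}\in T^{1,0}Y$, then identifies $\mathcal{V}(\varphi_k)$ with $(\kappa_{\varphi_k})^{-1}_* T^{1,0}X_{\varphi_k}$. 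Setting $G_k = F_k \circ \kappa_{\varphi_k}$ produces a CR embedding of $(X, \mathcal{V}(\varphi_k))$ into $\mathbb{S}^{2N_k - 1}$ intertwining $\mathcal{T}$ with $\mathcal{T}_{\beta(k)}$, so $(\mathcal{V}(\varphi_k), \mathcal{T})$ is induced by the $\beta(k)$-weighted sphere, and the Sasakian property $[\mathcal{T}, T^{1,0}X] \subset T^{1,0}X$ together with the smallness of $\|\varphi_k\|_{\mathscr{C}^2}$ guarantees that $(X,\mathcal{V}(\varphi_k),\mathcal{T})$ is genuinely Sasakian.

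The principal obstacle is preserving the bound $\|\varphi_k\|_{\mathscr{C}^m(X)} = O(k^{-2})$ while allowing the weights $\beta(k)_j$ to range over the broad interval $[C,k]$ rather than the narrow band $[\delta_1 k,\delta_2 k]$ of Theorem~\ref{thm:MainThmIntroGeneral}. The Hsiao--Marinescu Szeg\H{o} asymptotics place the essential mass of $|F_k|^2$ at weights $\sim k$, and one must verify that the low-weight components introduced here, which lack any semiclassical concentration, do not spoil this estimate. I would handle this by decomposing $|F_k|^2 - 1$ along Reeb isotypes and applying the sharp on-diagonal expansion from~\cite{HHL20} band by band: the low-weight bands have dimension only $O(k^n)$, so with appropriately rescaled coefficients $a_j$ their aggregate contribution is dominated by the $O(k^{-2})$ from the top band, and the estimate survives.
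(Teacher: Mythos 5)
Your core construction coincides with the paper's: the same cylinder $Y=X\times(-c,c)$ with $T^{1,0}Y=T^{1,0}X\oplus\C\{\tfrac{\partial}{\partial s}-i\mathcal{T}\}$, components of the form $e^{\lambda_j s}f_j(x)$ built from Reeb eigenfunctions, the graph function $\varphi_k$ defined implicitly by $|F_k(x,\varphi_k(x))|^2=1$, the observation that $\mathcal{T}|f_j|^2=0$ forces $\mathcal{T}\varphi_k\equiv 0$, and the identification of $(\kappa_{\varphi_k})^{-1}_*T^{1,0}X_{\varphi_k}$ with $\mathcal{V}(\varphi_k)$ (Lemma~\ref{lem:IsoGraphDeformation}). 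Two remarks on where your route diverges. First, the paper does not obtain the Sasakian case by equivariantly upgrading Theorem~\ref{thm:MainThmIntroGeneral} and the domain Toeplitz calculus of \cite{HM23}; it proves Theorem~\ref{thm:MainThmIntro} directly and self-containedly from the expansion of $\sum_j|\chi(k^{-1}\lambda_j)|^2|f_j(x)|^2$ on $X$ (Theorem~\ref{thm:HHMSEmbedding}, from \cite{HHL20,HHMS23}), and the general pseudohermitian proof is then modeled on this simpler argument rather than the other way around.

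Second, and more substantively, your ``principal obstacle'' is a phantom arising from a misreading of the statement: the condition $\beta(k)_j\in[C,k]$ only requires that every weight \emph{lie in} that interval, not that the spectral cutoff be supported on all of it. The paper keeps the narrow-band cutoff $\chi$ with $\supp\chi\subset(\delta_1,\delta_2)\subset(0,1)$, so the semiclassical components $H_k$ carry weights in $[\delta_1 k,\delta_2 k]\subset[C,k]$ for large $k$; the only low weights come from the finitely many embedding components $e^{-k}e^{\lambda_j s}f_j(x)$, $m_0\le j\le N$, of the map $G$ in Lemma~\ref{lem:ExistenceOfG}, whose weights are fixed constants made $\ge C$ simply by choosing the starting index $m_0$ large, and whose contribution to $|F_k|^2$ is $O(k^{-\infty})$ because of the $e^{-k}$ damping. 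So no band-by-band analysis over $[C,k]$ is needed. Had it been needed, your sketch would not close as written: by the Weyl law for $-i\mathcal{T}$ the number of eigenvalues $\lambda_j\le\delta_1 k$ is $O(k^{n+1})$, not $O(k^n)$, so ``appropriately rescaled coefficients'' would have to be specified with care — and the natural choice (exponential damping of all but finitely many low-frequency terms) collapses back to exactly what the paper does.
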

\begin{remark}\label{rmk:ThmSasakianManifoldMain}{\color{white}{.}}
  \begin{itemize}
  \item [(1)] Given a Sasakian manifold \((X,T^{1,0}X,\mathcal{T})\)
    then the 
    form \(\alpha\) uniquely defined by
    \(\mathcal{T}\lrcorner\alpha=1\) and
    \(\ker\alpha=\operatorname{Re}T^{1,0}X\) defines a pseudohermitian
    structure \((T^{1,0}X,\alpha)\) on \(X\) such that \(\mathcal{T}\)
    is the Reeb vector field associated to \(\alpha\). Furthermore,
    given \(N\in \N\), \(\beta\in \R^N_+\) and a CR embedding
    \(F\colon X\to \C^N\) with \(F(X)\subset \mathbb{S}^{2N-1}\) and
    \(F_*\mathcal{T}=\mathcal{T}_\beta\circ F\) it automatically
    follows that \(F^*\alpha_\beta=\alpha\). Here
    \(\mathcal{T}_\beta\) and \(\alpha_\beta\) are as in
    Example~\ref{ex:PseudoHermSpheres}. Hence
    Theorem~\ref{thm:MainThmIntro} can be also seen as a Sasakian
    version of Theorem~\ref{thm:MainThmIntroGeneral}.
  \item[(2)] We note that it follows from the proof that the sequence
    \(\{\varphi\}_{k\in\N}\) in Theorem~\ref{thm:MainThmIntro} can
    also replaced by a family \(\{\varphi_k\}_{k\geq k_0}\) which is
    smooth in \(k\) similar as in
    Theorem~\ref{thm:MainThmIntroGeneral} (see
    Theorem~\ref{thm:ConstructionOfVarphik}).
  \item[(3)] Define \(Y=X\times \R\),
    \(T^{1,0}Y=T^{1,0}X\oplus\C \{\frac{\partial}{\partial
      s}-i\mathcal{T}\}\) and extend \(\mathcal{T}\) trivially to
    \(Y\). Then \((Y,T^{1,0}Y)\) is a complex manifold and
    \(X\ni x\mapsto (x,0)\in M\) defines a CR embedding of \(X\) into
    \(Y\). Given a smooth function \(\varphi\colon X\to \R\) with
    \(\mathcal{T}\varphi\equiv 0\) put
    \(X_\varphi=\{(x,s)\in Y\mid s=\varphi(x)\}\) and
    \(T^{1,0}X_\varphi:=\C TX_\varphi\cap T^{1,0}Y\). Provided that
    the \(\mathscr{C}^1\)-norm of \(d\varphi\) is sufficiently small
    one can show (see Section~\ref{sec:Setup}) that
    \((X_{\varphi},T^{1,0}X_\varphi,\mathcal{T})\) is a Sasakian
    manifold which is isomorphic to
    \((X,\mathcal{V}(\varphi),\mathcal{T})\). In the proof of
    Theorem~\ref{thm:MainThmIntro} we construct a sequence
    \(\{F_k\colon Y\to \C^{N_k}\}_{k\in \N}\) which restrict to
    holomorphic embeddings of a neighborhood \(U\subset Y\) around
    \(X\times\{0\}\) such that
    \(F_k(X_{\varphi_k})\subset\mathbb{S}^{2N_k-1}\) and
    \((F_k)_*\mathcal{T}=\mathcal{T}_{\beta(k)}\circ F_k\) on
    \(X_{\varphi_k}\) for all \(k\in \N\). One can show that each
    \(X_{\varphi_k}\) is a real analytic submanifold of \(Y\).  See
    Section~\ref{sec:ProofOfMainThmSasakian} for more details.
  \item[(4)] In the Sasakian case, given a smooth CR embedding
    \(F\colon X\to \C^{N}\) with \(F(X)\subset \mathbb{S}^{2N-1}\) and
    \(F_*\mathcal{T}=\mathcal{T}_\beta\circ F\) for some \(N\in \N\) and
    \(\beta\in \R_+^{N}\) it follows automatically (e.g.~from the construction in (3))
     that \(F(X)\) is real analytic (see
    Lemma~\ref{lem:SasakianSubmanifoldsOfSpheresAreRealAnalytic}).
  \item[(5)] The techniques used in the proof of
    Theorem~\ref{thm:MainThmIntro} rely heavily on expansion results
    for the semi-classical functional calculus on Sasakian manifolds
    established by Herrmann--Hsiao--Li~\cite{HHL20} (see
    also~\cite{HHL22}) in combination with applications to CR
    embeddings developed by
    Herrmann--Hsiao--Marinescu--Shen~\cite{HHMS23}. We refer to
    Section~\ref{sec:IdeaProof} for more details.
  \end{itemize}
	  
\end{remark}
Since \((T^{1,0}X,\mathcal{T})\) can be approximated by quasi-regular
Sasakian structures of the form \((T^{1,0}X,\mathcal{T}^k)\) in
\(\mathscr{C}^\infty\)-topology (see Rukimbira~\cite{Ru95}) we recover
as a consequence the result of Loi--Placini.
\begin{corollary}[cf. \cite{LP22}*{Theorem 1}]\label{cor:Loi-Placini}
  Let \((X,T^{1,0}X,\mathcal{T})\) be a compact Sasakian manifold and
  \(q\in \N\). There exists a sequence
  \(\{(\mathcal{V}^k,\mathcal{T}^k)\}_{k\in \N}\) of quasi-regular
  Sasakian structures on \(X\) converging to
  \((T^{1,0}X,\mathcal{T})\) in \(\mathscr{C}^q\)-norm as
  \(k\to\infty\) such that for each \(k\in \N\) we have that
  \((\mathcal{V}^k,\mathcal{T}^k)\) is induced by a weighted sphere.
\end{corollary}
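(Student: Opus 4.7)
The plan is to combine two approximation results: Rukimbira's~\cite{Ru95} theorem, which approximates the Reeb vector field $\mathcal{T}$ by quasi-regular Reeb vector fields while \emph{keeping the CR structure $T^{1,0}X$ fixed}, and Theorem~\ref{thm:MainThmIntro}, which approximates the CR structure by sphere-embeddable ones while \emph{keeping the Reeb vector field fixed}. Since quasi-regularity is a property of the Reeb orbits alone, applying the second result to an already quasi-regular Sasakian manifold preserves quasi-regularity, and a diagonal argument then delivers a single sequence approximating $(T^{1,0}X,\mathcal{T})$ in $\mathscr{C}^q$-topology.

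In detail, I would first invoke Rukimbira's result on the compact Sasakian manifold $(X,T^{1,0}X,\mathcal{T})$ to obtain, for each $k \in \N$, a quasi-regular Reeb vector field $\mathcal{T}^{(k)}$ such that $(X,T^{1,0}X,\mathcal{T}^{(k)})$ is again Sasakian and $\|\mathcal{T}^{(k)} - \mathcal{T}\|_{\mathscr{C}^k(X)} < 1/k$. Second, I would apply Theorem~\ref{thm:MainThmIntro} to each quasi-regular Sasakian manifold $(X,T^{1,0}X,\mathcal{T}^{(k)})$. This yields, for every $k$, a threshold $m_0^{(k)}$ and a family of smooth functions $\{\varphi^{(k)}_m\}_{m \geq m_0^{(k)}}$ with $\mathcal{T}^{(k)} \varphi^{(k)}_m \equiv 0$ such that each $(X,\mathcal{V}(\varphi^{(k)}_m),\mathcal{T}^{(k)})$ is a Sasakian manifold whose structure is induced by a weighted sphere, and such that $\mathcal{V}(\varphi^{(k)}_m) \to T^{1,0}X$ in $\mathscr{C}^\infty$-topology as $m \to \infty$ (the Reeb vector field being held equal to $\mathcal{T}^{(k)}$ throughout). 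Crucially, since $\mathcal{T}^{(k)}$ is quasi-regular and is the Reeb field of the Sasakian structure $(\mathcal{V}(\varphi^{(k)}_m),\mathcal{T}^{(k)})$, that structure itself is quasi-regular.

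The final step is a standard diagonal extraction. For every $k \in \N$, I would choose $m(k) \geq m_0^{(k)}$ large enough to ensure
\[
\|\mathcal{V}(\varphi^{(k)}_{m(k)}) - T^{1,0}X\|_{\mathscr{C}^k(X)} < \tfrac{1}{k},
\]
where the CR-structure convergence is understood in the sense of Definition~\ref{def:BundleConvergence}. Setting $\mathcal{V}^k := \mathcal{V}(\varphi^{(k)}_{m(k)})$ and $\mathcal{T}^k := \mathcal{T}^{(k)}$, the triangle inequality combined with the two estimates above gives $(\mathcal{V}^k,\mathcal{T}^k) \to (T^{1,0}X,\mathcal{T})$ in $\mathscr{C}^q$-topology for every $q$, and by construction each $(\mathcal{V}^k,\mathcal{T}^k)$ is a quasi-regular Sasakian structure induced by a weighted sphere.

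The main technical point to watch is that the $\mathscr{C}^m$-bounds $\|\varphi^{(k)}_m\|_{\mathscr{C}^m(X)} = O(m^{-2})$ produced by Theorem~\ref{thm:MainThmIntro} come with implicit constants depending on the Sasakian data $(X,T^{1,0}X,\mathcal{T}^{(k)})$, which varies with $k$. This forces the diagonal selection $m(k)$ to be made \emph{after} $\mathcal{T}^{(k)}$ has been fixed, and requires one to verify that the bundle-valued $\mathscr{C}^q$-norm of the CR-structure difference $\mathcal{V}^k - T^{1,0}X$ is indeed controlled by the size of $\varphi^{(k)}_{m(k)}$ in a sufficiently high Hölder norm; once this book-keeping is done, however, the argument is routine.
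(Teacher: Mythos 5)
Your proposal is correct and follows exactly the route the paper intends: the sentence preceding the corollary states that one approximates $\mathcal{T}$ by quasi-regular Reeb fields $\mathcal{T}^k$ via Rukimbira while keeping $T^{1,0}X$ fixed, and then applies Theorem~\ref{thm:MainThmIntro} to each $(X,T^{1,0}X,\mathcal{T}^k)$, which preserves the (quasi-regular) Reeb field; your diagonal extraction just makes explicit the bookkeeping the paper leaves to the reader.
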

Comparing Theorem~\ref{thm:MainThmIntro} with the result of
Loi--Placini we see that in Theorem~\ref{thm:MainThmIntro} we do not
need to change the Reeb vector field \(\mathcal{T}\) which in
principle shows that one can preserve more symmetries during the
approximation. This is due to the fact that our method relies on
semi-classical Szeg\H{o} kernel asymptotics for strictly pseudoconvex
CR manifolds. The method of Loi--Placini uses the Bergman kernel
expansion for cyclic complex orbifolds due to
Ross--Thomas~\cite{RT11-1, RT11-2} (see also
Dai--Liu--Ma~\cite{DLM12}) which makes it necessary to switch to
quasi-regular Sasakian manifolds during the approximation.  In addition, from the \(\mathscr{C}^\infty\)-smoothness of the semi-classical Szeg\H{o} kernel asymptotics we obtain a family of Sasakian structures converging to the original one in \(\mathscr{C}^\infty\)-topology.  Moreover,
Theorem~\ref{thm:MainThmIntro} provides an explicit analytic
description of the Sasakian structures which approximate the original
one. This observation leads to some insights about deformations of
Sasakian structures which we are going to explain now.

As already mentioned above, it is known (see
Forstneri\v{c}~\cite{Fors86}) that general strongly pseudoconvex CR
manifolds are 'almost never' CR embeddable into finite dimensional
spheres. Following the idea of Forstneri\v{c} we show that a similar
statement also holds for the space of certain Sasakian
deformations. More precisely, we consider deformations of the form
\(\mathcal{V}(\varphi)\) with \(\varphi\in\mathscr{C}^\infty(X,\R)\)
such that \(\mathcal{T}\varphi\equiv 0\) and the
\(\mathscr{C}^1\)-norm of \(d\varphi\) is sufficiently small. Since
\[\mathcal{V}(\varphi)=\mathcal{V}(\psi)\,\,\Leftrightarrow
  \varphi-\psi\equiv \text{ const.}\] we should cut out all constant
functions. Therefore, consider
\[\mathcal{M}=\{\varphi\in \mathscr{C}^\infty(X,\R)\colon
  \mathcal{T}\varphi\equiv 0, \,\mathcal{V}(\varphi) \text{ is
    strictly pseudoconvex}\}/\R\cdot1\!\!1\] where
\(1\!\! 1\colon X\to\R\), \(1\!\! 1(x)=1\), denotes the constant one
function and let \(\mathcal{S}\) be the subset defined by
\begin{eqnarray*}
  \mathcal{S}&=&\{[\varphi]\in \mathcal{M}\colon (\mathcal{V}(\varphi),\mathcal{T})\text{ is induced by a weighted sphere}\}.
\end{eqnarray*}
We equip \(\mathcal{M}\) with the quotient topology induced by the
\(\mathscr{C}^\infty\)-topology on
\(\mathscr{C}^\infty(X,\R)\). Adapting the ideas of Forstneri\v{c} to
the Sasakian case we obtain the following.
\begin{theorem}\label{thm:ForstnericForSasakian}
  Let \((X,T^{1,0}X,\mathcal{T})\) be a compact Sasakian manifold and
  \(\mathcal{S},\mathcal{M}\) as above. We have that \(\mathcal{S}\)
  is a set of first category in \(\mathcal{M}\).
\end{theorem}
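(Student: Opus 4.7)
The plan is to adapt Forstneri\v{c}'s argument from \cite{Fors86}*{Proof of Theorem 1.1} to our Sasakian deformation setting, by showing that each potential weighted--sphere embedding of $(X,\mathcal{V}(\varphi),\mathcal{T})$ is determined by finitely many real parameters and therefore sweeps out only a meager locus in the infinite-dimensional space $\mathcal{M}$. First I unwind Definition~\ref{def:SasakianStructureSimpleDeformation} component-wise: if $F=(f_1,\ldots,f_N)\colon X\to\C^N$ realizes $[\varphi]\in\mathcal{S}$ via a weight vector $\beta\in\R_+^N$, then $F_*\mathcal{T}=\mathcal{T}_\beta\circ F$ forces $\mathcal{T}f_j=i\beta_j f_j$ on $X$, while $F(X)\subset\mathbb{S}^{2N-1}$ reads pointwise as $\sum_{j=1}^N|f_j|^2\equiv 1$, and each $f_j$ must be CR for $\mathcal{V}(\varphi)$, i.e.~annihilated by $\overline{\mathcal{V}(\varphi)}=\{\overline{Z}+i\overline{Z}(\varphi)\mathcal{T}\colon Z\in T^{1,0}X\}$.

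The next step is to remove the $\varphi$-dependence on the CR side by an exponential twist. A direct computation using $\mathcal{T}\varphi\equiv 0$ shows that $f$ is CR for $\mathcal{V}(\varphi)$ with $\mathcal{T}f=i\beta f$ if and only if $g:=fe^{-\beta\varphi}$ is CR for the original structure $T^{1,0}X$ and still satisfies $\mathcal{T}g=i\beta g$. Consequently the spectrum
\[\Sigma:=\{\beta>0\colon V_\beta\neq 0\},\quad V_\beta:=\{g\in\vbunsec{X}\colon \overline{Z}g=0\ \forall Z\in T^{1,0}X,\ \mathcal{T}g=i\beta g\},\]
is intrinsic to $(X,T^{1,0}X,\mathcal{T})$ and independent of $[\varphi]$; each $V_\beta$ is finite dimensional (this follows from the Szeg\H{o} calculus for Sasakian manifolds in Herrmann--Hsiao--Li~\cite{HHL20}, which gives a spectral decomposition of the CR functions under $-i\mathcal{T}$) and $\Sigma$ is countable. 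Substituting $f_j=g_je^{\beta_j\varphi}$ turns the normalization $\sum_j|f_j|^2\equiv 1$ into the pointwise equation
\[\sum_{j=1}^N|g_j(x)|^2\,e^{2\beta_j\varphi(x)}=1,\quad x\in X,\]
with $(g_1,\ldots,g_N)$ lying in the finite-dimensional space $W_{\vec{\beta}}:=V_{\beta_1}\times\cdots\times V_{\beta_N}$. Since $|g_j|^2$ is automatically $\mathcal{T}$-invariant and the left-hand side is strictly increasing in $\varphi\in\R$ at every point, this equation uniquely determines a smooth $\mathcal{T}$-invariant function $\varphi$ on the open locus $W'_{\vec{\beta}}\subset W_{\vec{\beta}}$ where such an admissible solution exists, and by the implicit function theorem produces a continuous map $\Phi_{N,\vec{\beta}}\colon W'_{\vec{\beta}}\to\mathcal{M}$ whose image contains the slice of $\mathcal{S}$ corresponding to the weight vector $\vec{\beta}\in\Sigma^N$.

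Finally I conclude by a Baire argument. Countability of $\Sigma$ yields
\[\mathcal{S}\subset\bigcup_{N\geq 1}\bigcup_{\vec{\beta}\in\Sigma^N}\Phi_{N,\vec{\beta}}(W'_{\vec{\beta}}),\]
a countable union. Each image $\Phi_{N,\vec{\beta}}(W'_{\vec{\beta}})$ is $\sigma$-compact because $W_{\vec{\beta}}$ is finite dimensional, whereas $\mathcal{M}$ is an open subset of the quotient of the Fr\'echet space $\{\varphi\in\vbunsec{X,\R}\colon \mathcal{T}\varphi=0\}$ by the one-dimensional subspace $\R\cdot 1\!\!1$, which is itself an infinite-dimensional Fr\'echet space in which compact sets have empty interior by F.~Riesz. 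Hence every $\sigma$-compact subset of $\mathcal{M}$ is nowhere dense, and $\mathcal{S}$ is a countable union of meager sets, i.e.~of first category. The hard part will be the rigorous verification of finite dimensionality of each $V_\beta$ in the genuinely irregular Sasakian case (handled via \cite{HHL20}) together with a careful description of the open locus $W'_{\vec{\beta}}$ that excludes tuples for which the exponential equation fails to produce an admissible $\varphi$ compatible with the strict pseudoconvexity constraint built into $\mathcal{M}$.
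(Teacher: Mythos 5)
Your reduction of the embedding condition is exactly the paper's Lemma~\ref{lem:DescriptionOfBetaMaps}: the exponential twist $g_j=f_je^{-\beta_j\varphi}$ identifies the components with eigenfunctions of $-i\mathcal{T}$ on $H_b^0(X)$ for the \emph{undeformed} structure, the normalization becomes $\sum_j|g_j|^2e^{2\beta_j\varphi}\equiv 1$, and countability of the spectrum plus finite dimensionality of each eigenspace (from \cite{HHL20}) makes the relevant union countable. Where you genuinely diverge is in the proof that each piece is small. The paper shows the set $S_N$ of solutions is \emph{closed} in $\mathscr{C}^\infty_{\mathcal{T}}(X,\R)$ (Lemma~\ref{lem:SNisClosed}, via compactness of bounded sets in the finite-dimensional eigenspaces) and that every element of $S_N$ is \emph{automatically smooth} (Lemma~\ref{lem:RegularityOfSN}); combined with the existence of $\mathscr{C}^m$ but non-smooth $\mathcal{T}$-invariant functions and the density of smooth ones, a closed set of automatically-regular functions cannot contain a $\mathscr{C}^m$-ball, so $S_N$ is nowhere dense. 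You instead parametrize the solutions by the open locus $W'_{\vec\beta}$ in a finite-dimensional space and conclude that the image is $\sigma$-compact, hence meager in the infinite-dimensional Fr\'echet quotient by Riesz's theorem. Both routes work; yours is conceptually shorter but shifts the burden onto (a) continuity of $\Phi_{N,\vec\beta}$ into the full $\mathscr{C}^\infty$-topology, which requires the same uniform-in-$m$ implicit-function bootstrapping the paper carries out for its map $g$ in Section~\ref{sec:DeformationWeightedSphere}, and (b) a verification that $\mathscr{C}^\infty_{\mathcal{T}}(X,\R)/\R\cdot 1\!\!1$ is indeed infinite dimensional. Two small corrections: a $\sigma$-compact set in an infinite-dimensional Fr\'echet space is of first category but need not be nowhere dense (each compact piece is nowhere dense; the countable union is only meager) --- this is harmless for the theorem as stated, but note that the paper's closedness-plus-regularity argument yields the stronger statement that each $S_N$ is closed and nowhere dense, which is what later drives Theorem~\ref{thm:NoIsolatedPoint}. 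Also, since the weight vector $\beta\in\R_+^N$ is a priori arbitrary, you should say explicitly (as you do implicitly) that components with $f_j\equiv 0$ are discarded so that the surviving weights lie in the countable spectrum $\Sigma$.
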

Combining Theorem~\ref{thm:ForstnericForSasakian} with
Theorem~\ref{thm:MainThmIntro} we obtain the following corollary.
\begin{corollary}\label{cor:ForstnericForSasakian}
  Let \((X,T^{1,0}X,\mathcal{T})\) be a compact Sasakian manifold and
  \(\mathcal{S},\mathcal{M}\) as above. Then \(\mathcal{S}\) is dense
  and of first category in \(\mathcal{M}\).
\end{corollary}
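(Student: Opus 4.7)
The plan is to combine Theorem~\ref{thm:ForstnericForSasakian}, which already supplies the first-category conclusion, with Theorem~\ref{thm:MainThmIntro} applied to deformed Sasakian structures in order to obtain density. The first-category half is immediate: it is literally Theorem~\ref{thm:ForstnericForSasakian}, so no further work is needed there.

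For density, fix a class $[\varphi_0]\in\mathcal{M}$. By the definition of $\mathcal{M}$, the triple $(X,\mathcal{V}(\varphi_0),\mathcal{T})$ is a compact Sasakian manifold: the CR structure is strictly pseudoconvex by hypothesis, and $\mathcal{T}$ is its Reeb vector field because the deformation $\varphi_0$ was designed to preserve $\mathcal{T}$ (this is exactly the content of the discussion around \eqref{eq:DefDeformedSasakianStructure}). Now apply Theorem~\ref{thm:MainThmIntro} not to the original Sasakian manifold but to $(X,\mathcal{V}(\varphi_0),\mathcal{T})$. This produces a sequence of smooth functions $\{\psi_k\}_{k\geq k_0}\subset\mathscr{C}^\infty(X,\R)$ with $\mathcal{T}\psi_k\equiv 0$ and $\|\psi_k\|_{\mathscr{C}^m(X)}=O(k^{-2})$ for every $m\in\N_0$, such that each Sasakian structure obtained from $(X,\mathcal{V}(\varphi_0),\mathcal{T})$ by $\psi_k$-deformation is induced by a weighted sphere.

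The key algebraic observation is that iterating Sasakian deformations in $\varphi_0$ and $\psi_k$ is the same as a single deformation in $\varphi_0+\psi_k$: a generic section of the $\psi_k$-deformation of $\mathcal{V}(\varphi_0)$ has the form $W-iW(\psi_k)\mathcal{T}$ with $W=Z-iZ(\varphi_0)\mathcal{T}$ and $Z\in T^{1,0}X$, and using $\mathcal{T}\psi_k\equiv 0$ we get $W(\psi_k)=Z(\psi_k)$, so the vector simplifies to $Z-iZ(\varphi_0+\psi_k)\mathcal{T}$, a generic element of $\mathcal{V}(\varphi_0+\psi_k)$. Consequently $\mathcal{V}(\varphi_0+\psi_k)$ is induced by a weighted sphere (with the same Reeb field $\mathcal{T}$, as in Theorem~\ref{thm:MainThmIntro}), so $[\varphi_0+\psi_k]\in\mathcal{S}$. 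Since $\mathcal{T}(\varphi_0+\psi_k)\equiv 0$ and $\|\psi_k\|_{\mathscr{C}^m(X)}\to 0$ for each $m$, we have $\varphi_0+\psi_k\to\varphi_0$ in the $\mathscr{C}^\infty$-topology on $\mathscr{C}^\infty(X,\R)$, hence $[\varphi_0+\psi_k]\to [\varphi_0]$ in the quotient topology on $\mathcal{M}$. This proves density.

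The main obstacle is the composition identity $\mathcal{V}(\varphi_0)(\psi_k)=\mathcal{V}(\varphi_0+\psi_k)$, but once one observes that $\mathcal{T}\psi_k\equiv 0$ kills the only potentially problematic term this reduces to a one-line calculation. The conceptual point is that the preservation of the Reeb vector field $\mathcal{T}$ in Theorem~\ref{thm:MainThmIntro}—a feature absent from the Loi--Placini approximation, where the Reeb field itself is altered—is precisely what allows Theorem~\ref{thm:MainThmIntro} to be iterated on top of an arbitrary Sasakian deformation and thus yields density in $\mathcal{M}$, not merely density near the distinguished point $[0]$.
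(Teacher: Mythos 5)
Your proposal is correct and follows exactly the route the paper intends: the first-category half is Theorem~\ref{thm:ForstnericForSasakian} verbatim, and density comes from applying Theorem~\ref{thm:MainThmIntro} to the deformed Sasakian manifold $(X,\mathcal{V}(\varphi_0),\mathcal{T})$ at an arbitrary base point $[\varphi_0]\in\mathcal{M}$. The composition identity $\mathcal{V}(\varphi_0)(\psi_k)=\mathcal{V}(\varphi_0+\psi_k)$, which the paper leaves implicit when it says the corollary follows by ``combining'' the two theorems, is precisely the detail needed to upgrade approximation near $[0]$ to density in all of $\mathcal{M}$, and your one-line verification of it (using $\mathcal{T}\psi_k\equiv 0$) is right.
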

We note that for the Sasakian case the conclusion of
Theorem~\ref{thm:ForstnericForSasakian} is less surprising compared to
the result of Forstneri\v{c}~\cite{Fors86} for the general case since
any Sasakian submanifold of a sphere has to be automatically real
analytic (see
Lemma~\ref{lem:SasakianSubmanifoldsOfSpheresAreRealAnalytic}). We
still provide a proof for this statement since its method leads to
structural insights about the set of Sasakian deformations which are
induced by weighted spheres as the following result will show.

For \(N\in \N\) fixed, consider the set
\[\mathcal{S}_N=\{[\varphi]\in \mathcal{M}\colon
  (\mathcal{V}(\varphi),\mathcal{T})\text{ is induced by a
    \(\beta\)-weighted sphere for some \(\beta\in\R_+^N\)}\}\] and for
any \(\beta\in \R^{N}_+\) define
\[\mathcal{S}_\beta=\{[\varphi]\in \mathcal{S}_N\colon
  (\mathcal{V}(\varphi),\mathcal{T})\text{ is induced by a
    \(\beta\)-weighted sphere} \}.\] One has that
\(\mathcal{S}=\bigcup_{N=1}^\infty\mathcal{S}_N\). For the proof of
Theorem~\ref{thm:ForstnericForSasakian} we will basically show that
for any \(N\in\N\) we have that \(\mathcal{S}_N\) is nowhere dense in
\(\mathcal{M}\) (see Lemma~\ref{lem:NonDensitySN}). Since
\(\mathcal{S}_\beta\subset \mathcal{S}_N\) it follows that the same
holds true for \(\mathcal{S}_\beta\).  However, it turns out that
\(\mathcal{S}_\beta\) has no isolated points. We have the following.
\begin{theorem}\label{thm:NoIsolatedPoint}
  Fix \(N\in \N\) and \(\beta\in \R_+^N\). Let
  \((X,T^{1,0}X,\mathcal{T})\) be a compact Sasakian manifold and
  \(\mathcal{S}_\beta,\mathcal{M}\) as above. We have that
  \(\mathcal{S}_\beta\) is nowhere dense in
  \(\mathcal{M}\). Furthermore, if \(\mathcal{S}_\beta\neq\emptyset\)
  we have that \(\mathcal{S}_\beta\subset \mathcal{M}\) has no
  isolated points. In particular, for any
  \([\varphi]\in\mathcal{S}_\beta\) there exist \(\varepsilon>0\) and
  a continuous non-constant
  \(\gamma\colon [0,\varepsilon)\to \mathcal{M}\) with
  \(\gamma(0)=\varphi\) and
  \(\gamma([0,\varepsilon))\subset \mathcal{S}_\beta\).
\end{theorem}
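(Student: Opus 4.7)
The nowhere-density assertion is immediate from $\mathcal{S}_\beta \subseteq \mathcal{S}_N$ together with Lemma~\ref{lem:NonDensitySN}, so the substance of the proof is the construction of the continuous non-constant path $\gamma$ through a given $[\varphi] \in \mathcal{S}_\beta$. Fix such $[\varphi]$ and a CR embedding $F = (F_1, \dots, F_N) \colon (X, \mathcal{V}(\varphi)) \to \mathbb{S}^{2N-1}$ realising it, so that $F_*\mathcal{T} = \mathcal{T}_\beta \circ F$, $\mathcal{T}F_j = i\beta_j F_j$ and $\sum_j |F_j|^2 \equiv 1$. Unravelling the definition of $\mathcal{V}(\varphi)$ together with $\mathcal{T}\varphi \equiv 0$ shows that the CR eigenfunctions of $(\mathcal{V}(\varphi),\mathcal{T})$ with Reeb weight $\beta_j$ are precisely those of the form $e^{\beta_j\varphi}h$ with $h$ CR on $(X,T^{1,0}X)$ and $\mathcal{T}h = i\beta_j h$; hence we may write $F_j = e^{\beta_j\varphi} H_j$ with $H_j$ of this type, and note that $\mathcal{T}|H_j|^2 = 0$.

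The plan is to perturb by rescaling the $H_j$ independently. For a vector $c = (c_1,\ldots,c_N) \in \R^N$ (to be chosen) and $t \geq 0$, define $\varphi_t$ implicitly by
\begin{equation*}
  \sum_{j=1}^{N} e^{2 t c_j} |H_j(x)|^2 \, e^{2 \beta_j \varphi_t(x)} = 1.
\end{equation*}
The left-hand side is smooth and strictly increasing in $\varphi_t$ and equals $1$ at $(t,\varphi_t) = (0,\varphi)$, so the implicit function theorem produces a unique smooth family $\{\varphi_t\}_{t \in [0,\varepsilon_0)} \subset \mathscr{C}^\infty(X,\R)$ with $\varphi_0 = \varphi$. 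Applying $\mathcal{T}$ to the defining relation and using $\mathcal{T}|H_j|^2 = 0$ yields $\mathcal{T}\varphi_t \equiv 0$. Setting $F^t_j := e^{t c_j + \beta_j \varphi_t} H_j$, the reverse direction of the characterization above shows that $F^t$ is CR with respect to $\mathcal{V}(\varphi_t)$ and satisfies $\mathcal{T}F^t_j = i\beta_j F^t_j$, while by construction $F^t(X) \subset \mathbb{S}^{2N-1}$. Since being an embedding and strict pseudoconvexity are open conditions in the $\mathscr{C}^\infty$-topology, for $t$ in a possibly smaller interval $[0,\varepsilon)$ the map $F^t \colon (X, \mathcal{V}(\varphi_t)) \to \mathbb{S}^{2N-1}$ is a CR embedding with $F^t_* \mathcal{T} = \mathcal{T}_\beta \circ F^t$, so $[\varphi_t] \in \mathcal{S}_\beta$.

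It remains to choose $c$ so that the path $\gamma(t) := [\varphi_t]$ is non-constant. Differentiating the defining relation at $t = 0$ yields
\[
  \frac{d \varphi_t}{dt}\bigg|_{t=0}(x) = -\,\frac{\sum_j c_j |F_j(x)|^2}{\sum_j \beta_j |F_j(x)|^2},
\]
so $\gamma$ is non-constant near $0$ provided that this expression fails to be a constant function on $X$ for at least one choice of $c$. Here lies the main obstacle of the argument. Suppose for contradiction that this quotient is independent of $x$ for every $c \in \R^N$; differentiating in each $c_j$ would force the individual ratios $|F_j|^2 / \sum_k \beta_k |F_k|^2$ to be constant in $x$, and combined with $\sum_j|F_j|^2 \equiv 1$ this would imply $|F_j|^2 \equiv a_j$ for nonnegative constants $a_j$. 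But then $F(X)$ would be contained in the totally real torus $\{z \in \C^N : |z_j|^2 = a_j \text{ for all } j\} \cap \mathbb{S}^{2N-1}$, which is incompatible with $F$ being a CR embedding of the strictly pseudoconvex manifold $(X,\mathcal{V}(\varphi))$ of CR dimension $n \geq 1$. Hence an admissible $c$ exists, and with this choice $\gamma \colon [0,\varepsilon) \to \mathcal{M}$ is a continuous non-constant path with $\gamma([0,\varepsilon)) \subseteq \mathcal{S}_\beta$, showing in particular that $[\varphi]$ is not isolated in $\mathcal{S}_\beta$.
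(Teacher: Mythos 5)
Your proof is correct and follows essentially the same route as the paper: rescale the components of the embedding, solve implicitly for the deformed potential $\varphi_t$, and derive non-constancy of the path from the fact that not all $|F_j|^2$ can be constant, since otherwise $F$ could not be a CR embedding. The only cosmetic differences are that you parametrize the rescaling by a one-parameter family $r_j=e^{tc_j}$ rather than the full $\R^\nu$-family used in the paper, and you phrase the final contradiction via containment in a totally real torus where the paper observes directly (Lemma~\ref{lem:ModuliOfEntriesCantBeConstantDeformation}) that the differential of such a map has rank at most one.
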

It might happen that given an element
\([\varphi]\in\mathcal{S}_\beta\) we find other elements
\([\psi]\in\mathcal{S}_\beta\) that are close to \([\varphi]\) with
\([\psi]\neq [\varphi]\) defining Sasakian structures
\((\mathcal{V}(\psi),\mathcal{T})\) which are isomorphic to
\((X,\mathcal{V}(\varphi),\mathcal{T})\) as Sasakian manifolds. Such
an issue could appear as the following example will show.
\begin{example}\label{ex:IsomorphicSasakianStructures}
  Consider the three-sphere
  \(X:=\mathbb{S}^3=\{(z,w)\in \C^2\mid |z|^2+|w|^2=1\}\subset \C^2\)
  with CR structure \(T^{1,0}X=\C TX\cap T^{1,0}\C^2\). Then choosing
  \[\mathcal{T}=i\left(z\frac{\partial}{\partial
        z}-\overline{z}\frac{\partial}{\partial
        \overline{z}}\right)+i\left( w\frac{\partial}{\partial
        w}-\overline{w}\frac{\partial}{\partial \overline{w}}\right)\]
  on \(X\) leads to a compact Sasakian manifold
  \((X,T^{1,0}X,\mathcal{T})\). We call \((T^{1,0}X,\mathcal{T})\) the
  standard Sasakian structure on \(X\). Taking
  \(F\colon X\to \mathbb{S}^{3}\), \(F(z,w)=(z,w)\) shows that
  \((T^{1,0}X,\mathcal{T})\) is induced by a \((1,1)\)-weighted
  sphere.  Hence, we find \([0]\in \mathcal{S}_{(1,1)}\). Now, for
  \(\varepsilon>0\) consider
  \(\varphi_\varepsilon(z,w)=-\log\sqrt{1+\varepsilon |z|^2}\). Then
  the map \(F_\varepsilon\colon X\to \C^2\),
  \(F_\varepsilon=(e^{\varphi_\epsilon(z,w)}(\sqrt{1+\varepsilon})z,e^{\varphi_\epsilon(z,w)}w)\)
  is CR with respect to the CR structure
  \(\mathcal{V}(\varphi_\varepsilon)\), satisfies
  \[|F_\varepsilon(z,w)|^2=(1+\varepsilon|z|^2)^{-1}\left((1+\varepsilon)|z|^2+|w|^2\right)=1,\,\,\,(z,w)\in
    X\] and
  \((F_\varepsilon)_*\mathcal{T}=\mathcal{T}_{(1,1)}\circ
  F_\varepsilon\). It follows that
  \([\varphi_\varepsilon]\in \mathcal{S}_{(1,1)}\) for all
  sufficiently small \(\varepsilon>0\) and
  \([\varphi_\varepsilon]\neq [0]\). Since in addition we have that
  \(F_\varepsilon\colon X\to \mathbb{S}^{3}\) has to be one to one for
  all sufficiently small \(\varepsilon>0\) we have that
  \(F_\varepsilon\) defines an isomorphism between Sasakian manifolds
  when \(\varepsilon >0\) is small. As a consequence we obtain that
  \(\{(\mathcal{V}(\varphi_\varepsilon),\mathcal{T})\}_{0<\varepsilon<<1}\)
  is a family of Sasakian structures on \(X\) which are induced by a
  \((1,1)\)-weighted sphere and are all isomorphic to the standard one
  but not equal.
\end{example}
Example~\ref{ex:IsomorphicSasakianStructures} leads to the following
question.
\begin{problem}
  Fix \(N\in \N\) and \(\beta\in \R_+^N\). Let
  \((X,T^{1,0}X,\mathcal{T})\) be a compact Sasakian manifold and
  \(\mathcal{S}_\beta,\mathcal{M}\) as above. Assume that \(N\in \N\)
  is sufficiently large and that there is a CR embedding
  \(F=(f_1,\ldots,f_N)\colon X\to \C^N\) with
  \(F(X)\subset \mathbb{S}^{2N-1}\) and
  \(F_*\mathcal{T}=\mathcal{T}_\beta\circ F\) such that
  \(f_j\not\equiv 0\) for \(1\leq j\leq N\). Is there a sequence
  \({[\varphi_k]}_{k\in\N}\in \mathcal{S}_\beta\) which converges to
  \([0]\) in \(\mathcal{M}\) such that \((X,T^{1,0}X,\mathcal{T})\)
  and \((X,\mathcal{V}(\varphi_k),\mathcal{T})\) are not isomorphic as
  Sasakian manifolds for any \(k\in \N\)?
\end{problem}

\subsection{Local Approximation Results}\label{sub:further}
In this section we note that we obtain certain local results from
our global results. Indeed,  let us 
assume that $(X,T^{1,0}X)$ is a not necessarily
compact, strictly pseudoconvex CR manifold of real
dimension $2n+1$ as before, which is locally CR embeddable
in the sense that for every $p\in X$ there exist a neighborhood
$U$ of $p$ in $X$ and a CR map $H\colon U \to \mathbb{C}^{n+1}$
defining a CR diffeomorphism onto its image (note that since
we are in the local setting here, the dimension of the target space
can be chosen to be $n+1$ without losing generality). We then have
the following results.

\begin{corollary}\label{cor:LocalEmbedding01Intro} Let $(X, T^{1,0} X)$ be a locally embeddable
	strictly pseudoconvex CR manifold. Then every $p\in X$
	has a neighborhood $U$ with the following property: There
	exists a sequence of strictly pseudoconvex CR structures
	\(\{\mathcal{V}^k\}_{k\in \N}\) on \(U\) which approximate
	\(T^{1,0}X\big|_U\) in \(\mathscr{C}^\infty\)-topology such that for each
	\(k\in \N\) there is \(N_k\in \N\) and a smooth embedding
	\(F_k\colon U\to \C^{N_k}\) which is CR with respect to
	\(\mathcal{V}^k\) and satisfies
	\(F_k(U)\subset \mathbb{S}^{2N_k-1}\). 
\end{corollary}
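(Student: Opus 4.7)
The plan is to reduce the local statement to the global/compact case handled by Theorem~\ref{thm:MainThmIntroGeneralTeaser} via a standard capping construction. Fix $p\in X$ and a local CR embedding $H\colon U \to \mathbb{C}^{n+1}$ that is a CR diffeomorphism onto its image. Since $(X, T^{1,0}X)$ is strictly pseudoconvex, $H(U)$ is a strictly pseudoconvex real hypersurface in $\mathbb{C}^{n+1}$, and on a small neighborhood of $H(p)$ it admits a strictly plurisubharmonic defining function $\rho$. I would then shrink $U$ to a relatively compact open neighborhood $U'$ of $p$ such that $H(\overline{U'})$ is contained in a coordinate ball $B_1\subset\subset \mathbb{C}^{n+1}$ inside the plurisubharmonicity region of $\rho$.

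The next step is to build a bounded strictly pseudoconvex domain $\Omega\subset\subset \mathbb{C}^{n+1}$ with smooth boundary $b\Omega$ such that $H(U')\subset b\Omega$ as an open subset with the induced CR structure. This is achieved by a classical capping argument: on $B_1$ one keeps $\rho$, outside a slightly larger ball $B_2$ one prescribes a strictly plurisubharmonic function whose zero set is a smooth cap, and interpolates in the annulus $B_2\setminus B_1$ using a contribution of the form $\chi(|z-H(p)|^2)$ for a suitable convex nondecreasing cutoff $\chi$ that vanishes on $B_1$ and grows fast enough past $B_2$ to dominate $\rho$. The resulting function $\tilde\rho$ is smooth and strictly plurisubharmonic near $\{\tilde\rho\leq 0\}$, and $\Omega=\{\tilde\rho<0\}$ is a bounded strictly pseudoconvex domain whose smooth boundary $b\Omega$ agrees with $H(U)$ on a neighborhood of $H(\overline{U'})$. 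In particular $b\Omega$ is a compact connected strictly pseudoconvex CR manifold CR embeddable into $\mathbb{C}^{n+1}$.

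Applying Theorem~\ref{thm:MainThmIntroGeneralTeaser} to $b\Omega$ produces a sequence of strictly pseudoconvex CR structures $\{\mathcal{V}^k\}_{k\in\mathbb{N}}$ on $b\Omega$ converging to $T^{1,0}b\Omega$ in $\mathscr{C}^\infty$-topology, together with smooth embeddings $F_k\colon b\Omega\to \mathbb{C}^{N_k}$ that are CR with respect to $\mathcal{V}^k$ and satisfy $F_k(b\Omega)\subset \mathbb{S}^{2N_k-1}$. Restricting $\mathcal{V}^k$ to the open subset $H(U')\subset b\Omega$ and pulling back along the CR diffeomorphism $H\colon U' \to H(U')$ yields CR structures $\widetilde{\mathcal{V}}^k$ on $U'$ that converge to $T^{1,0}X\big|_{U'}$ in $\mathscr{C}^\infty$-topology, while $F_k\circ H\colon U' \to \mathbb{C}^{N_k}$ is a smooth embedding which is CR with respect to $\widetilde{\mathcal{V}}^k$ and whose image lies in $\mathbb{S}^{2N_k-1}$. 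This delivers exactly the conclusion of the corollary.

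The main obstacle is the capping step: one must arrange $\tilde\rho$ to be smooth and strictly plurisubharmonic throughout a neighborhood of $\{\tilde\rho\leq 0\}$, while ensuring that $\{\tilde\rho=0\}$ is a smooth hypersurface coinciding with $H(U)$ on a full neighborhood of $H(\overline{U'})$; this is routine in the theory of strictly pseudoconvex domains but requires careful choice of $B_1\subset\subset B_2$ and of the interpolating convex function $\chi$. Once the domain $\Omega$ has been constructed, no further serious difficulties arise, since $\mathscr{C}^\infty$-convergence on $b\Omega$ restricts to $\mathscr{C}^\infty$-convergence on the open subset $H(U')$, and the restriction of a CR embedding to an open subset is again a CR embedding.
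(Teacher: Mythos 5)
Your proposal is correct and follows essentially the same route as the paper: extend the locally embedded hypersurface to the boundary of a bounded strictly pseudoconvex domain in $\C^{n+1}$, apply Theorem~\ref{thm:MainThmIntroGeneralTeaser} to that compact boundary, and restrict the resulting structures and embeddings to the open piece corresponding to $U$. The only difference is that the paper first changes coordinates so that the image is a strictly \emph{convex} hypersurface (Narasimhan's lemma), which makes the capping step immediate and avoids the delicate points you flag in your plurisubharmonic interpolation (keeping $\{\tilde\rho\le 0\}$ inside the domain of definition of $\rho$ and ensuring $d\tilde\rho\neq 0$ on the zero set).
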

The result follows in a straightforward way from Theorem~\ref{thm:MainThmIntroGeneralTeaser}.   We first
choose a neighborhood $V$ of $p$ and coordinates $(z,w)\in \mathbb{C}^n \times \mathbb{C}$, such
that the embedding functions $(z(x), u(x) + iv(x))$ map $V$ onto a smooth strictly convex hypersurface
$M\subset \mathbb{C}^{n+1}$. After possibly shrinking $V$ again, we can assume that $M$ is
part of a smooth strictly convex hypersurface \(S\) bounding a bounded domain (see Amar~\cite{Amar84} and McNeal~\cite{McNeal92}) and apply the global theorem.

\begin{corollary}\label{cor:LocalEmbedding02Intro} Let $(X, T^{1,0} X,\alpha)$ be a pseudohermitian manifold such that \((X,T^{1,0}X)\) is locally embeddable. Then every $p\in X$
	has a neighborhood $U$ with the following property: There
	exists a sequence of pseudohermitian structures
	\(\{(\mathcal{V}^k,\alpha^k)\}_{k\in \N}\) on \(U\) which approximate
	\((T^{1,0}X,\alpha)\) in \(\mathscr{C}^\infty\)-topology on \(U\) such that for each
	\(k\in \N\) we have that \((U,\mathcal{V}^k,\alpha^k)\) is isomorphic to a real analytic pseudohermitian submanifold of a sphere. 
\end{corollary}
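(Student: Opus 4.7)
The strategy is to reduce to the global Corollary~\ref{cor:RealAnalyticPseudohermitianApproximation} by the same localization trick used in the sketch of Corollary~\ref{cor:LocalEmbedding01Intro}, with the extra bookkeeping needed to carry along the pseudohermitian form $\alpha$. First, as in the argument for Corollary~\ref{cor:LocalEmbedding01Intro}, I use local embeddability to choose a neighborhood $V$ of $p$ and coordinates $(z,w)\in\mathbb{C}^n\times\mathbb{C}$ so that a local CR diffeomorphism $H\colon V\to M\subset\mathbb{C}^{n+1}$ realizes $V$ as a smooth strictly convex real hypersurface $M$; after possibly shrinking $V$, I arrange that $M$ is an open piece of a compact smooth strictly convex hypersurface $S$ bounding a bounded strictly pseudoconvex domain $D\subset\mathbb{C}^{n+1}$. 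In particular $(S,T^{1,0}S)$ is CR embeddable into a complex Euclidean space.

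Next I extend the form. Push $\alpha|_V$ forward via $H$ to a pseudohermitian form $\alpha_M:=H_*\alpha$ on $M$, and pick any pseudohermitian form $\beta_S$ on $S$ (for instance the one induced by a strictly plurisubharmonic defining function of $D$). Since $\alpha_M$ and $\beta_S|_M$ are two contact forms for the same contact structure $\operatorname{Re}T^{1,0}S|_M$ with positive Levi forms, there is a positive smooth $\lambda\colon M\to\mathbb{R}_+$ with $\alpha_M=\lambda\cdot\beta_S|_M$. Shrink $V$ to a smaller neighborhood $U\ni p$ with $H(\overline U)\subset M$ compact, pick a smooth cutoff $\chi\colon S\to[0,1]$ which is identically $1$ on a neighborhood of $H(\overline U)$ and compactly supported in $M$, and set
\[
\tilde\lambda:=\chi\cdot\lambda+(1-\chi),
\]
where $\chi\lambda$ is extended by zero outside $M$. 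A quick check shows $\tilde\lambda>0$ on all of $S$, so $\tilde\beta_S:=\tilde\lambda\cdot\beta_S$ is a pseudohermitian form on $S$ that agrees with $H_*\alpha$ on a neighborhood of $H(\overline U)$.

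Now apply Corollary~\ref{cor:RealAnalyticPseudohermitianApproximation} to the compact pseudohermitian manifold $(S,T^{1,0}S,\tilde\beta_S)$. This yields a sequence $\{(\mathcal{W}^k,\gamma^k)\}_{k\in\mathbb{N}}$ of pseudohermitian structures on $S$ converging to $(T^{1,0}S,\tilde\beta_S)$ in $\mathscr{C}^\infty$-topology, together with, for each $k$, a diffeomorphism $\Phi_k\colon S\to Y_k\subset\mathbb{S}^{2N_k-1}$ realizing an isomorphism between $(S,\mathcal{W}^k,\gamma^k)$ and a real analytic pseudohermitian submanifold $Y_k$ of the sphere. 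Define on $U$ the pullbacks $\mathcal{V}^k:=H^{*}(\mathcal{W}^k|_{H(U)})$ and $\alpha^k:=H^{*}(\gamma^k|_{H(U)})$; then $(\mathcal{V}^k,\alpha^k)\to(T^{1,0}X|_U,\alpha|_U)$ in $\mathscr{C}^\infty(U)$ because restriction to $H(U)$ and pullback by the fixed CR diffeomorphism $H$ preserve $\mathscr{C}^\infty$-convergence, and because $\tilde\beta_S|_{H(\overline U)}=H_*\alpha$ by construction.

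Finally, observe that $\Phi_k(H(U))$ is an open subset of the real analytic submanifold $Y_k$ of $\mathbb{S}^{2N_k-1}$, hence is itself a real analytic pseudohermitian submanifold of the sphere in the sense of Definition~\ref{def:PseudoHermSubManiSphere}. The composition $\Phi_k|_{H(U)}\circ H\colon U\to\Phi_k(H(U))$ is then a CR diffeomorphism pulling back $\iota^*\alpha_{\beta(k)}$ to $\alpha^k$, so $(U,\mathcal{V}^k,\alpha^k)$ is isomorphic to a real analytic pseudohermitian submanifold of a sphere, as required. The only subtle point in the whole reduction is the positivity-preserving extension of $\lambda$ from $M$ to $S$, which is handled by the above convex combination with $1$; the rest is a direct localization of the compact result.
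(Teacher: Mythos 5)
Your argument is correct and follows essentially the same route as the paper: realize a neighborhood of $p$ as a piece $M$ of a compact strictly convex hypersurface $S$, rescale a global contact form on $S$ by a positive conformal factor (glued with a cutoff) so that it agrees with $H_*\alpha$ near $H(\overline{U})$, and then apply Corollary~\ref{cor:RealAnalyticPseudohermitianApproximation} to $(S,T^{1,0}S,\tilde\beta_S)$ and restrict. The explicit convex combination $\chi\lambda+(1-\chi)$ is exactly the modification the paper leaves implicit, and the remaining verifications (positivity of the Levi form under conformal rescaling, openness of Definition~\ref{def:PseudoHermSubManiSphere} under restriction) go through as you state.
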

The result follows from Corollary~\ref{cor:RealAnalyticPseudohermitianApproximation} modifying the argument in the proof of Corollary~\ref{cor:LocalEmbedding01Intro}. 
Denote by \(T^{1,0}S\) the CR structure on \(S\) induced by \(\C^{n+1}\) and choose a contact form \(\gamma\) for \(\operatorname{Re}T^{1,0}S\) such that the respective Levi form is positive definite. Identifying \(V\) with \(M\) via the local CR embedding we can assume that  \(\alpha\) is a contact form for \(\operatorname{Re}T^{1,0}S\big|_M\). It is not hard to see (assuming that \(M\) is connected) that there is a positive smooth function \(f\colon M\to \R_+\) such that \(f\alpha=\gamma\) on \(M\). Hence after shrinking \(M\) we can modify \(\gamma\) such that \(\alpha=\gamma\) holds on \(M\) and apply  Corollary~\ref{cor:RealAnalyticPseudohermitianApproximation} to \((S,T^{1,0}S,\gamma)\).

\begin{corollary}\label{cor:LocalEmbedding03Intro} Let $(X, T^{1,0} X,\mathcal{T})$ be a Sasakian manifold. Then every $p\in X$
	has a neighborhood $U$ with the following property: There
	exists a sequence of Sasakian structures
	\(\{(\mathcal{V}^k,\mathcal{T}|_U)\}_{k\in \N}\) on \(U\) which approximate
	\((T^{1,0}X,\mathcal{T})\) in \(\mathscr{C}^\infty\)-topology on \(U\) such that for each
	\(k\in \N\) we have that \((\mathcal{V}^k,\mathcal{T}|_U)\) is induced by a weighted sphere. 
\end{corollary}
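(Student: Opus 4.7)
The plan is to deduce the corollary from Theorem~\ref{thm:MainThmIntro} by realizing a neighborhood \(U\) of \(p\in X\) as an open Sasakian submanifold of some compact Sasakian manifold \((\hat{X}, T^{1,0}\hat{X}, \hat{\mathcal{T}})\), in analogy with the arguments used to derive Corollaries~\ref{cor:LocalEmbedding01Intro} and \ref{cor:LocalEmbedding02Intro}. Once such a Sasakian open embedding \(\iota\colon U\hookrightarrow \hat{X}\) is in hand, the approximating data furnished by Theorem~\ref{thm:MainThmIntro} on \(\hat{X}\) pulls back directly to \(U\).

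For the extension, I would invoke the Sasakian normal form: since \(\mathcal{T}\) is transversal to \(\operatorname{Re}T^{1,0}X\) and preserves \(T^{1,0}X\), a neighborhood \(V\) of \(p\) carries local coordinates \((z,t)\in \Omega\times (-\epsilon,\epsilon)\), where \(\Omega\subset\C^n\) is a ball, such that \(\mathcal{T}=\partial/\partial t\) and \(T^{1,0}X|_V\) is spanned by the vector fields \(\partial/\partial z_j - i(\partial\phi/\partial z_j)\partial/\partial t\) for some smooth strictly plurisubharmonic \(\phi\colon\Omega\to\R\). View \(\Omega\) as sitting inside an affine chart of \(\C\mathbb{P}^n\), and fix a Kähler form \(\omega_0\) on \(\C\mathbb{P}^n\) with integer class \([\omega_0]\in H^2(\C\mathbb{P}^n,\Z)\) and local potential \(\psi_0\) on this chart. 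For some \(k_0\in\N\) large and a suitable smooth function \(\rho\) on \(\C\mathbb{P}^n\) extending \(\phi-k_0\psi_0\) from a smaller ball \(\Omega'\Subset\Omega\), the form \(\hat\omega:=k_0\omega_0+i\partial\ddbar\rho\) is a global Kähler form on \(\C\mathbb{P}^n\) in the integer class \(k_0[\omega_0]\) satisfying \(\hat\omega|_{\Omega'}=i\partial\ddbar\phi\). Applying the Boothby-Wang construction to \((\C\mathbb{P}^n,\hat\omega)\) yields a compact Sasakian manifold \((\hat{X}, T^{1,0}\hat{X},\hat{\mathcal{T}})\), a principal \(S^1\)-bundle over \(\C\mathbb{P}^n\) with transverse Kähler form \(\hat\omega\). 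Over \(\Omega'\), the Sasakian structure on \(\hat{X}\) is---after possibly absorbing a pluriharmonic discrepancy in the Kähler potential by a shift in \(t\)---isomorphic to the local model above, so for \(\epsilon\) smaller than half the \(S^1\)-fiber length we obtain an open Sasakian embedding \(\iota\colon U:=\Omega'\times(-\epsilon,\epsilon)\hookrightarrow\hat{X}\) with \(\iota_*\mathcal{T}|_U=\hat{\mathcal{T}}|_{\iota(U)}\).

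Applying Theorem~\ref{thm:MainThmIntro} to \((\hat{X},T^{1,0}\hat{X},\hat{\mathcal{T}})\) yields smooth functions \(\hat\varphi_k\in\mathscr{C}^\infty(\hat{X},\R)\) with \(\hat{\mathcal{T}}\hat\varphi_k\equiv 0\) and \(\|\hat\varphi_k\|_{\mathscr{C}^m(\hat{X})}=O(k^{-2})\), weight vectors \(\beta(k)\in\R_+^{N_k}\), and CR embeddings \(\hat F_k\colon (\hat{X},\mathcal{V}(\hat\varphi_k))\to\mathbb{S}^{2N_k-1}\) with \((\hat F_k)_*\hat{\mathcal{T}}=\mathcal{T}_{\beta(k)}\circ \hat F_k\). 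Setting \(\varphi_k:=\iota^*\hat\varphi_k\) on \(U\) and \(F_k:=\hat F_k\circ\iota\), one has \(\mathcal{T}\varphi_k\equiv 0\), \(\|\varphi_k\|_{\mathscr{C}^m(U)}=O(k^{-2})\), \(\mathcal{V}(\varphi_k)=\iota^*\mathcal{V}(\hat\varphi_k)\), and \((F_k)_*\mathcal{T}|_U=\mathcal{T}_{\beta(k)}\circ F_k\). Consequently \((\mathcal{V}(\varphi_k),\mathcal{T}|_U)\) is induced by a \(\beta(k)\)-weighted sphere, and \(\mathcal{V}(\varphi_k)\to T^{1,0}X|_U\) in \(\mathscr{C}^\infty\)-topology.

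The principal difficulty is the Sasakian-respecting extension: not only the CR structure but also the Reeb vector field (and hence the contact form) must match between \(U\) and \(\hat{X}\). Pluriharmonic freedom in the local Kähler potential---corresponding to shifts of the Sasakian coordinate \(t\) by the real part of a holomorphic function---reduces the contact-form matching to a Kähler-form matching on the transverse base, while the Boothby-Wang step requires the extended class to be integral; both subtleties are handled by the explicit choice of \(\hat\omega\) above. Once these points are arranged, the rest of the argument is a direct restriction of the global construction.
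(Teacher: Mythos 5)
Your proposal is correct and follows essentially the same route as the paper: reduce to the Baouendi--Rothschild--Treves local model on a ball times an interval, glue the local (pluri)potential into a globally positive metric on a large tensor power of $O(1)\to\C\mathbb{P}^n$ (your $k_0\omega_0+i\partial\ddbar\rho$ is the additive version of the paper's explicit cutoff potential $-\eta\varphi+m\chi(\log(1+|z|^2))$), realize the resulting compact Sasakian manifold as the circle bundle (Boothby--Wang), and restrict the conclusion of Theorem~\ref{thm:MainThmIntro}. The only remarks are cosmetic: the paper's sign convention requires $-\varphi$ to be strictly plurisubharmonic in the model, and the positivity of the glued form in the transition annulus is exactly the point the paper's convex interpolation $\chi$ is designed to handle, which your ``suitable smooth function $\rho$'' leaves implicit.
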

The result follows from Theorem~\ref{thm:MainThmIntro} using that Sasakian structures have 'good' local coordinates in the sense of complex geometry and the fact that the circle bundle of a positive holomorphic line bundle over a compact complex manifold together with the canonical \(S^1\)-action on the fiber defines a compact Sasakian manifold. We provide a proof for Corollary~\ref{cor:LocalEmbedding03Intro} in Section~\ref{sec:ProofCorollarySasakainLocal}.

\subsection{Organization of the Paper}
The paper is organized as follows. After sketching the idea of the
proof for Theorem~\ref{thm:MainThmIntroGeneral} and
Theorem~\ref{thm:MainThmIntro} in Section~\ref{sec:IdeaProof} we
recall some basic definitions and fundamental facts from complex
analysis and CR geometry in Section~\ref{sec:Preliminaries}. In
Section~\ref{sec:ProofOfMainThmSasakian} we prove
Theorem~\ref{thm:MainThmIntro}. Section~\ref{sec:NONWeightedSpheres}
and Section~\ref{sec:DeformationWeightedSphere} contain the proofs of
Theorem~\ref{thm:ForstnericForSasakian} and
Theorem~\ref{thm:NoIsolatedPoint}, respectively. In
Section~\ref{sec:ProofOfApproximationGeneral} we prove
Theorem~\ref{thm:MainThmIntroGeneral} and
Theorem~\ref{thm:ContrastToForstneric}.

\section{Idea of the Proofs}\label{sec:IdeaProof}
\subsection{Bergman Kernels, Szeg\H{o} Kernels and Asymptotic
  Expansions}
The proof of the approximation results
Theorem~\ref{thm:MainThmIntroGeneral} and
Theorem~\ref{thm:MainThmIntro} rely on the semi-classical functional
calculus for Toeplitz operators on CR manifolds
\cite{HHL20,HHL22,HHMS23} and on compact manifolds with boundary
\cite{HM23} in combination with their applications to embedding
problems \cite{HHL22,HHMS23, HHMS24}. The semi-classical functional
calculus for Toeplitz operators can be seen as a CR analog of Bergman
kernel expansion. The Bergman kernel for one complex variable was
introduced by Stefan Bergman~\cite{Berg33}. Due to the famous and
fundamental works of Hörmander~\cite{Ho65}, Diederich~\cite{Di70},
Kerzman~\cite{KER71}, Fefferman~\cite{Fef74}, Boutet de
Monvel--Sjöstrand~\cite{BS75} and others (see
Greene--Kim--Krantz~\cite{GKK11} and Ohsawa~\cite{Ohsawa2018} for more
details) it became an important subject in several complex variables
and geometry. An interesting phenomenon is that considering the
Bergman kernel \(B_k\) for the tensor power \((L^k,h^h)\),
\(k\in \N\), of some positive holomorphic line bundle \(L\) with
Hermitian metric \(h\) over a compact complex manifold \(M\),
\(\dim_\C M=n\), it follows from the results by Tian~\cite{Ti90},
Ruan~\cite{Ruan98}, Catlin~\cite{Catlin1999}, Zelditch~\cite{Zel98}
(see also Berman--Berndtsson--Sjöstrand~\cite{BBS08},
Dai--Liu--Ma~\cite{DLM06} and Ross--Thomas~\cite{RT11-1}) that the
integral kernel \(B_k(z,w)\) of \(B_k\) admits a full asymptotic
expansion in \(k\) for \(k\to \infty\) (we refer to the book of
Ma--Marinescu~\cite{MM07} for a complete reference). For example, on
the diagonal in \(M\times M\), we find smooth functions
\(\{b_j\}_{j\in \N_0}\subset \mathscr{C}^\infty(M)\) such that
\[B_k(z,z)\sim k^{n}b_0(z)+k^{n-1}b_1(z)+\ldots,\,\,\,\text{ in
  }\mathscr{C}^\infty\text{-topology for }k\to \infty.\] Furthermore,
the functions \(b_j\) encode the local geometry of \((M,L,h)\) (see
for example Xu~\cite{XH12} and Herrmann~\cite{He18} for explicit
formulas of the \(b_j\)'s).  Beside numerous of applications in
complex geometry, random geometry and mathematical physics
(see~\cite{MM07} and the references therein) the Bergman kernel
asymptotic expansion turns out to be a useful tool for studying
holomorphic embeddings of complex manifolds (see for example
Donaldson~\cite{Do01,Do05,Do09}). The method for deriving embedding
results from those kernel asymptotics is due to Bouche~\cite{Bch96}
and was applied to a various kind of set-ups by
Shiffman--Zelditch~\cite{SZ02}, Ma--Marinescu~\cite{MM07}, Hsiao, Li
and others \cite{Hs15,HLM21,HHL22}.  In CR geometry the analog of the
Bergman kernel is the Szeg\H{o} kernel. Since in general it is singular
on the diagonal (see~\cite{BS75}) it is much harder to derive
geometric applications from it (see for example
Hsiao--Marinescu~\cite{HM14,HM17} and Hsiao--Shen~\cite{HS20}). In
order to obtain smooth objects one can consider group actions on the
manifold and the Szeg\H{o} kernel for subspaces of equivariant
functions (see for example Herrmann--Hsiao--Li~\cite{HHL16,HHL22},
Paoletti--Galasso~\cite{GP18}, Shen~\cite{Sh19},
Hsiao--Li--Marinescu~\cite{HLM21},
Fritsch--Herrmann--Hsiao~\cite{FHH22}). Another approach is to study
the functional calculus of Toeplitz operators introducing a
semi-classical parameter. For example, let \((X,T^{1,0}X, \alpha)\) be
a compact, embeddable, pseudohermitian manifold. More precisely, we
have that \((X,T^{1,0}X)\) is a compact, strictly pseudoconvex CR
manifold which is CR embeddable into the complex Euclidean space and
\(\alpha\) is a contact form for \(\operatorname{Re}T^{1,0}X\) such
that its associated Levi form is positive definite. Let
\(\mathcal{T}\) denote the Reeb vector field with respect to
\(\alpha\) and consider the CR Toeplitz operator
\[T:=\Pi (-i\mathcal{T})\Pi\colon \mathscr{C}^\infty(X)\to
  \mathscr{C}^\infty(X) \] where \(\Pi\colon L^2(X)\to H_b^0(X)\)
denotes the Szeg\H{o} projection associated to an \(L^{2}\)-inner
product arising from the volume form \(dV:=\alpha\wedge (d\alpha)^n\)
on \(X\). It follows from Kohn~\cite{Koh63,Koh64} that \(\Pi\)
preserves the space of smooth functions and hence that \(T\) is well
defined. Moreover, CR Toeplitz operators in a much wider sense where
intensively studied by Boutet de Monvel--Guillemin~\cite{BG81}. As
part of their results (see also~\cite{HHMS23}) it follows that \(T\)
has a self-adjoint extension to a densely defined operator on
\(L^2(X)\) and admits a 'good' spectral decomposition. Hence, for any
smooth compactly supported function
\(\eta\in \mathscr{C}_c^\infty(\R)\),  the functional
calculus \(\eta(T)\) possesses a smooth integral kernel
\(\eta(T)(x,y)\). In order to introduce a semi-classical parameter
\(k>0\) into the method we fix \(\eta\in \mathscr{C}_c^\infty(\R_+)\)
and put \(\eta_k(\lambda)=\eta(k^{-1}\lambda)\), \(k,\lambda>0\). The
semi-classical functional calculus of \(T\) is then given by the
integral kernel \(\eta_k(T)(x,y)\) of \(\eta_k(T)\) for \(k>0\). It
was shown by Herrmann--Hsiao--Marinescu--Shen~\cite{HHMS23} that
\(\eta_k(x,y)\) admits a smooth full asymptotic expansion for
\(k\to \infty\) (see also the earlier works of
Herrmann--Hsiao--Li~\cite{HHL20,HHL22} for rigid CR manifolds). For
instance, on the diagonal in \(X\times X\), it follows from the
results in~\cite{HHMS23} that we find smooth functions
\(\{a_j\}_{j\in \N_0}\subset \mathscr{C}^\infty(X)\) such that
\[\eta_k(T)(x,x)\sim k^{n+1}a_0(x)+k^{n}a_1(x)+\ldots,\,\,\,\text{ in
  }\mathscr{C}^\infty\text{-topology for }k\to \infty\] where \(a_0\)
is an explicitly given positive constant.  Moreover, from the results
by Chang--Herrmann--Hsiao~\cite{CHH24} it follows that there exist
smooth functions \(\{b_j\}_{j\in \N_0}\subset \mathscr{C}^\infty(X)\)
independent of \(\eta\) such that
\(a_j=a_j^\eta=b_j\int_\R\eta(t)t^{n-j}dt\) for all \(j\in \N_0\) and
\(b_j(x)\) is related to the local geometry of
\((X,T^{1,0}X,\alpha,dV)\) near \(x\in X\). Using the asymptotic
expansion of the semi-classical functional calculus, first geometric
applications were provided in~\cite{HHMS23,HHMS24} with respect to
spectral measures, CR embeddings and the equidistribution of zeros of
random CR functions. Furthermore, the semi-classical functional
calculus for Toeplitz operators on smoothly bounded strictly
pseudoconvex domains was considered by Hsiao--Marinescu in
\cite{HM23}. One of the key observation in their work is that the
asymptotics in the domain case can be described by the semi-classical
functional calculus of a certain kind of generalized CR Toeplitz operator
on the boundary. This observation in combination with the results and
methods in~\cite{HHL20,HHMS23,HM23,HHMS24} will be used to prove the
approximation results Theorem~\ref{thm:MainThmIntroGeneral} and
Theorem~\ref{thm:MainThmIntro} in this paper.
 
\subsection{The Sasakian Case} 
We start by explaining the method for the Sasakian case
(Theorem~\ref{thm:MainThmIntro}) since it is less technical and the
idea can be stated more clearly.  Let \((X,T^{1,0}X,\mathcal{T})\) be
a compact Sasakian manifold with \(\dim_\R X=2n+1\), \(n\geq 1\).  We
denote by \(0<\lambda_1\leq\lambda_2\leq\ldots\) the positive
eigenvalues of the operator \(-i\mathcal{T}\) acting on CR functions
counting multiplicity and let \(\{f_j\}_{j\in \N}\) be an orthonormal
system of corresponding eigenfunctions. Here we use an \(L^2\)-inner
product on \(X\) induced by a volume form \(dV\) which is a constant
multiple of \(\alpha\wedge (d\alpha)^n\) where \(\alpha\) is the
contact form associated to \(\mathcal{T}\). Given a function
\(\eta\in \mathscr{C}_c^\infty(\R_+)\) put \(\eta_k(t)=\eta(k^{-1}t)\),
\(t,k>0\) and consider the integral kernel of the semi-classical
functional calculus for the Toeplitz operator \(-i\mathcal{T}\) acting
on CR functions given by
\[\eta_k(-i\mathcal{T})(x,y)=\sum_{j=1}^\infty
  \eta_k(\lambda_j)f_j(x)\overline{f_j(y)},\,\,(x,y)\in X\times X,\,k>0.\] It follows
from~\cite{HHMS23} (see also~\cite{HHL22}) that this operator can be
expressed as a semi-classical Fourier integral operator up to an
\(O(k^{-\infty})\). Furthermore, it was shown in~\cite{HHL20} (see
also~\cite{HHMS23}) that \(\eta_k(-i\mathcal{T})(x,y)\) admits a full
asymptotic expansion on the diagonal, that is,
\begin{eqnarray}\label{eq:FunctionalCalculusOnDiagonal}
  \eta_k(-i\mathcal{T})(x,x)\sim a_0(x)k^{n+1}+a_1(x)k^{n}+\ldots
\end{eqnarray}
where \(a_j\), \(j\geq 0\), are smooth functions on \(X\) and \(a_0\)
is a positive constant (see also~\cite{CHH24} for more details on the
\(a_j\)).  Define \(Y=X\times \R\),
\(T^{1,0}Y=T^{1,0}X\oplus\C \{\frac{\partial}{\partial
  s}-i\mathcal{T}\}\) and extend \(\mathcal{T}\) trivially to
\(Y\). Then \((Y,T^{1,0}Y)\) is a complex manifold and
\(X\ni x\mapsto (x,0)\in Y\) defines a CR embedding of \(X\) into
\(Y\). Furthermore, we have that for any \(j\in \N\) the function
\(\tilde{f}_j\colon Y\to \C\),
\(\tilde{f}_j(x,s)=e^{\lambda_js}f_j(x)\) is a holomorphic extension
of \(f_j\).  It is a fact that for any \(j_0\in \N_0\) there exists
\(N\in \N\) such that
\(x\mapsto(f_{j_0+1}(x),\ldots, f_{j_0+N}(x))\) defines a CR
embedding of \(X\) into \(\C^{N}\). For simplicity we take
\(j_0=0\). It follows that the holomorphic extension
\[G\colon Y\to \C^{N},\,\,\,G(x,s)= (e^{\lambda_{1}s}f_{1}(x),\ldots,
  e^{\lambda_{N}s}f_{N}(x)),\] defines a holomorphic embedding of an
open neighborhood \(X\times(-c,c)\subset Y\) around \(X\times\{0\}\)
for some \(c>0\). Now we take a function
\(\chi\in \mathscr{C}^\infty_c(\R_+)\) with \(\chi\not\equiv 0\),
\(\supp\chi\subset (0,1)\). Put \(k_0=\lambda_1\). For each large
enough \(k>k_0\) we consider the holomorphic map
\[H_k\colon Y\to
  \C^{N_k},\,\,\,\,H_k(x,s)=(\chi(k^{-1}\lambda_1)e^{\lambda_{1}s}f_{1}(x),\ldots,\chi(k^{-1}\lambda_{N_k})
  e^{\lambda_{N_k}s}f_{N_k}(x))\] where
\(N_k=\#\{j\in \N\mid\lambda_j\leq k\}\). Then consider the map
\[F_k\colon Y\to
  \C^{N+N_k},\,\,\,\,F_k(x,s)=\left(e^{-k}G(x,s),\frac{H_k(x,s)}{\sqrt{k^{n+1}C}}\right)\]
where \(C>0\) is a constant independent of \(k\). Now from the
properties of \(G\) it follows that for each \(k\) the map \(F_k\)
defines a holomorphic embedding of \(X\times (-c,c)\) into
\(\C^{N+N_k}\) while at the same time the factor \(e^{-k}\) in front
of \(G\) ensures that the contribution of \(G\) to \(F_k\) is an
\(O(k^{-\infty})\). More precisely, we find
\[|F_k(x,s)|^2=e^{-2k}|G(x,s)|^2+
  k^{-n-1}C(\chi,n)|H_k(x,s)|^2=k^{-n-1}C(\chi,n)|H_k(x,s)|^2+O(k^{-\infty})\]
and
\[|H_k(x,0)|^2=\sum_{j=1}^\infty
  |\chi(k^{-1}\lambda_j)|^2|f_j(x)|^2=\eta_k(-i\mathcal{T})(x,x)\]
with \(\eta=|\chi|^2\). It follows that choosing \(C=a_0\) where
\(a_0>0\) is the constant in~\eqref{eq:FunctionalCalculusOnDiagonal}
we obtain \(|F_k(x,0)|^2=1+O(k^{-1})\) and hence that \(F_k\) maps
\(X\times\{0\}\) close to the unit sphere in \(\C^{N+N_k}\) when \(k\)
becomes large. Fix \(k>k_0\). Since we are interested in embeddings
into the unit sphere we will now identify the points \((x,s)\in Y\)
such that \(F_k(x,s)\) lies in the unit sphere, that is, we have to
solve the equation \(|F_k(x,s)|^2=1\). We write
\begin{eqnarray}\label{eq:NormSquareMapIdeaProofSasakian}
  |F_k(x,s)|^2=e^{-2k}\sum_{j=1}^Ne^{2\lambda_js}|f_j(x)|^2+\frac{1}{k^{n+1}a_0}\sum_{j=1}^{N_k}|\chi(k^{-1}\lambda_j)|^2e^{2\lambda_js}|f_j(x)|^2
\end{eqnarray}
and observe from \(\lambda_j>0\) that for each \(x\in X\) we have that
\(s\mapsto F_k(x,s)\) is a strictly increasing function on \(\R\) with
\(\lim_{s\to \infty}F_k(x,s)=\infty\) and
\(\lim_{s\to -\infty}F_k(x,s)=0\). As a conclusion we find that for
each \(x\in X\) and \(k>k_0\) there is a unique \(s=s(x,k)\) which
solves the equation \(|F_k(x,s)|^2=1\). Furthermore, one can show
using~\eqref{eq:NormSquareMapIdeaProofSasakian} and
\(|F_k(x,0)|^2=1+O(k^{-1})\) that \(|s(x,k)|\leq C_1k^{-2}\) for some
constant \(C_1>0\) and all sufficiently large \(k\) (see
Proposition~\ref{pro:PropertiesOfFk}).  Then, for each \(k>k_0\), we define a function \(\varphi_k\colon X\to \R\),
\(\varphi_k(x)=s(x,k)\). From the implicit function theorem it follows
that \(\varphi_k\) is smooth and invariant under the flow of
\(\mathcal{T}\) when \(k\) is large enough. Furthermore, using the
results on the semi-classical functional calculus for Toeplitz
operators in the Sasakian case~\cite{HHL22,HHMS23} and some
modifications (see Lemma~\ref{lem:TaylorExpansionFunctionalCalculus})
we can control all \(\mathscr{C}^m\)-norms of \(\varphi_k\) when \(k\)
becomes large (see Theorem~\ref{thm:ConstructionOfVarphik}). Put
\(X_{k}=\{(x,s)\in Y\colon s=\varphi_k(x)\}\). Then \(X_k\) is a
smooth real hypersurface in \(Y\) and hence \((X_k,T^{1,0}X_k)\) with
\(T^{1,0}X_k=\C TX_k\cap T^{1,0}Y\) is a CR submanifold of \(Y\).
Furthermore, since \(X_k\) is contained in \(X\times (-c,c)\) for
\(k\) large enough we find that \(F_k\) restricts to a smooth CR
embedding \(\hat{F}_k\colon X_k\to\C^{N+N_k}\) and \(\hat{F}_k(X_k)\)
is contained in the unit sphere by construction. It follows that
\((X_k,T^{1,0}X_k)\) is strictly pseudoconvex (see
Lemma~\ref{lem:CRsubmanifoldsOfSpheresAreStrictlyPseudoconvex}). Since
\(\mathcal{T}\varphi_k\equiv 0\) we conclude that
\((X_k,T^{1,0}X_k,\mathcal{T})\) is Sasakian and
\((\hat{F}_k)_*\mathcal{T}=\mathcal{T}_{\beta(k)}\circ\hat{F}_k\) with
\(\mathcal{T}_{\beta(k)}\) as in Example~\ref{ex:PseudoHermSpheres}
for some weight vector \(\beta(k)\in \R^{N+N_k}_+\). Finally, the
projection \((x,s)\mapsto x\) defines an isomorphism between the
Sasakian manifolds \((X_k,T^{1,0}X_k,\mathcal{T})\) and
\((X,\mathcal{V}(\varphi_k),\mathcal{T})\) (see
Lemma~\ref{lem:IsoGraphDeformation}) with \(\mathcal{V}(\varphi_k)\)
given by~\eqref{eq:DefDeformedSasakianStructure}. The conclusion of
Theorem~\ref{thm:MainThmIntro} follows.
 
 \subsection{The Pseudohermitian Case}
 Now let \((X,T^{1,0}X,\alpha)\), \(\dim_\R X=2n+1\), \(n\geq 1\), be
 a compact pseudohermitian manifold which is embeddable into the
 complex Euclidean space. More precisely, \((X,T^{1,0}X)\) is a
 compact, CR embeddable, strictly pseudoconvex CR manifold and
 \(\alpha\) is a contact form for \(\operatorname{Re}T^{1,0}X\) such
 that its associated Levi form is positive definite. Denote by
 \(\mathcal{T}\) the Reeb vector field associated to
 \(\alpha\). Without loss of generality (see
 Lemma~\ref{lem:CharacterizationEmbeddable}) we can assume that \(X\)
 bounds a relatively compact, smoothly bounded, strictly pseudoconvex
 domain \(M\) in a complex manifold \(M'\) with \(\dim_\C M'=n+1\). We
 choose a Hermitian metric on \(M'\) and a Toeplitz operator \(T_R\)
 on \(\overline{M}\) which reflects the pseudohermitian structure on
 \(X\) (see \eqref{eq:propHermMetric1}, \eqref{eq:propHermMetric2} and
 \eqref{eq:DefToeplitzOperatorGeneral}).  We denote by
 \(0<\lambda_1\leq\lambda_2\leq\ldots\) the positive eigenvalues of
 the operator \(T_R\) acting on smooth functions on \(\overline{M}\)
 counting multiplicity and let \(\{f_j\}_{j\in \N}\) be an orthonormal
 system of corresponding eigenfunctions.  Given a function
 \(\eta\in \mathscr{C}^\infty(\R_+)\) put \(\eta_k(t)=\eta(k^{-1}t)\),
 \(t,k>0\) and consider the integral kernel of the semi-classical
 functional calculus for the Toeplitz operator \(T_R\) by
 \[\eta_k(T_R)\colon \overline{M}\times \overline{M}\to
   \C,\,\,\,\eta_k(T_R)(z,w)=\sum_{j=1}^\infty
   \eta_k(\lambda_j)f_j(z)\overline{f_j(w)},\,\,k>0.\] It follows
 from~\cite{HM23} that this operator can be expressed as a
 semi-classical Fourier integral operator up to an
 \(O(k^{-\infty})\). Furthermore, a key point for the proof of the
 result in~\cite{HM23} is the observation that
 \(\eta_k(T_R)|_{X\times X}\), (\(X=bM\)), has a similar asymptotic
 description as the semi-classical functional calculus for Toeplitz
 operators on \(X\) as in~\cite{HHMS23}. As a consequence we find that
 on \(X=bM\) we have
 \begin{eqnarray}\label{eq:ExpansionDomainCaseIdea}
   \eta_k(T_R)(x,x)\sim a_0(x)k^{n+2}+a_1(x)k^{n+1}+\ldots,\,\,\,\,x\in X
 \end{eqnarray}
 and \(a_0\) is a positive constant.  Let
 \(\tilde{\mathcal{G}}\colon X\to \C^N\) be a CR embedding of \(X\)
 and denote its holomorphic extension by
 \(\mathcal{G}\colon \overline{M}\to \C^N\). We find that
 \(\mathcal{G}\) is an embedding of a small open neighborhood of \(X\)
 in \(\overline{M}\). Put \(k_0=\lambda_1\). Given
 \(\chi\in \mathscr{C}^\infty_c(\R_+)\), \(\supp \chi \subset (0,1)\),
 \(\chi\not\equiv 0\) we define
 \[\mathcal{H}_k\colon \overline{M}\to \C^{N_k},
   \mathcal{H}_k(z)=(\chi(k^{-1}\lambda_1)f_{1}(z),\ldots,\chi(k^{-1}\lambda_{N_k})
   f_{N_k}(z))\] for \(k>k_0\) with
 \(N_k=\#\{j\in \N\mid\lambda_j\leq k\}\) and consider the map
 \begin{eqnarray}
   \mathcal{F}_k\colon \overline{M}\to \C^{N+N_k},\,\,\, \mathcal{F}_k=\left(e^{-k}\mathcal{G},\frac{\mathcal{H}_k}{\sqrt{k^{n+2}C}}\right)
 \end{eqnarray}
 with \(C=a_0>0\) where \(a_0\) is the constant
 in~\ref{eq:ExpansionDomainCaseIdea} for \(\eta=|\chi|^2\). We note
 that for some technical reasons we actually do not start with the
 eigenfunction \(f_1\) in the definition of \(\mathcal{H}_k\)
 (see~\eqref{eq:DefHkGeneral}). But in order to explain the idea we
 should keep the notation simple. Now let
 \(E\colon X\times [0,c)\to V\) be a diffeomorphism onto a relatively
 open neighborhood \(V\subset\overline{M}\) around \(X=bM\) such that
 \(E|_{X\times\{0\}}\) is the inclusion \(X\to \overline{M}\). Then
 \(E\) induces a complex structure on \(Y=X\times [0,c)\) such that
 \(Y\) is a complex manifold with boundary \(bY=X\). Then consider
 \(\tilde{F}_k\colon Y\to \C^{N+N_k}\),
 \(\tilde{F}_k(x,s)=\mathcal{F}_k(E(x,s))\). From the properties of
 \(\mathcal{G}\) we find that by shrinking \(c>0\) we have that
 \(\tilde{F}_k\) is a holomorphic embedding of \(Y\) into
 \(\C^{N+N_k}\) for all \(k>k_0\). Similar as in the Sasakian case we
 would like to identify for each \(k\) the points \((x,s)\in Y\) which
 are mapped into the unit sphere under \(\tilde{F}_k\), that is, we
 would like to find solutions of the equation
 \(|\tilde{F}_k(x,s)|^2=1\). This leads to a problem since
 \(\tilde{F}_k\) is only defined for \(s\geq 0\) (In the Sasakian case
 it was defined for all \(s\in \R\)). In order to ensure that the
 desired solutions \((x,s)\) satisfy \(s\geq 0\) we scale
 \(\tilde{F}_k\) by a \(k\)-dependent factor, that is, we define
 \[F_k\colon Y\to
   \C^{N+N_k},\,\,\,F_k=\frac{1}{\sqrt{1-d/k}}\tilde{F}_k\] where
 \(d>0\) is a large enough positive constant and \(k_0\) has to be
 increased depending on \(d\). Hence, choosing \(d\) and \(k_0\) well,
 we find using~\eqref{eq:ExpansionDomainCaseIdea} with
 \(\eta=|\chi|^2\) that \(|F_k(x,0)|^2>1\) for all \(x\in X\) and
 \(k>k_0\). Recall that in the Sasakian case we had a quite explicit
 description for the dependency of \(|F_k(x,s)|^2\) on \(s\). In the
 general case we use the result in~\cite{HM23} on the asymptotic
 expansion of \(\eta_k(T_R)\) with \(\eta=|\chi|^2\) and find that
 \begin{eqnarray}\label{eq:FunctionalCalculusDomainIdea}
   |F_k(x,s)|^2=\int_\R e^{-tk\tau(x,s)}(a_0(x,s,t)+a_1(x,s,t,k))dt
 \end{eqnarray}
 for some functions \(\tau,a_0,a_1\) satisfying some properties (see
 Lemma~\ref{lem:ConsequenceOfHM}). After rescaling the diffeomorphism
 \(E\) (see Lemma~\ref{lem:ConsequenceOfHMGoodE}) we can ensure that
 \(\tau\) is good enough in order to show that for any \(x\in X\) and
 \(k\geq k_0\) there is a unique \(s=s(x,k)\geq 0\) with
 \(|F_k(x,s)|^2=1\) (see
 Proposition~\ref{pro:PropertiesFkGeneral}). Furthermore, we find
 \(0< s(x,k)\leq C_1k^2\) for some constant \(C_1\). Then, as in the
 Sasakian case, define \(\varphi_k\colon X\to(0,c)\),
 \(\varphi_k(x)=s(x,k)\) and put
 \(X_k=\{(x,s)\in Y\colon s=\varphi_k(x)\}\),
 \(T^{1,0}X_k=\C TX_k\cap T^{1,0}Y\). Furthermore, denote by
 \(\iota_k\colon X_k\to Y\) the
 inclusion. From~\eqref{eq:FunctionalCalculusDomainIdea} we can verify
 crucial properties of the asymptotic behavior of \(\varphi_k\) in
 \(k\) (see Lemma~\ref{lem:IntegralLemma} and
 Theorem~\ref{thm:ConstructionPhiGeneral}). We should mention that
 this part of the proof is more technical than for the Sasakian case
 since we have to use~\eqref{eq:FunctionalCalculusDomainIdea} instead
 of the explicit
 description~\eqref{eq:NormSquareMapIdeaProofSasakian}. Since \(X_k\)
 is a real hypersurface in \(Y\) we have that \((X_k,T^{1,0}X_k)\) is
 a CR manifold and \(F_k\) restricts to a CR embedding of \(X_k\) into
 \(\C^{N+N_k}\) such that \(F_k(X_k)\subset \mathbb{S}^{2(N+N_k)-1}\)
 by construction. It follows (see
 Lemma~\ref{lem:CRsubmanifoldsOfSpheresAreStrictlyPseudoconvex}) that
 \((X_k,T^{1,0}X_k,\gamma_k)\) with
 \(\gamma_k=\iota_k^*F_k^*\alpha_{\beta(k)}\) is a pseudohermitian
 manifold where \(\beta(k)\in \R^{N+N_k}_+\) is chosen with respect to
 the map \(F_k\) (see~\eqref{eq:DefBetakGeneral}). Since \(F_k(X_k)\)
 is the transversal intersection of a complex submanifold in
 \(\C^{N+N_k}\) with \(\mathbb{S}^{2(N+N_k)-1}\) we find that 
 \((X_k,T^{1,0}X_k,\gamma_k)\) is isomorphic to a real analytic
 pseudohermitian submanifold of the sphere. Now define
 \(\mathcal{V}^k=\operatorname{pr}_*T^{1,0}X_k\),
 \(\alpha^k=\kappa_k^*\gamma_k\) and \(G_k=F_k\circ \kappa_k\). Here, 
 \(\operatorname{pr}\colon Y\to X\), \((x,s)\mapsto x\) denotes the
 projection and \(\kappa_k\colon X\to X_k\) is defined by
 \(\kappa_k(x)=(x,\varphi_k(x))\). It follows that
 \((X,\mathcal{V}^k,\alpha^k)\) is a pseudohermitian manifold which is
 isomorphic to \((X_k,T^{1,0}X_k,\gamma_k)\) and that \(G_k\) is a CR
 embedding of \(X\) with respect to the CR structure \(\mathcal{V}^k\)
 such that \(G_k(X)\subset \mathbb{S}^{2(N+N_k)-1}\) and
 \(G_k^*\alpha_{\beta(k)}=\alpha^k\). Let \(\mathcal{T}^k\) denote the
 Reeb vector field associated to \(\alpha^k\). Now, following the
 ideas and methods developed in~\cite{HHMS23,HHMS24}, we can show that
 \(\alpha_k\to \alpha\), \(\mathcal{T}^k\to \mathcal{T}\) in
 \(\mathscr{C}^\infty\)-topology as \(k\to \infty\) and describe the
 pushforward of \(\mathcal{T}^k\) under the map \(G_k\). We note that
 in the Sasakian case we get this part for free since the embedding maps are
 equivariant by construction.  Also, in the Sasakian case, the
 convergence of \(\mathcal{V}^k\) to \(T^{1,0}X\) is quite obvious
 since the complex structure on \(Y\) is explicitly given and hence
 \(\mathcal{V}^k\) is explicitly described in terms of \(\varphi_k\)
 and \(T^{1,0}X\) (see the definition of \(\mathcal{V}(\varphi)\)
 in~\eqref{eq:DefDeformedSasakianStructure}). In the pseudohermitian
 case we only have that there is a complex structure \(T^{1,0}Y\) on
 \(Y\) which induces the CR structure \(T^{1,0}X\) on
 \(X\simeq X\times\{0\}\). It turns out by the properties of
 \(\varphi_k\) that this is still enough to prove that
 \(\mathcal{V}^k\to T^{1,0}X\) in \(\mathscr{C}^\infty\)-topology as
 \(k\to \infty\) (see Definition~\ref{def:BundleConvergence} and
 Lemma~\ref{lem:CRStructureConvergenceGeneral}). The conclusion of
 Theorem~\ref{thm:MainThmIntroGeneral} follows.

 \section{Preliminaries}\label{sec:Preliminaries}
 \subsection{Notations}
 We use the following notations throughout this article. $\mathbb{Z}$
 is the set of integers, $\mathbb N=\{1,2,3,\ldots\}$ is the set of
 natural numbers, $\mathbb N_0=\mathbb N\bigcup\{0\}$,  $\mathbb R$
 is the set of real numbers and \(\C\) is the set of complex numbers. Also,
 $\overline{\mathbb R}_+=\{x\in\mathbb R:x\geq0\}$,
 $\R^*:=\R\setminus\{0\}$,
 $\R_+:=\overline{\mathbb R}_+\setminus\{0\}$.  We write
 $\alpha=(\alpha_1,\ldots,\alpha_n)\in\mathbb N^n_0$ if
 $\alpha_j\in\mathbb N_0$, $j=1,\ldots,n$ and put
 \(|\alpha|=\sum_{j=1}^na_j\). For $x=(x_1,\ldots,x_n)\in\mathbb R^n$,
 we write
 \[
   \begin{split}
     &x^\alpha=x_1^{\alpha_1}\ldots x^{\alpha_n}_n,\\
     & \partial_{x_j}=\frac{\partial}{\partial x_j}\,,\quad
       d_x^\alpha=\partial^\alpha_x=\partial^{\alpha_1}_{x_1}\ldots\partial^{\alpha_n}_{x_n}
       =\frac{\partial^{|\alpha|}}{\partial x^\alpha}\,.       
   \end{split}
 \]
 For $j, s\in\mathbb Z$, set $\delta_{js}=1$ if $j=s$, $\delta_{js}=0$
 if $j\neq s$.

 Let $U$ be an open set in $\mathbb{R}^{n_1}$, $V$ be an open set in
 $\mathbb{R}^{n_2}$ and \(s\in \N_0\cup\{\infty\}\). Let
 $\mathscr{C}^s(U)$ (\(\mathscr{C}^s(U,\R)\)) and $\mathscr{C}^s_c(V)$
 ($\mathscr{C}^s_c(V,\R)$) be the space of \(\mathscr{C}^s\)-smooth
 complex valued (real valued) functions on $U$ and the space of smooth
 complex valued (real valued) functions with compact support in $V$ ,
 respectively; $\mathscr{D}'(U)$ and $\mathscr{E}'(V)$ be the space of
 distributions on $U$ and the space of distributions with compact
 support in $V$, respectively. By saying smooth we always mean
 \(\mathscr{C}^\infty\)-smooth.

 Let $F:\cC^\infty_c(V)\to\mathscr{D}'(U)$ be a continuous operator
 and let $F(x,y)\in\mathscr{D}'(U\times V)$ be the distribution kernel
 of $F$. In this work, we will identify $F$ with $F(x,y)$.  We say
 that $F$ is a smoothing operator if $F(x,y)\in\cC^\infty(U\times V)$.
 For two continuous linear operators
 $A,B:\cC^\infty_c(V)\to\mathscr{D}'(U)$, we write $A\equiv B$ (on
 $U\times V$) or $A(x,y)\equiv B(x,y)$ (on $U\times V$) if $A-B$ is a
 smoothing operator, where $A(x,y),B(x,y)\in\mathscr{D}'(U\times V)$
 are the distribution kernels of $A$ and $B$, respectively.

 If $U=V$, for a smooth function
 $f(x,y)\in\mathscr{C}^\infty(U\times U)$, we write
 $f(x,y)=O\left(|x-y|^\infty\right)$ if for all multi-indices
 $\alpha,\beta\in\N_0^{n_1}$,
 $\left(\partial_x^\alpha\partial_y^\beta f\right)(x,x)=0$, for all
 $x\in U$.

 Let us introduce some notion in microlocal analysis used in this
 paper. Let $D\subset\R^{2n+1}$ be an open set. For any
 $m\in\mathbb R$, $S^m_{1,0}(D\times D\times\mathbb{R}_+)$ is the
 space of all $s(x,y,t)\in\cC^\infty(D\times D\times\mathbb{R}_+)$
 such that for all compact sets $K\Subset D\times D$, all
 $\alpha, \beta\in\mathbb N^{2n+1}_0$ and $\gamma\in\mathbb N_0$,
 there is a constant $C_{K,\alpha,\beta,\gamma}>0$ satisfying the
 estimate
 \begin{equation}
   \left|\partial^\alpha_x\partial^\beta_y\partial^\gamma_t a(x,y,t)\right|\leq 
   C_{K,\alpha,\beta,\gamma}(1+|t|)^{m-|\gamma|},\ \ 
   \mbox{for all $(x,y,t)\in K\times\mathbb R_+$, $|t|\geq1$}.
 \end{equation}
 We put
 \[
   S^{-\infty}(D\times D\times\mathbb{R}_+) :=\bigcap_{m\in\mathbb
     R}S^m_{1,0}(D\times D\times\mathbb{R}_+).\] Let
 $s_j\in S^{m_j}_{1,0}(D\times D\times\mathbb{R}_+)$, $j=0,1,2,\cdots$
 with $m_j\rightarrow-\infty$ as $j\rightarrow+\infty$.  By the
 argument of Borel construction, there always exists
 $s\in S^{m_0}_{1,0}(D\times D\times\mathbb{R}_+)$ unique modulo
 $S^{-\infty}$ such that
 \begin{equation}
   s-\sum^{\ell-1}_{j=0}s_j\in S^{m_\ell}_{1,0}(D\times D\times\mathbb{R}_+)
 \end{equation}
 for all $\ell=1,2,\cdots$. If $s$ and $s_j$ have the properties
 above, we write
 \begin{equation}
   s(x,y,t)\sim\sum^{+\infty}_{j=0}s_j(x,y,t)~\text{in}~ 
   S^{m_0}_{1,0}(D\times D\times\mathbb{R}_+).
 \end{equation}
 Also, we use the notation
 \begin{equation}
   s(x, y, t)\in S^{m}_{{\rm cl\,}}(D\times D\times\mathbb{R}_+)
 \end{equation}
 if $s(x, y, t)\in S^{m}_{1,0}(D\times D\times\mathbb{R}_+)$ and we
 can find $s_j(x, y)\in\cC^\infty(D\times D)$, $j\in\N_0$, such that
 \begin{equation}
   s(x, y, t)\sim\sum^{+\infty}_{j=0}s_j(x, y)t^{m-j}\text{ in }S^{m}_{1, 0}
   (D\times D\times\mathbb{R}_+).
 \end{equation}

 Let $M, M_1$ be smooth paracompact manifolds. We can define
 $S^m_{1,0}(M\times M_1\times\mathbb R_+)$,
 $S^m_{{\rm cl\,}}(M\times M_1\times\mathbb R_+)$ and asymptotic sums
 in the standard way. Given a smooth vector bundle \(E\to M\) over
 \(E\) we denote the space of smooth sections on \(M\) with values in
 \(E\) by \(\mathscr{C}^\infty(M,E)\). The space of those sections
 with compact support is denoted by \(\mathscr{C}_c^\infty(M,E)\). For
 any \(q\in\N_0\) we denote the space of smooth \(q\)-forms on \(M\)
 by \(\Omega^q(M)\) and denote space of those forms with compact
 support by \(\Omega_c^q(M)\). Recall the definition of the exterior
 differential \(d\colon \Omega^q(M)\to \Omega^{q+1}(M)\).

 We recall some notations of semi-classical analysis.  Let $U$ be an
 open set in $\mathbb{R}^{n_1}$ and let $V$ be an open set in
 $\mathbb{R}^{n_2}$.

\begin{definition}\label{D:knegl}
  We say a $k$-dependent continuous linear operator
  $F_k:\mathscr{C}^\infty_c(V)\to\mathscr{D}'(U)$ is $k$-negligible if
  for all $k$ large enough, $F_k$ is a smoothing operator, and for any
  compact set $K$ in $U\times V$, for all multi-index
  $\alpha\in\N_0^{n_1}$, $\beta\in\N_0^{n_2}$ and $N\in\mathbb{N}_0$,
  there exists a constant $C_{K,\alpha,\beta,N}>0$ such that
  \begin{equation*}
    \left|\partial^\alpha_x\partial^\beta_y F_k(x,y)\right|\leq 
    C_{K,\alpha,\beta,N} k^{-N},
  \end{equation*}
  for all $x,y\in K$ and $k$ large enough.  We write
  $F_k(x,y)=O(k^{-\infty})$ (on $U\times V$) or $F_k=O(k^{-\infty})$
  (on $U\times V$) if $F_k$ is $k$-negligible.
\end{definition}

Next, we recall semi-classical symbol spaces. Let $W$ be an open set
in $\mathbb{R}^{N}$, we define the space
\begin{equation}
  S(1;W):=\{a\in\mathscr{C}^\infty(W):\sup_{x\in W}|\partial^\alpha_x a(x)|<\infty~\text{for all}~\alpha\in\mathbb{N}_0^N\}.
\end{equation}
Consider the space $S^0_{\operatorname{loc}}(1;W)$ containing all
smooth functions $a(x,k)$ with real parameter $k$ such that for all
multi-index $\alpha\in\mathbb{N}_0^N$, any cut-off function
$\chi\in\mathscr{C}^\infty_c(W)$, we have
\begin{equation}
  \sup_{\substack{k\in\mathbb{R}\\k\geq 1}}\sup_{x\in W}|\partial^\alpha_x(\chi(x)a(x,k))|<\infty.
\end{equation}
For general $m\in\mathbb{R}$, we can also consider
\begin{equation}
  S^m_{\operatorname{loc}}(1;W):=\{a(x,k):k^{-m}a(x,k)\in\ S^0_{\operatorname{loc}}(1;W)\}.  
\end{equation}
In other words, $S^m_{\operatorname{loc}}(1;W)$ takes all the smooth
function $a(x,k)$ with parameter $k\in\mathbb{R}$ satisfying the
following estimate. For any compact set $K\Subset W$, any multi-index
$\alpha\in\mathbb N^n_0$, there is a constant $C_{K,\alpha}>0$
independent of $k$ such that
\begin{equation}
  |\partial^\alpha_x (a(x,k))|\leq C_{K,\alpha} k^m,~\text{for all}~x\in K,~k\geq1. 
\end{equation}
For a sequence of $a_j\in S^{m_j}_{\operatorname{loc}}(1;W)$ with
$m_j$ decreasing, $m_j\to-\infty$, and
$a\in S^{m_0}_{\operatorname{loc}}(1;W)$. We say
\begin{equation}
  a(x,k)\sim\sum_{j=0}^\infty a_j(x,k)~\text{in}~S^{m_0}_{\operatorname{loc}}(1;W)
\end{equation}
if for all $l\in\mathbb{N}$, we have
\begin{equation}
  a-\sum_{j=0}^{l-1} a_j\in S^{m_l}_{\operatorname{loc}}(1;W).
\end{equation}
In fact, for all sequences $a_j$ above, there always exists an element
$a$ as the asymptotic sum, which is unique up to the elements in
$S^{-\infty}_{\operatorname{loc}}(1;W):=\cap_{m}
S^m_{\operatorname{loc}}(1;W)$.

\begin{definition}
  Let $W$ be an open set in $\mathbb{R}^{N}$, \(M\subset W\) a subset,
  \(s\in \N_0\cup\{\infty\}\), \(m\in\Z\) and \(k_0\geq 1\). Given a
  family \(\{f_k\}_{k\geq k_0}\subset \mathscr{C}^s(W)\) of
  \(\mathscr{C}^s\)-smooth functions on \(W\) we say \(f_k=O(k^{m})\)
  in \(\mathscr{C}^s\)-topology on \(M\) (or in
  \(\mathscr{C}^s(M)\)-topology) if for any compact set \(K\subset M\)
  and any \(\alpha\in \N_0^{N}\), \(|\alpha|\leq s \), there exists a
  constant \(C_{\alpha,K}\) such that
  \begin{equation}
    |\partial^\alpha_x f_k(x)|\leq C_{K,\alpha} k^m,~\text{for all}~x\in K,~k\geq k_0. 
  \end{equation}
  If \(M=W\) we just say \(f_k=O(k^{m})\) in
  \(\mathscr{C}^s\)-topology.  We say \(f_k=O(k^{m})\) on \(M\) if
  \(f_k=O(k^{m})\) in \(\mathscr{C}^0\)-topology on
  \(M\). Furthermore, we say \(f_k=O(k^{-\infty})\) in
  \(\mathscr{C}^s\)-topology on \(M\) if \(f_k=O(k^{m})\) in
  \(\mathscr{C}^s\)-topology on \(M\) for all \(m\in \Z\).
\end{definition}
\begin{remark}
  Given \(m\in \Z\) and a family of smooth functions
  \(\{f_k\}_{k\geq 1}\subset \mathscr{C}^\infty(W)\) we have
  \(f_k=O(k^{m})\) in \(\mathscr{C}^\infty\)-topology if and only if
  \(((x,k)\mapsto f_k(x))\in S^m_{\operatorname{loc}}(1,W)\).
\end{remark}
The detail of discussion here about microlocal analysis or
semi-classical analysis can be found in \cites{DS99, GS94} for
example. All the notations introduced above can be generalized to the
case on paracompact manifolds and paracompact manifolds with boundary.

Given a complex manifold \(Y\) we denote its complex structure by
\(T^{1,0}Y\) and the space of holomorphic functions on \(Y\) by
\(\mathcal{O}(Y)\). Furthermore, if \(Y\) is a complex manifold with
boundary we denote by \(\mathcal{O}^\infty(Y)\) the smooth functions
on \(Y\) which are holomorphic on the set of interior points of
\(Y\). For any \(p,q\in\N_0\) we denote the space of smooth
\((p,q)\)-forms by
\(\Omega^{p,q}(Y):=\mathscr{C}^\infty(Y,\Lambda^{p,q}\C T^*Y)\). We
further denote the space of those forms with compact support by
\(\Omega^{p,q}_c(Y)\).  Here, \(\Lambda^{p,q}\C T^*Y\) is the bundle
of alternating multilinear forms on \(\C TY\) of type
\((p,q)\). Recall that on \(Y\) we have the decomposition for the
exterior differential \(d=\partial+\overline{\partial}\) with
\[\partial\colon \Omega^{p,q}(Y)\to
  \Omega^{p+1,q}(Y),\,\,\,\overline{\partial}\colon \Omega^{p,q}(Y)\to
  \Omega^{p,q+1}(Y).\]

\subsection{Pseudohermitian Manifolds}\label{sec:PseudohermitianCRManifolds}
Let $X$ be a ($\mathscr{C}^\infty$-)smooth connected orientable
manifold of real dimension $2n+1,~n\geq 1$. A subbundle \(T^{1,0}X\)
of the complexified tangent bundle \(\C TX\) is called (codimension
one) Cauchy--Riemann (CR for short) structure if
\begin{enumerate}
\item (codimension one) $\dim_{\mathbb{C}}T^{1,0}_{p}X=n$ for any
  $p\in X$.
\item (non-degenerate) $T^{1,0}_p X\cap T^{0,1}_p X=\{0\}$ for any
  $p\in X$, where $T^{0,1}_p X:=\overline{T^{1,0}_p X}$.
\item (integrable) For $V_1, V_2\in \mathscr{C}^{\infty}(X,T^{1,0}X)$,
  then $[V_1,V_2]\in\mathscr{C}^{\infty}(X,T^{1,0}X)$, where
  $[\cdot,\cdot]$ stands for the Lie bracket between vector fields.
\end{enumerate}
Given a CR structure \(T^{1,0}X\) on \(X\) we call the pair
$(X,T^{1,0}X)$ a (codimension one) Cauchy--Riemann (CR for short)
manifold with CR structure \(T^{1,0}X\).  Given a distribution
\(f\in \mathcal{D}'(X)\) we say that \(f\) is CR if
\(\overline{Z}f= 0\) for all smooth sections
\(Z\in \vbunsec{X,T^{1,0}X}\) in the sense of distributions. A smooth
CR function on \((X,T^{1,0}X)\) is a function
\(f\in \mathscr{C}^\infty(X)\) such that \(f\) defines a CR
distribution.
\begin{example}
  Let \((Y,T^{1,0}Y)\) be a complex manifold with \(\dim_\C Y=N\) and
  let \(X\subset Y\) be a smooth real submanifold of \(Y\) which is
  connected and orientable with \(\dim_\R X=2n+1\), \(n\geq 1\). Put
  \(T^{1,0}X:=\C TX\cap T^{1,0}Y\). If \(T^{1,0}X\) is a complex rank
  \(n\) subbundle of \(\C TX\) we have that \(T^{1,0}X\) is a CR
  structure on \(X\) and \(X\) together with this particular CR
  structure \(T^{1,0}X\) is called a CR submanifold of
  \(Y\). Furthermore, given a holomorphic function \(f\) in an open
  neighborhood around \(X\) in \(Y\) we have that \(f|_X\) is a smooth
  CR function on \(X\).
\end{example}
\begin{definition}
  Let \((Y,T^{1,0}Y)\) be a complex manifold with \(\dim_\C Y=N\) and
  let \((X,T^{1,0}X)\) be a CR manifold with \(\dim_\R X=2n+1\),
  \(n\geq 1\). A map \(F\colon X\to Y\) is called a CR embedding if
  \(F\) is a smooth embedding and
  \(F_*T^{1,0}X=\C T F(X)\cap T^{1,0}Y\).
\end{definition}
In particular, we have that given a CR embedding \(F\colon X\to Y\) it follows that \(F(X)\) is a CR submanifold of \(Y\) such that the induced
CR structure on \(F(X)\) coincides with the push forward of the
original CR structure \(T^{1,0}X\) on \(X\) under the map \(F\). The
following is quite useful.
\begin{lemma}
  Let \((X,T^{1,0}X)\) be a CR manifold with \(\dim_\R X=2n+1\),
  \(n\geq 1\) and \(N\in \N\). A smooth embedding
  \(F=(f_1,\ldots,f_N)\colon X\to \C^N\) is a CR embedding if and only
  if \(f_j\), \(1\leq j\leq N\) are smooth CR functions.
\end{lemma}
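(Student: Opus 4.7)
The plan is to prove the two implications separately by exploiting the coordinate functions of \(F\) together with the facts that the holomorphic coordinates \(z_j\) on \(\C^N\) are annihilated by \(T^{0,1}\C^N\) and the antiholomorphic coordinates \(\overline{z}_j\) by \(T^{1,0}\C^N\).

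For the direction \((\Rightarrow)\), suppose \(F\) is a CR embedding, so that \(F_*T^{1,0}X=\C TF(X)\cap T^{1,0}\C^N\); in particular \(F_*\overline{Z}\in T^{0,1}\C^N\) for every \(\overline{Z}\in \vbunsec{X,T^{0,1}X}\). Writing \(f_j=z_j\circ F\), I would compute
\[\overline{Z} f_j=(F_*\overline{Z})(z_j)=0,\]
since \(z_j\) is annihilated by \(T^{0,1}\C^N\). Hence each \(f_j\) is a smooth CR function.

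For the direction \((\Leftarrow)\), assume each \(f_j\) is CR. For \(Z\in \vbunsec{X,T^{1,0}X}\) with conjugate \(\overline{Z}\), I would compute
\[(F_*Z)(z_j)=Zf_j,\qquad (F_*Z)(\overline{z}_j)=Z\overline{f_j}=\overline{\overline{Z} f_j}=0,\]
so \(F_*Z\) has vanishing \(\partial/\partial\overline{z}_j\) components and therefore lies in \(T^{1,0}\C^N\). Since \(F_*Z\) also lies in \(\C TF(X)\), this yields the inclusion \(F_*T^{1,0}X\subseteq \C TF(X)\cap T^{1,0}\C^N\).

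To upgrade this inclusion to equality it suffices to match complex ranks. Because \(F\) is a smooth embedding, \(F_*\) is injective on \(\C TX\) and \(F_*T^{1,0}X\) has complex rank \(n\). Denoting by \(J\) the complex structure on \(\C^N\), a direct calculation shows that \(A\mapsto A-iJA\) defines a real-linear isomorphism between \(T_{F(p)}F(X)\cap J T_{F(p)}F(X)\) and \(\C T_{F(p)}F(X)\cap T^{1,0}_{F(p)}\C^N\), which is complex-linear when the source is equipped with the complex structure induced by \(J\). The source is \(J\)-invariant, hence even-dimensional over \(\R\), and contained in \(T_{F(p)}F(X)\) of real dimension \(2n+1\); its real dimension is therefore at most \(2n\) and its complex dimension at most \(n\). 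Thus \(\C TF(X)\cap T^{1,0}\C^N\) has complex rank at most \(n\), matching \(F_*T^{1,0}X\), and equality follows. The only non-trivial ingredient is this dimension count; everything else is an unwinding of definitions.
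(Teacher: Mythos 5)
Your proof is correct. Note that the paper itself gives no argument for this lemma; it simply cites Chen--Shaw, Lemma 12.1.2, so there is nothing in the text to compare against. Your two implications are the standard ones: the forward direction is immediate from $F_*T^{0,1}X=\overline{F_*T^{1,0}X}\subset T^{0,1}\C^N$ and the fact that the $z_j$ are annihilated by $T^{0,1}\C^N$, and the reverse direction correctly reduces to the rank count, where your identification of $\C T_{F(p)}F(X)\cap T^{1,0}_{F(p)}\C^N$ with the maximal $J$-invariant subspace $T_{F(p)}F(X)\cap JT_{F(p)}F(X)$ via $A\mapsto A-iJA$ is exactly the right device: that subspace is even-dimensional and sits inside a $(2n+1)$-dimensional real space, so its complex dimension is at most $n$, matching the rank $n$ of $F_*T^{1,0}X$ coming from injectivity of $dF$. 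This is the same argument one finds in the cited reference.
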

\begin{proof}
  See Chen--Shaw~\cite{CS01}*{Lemma 12.1.2}
\end{proof}

The Levi distribution $HX$ of the CR manifold $X$ is the real part of
$T^{1,0}X \oplus T^{0,1}X$, i.e. \(HX=\operatorname{Re}T^{1,0}X\),
i.e. the unique subbundle $HX$ of $TX$ such that
\begin{align}\label{eq:2.5}
  \C HX=T^{1,0}X \oplus T^{0,1}X.
\end{align} 
Let $J:HX\to HX$ be the (almost) complex structure given by
$J(u+\ol u)=iu-i\ol u$, for every $u\in T^{1,0}X$.  If we extend $J$
complex linearly to $\C HX$ we have
$T^{1,0}X \, = \, \left\{ V \in \C HX \,:\, \, JV \, = \, iV
\right\}$.  Thus the CR structure $T^{1,0}X$ is determined by the Levi
distribution and $J$. We note that \(J\) induces an orientation on
\(HX\). Since \(TX\) is orientable by assumption it follows that
\(TX/HX\) is orientable. Hence there exists a non-vanishing real
$1$-form \(\alpha\in \Omega^1(X)\) such that \(\ker \alpha=HX\).
\begin{definition}
  A (non-vanishing) real valued $1$-form \(\alpha\in \Omega^1(X)\)
  with \(\ker\alpha=HX=\operatorname{Re}T^{1,0}X\) is called a
  characteristic $1$-form for \((X,T^{1,0}X)\).
\end{definition}
\begin{definition}\label{Def:LeviFormCRGeneral}
  Let \(\alpha\in \Omega^1(X)\) be characteristic $1$-form for
  \((X,T^{1,0}X)\).  The Levi form
  $\mathcal{L}_x=\mathcal{L}^{\alpha}_x$ of $X$ at $x\in X$ associated
  to $\alpha$ is the symmetric bilinear map
  \begin{equation}\label{eq:2.12}
    \mathcal{L}_x:H_xX\times H_x X\to\R,\quad \mathcal{L}_x(u,v)
    =\frac{d\alpha}{2}(u,Jv),\quad \text{ for } u, v\in H_xX.
  \end{equation}  
  It induces a Hermitian form
  \begin{equation}\label{eq:2.12b}
    \mathcal{L}_x:T^{1,0}_xX\times T^{1,0}_xX\to\C,
    \quad \mathcal{L}_x(U,V)=\frac{d\alpha}{2i}(U, \ol V) ,
    \quad \text{ for } U, V\in T^{1,0}_xX.
  \end{equation}  
\end{definition}
 
 \begin{definition}
   A CR structure \(T^{1,0}X\) is said to be strictly pseudoconvex if
   there exists a characteristic $1$-form $\alpha$ for
   \((X,T^{1,0}X)\) such that for every $x\in X$ the Levi form
   $\mathcal{L}^{\alpha}_x$ is positive definite. We then call
   $(X,T^{1,0}X)$ a strictly pseudoconvex CR manifold.
 \end{definition}
 \begin{lemma}
   Let $(X,T^{1,0}X)$ be strictly pseudoconvex. Then
   \(HX=\operatorname{Re}T^{1,0}X\) is a contact structure. Any
   contact form for \(HX\) is a characteristic $1$-form for
   \((X,T^{1,0}X)\) and vice versa. Furthermore, if \(\alpha\) is a
   characteristic $1$-form for \((X,T^{1,0}X)\) we have either
   $\mathcal{L}^{\alpha}_x$ is positive definite at any point
   \(x\in X\) or $\mathcal{L}^{\alpha}_x$ is negative definite at any
   point \(x\in X\).
 \end{lemma}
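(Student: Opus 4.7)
The plan is to leverage the hypothesis of strict pseudoconvexity — i.e.\ the existence of at least one characteristic $1$-form $\alpha_0$ with $\mathcal{L}^{\alpha_0}$ positive definite — to verify the contact condition, and then propagate the conclusions to every other characteristic $1$-form by a conformal change argument using connectedness of $X$.

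First I would show that $\alpha_0$ is a contact form, from which it follows that $HX$ is a contact distribution. For any nonzero $u\in H_xX$, the positive definiteness of $\mathcal{L}^{\alpha_0}_x$ gives
\begin{equation*}
 d\alpha_0(u,Ju) \,=\, 2\mathcal{L}^{\alpha_0}_x(u,u) \,>\, 0,
\end{equation*}
so $d\alpha_0|_{H_xX}$ is a non-degenerate skew form (since the kernel of a skew form is $J$-invariant, and any element $u$ of the kernel would satisfy $d\alpha_0(u,Ju)=0$). Combined with $\ker\alpha_0 = HX$, a standard linear-algebra computation then yields $\alpha_0\wedge(d\alpha_0)^n\ne 0$ at every point, which is precisely the contact condition. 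In particular $HX$ is a contact structure, and any contact form $\alpha$ for $HX$ satisfies $\ker\alpha=HX=\operatorname{Re}T^{1,0}X$ so is, by definition, a characteristic $1$-form.

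Next I would treat an arbitrary characteristic $1$-form $\alpha$. Since $HX$ has corank one and both $\alpha$ and $\alpha_0$ annihilate $HX$ without vanishing, there exists a unique nowhere-vanishing function $f\in\mathscr{C}^\infty(X,\R)$ with $\alpha=f\alpha_0$. Because $X$ is connected, $f$ has constant sign. A direct computation gives
\begin{equation*}
 d\alpha \,=\, df\wedge\alpha_0 + f\,d\alpha_0,
\end{equation*}
and since the first summand vanishes on $HX$ (as $\alpha_0|_{HX}\equiv 0$), one obtains $d\alpha|_{HX}=f\,d\alpha_0|_{HX}$. Substituting into the definition of the Levi form yields
\begin{equation*}
 \mathcal{L}^{\alpha}_x(u,v) \,=\, \tfrac{1}{2}d\alpha(u,Jv) \,=\, f(x)\,\mathcal{L}^{\alpha_0}_x(u,v),
\end{equation*}
for $u,v\in H_xX$. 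Hence $\mathcal{L}^\alpha$ is uniformly positive definite (if $f>0$) or uniformly negative definite (if $f<0$). In either case it is non-degenerate, so $\alpha\wedge (d\alpha)^n$ is a nonzero multiple of $\alpha_0\wedge(d\alpha_0)^n$ pointwise, showing that $\alpha$ is itself a contact form for $HX$.

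Assembling these two steps proves simultaneously that characteristic $1$-forms and contact forms for $HX$ coincide, and that the sign of the Levi form of any characteristic $1$-form is globally constant. The argument is almost entirely linear algebra together with the observation that the ratio of two characteristic $1$-forms is globally defined, smooth, and of constant sign; there is no substantive obstacle, the only point requiring mild care being the derivation of non-degeneracy of $d\alpha_0|_{HX}$ from the $J$-compatible positivity of $\mathcal{L}^{\alpha_0}$.
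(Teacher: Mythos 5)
Your proof is correct and follows essentially the same route as the paper: produce one characteristic $1$-form with positive definite Levi form, deduce the contact condition from non-degeneracy of $d\alpha_0$ on $HX$, and then compare any other characteristic $1$-form (or contact form) via the nowhere-vanishing conformal factor $f$ together with connectedness of $X$. You merely spell out details the paper leaves implicit (e.g.\ why $\mathcal{L}^{\alpha_0}>0$ forces $\alpha_0\wedge(d\alpha_0)^n\neq 0$, and the identity $\mathcal{L}^{\alpha}=f\,\mathcal{L}^{\alpha_0}$ on $HX$).
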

 \begin{proof}
   From the definition we find a characteristic $1$-form
   $\tilde{\alpha}$ such that for every $x\in X$ the Levi form
   $\mathcal{L}^{\tilde{\alpha}}_x$ is positive definite. As a
   consequence we have
   $\tilde{\alpha}\wedge(d\tilde{\alpha})^n\neq 0$, where
   $\dim_\R X=2n+1$. It follows that \(HX\) is a contact structure and
   \(\tilde{\alpha}\) is a contact form. Given another characteristic
   $1$-form \(\alpha\) we find \(\alpha=f\tilde{\alpha}\) for some
   non-vanishing real valued smooth function
   \(f:X\to \R\setminus\{0\}\). The same holds true if \(\alpha\) is
   any contact form for \(HX\). Recalling that \(X\) is connected,
   this verifies the second part of the statement.
 \end{proof}

\begin{definition}
  Let $(X,T^{1,0}X)$ be a strictly pseudoconvex CR manifold and
  \(\alpha\in \Omega^1(X)\) a contact form for the contact structure
  \(\operatorname{Re}T^{1,0}X\) such that the associated Levi form is
  positive definite everywhere. We call \((T^{1,0}X,\alpha)\) a
  pseudohermitian structure on \(X\) and the triple
  \((X,T^{1,0}X,\alpha)\) is called a pseudohermitian manifold.
\end{definition}

\begin{definition}\label{D:Reebfield}
  Let \((X,T^{1,0}X,\alpha)\) be a pseudohermitian manifold. The Reeb
  vector field $\mathcal{T}=\mathcal{T}^{(\alpha)}$ associated with
  the contact form $\alpha$ is the vector field uniquely defined by
  the equations ${\mathcal{T}}\lrcorner\alpha=1$,
  ${\mathcal{T}}\lrcorner d\alpha=0$ (cf.\
  Geiges~\cite{Gei08}*{Lemma/Definition 1.1.9}). Here, \(\lrcorner\)
  denotes the contraction between vector fields and forms.
\end{definition}
We refer to Example~\ref{ex:PseudoHermSpheres} for examples of
pseudohermitian manifolds and Reeb vector fields.
\begin{remark}
  Let \((X,T^{1,0}X,\alpha)\) be a pseudohermitian manifold and let
  \(\mathcal{T}\) be the Reeb vector field associated to
  \(\alpha\). We find that \(TX=HX\oplus\R\mathcal{T}\). Denote by
  \(J\colon HX\to HX\) the (almost) complex structure associated to
  \(T^{1,0}X\) and by \(\operatorname{pr}\colon TX\to HX\) the
  projection with respect to the decomposition
  \(TX=HX\oplus\R\mathcal{T}\). Then the pseudohermitian structure
  \((T^{1,0}X,\alpha)\) induces a Riemannian metric
  \(g^{T^{1,0}X,\alpha}\) on \(X\) defined by
  \[g^{T^{1,0}X,\alpha}(u,v)=\frac{d\alpha}{2}\left(\operatorname{pr}u,J\operatorname{pr}v\right)+\alpha(u)\alpha(v),
    \,\,u,v\in T_pX,\,p\in X.\]
\end{remark}
\subsection{Strictly Pseudoconvex Domains in Complex
  Manifolds}\label{ssec:domains}
An important class of strictly pseudoconvex CR manifolds are
boundaries of strictly pseudoconvex domains.
\begin{definition}
  Let \((Y,T^{1,0}Y)\) be a complex manifold and let \(D\subset Y\) be
  a domain with boundary of class \(\mathscr{C}^2\). A boundary point
  \(p\in bD\) is called a strictly pseudoconvex boundary point if
  there is a neighborhood \(U\subset Y\) around \(p\) and a function
  \(\rho\colon U\to \R\) with \(D\cap U=\{z\in U\colon \rho(z)<0\}\),
  \(d\rho\neq 0\) at \(p\) and
  \(\partial\overline{\partial}\rho (Z,\overline{Z})>0\) for all
  \(Z\in \C T_pbD\cap T^{1,0}_pY\setminus\{0\}\). The domain \(D\) is
  called strictly pseudoconvex if any boundary point of \(D\) is
  strictly pseudoconvex
\end{definition}
\begin{example}\label{ex:BoundaryOfstrictlyPseudoconvexDomain}
  Let \((Y,T^{1,0}Y)\) be a complex manifold and let \(D\subset Y\) be
  a smoothly bounded strictly pseudoconvex domain with connected
  boundary. Let \(\rho \colon Y\to \R\) be a defining function for
  \(D\), that is, \(D=\{z\in Y\colon \rho(z)<0\}\) and \(d\rho\neq 0\)
  at any point \(p\in bD\). Put \(X=bD\),
  \(T^{1,0}X:=\C TbD\cap T^{1,0}Y\) and
  \(\alpha=-i\iota^*(\partial-\overline{\partial})\rho\) where
  \(\iota\colon X\to Y\) denotes the inclusion map. We have that
  \((X,T^{1,0}X)\) is a CR submanifold of \(Y\) and
  \((X,T^{1,0}X,\alpha)\) is a pseudohermitian manifold.
\end{example}

The following lemma shows that compact strictly pseudoconvex CR
manifolds which are embeddable into the complex Euclidean space can
be realized as in
Example~\ref{ex:BoundaryOfstrictlyPseudoconvexDomain} and can be seen as a consequence of 
the works of Boutet de Monvel~\cite{Bou75}, Boutet de
Monvel--Sj\"ostrand \cite{BS75}, Harvey--Lawson~\cite{HL75},
Burns~\cite{Bu:77}, Kohn~\cite{Koh85,Koh86}, Heunemann~\cite{Heu86},
Ohsawa~\cite{O84}, see also Grauert~\cite{Gr58,Gr94},
Catlin~\cite{MR1128581}, and Hill--Nacinovich~\cite{MR1289628}
(cf.~\cite{MM07,HM17}). Below, we will give a brief explanation of its proof.
\begin{lemma}\label{lem:CharacterizationEmbeddable}
  Let \((X,T^{1,0}X)\) be a compact strictly pseudoconvex CR
  manifold. The following are equivalent.
  \begin{itemize}
  \item [(1)]\(X\) is CR embeddable into the complex Euclidean space.
  \item [(2)]There exist a complex manifold \(Y\) and a smoothly
    bounded strictly pseudoconvex domain \(D\subset\subset Y\) such
    that \(X\) is CR isomorphic to the boundary \(bD\) of \(D\).
  \end{itemize}
\end{lemma}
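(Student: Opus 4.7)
I would prove the two implications separately, with $(2)\Rightarrow(1)$ being the easy direction.

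For $(2)\Rightarrow(1)$: Suppose $X$ is CR isomorphic to $bD$ for some smoothly bounded strictly pseudoconvex domain $D\subset\subset Y$. By Grauert's theorem on Stein neighborhood bases of strictly pseudoconvex compacta, $\overline{D}$ admits arbitrarily small Stein open neighborhoods $U\subset Y$. Since $U$ is Stein of complex dimension $n+1$, the Remmert-Bishop-Narasimhan embedding theorem yields a proper holomorphic embedding $\Phi\colon U\to \C^{N}$ for some $N$. Restricting $\Phi$ to $bD\cong X$ gives the required smooth CR embedding into $\C^{N}$.

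For $(1)\Rightarrow(2)$: Given a CR embedding $F\colon X\to \C^{N}$, the image $F(X)$ is a compact, maximally complex, strictly pseudoconvex CR submanifold of $\C^{N}$ of real dimension $2n+1$. For $n\geq 2$, Harvey-Lawson's theorem on boundaries of holomorphic chains produces a unique complex analytic subvariety $V\subset \C^{N}$ of complex dimension $n+1$ with $\overline{V}\setminus V=F(X)$. The strict pseudoconvexity of $F(X)$ combined with Kohn's regularity/extension results (or the theorem of Harvey-Lawson together with standard reflection arguments) implies that $V$ is smooth and meets $F(X)$ transversally in a one-sided collar neighborhood of $F(X)$. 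Resolving any possible singularities of $V$ in its interior (they lie away from $F(X)$ and hence do not affect the boundary CR structure) yields a smooth complex manifold $\widetilde V$ with boundary CR isomorphic to $X$. Taking $Y$ to be a suitable open complex manifold containing $\widetilde V$ (e.g.\ a tubular thickening of the embedded $\widetilde V$) and setting $D=\operatorname{int}(\widetilde V)$ produces the required strictly pseudoconvex filling.

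For the critical dimension $n=1$, the Harvey-Lawson variety does not come for free (a moment condition is needed), so the filling must be produced differently. Here one can argue via the Szeg\H{o} kernel methods of Boutet de Monvel-Sj\"ostrand combined with the work of Kohn and Heunemann/Ohsawa: the embeddability of $X$ forces $\bar\pr_b$ on $X$ to have closed range, which via Kohn's theorem yields a global one-sided holomorphic extension of the CR algebra of $X$ and produces a (smooth) strictly pseudoconvex complex manifold filling $X$. The main obstacle in both cases is constructing a \emph{smooth} filling whose boundary CR structure coincides with $T^{1,0}X$; handling interior singularities (for $n\geq 2$) and obtaining a smooth filling in the critical case $n=1$ from the mere existence of CR functions separating points are the technical heart of the argument. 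All of these ingredients are classical, and the lemma is essentially an assemblage of the cited theorems.
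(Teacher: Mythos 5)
Your proof of $(2)\Rightarrow(1)$ contains a genuine error: it is \emph{not} true that $\overline{D}$ admits Stein neighborhoods for a strictly pseudoconvex domain $D\subset\subset Y$ in an arbitrary complex manifold $Y$. For example, if $Y$ is the blow-up of $\C^2$ at the origin and $D$ is the preimage of the unit ball, then $\overline{D}$ contains the exceptional curve $\mathbb{P}^1$, so no neighborhood of $\overline{D}$ is Stein and Remmert--Bishop--Narasimhan cannot be applied. Grauert's theorem only gives that $D$ is holomorphically convex; to salvage your route one would have to pass to the Remmert reduction (blowing down the maximal compact analytic subset, which is a biholomorphism near $bD$) and embed the resulting Stein \emph{space}. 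The paper avoids this entirely: it invokes Kohn's closed-range theorem for $\overline{\partial}_b$ on $bD$ and then Boutet de Monvel's embedding theorem, which produces the CR embedding directly from the closed-range property.

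For $(1)\Rightarrow(2)$ your outline follows the same route as the paper (Harvey--Lawson filling, resolution of the interior singularities, then thickening), but two points need repair. First, your caveat about $n=1$ is misplaced: the moment condition in Harvey--Lawson is needed only for \emph{real one-dimensional} boundaries; for a compact maximally complex CR submanifold of real dimension $2n+1\geq 3$ (so all $n\geq 1$ here) the filling by a holomorphic $(n+1)$-chain with isolated singularities exists unconditionally, and the paper uses this uniformly. Second, "taking $Y$ to be a tubular thickening of $\widetilde{V}$" is exactly the nontrivial step you cannot get for free: a smooth collar does not carry a complex structure extending that of $\widetilde{V}$, and one must invoke the extension theorems of Heunemann and Ohsawa (alternatively Catlin or Hill--Nacinovich) to realize the compact complex manifold with strictly pseudoconvex boundary as the closure of a domain $D\subset\subset Y$ in an honest open complex manifold $Y$. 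With these two corrections your argument for this direction matches the paper's.
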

\begin{proof}
  \underline{\((2)\Rightarrow (1)\):} It follows from Kohn~\cite{Koh85,Koh86} that the Kohn Laplacian  (see~\eqref{eq:DefKohnLaplacian}) on \(bD\) has closed range. Then from Boutet de Monvel~\cite{Bou75} (see also Boutet de Monvel--Sj\"ostrand~\cite{BS75}) we obtain (1).\\
  \underline{\((1)\Rightarrow (2)\):} It follows from 
  Harvey--Lawson~\cite{HL75}, that \(X\) bounds an analytic variety
  with only isolated singularities (see also~\cite{LY98}, \cite{HL00}). More precisely, there exists a
  compact smoothly bounded analytic variety \(V\) with boundary \(bV\)
   where the singular set
  \(\operatorname{Sing}(V)\) consists of finitely many points lying in
  \(\operatorname{Int}(V)\) and \(X\) is CR isomorphic to
  \(bV\). Here, \(\operatorname{Int}(V)\) denotes the set of interior 
  points of \(V=\operatorname{Int}(V)\cup bV\). We can blow up each singularity (see
  e.g.~\cite{MM07}*{Chapter~2.1}) and obtain a compact complex
  manifold \(M=\operatorname{Int}(M)\cup bM\) where the boundary
  \(bM\) is still CR isomorphic to \(X\). Hence, \(M\) is a compact
  complex manifold with strictly pseudoconvex boundary. Then it
  follows from Heunemann~\cite{Heu86} and Ohsawa~\cite{O84} that there
  exist a complex manifold \(Y\), a smoothly bounded strictly
  pseudoconvex domain \(D\subset \subset Y\) and a diffeomorphism
  \(F\colon M\to \overline{D}\) which is a biholomorphism between
  \(\operatorname{Int}(M)\) and \(D\) with \(F(bM)=bD \).
\end{proof}

The following lemma summarize some important results for strictly
pseudoconvex domains in Stein manifolds.
\begin{lemma}\label{lem:GlobalDefiningFunction}
  Let \((Y,T^{1,0}Y)\) be a Stein manifold and let
  \(D\subset\subset Y\) be a relatively compact domain with boundary
  of class \(\mathscr{C}^s\), \(s\in \N\cup\{\infty\}\), \(s\geq
  2\). The following are equivalent:
  \begin{itemize}
  \item[(1)] \(D\) is strictly pseudoconvex.
  \item[(2)] There exists a \(\mathscr{C}^s\)-smooth defining function
    for \(D\) which is strictly plurisubharmonic in a neighborhood of
    \(\overline{D}\).
  \end{itemize}
\end{lemma}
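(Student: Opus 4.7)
The direction $(2)\Rightarrow(1)$ is immediate from the definition. If $\rho$ is a $\mathscr{C}^s$ defining function for $D$ that is strictly plurisubharmonic in a neighborhood of $\overline{D}$, then at each boundary point $p\in bD$ the complex Hessian $\partial\bar\partial\rho|_p$ is positive definite on all of $T^{1,0}_pY$, hence \emph{a fortiori} on the complex tangent space $\C T_pbD\cap T^{1,0}_pY$. Together with $d\rho|_p\neq 0$ and $\{\rho<0\}=D$ this is exactly strict pseudoconvexity of $D$ at $p$.

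For the direction $(1)\Rightarrow(2)$ the plan is a two-step construction: first make $\rho$ strictly plurisubharmonic near $bD$, then globalize using the Stein hypothesis. Because $D$ has $\mathscr{C}^s$ boundary we first patch local $\mathscr{C}^s$ defining functions by a partition of unity to obtain a single $\mathscr{C}^s$ defining function $\rho_0$ for $D$ on a neighborhood $W$ of $\overline{D}$ with $d\rho_0\neq 0$ on $bD$. Strict pseudoconvexity gives $\partial\bar\partial\rho_0(Z,\bar Z)>0$ for every nonzero $Z\in\C T_pbD\cap T^{1,0}_pY$ at each $p\in bD$. I would then apply the classical exponential trick: set $\rho_A:=e^{A\rho_0}-1$, so that
\[
\partial\bar\partial\rho_A \;=\; A\,e^{A\rho_0}\bigl(\partial\bar\partial\rho_0 + A\,\partial\rho_0\wedge\bar\partial\rho_0\bigr).
\]
Decomposing an arbitrary $(1,0)$-vector into a complex tangential part (where the first summand is positive by continuity from $bD$) and a complex normal part (where $|\partial\rho_0(Z)|^2$ is bounded below), one checks that for $A$ large enough, $\rho_A$ is a $\mathscr{C}^s$ defining function for $D$ that is strictly plurisubharmonic on some neighborhood $V\subset W$ of $bD$.

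To upgrade strict plurisubharmonicity from $V$ to a full neighborhood of $\overline{D}$, I would use that $Y$ is Stein: there exists a smooth strictly plurisubharmonic exhaustion function $\varphi\colon Y\to\R$. After multiplying by a small positive constant and subtracting a large constant we may assume that $\varepsilon\varphi-C$ is strictly plurisubharmonic, strictly negative on $\overline{D}$, and strictly smaller than $\rho_A$ in an annular neighborhood of $bD$ inside $V$. Then one glues $\rho_A$ (which is used near $bD$) with $\varepsilon\varphi-C$ (which is used in the interior of $D$) using Richberg's regularized maximum $\widetilde{\max}$; this produces a function $\rho:=\widetilde{\max}(\rho_A,\varepsilon\varphi-C)$ that is strictly plurisubharmonic on a neighborhood of $\overline{D}$, agrees with $\rho_A$ in a smaller neighborhood of $bD$ (so $\{\rho<0\}=D$ and $d\rho\neq0$ on $bD$ are preserved), and coincides with $\varepsilon\varphi-C$ in the interior.

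The main obstacle is the gluing step: Richberg's construction ordinarily produces a $\mathscr{C}^\infty$ strictly plurisubharmonic function away from the transition region, but here we also need to retain exactly the $\mathscr{C}^s$-regularity of $\rho_0$ near $bD$ rather than losing regularity to the boundary. This is handled by choosing the Richberg profile so that the regularized max literally equals $\rho_A$ on a neighborhood of $bD$ (where $\rho_A\geq \varepsilon\varphi-C+1$, say) and equals $\varepsilon\varphi-C$ away from $V$; the overlap region is pushed into the interior of $D$, where both functions are smooth, so no regularity at $bD$ is lost.
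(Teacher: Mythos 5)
The paper does not actually prove this lemma; it simply cites Forn\ae ss--Stens\o nes and Harz--Shcherbina--Tomassini. Your proposal reconstructs the standard argument from those sources (patching local defining functions, the exponential trick near the boundary, then gluing with a rescaled Stein exhaustion via a regularized maximum), and the overall strategy is sound: $(2)\Rightarrow(1)$ is indeed immediate, the Levi form of a patched defining function restricted to $\C T_pbD\cap T^{1,0}_pY$ is the positive combination $\sum\chi_i(p)\,\partial\ddbar\rho_i(Z,\overline Z)$ because the terms involving $\partial\rho_i(Z)$ and $\rho_i$ vanish at boundary points, and the exponential trick then yields strict plurisubharmonicity on a full collar of $bD$ since $s\geq 2$ makes the complex Hessian continuous.

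The one place you should be more careful is the gluing. With your normalization $\rho_A=e^{A\rho_0}-1$ one has $\rho_A>-1$ everywhere, while you also want $\varepsilon\varphi-C<\rho_A$ near $bD$ where $\rho_A$ is close to $0$; so $\varepsilon\varphi-C$ must take values in the narrow band between $\inf_{\text{inner edge of }V}\rho_A=e^{-Ac_0}-1$ (which it must exceed there, so that the regularized maximum switches over to $\varepsilon\varphi-C$ strictly inside $V$ and the glued function is defined past the inner edge) and $-\delta_1$ (which it must stay below near $bD$). An arbitrary choice of ``small $\varepsilon$, large $C$'' does not achieve both; you need to first fix the collar and $A$, and then choose $\varepsilon$ so small that the oscillation of $\varepsilon\varphi$ over a compact neighborhood of $\overline D$ is less than $e^{-Ac_0}-1+\delta_1$ in absolute value, before subtracting the constant $C$. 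With that adjustment the transition region lies in the interior of $V\cap D$, the regularized maximum of two $\mathscr{C}^s$ strictly plurisubharmonic functions is again $\mathscr{C}^s$ strictly plurisubharmonic, and the function equals $\rho_A$ near $bD$, so it remains a $\mathscr{C}^s$ defining function. (Also note that $\rho_A$ is only $\mathscr{C}^s$, not smooth, in the transition region, but this is harmless since only $\mathscr{C}^s$ is claimed.) With these constants pinned down your argument is complete and gives a self-contained proof where the paper offers only a reference.
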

\begin{proof}
  See for example Fornæss--Stensønes~\cite{FoSt87} and
  Harz--Shcherbina--Tomassini~\cite{HST21}.
\end{proof}
We have the following.
\begin{lemma}\label{lem:ApproximationBySmoothDomains}
  Let \((Y,T^{1,0}Y)\) be a Stein manifold and let
  \(D\subset\subset Y\) be a relatively compact strictly pseudoconvex
  domain with boundary of class \(\mathscr{C}^s\), \(s\geq 2\). In any
  neighborhood of \(D\) in the \(\mathscr{C}^s\)-topology for domains
  there exists a smoothly bounded strictly pseudoconvex domain
  \(M\subset\subset D\).
\end{lemma}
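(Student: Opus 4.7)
The plan is to start from Lemma~\ref{lem:GlobalDefiningFunction}, which provides a $\mathscr{C}^s$-smooth defining function $\rho$ for $D$, defined on an open neighborhood $U$ of $\overline{D}$ in $Y$, which is strictly plurisubharmonic on $U$. Since strict plurisubharmonicity is an open condition in $\mathscr{C}^2$-topology and $s\geq 2$, our smoothing target is to approximate $\rho$ by a genuinely smooth function $\widetilde{\rho}\in\mathscr{C}^\infty(U)$ in $\mathscr{C}^s$-topology while preserving strict plurisubharmonicity.

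First I would produce the smooth approximation via a partition-of-unity argument. Cover a slightly smaller neighborhood $U'\Subset U$ of $\overline{D}$ by finitely many coordinate charts $(V_j,z_j)$ biholomorphic to open sets in $\C^N$ with $N=\dim_\C Y$, choose a subordinate smooth partition of unity $\{\chi_j\}$, and set $\widetilde{\rho}=\sum_j \chi_j(\rho_j *_{z_j}\phi_{\varepsilon_j})$, where $\rho_j=\rho$ read off in the chart $z_j$ and $\phi_{\varepsilon_j}$ is a standard mollifier of scale $\varepsilon_j$. Standard mollification estimates give $\widetilde{\rho}\to\rho$ in $\mathscr{C}^s(U',\R)$ as $\max_j\varepsilon_j\to 0$. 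Because $\partial\overline\partial\rho$ is pointwise positive definite on $\overline{U'}$ with uniform positive lower bound, for $\max_j\varepsilon_j$ sufficiently small we have that $\widetilde{\rho}$ remains strictly plurisubharmonic on $U'$, and $\widetilde{\rho}\in\mathscr{C}^\infty(U')$.

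Next I would extract a smoothly bounded domain from a level set of $\widetilde{\rho}$. Since $d\rho\neq 0$ along $bD=\{\rho=0\}$ and $\widetilde{\rho}\to\rho$ in $\mathscr{C}^s$, for the approximation sufficiently good we have $d\widetilde{\rho}\neq 0$ on a tubular neighborhood of $bD$. By Sard's theorem applied to $\widetilde{\rho}$ on $U'$ we may pick an arbitrarily small regular value $-\varepsilon<0$ of $\widetilde{\rho}$, and define
\[
  M=M_\varepsilon:=\{z\in U'\colon \widetilde{\rho}(z)<-\varepsilon\}.
\]
Then $bM=\{\widetilde{\rho}=-\varepsilon\}$ is a smooth real hypersurface. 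Strict pseudoconvexity of $M$ is immediate from strict plurisubharmonicity of the smooth defining function $\widetilde{\rho}+\varepsilon$ on a neighborhood of $\overline{M}$. To verify $M\subset\subset D$, note that on $bM$ we have $\rho=\widetilde{\rho}+(\rho-\widetilde{\rho})=-\varepsilon+O(\|\rho-\widetilde{\rho}\|_{\mathscr{C}^0})<0$ provided $\|\rho-\widetilde{\rho}\|_{\mathscr{C}^0}<\varepsilon$, which also guarantees $M\subset D$ by continuity.

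Finally, to prove closeness of $M$ to $D$ in the $\mathscr{C}^s$-topology on domains, one argues that both $bD$ and $bM$ are transversal zero sets of functions which are $\mathscr{C}^s$-close (namely $\rho$ and $\widetilde{\rho}+\varepsilon$), so the implicit function theorem provides a $\mathscr{C}^s$-small normal graph parametrization of $bM$ over $bD$, whose $\mathscr{C}^s$-size is controlled by $\|\widetilde{\rho}-\rho\|_{\mathscr{C}^s}+\varepsilon$; choosing both small places $M$ in any prescribed $\mathscr{C}^s$-neighborhood of $D$. The only delicate point is to coordinate the two smallness parameters (the mollification scale and the level $\varepsilon$) so that simultaneously $\widetilde{\rho}$ is strictly plurisubharmonic, $-\varepsilon$ is a regular value of $\widetilde{\rho}$, and the resulting boundary lies in the desired $\mathscr{C}^s$-neighborhood; this is the main technical point but is handled routinely by choosing $\max_j\varepsilon_j$ first (uniformly on $\overline{U'}$) and $\varepsilon>0$ afterwards via Sard.
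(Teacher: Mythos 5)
Your proposal is correct and follows essentially the same route as the paper: both start from the strictly plurisubharmonic $\mathscr{C}^s$ defining function supplied by Lemma~\ref{lem:GlobalDefiningFunction}, smooth it while preserving strict plurisubharmonicity, and take a slightly shifted sublevel set to obtain $M\subset\subset D$. The only cosmetic difference is that you select the level via Sard's theorem and a $\mathscr{C}^0$ comparison, whereas the paper shifts the smooth approximant by a constant $-c_k+1/k$ so that $\rho_k\geq\rho+1/k$ pointwise and confines the new boundary to a fixed neighborhood of $bD$ where $d\rho_k\neq 0$; both devices accomplish the same thing.
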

\begin{proof}
  Choose any Hermitian metric on \(Y\). From
  Lemma~\ref{lem:GlobalDefiningFunction} we find that there exist an
  open neighborhood \(U'\subset\subset Y\) with
  \(\overline{D}\subset U'\) and a function
  \(\rho\in \mathscr{C}^s(U',\R)\) with
  \(D=\{z\in U'\colon \rho(z)<0\}\), \((d\rho)_p\neq 0\) for all \(p\)
  in an open neighborhood of \(bD\) and
  \(\partial\overline{\partial}\rho(Z,\overline{Z})\geq 2\) for all
  \(Z\in T_p^{1,0}Y\), \(|Z|=1\), \(p\in U'\). Choose an open
  neighborhood \(U\subset\subset U'\) with
  \(\overline{D}\subset U\) and let
  \(V\subset\subset V'\subset\subset U\) be open neighborhoods of
  \(bD\) with \((d\rho)_p\neq 0\) for all \(p\in V'\). Furthermore,
  there is \(A>0\) such that \(|\rho|\geq2A\) on \(U\setminus V\). We
  find a sequence
  \(\{\tilde{\rho}_k\}_{k\in \N}\subset \mathscr{C}^\infty(U',\R)\)
  such that \(\tilde{\rho}_k\to \rho\) in \(\mathscr{C}^s\)-norm on
  \(\overline{U}\) as \(k\to\infty\). For \(k\in \N\) put
  \(c_k=\min_{p\in U}(\tilde{\rho}_k(p)-\rho(p))\) and define
  \(\rho_k(p):=\tilde{\rho}_k(p)-c_k+1/k\). Since
  \(\lim_{k\to\infty}c_k+1/k=0\) we find \(\rho_k\to \rho\) in
  \(\mathscr{C}^s\)-norm on \(U\) as \(k\to\infty\). Then there exists
  \(k_0\in \N\) such that for all \(k\geq k_0\) we have
  \(\partial\overline{\partial}\rho_k(Z,\overline{Z})\geq 1\) for all
  \(Z\in T_p^{1,0}Y\), \(|Z|=1\), \(p\in \overline{U}\),
  \((d\rho_k)_p\neq 0\) for all \(p\in V\) and \(|\rho_k(p)|\geq A\)
  for all \(p\in U\setminus V\). Since \(\rho+1/k\leq\rho_k\) on \(U\)
  we find \(D_k:=\{z\in U\colon \rho_k(z)<0\}\subset\subset D\). For
  any \(k\geq k_0\) it follows from the construction that
  \(bD_k\subset V\) and hence that \(\rho_k\colon U\to\R\) is a
  strictly plurisubharmonic defining function for \(D_k\).  Then
  \(\rho_k\to \rho\) in \(\mathscr{C}^s\)-norm as \(k\to\infty\)
  yields the statement.
\end{proof}
\subsection{Rigid CR Manifolds}\label{sec:Setup}
Let \((X,T^{1,0}X)\) be a CR manifold with \(\dim_\R X=2n+1\),
\(n\geq 1\) and let \(\mathcal{T}\in \vbunsec{X,TX}\) be a real vector
field which is transversal and CR, meaning that \(\mathcal{T}\)
satisfies the following two properties:
\begin{itemize}
\item
  \(\displaystyle \C TX=T^{1,0}X\oplus \overline{T^{1,0}X}\oplus \C
  \mathcal{T}
  \,\,\,\,\,\,\,\,\,\,\,\,\,\,\,\,\,\,\,\,\,\,\,\,\,\,\,\,\text{
    (transversality)}\),
\item
  \(\displaystyle [\mathcal{T},\vbunsec{X,T^{1,0}X}]\subset
  \vbunsec{X,T^{1,0}X}\,\,\,\,\,\,\,\) (CR condition).
\end{itemize}

Given a smooth function \(\varphi\colon X\to \R\) with
\(\mathcal{T}\varphi\equiv 0\) we define
\[\mathcal{V}(\varphi)=\{Z-iZ(\varphi)\mathcal{T}\mid Z\in
  T^{1,0}X\}.\]
\begin{lemma}
  Given a smooth function \(\varphi\colon X\to \R\) with
  \(\mathcal{T}\varphi\equiv 0\) we have that
  \((X,\mathcal{V}(\varphi))\) is a codimension one CR manifold and
  \(\mathcal{T}\) is a transversal CR vector field.
\end{lemma}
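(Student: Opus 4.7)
The plan is to verify the three defining axioms of a codimension-one CR structure for \(\mathcal{V}(\varphi)\) and then verify that \(\mathcal{T}\) is transversal and CR with respect to it. The starting point is the observation that the assignment \(\Phi_\varphi\colon T^{1,0}X\to \C TX\), \(Z\mapsto Z-iZ(\varphi)\mathcal{T}\), is a \(\mathscr{C}^\infty(X)\)-linear bundle map with image \(\mathcal{V}(\varphi)\). Using the direct sum decomposition \(\C TX=T^{1,0}X\oplus T^{0,1}X\oplus\C\mathcal{T}\) guaranteed by the assumption that \(\mathcal{T}\) is transversal and CR for \(T^{1,0}X\), injectivity of \(\Phi_\varphi\) is immediate, which gives the codimension-one condition. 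For the non-degeneracy condition, I would observe that \(\mathcal{V}(\varphi)\subset T^{1,0}X\oplus\C\mathcal{T}\) while \(\overline{\mathcal{V}(\varphi)}\subset T^{0,1}X\oplus\C\mathcal{T}\), so any element of the intersection must have vanishing \(T^{1,0}X\) and \(T^{0,1}X\) components, forcing it to be zero. Transversality of \(\mathcal{T}\) then falls out of a dimension count combined with the explicit decomposition \(V=\Phi_\varphi(Z)+\overline{\Phi_\varphi}(W)+\bigl(c+iZ(\varphi)-iW(\varphi)\bigr)\mathcal{T}\) for \(V=Z+W+c\mathcal{T}\in \C TX\).

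The CR property of \(\mathcal{T}\) with respect to \(\mathcal{V}(\varphi)\) is a direct computation: setting \(A:=[\mathcal{T},Z]\in T^{1,0}X\) (by the CR hypothesis on \(\mathcal{T}\)) and using \(\mathcal{T}\varphi\equiv 0\) together with the identity \(\mathcal{T}(Z(\varphi))=[\mathcal{T},Z](\varphi)+Z(\mathcal{T}\varphi)=A(\varphi)\), the bracket \([\mathcal{T},\Phi_\varphi(Z)]\) collapses to \(A-iA(\varphi)\mathcal{T}=\Phi_\varphi(A)\in\mathcal{V}(\varphi)\).

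The heart of the proof is integrability. For \(W_j=\Phi_\varphi(Z_j)\), \(j=1,2\), I would expand \([W_1,W_2]\) via bilinearity and the Leibniz rule, isolating the \(T^{1,0}X\)-component
\[
\tilde Z:=[Z_1,Z_2]+iZ_2(\varphi)[\mathcal{T},Z_1]-iZ_1(\varphi)[\mathcal{T},Z_2],
\]
which lies in \(T^{1,0}X\) by the integrability of \(T^{1,0}X\) and the CR property of \(\mathcal{T}\). The remaining \(\mathcal{T}\)-coefficient, after repeated application of \(\mathcal{T}\varphi\equiv 0\) to rewrite each \(\mathcal{T}(Z_j(\varphi))\) as \([\mathcal{T},Z_j](\varphi)\), should simplify to exactly \(-i\tilde Z(\varphi)\), yielding \([W_1,W_2]=\Phi_\varphi(\tilde Z)\in\mathcal{V}(\varphi)\).

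The main obstacle is the bookkeeping in the integrability calculation: several terms with mixed \(T^{1,0}X\) and \(\mathcal{T}\) derivatives of \(\varphi\) appear, and the cancellation leading to the compact identity above relies on applying \(\mathcal{T}\varphi=0\) in the right places. A more conceptual alternative — one that dovetails with the construction used later in the paper (cf.\ the discussion of \(Y=X\times\R\) in Remark~\ref{rmk:ThmSasakianManifoldMain}~(3)) — is to identify \(\mathcal{V}(\varphi)\) with the induced CR structure \(\C TX_\varphi\cap T^{1,0}Y\) on the graph \(X_\varphi=\{(x,\varphi(x))\}\) of \(\varphi\) in \(Y=X\times\R\), where \(T^{1,0}Y=T^{1,0}X\oplus\C\{\partial_s-i\mathcal{T}\}\). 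Integrability then descends from integrability of \(T^{1,0}Y\) via the standard fact that the Lie bracket of vector fields tangent to a submanifold is again tangent. I would present the direct verification since it is self-contained at this stage of the paper and produces the explicit formula for \(\tilde Z\), which may be useful in subsequent sections.
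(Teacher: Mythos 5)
Your proposal is correct and follows essentially the same route as the paper: the same injective bundle map \(Z\mapsto Z-iZ(\varphi)\mathcal{T}\), the same intersection argument for non-degeneracy, and the same integrability computation producing the identical field \(\tilde Z=[Z_1,Z_2]+iZ_2(\varphi)[\mathcal{T},Z_1]-iZ_1(\varphi)[\mathcal{T},Z_2]\) with \([W_1,W_2]=\tilde Z-i\tilde Z(\varphi)\mathcal{T}\). You additionally spell out the transversality and CR conditions for \(\mathcal{T}\) relative to \(\mathcal{V}(\varphi)\), which the paper's proof leaves implicit; both verifications are correct.
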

\begin{proof}
  Given \(p\in X\) we notice that the map
  \(T^{1,0}_pX\ni Z\mapsto Z-iZ(\varphi)\mathcal{T}\in \C TX\) is
  linear and injective and defines a vector bundle morphism. It
  follows, that \(\mathcal{V}(\varphi)\) is a complex vector bundle of
  rank \(n\). Given
  \(V\in \mathcal{V}(\varphi)_p\cap
  \overline{\mathcal{V}(\varphi)_p}\) we find
  \(Z_1,Z_2\in T^{1,0}_pX\) such that
  \(Z_1-iZ_1(\varphi)\mathcal{T}=V=\overline{Z_2}+i\overline{Z_2}(\varphi)\mathcal{T}\). It
  follows that \(Z_1=Z_2=0\) and hence \(V=0\). Let
  \(V_1,V_2\in \vbunsec{X,\mathcal{V}(\varphi)}\) be two vector
  fields. We find \(Z_j\in \vbunsec{X,T^{1,0}X}\), \(j=1,2\), such
  that \(V_j=Z_j-iZ_j(\varphi)\mathcal{T}\) and compute
  \begin{eqnarray*}
    [V_1,V_2]&=& [Z_1,Z_2]-i[Z_1,Z_2(\varphi)\mathcal{T}]+i[Z_2,Z_1(\varphi)\mathcal{T}]-[Z_1(\varphi)\mathcal{T},Z_2(\varphi)\mathcal{T}]\\
             &=& [Z_1,Z_2]-i[Z_1,Z_2](\varphi)\mathcal{T} -iZ_2(\varphi)[Z_1,\mathcal{T}]+iZ_1(\varphi)[Z_2,\mathcal{T}]\\
             &&-Z_1(\varphi)\mathcal{T}(Z_2(\varphi))\mathcal{T}+Z_2(\varphi)\mathcal{T}(Z_1(\varphi))\mathcal{T}\\
             &=&[Z_1,Z_2]-i[Z_1,Z_2](\varphi)\mathcal{T}\\
             &&-iZ_2(\varphi)([Z_1,\mathcal{T}]-i[Z_1,\mathcal{T}](\varphi)\mathcal{T})+iZ_1(\varphi)([Z_2,\mathcal{T}]-i[Z_2,\mathcal{T}](\varphi)\mathcal{T})\\
             &=&[Z_1,Z_2]+iZ_2(\varphi)[\mathcal{T},Z_1]-iZ_1(\varphi)[\mathcal{T},Z_2]\\
             &&-i([Z_1,Z_2]+iZ_2(\varphi)[\mathcal{T},Z_1]-iZ_1(\varphi)[\mathcal{T},Z_2])(\varphi)\mathcal{T}.
  \end{eqnarray*}
  where we used \(\mathcal{T}\varphi\equiv 0\). Since
  \([\mathcal{T},Z_j]\in \vbunsec{X,T^{1,0}X}\) for \(j=1,2\) we find
  \([V_1,V_2]\in \vbunsec{X,\mathcal{V}(\varphi)}\).
\end{proof}
\begin{lemma}\label{lem:ExistenceOfOneFormSasakian}
  Let \((X,T^{1,0}X)\) be a codimension one CR manifold and let
  \(\mathcal{T}\in \vbunsec{X,TX}\) be real vector field which is
  transversal and CR. There exists a non-vanishing real one form
  \(\alpha^{T^{1,0}X,\mathcal{T}}\in \Omega^{1}(X)\) which is uniquely
  defined by the equations
  \[\alpha^{T^{1,0}X,\mathcal{T}}(\mathcal{T})\equiv1\text{ and } \ker
    \alpha^{T^{1,0}X,\mathcal{T}}=\operatorname{Re}\left(T^{1,0}X\right).\]
\end{lemma}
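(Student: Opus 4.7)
The plan is direct: produce $\alpha$ pointwise via the splitting induced by the hypotheses, then check smoothness and uniqueness.

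First I would note that the transversality hypothesis gives a direct sum decomposition $\C TX = T^{1,0}X \oplus \overline{T^{1,0}X} \oplus \C\mathcal{T}$, and taking real parts on both sides yields the real direct sum decomposition $TX = \operatorname{Re}(T^{1,0}X) \oplus \R\mathcal{T}$ (this uses only that $\mathcal{T}$ is real and transversal, not the CR condition). In particular, $HX := \operatorname{Re}(T^{1,0}X)$ has real rank $2n$ and every $V \in T_pX$ admits a unique decomposition $V = W + t\,\mathcal{T}_p$ with $W \in H_pX$ and $t \in \R$.

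Next I would \emph{define} $\alpha_p(V) := t$ for $V = W + t\,\mathcal{T}_p$. Linearity is immediate from the uniqueness of the decomposition, and by construction $\alpha(\mathcal{T}) \equiv 1$ and $\ker\alpha = HX$; in particular $\alpha$ is non-vanishing. The only substantive point is smoothness: since $HX$ is a smooth subbundle of $TX$ (as $T^{1,0}X$ is a smooth subbundle of $\C TX$ and taking real parts is a smooth operation) and $\mathcal{T}$ is a smooth never-vanishing section of $TX$ complementary to $HX$, the projection $TX \to \R\mathcal{T}$ along $HX$ is a smooth bundle map; composing with the trivialization $\R\mathcal{T} \to \R$ sending $\mathcal{T}$ to $1$ gives $\alpha$, which is therefore a smooth one-form.

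For uniqueness, if $\alpha'$ is any real one-form with $\alpha'(\mathcal{T}) = 1$ and $\ker\alpha' = HX$, then $\alpha' - \alpha$ vanishes on both summands of $TX = HX \oplus \R\mathcal{T}$, hence identically.

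The argument has essentially no obstacle — the CR integrability of $\mathcal{T}$ is not even used here; the lemma is a purely linear-algebraic/bundle-theoretic statement built on the splitting $TX = HX \oplus \R\mathcal{T}$. The CR condition on $\mathcal{T}$ will become relevant only in downstream constructions (e.g.\ when checking that $\alpha$ is the contact form of a pseudohermitian/Sasakian structure), not in the existence/uniqueness of $\alpha$ itself.
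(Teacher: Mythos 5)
Your argument is correct and is exactly the one the paper intends: the paper's proof consists of the single sentence that the statement ``follows immediately from the transversality of $\mathcal{T}$,'' and your construction of $\alpha$ as the smooth projection onto $\R\mathcal{T}$ along $HX$ in the splitting $TX=\operatorname{Re}(T^{1,0}X)\oplus\R\mathcal{T}$ is precisely the detail being suppressed. Your remark that the CR condition on $\mathcal{T}$ plays no role here is also accurate.
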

\begin{proof}
  The statement follows immediately from the transversality of the
  vector field \(\mathcal{T}\).
\end{proof}
\begin{definition}\label{def:SasakianManifold}
  Let \((X,T^{1,0}X)\) be a codimension one CR manifold and let
  \(\mathcal{T}\in \vbunsec{X,TX}\) be a real vector field. We call the
  triple \((X,T^{1,0}X,\mathcal{T})\) a Sasakian manifold if
  \(\mathcal{T}\) is transversal and CR with
  \[\forall Z\in T^{1,0}X\setminus\{0\}\colon
    \,\,\,\frac{1}{2i}d\alpha^{T^{1,0}X,\mathcal{T}}(Z,\overline{Z})>0\]
\end{definition}
\begin{remark}
  Given a Sasakian manifold \((X,T^{1,0}X,\mathcal{T})\) we have that
  \((X,T^{1,0}X)\) is a strictly pseudoconvex CR
  manifold. Furthermore, \(\operatorname{Re}T^{1,0}X\) defines a
  contact structure with contact form
  \(\alpha^{T^{1,0}X,\mathcal{T}}\) and respective Reeb vector field
  \(\mathcal{T}\).
\end{remark}
\begin{lemma}\label{lem:LeviFormSasakianDeformation}
  Let \((X,T^{1,0}X)\) be a codimension one CR manifold and let
  \(\mathcal{T}\in \vbunsec{X,TX}\) be real vector field which is
  transversal and CR.  Denote by
  \(\alpha:=\alpha^{T^{1,0}X,\mathcal{T}}\) the one form defined in
  Lemma~\ref{lem:ExistenceOfOneFormSasakian}. Given a function
  \(\varphi\in \mathscr{C}^\infty(X,\R)\) with
  \(\mathcal{T}\varphi\equiv 0\) put
  \(\alpha^\varphi:=\alpha^{\mathcal{V}(\varphi),\mathcal{T}}\) and
  \(\gamma^\varphi=\alpha^\varphi-\alpha\in \Omega^1(X)\). Then
  \(\gamma^\varphi(\mathcal{T})\equiv0\) and for any vector field
  \(Z\in \vbunsec{X,T^{1,0}X}\) one has
  \begin{itemize}
  \item [(i)] \(\gamma^\varphi(Z)=i(d\varphi) (Z)\),
    \(\gamma^\varphi(\overline{Z})=-i(d\varphi) (\overline{Z})\)
  \item[(ii)]
    \(\frac{1}{2i}d\alpha^\varphi(Z-iZ(\varphi)\mathcal{T},\overline{Z-iZ(\varphi)\mathcal{T}})=\frac{1}{2i}d\alpha(Z,\overline{Z})-\frac{1}{2i}\gamma^\varphi([Z,\overline{Z}])-\operatorname{Re}\left(Z(\overline{Z}\varphi)\right).\)
  \end{itemize}
\end{lemma}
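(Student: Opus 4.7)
\medskip

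\noindent\textbf{Proof proposal.} The identity $\gamma^\varphi(\mathcal{T})\equiv 0$ is immediate from the definition, since both $\alpha(\mathcal{T})=1$ and $\alpha^\varphi(\mathcal{T})=1$ by construction. For (i), fix $Z\in T^{1,0}X$. The crucial observation is that $W:=Z-iZ(\varphi)\mathcal{T}\in\mathcal{V}(\varphi)$, so $\alpha^\varphi(W)=0$, which together with $\alpha^\varphi(\mathcal{T})=1$ gives $\alpha^\varphi(Z)=iZ(\varphi)$. Since $Z$ is tangent to the real subbundle $\operatorname{Re}T^{1,0}X=\ker\alpha$, the complex-linear extension satisfies $\alpha(Z)=0$, and we obtain $\gamma^\varphi(Z)=iZ(\varphi)=i(d\varphi)(Z)$. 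The conjugate computation on $\overline{W}=\overline{Z}+i\overline{Z}(\varphi)\mathcal{T}\in\overline{\mathcal{V}(\varphi)}$ yields $\gamma^\varphi(\overline{Z})=-i(d\varphi)(\overline{Z})$.

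For (ii), I would split $d\alpha^\varphi=d\alpha+d\gamma^\varphi$ and handle each summand separately. The first step is to show $\mathcal{T}\lrcorner d\alpha=0$: since $\alpha$ vanishes on $T^{1,0}X\oplus T^{0,1}X$ and the CR condition $[\mathcal{T},\vbunsec{X,T^{1,0}X}]\subset\vbunsec{X,T^{1,0}X}$ ensures $\alpha([\mathcal{T},Z])=0=\alpha([\mathcal{T},\overline{Z}])$, one checks $(\mathcal{L}_\mathcal{T}\alpha)(Z)=(\mathcal{L}_\mathcal{T}\alpha)(\overline{Z})=(\mathcal{L}_\mathcal{T}\alpha)(\mathcal{T})=0$; Cartan's formula together with $\alpha(\mathcal{T})=1$ then gives $\mathcal{T}\lrcorner d\alpha=0$. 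As a direct consequence, expanding the bilinearity of $d\alpha(W,\overline{W})$ and factoring scalar multiples through, every cross term involving $\mathcal{T}$ vanishes and we are left with $d\alpha(W,\overline{W})=d\alpha(Z,\overline{Z})$.

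For the $d\gamma^\varphi$ part I would apply the standard formula
\[
d\gamma^\varphi(W,\overline{W})=W(\gamma^\varphi(\overline{W}))-\overline{W}(\gamma^\varphi(W))-\gamma^\varphi([W,\overline{W}]).
\]
Using (i), $\gamma^\varphi(W)=iZ(\varphi)$ and $\gamma^\varphi(\overline{W})=-i\overline{Z}(\varphi)$, so after expanding $W$ and $\overline{W}$, the first two terms produce $-iZ(\overline{Z}\varphi)-i\overline{Z}(Z\varphi)$ plus two remainder terms $-Z(\varphi)\mathcal{T}(\overline{Z}\varphi)+\overline{Z}(\varphi)\mathcal{T}(Z\varphi)$ (the relation $\mathcal{T}\varphi\equiv 0$ kills the other contributions). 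For $[W,\overline{W}]$, I would expand via the identity $[fA,gB]=fg[A,B]+f(Ag)B-g(Bf)A$; the $\mathcal{T}$-components drop out once $\gamma^\varphi$ is applied because $\gamma^\varphi(\mathcal{T})=0$, and on the $T^{1,0}X\oplus T^{0,1}X$ components (i) together with $[Z,\mathcal{T}]\varphi=-\mathcal{T}(Z\varphi)=0$ and $[\mathcal{T},\overline{Z}]\varphi=\mathcal{T}(\overline{Z}\varphi)=0$ produces exactly the two remainder terms above, but with opposite signs.

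Combining these computations, the remainder terms cancel and we obtain
\[
d\alpha^\varphi(W,\overline{W})=d\alpha(Z,\overline{Z})-\gamma^\varphi([Z,\overline{Z}])-i\bigl(Z(\overline{Z}\varphi)+\overline{Z}(Z\varphi)\bigr),
\]
which after dividing by $2i$ and recognizing $\frac{1}{2}(Z(\overline{Z}\varphi)+\overline{Z}(Z\varphi))=\operatorname{Re}(Z(\overline{Z}\varphi))$ (using that $\varphi$ is real) yields precisely (ii). The main obstacle is bookkeeping: several cancellations depend on a careful use of $\mathcal{T}\varphi\equiv 0$ in both the vanishing of mixed brackets and the annihilation of $\mathcal{T}$-components by $\gamma^\varphi$, and one must keep track of the distinction between real and complex-linear extensions of $\alpha$, $\gamma^\varphi$.
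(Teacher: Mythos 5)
Your proof of the preliminary identity and of (i) coincides with the paper's (both rest on $\alpha^\varphi(Z-iZ(\varphi)\mathcal{T})=0$, $\alpha(Z)=0$ and the reality of $\gamma^\varphi$). For (ii) you take a genuinely different route: the paper applies the invariant formula for $d\alpha^\varphi(W,\overline W)$ with $W=Z-iZ(\varphi)\mathcal{T}$ directly, so that $\alpha^\varphi(W)=\alpha^\varphi(\overline W)=0$ kills the derivative terms and everything reduces to one computation of $\alpha^\varphi([W,\overline W])$; you instead split $d\alpha^\varphi=d\alpha+d\gamma^\varphi$, which forces you to first establish $\mathcal{T}\lrcorner\, d\alpha=0$ (your Lie-derivative argument for this is correct and uses exactly the CR condition on $\mathcal{T}$) and then to track the derivative terms $W(\gamma^\varphi(\overline W))-\overline W(\gamma^\varphi(W))$ against the bracket term. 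Both decompositions lead to the stated formula; the paper's is shorter because only the bracket survives, while yours makes the role of the Reeb condition for $\alpha$ explicit.

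There is, however, one concretely false assertion in your treatment of the bracket: you write $[Z,\mathcal{T}]\varphi=-\mathcal{T}(Z\varphi)=0$ and $[\mathcal{T},\overline Z]\varphi=\mathcal{T}(\overline Z\varphi)=0$. The identities $[Z,\mathcal{T}]\varphi=-\mathcal{T}(Z\varphi)$ and $[\mathcal{T},\overline Z]\varphi=\mathcal{T}(\overline Z\varphi)$ are correct (they use $\mathcal{T}\varphi\equiv0$), but these quantities do \emph{not} vanish in general: $\mathcal{T}\varphi\equiv 0$ gives $\mathcal{T}(Z\varphi)=[\mathcal{T},Z]\varphi$, and there is no reason for the $T^{1,0}$-field $[\mathcal{T},Z]$ to annihilate $\varphi$ for an arbitrary section $Z$. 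Your proof does not actually need the vanishing — as you yourself say in the same sentence, after applying (i) to $\gamma^\varphi([Z,\mathcal{T}])$ and $\gamma^\varphi([\mathcal{T},\overline Z])$ the bracket contributes $\overline Z(\varphi)\,\mathcal{T}(Z\varphi)-Z(\varphi)\,\mathcal{T}(\overline Z\varphi)$, which is the negative of the two remainder terms coming from $W(\gamma^\varphi(\overline W))-\overline W(\gamma^\varphi(W))$, and it is this cancellation (not any vanishing) that closes the computation. Delete the two ``$=0$''s and keep the cancellation, and the argument is complete.
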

\begin{proof}
  We have
  \(\gamma^\varphi(\mathcal{T})=\alpha^\varphi(\mathcal{T})-\alpha(\mathcal{T})=1-1=0
  \). Furthermore, given \(Z\in T^{1,0}X\) we find
  \(Z-iZ(\varphi)\mathcal{T}\in\mathcal{V}(\varphi)\) and hence
  \[0=\alpha^\varphi(Z-iZ(\varphi)\mathcal{T})=-iZ(\varphi)+\gamma^\varphi(Z).\]
  Since \(\gamma^\varphi\) is real we obtain (i).  Given
  \(Z\in \vbunsec{X,T^{1,0}X}\) we compute
  \begin{eqnarray*}
    [Z-iZ(\varphi)\mathcal{T},\overline{Z-iZ(\varphi)\mathcal{T}}]&=& [Z,\overline{Z}]+2i\operatorname{Re}\left(Z\overline{Z}\varphi\mathcal{T}+\overline{Z}(\varphi)[Z,\mathcal{T}]\right)\\
                                                                  &&+2i\operatorname{Im}\left(
                                                                     \overline{Z}(\varphi)[Z,\mathcal{T}](\varphi)\mathcal{T}\right).
  \end{eqnarray*}
  Since \([Z,\mathcal{T}]\in \vbunsec{X,T^{1,0}X}\) we have
  \(\alpha([Z,\mathcal{T}])=0\) and
  \(\gamma^\varphi[Z,\mathcal{T}]=i[Z,\mathcal{T}](\varphi)\). We
  conclude that
  \begin{eqnarray*}
    \alpha^{\varphi}([Z-iZ(\varphi)\mathcal{T},\overline{Z-iZ(\varphi)\mathcal{T}}])&=& \alpha([Z,\overline{Z}]) +\gamma^\varphi([Z,\overline{Z}])+2i\operatorname{Re}\left(Z\overline{Z}\varphi\right)
  \end{eqnarray*}
  Using Cartan's formula the claim follows.
\end{proof}
\begin{remark}\label{rmk:StrictlyPseudoconvexIsOpen}
  It follows from Lemma~\ref{lem:LeviFormSasakianDeformation} that
  given a compact Sasakian manifold \((X,T^{1,0}X,\mathcal{T})\) and a
  function \(\varphi\in \mathscr{C}^\infty(X,\R)\) with
  \(\mathcal{T}\varphi\equiv 0\) we have that
  \((X,\mathcal{V}(\varphi),\mathcal{T})\) is again a Sasakian
  manifold provided that the \(\mathscr{C}^1\)-norm of \(d\varphi\) is
  sufficiently small.
\end{remark}
Let \((X,T^{1,0}X)\) be a CR manifold and let
\(\mathcal{T}\in \vbunsec{X,TX}\) be real vector field which is
transversal and CR.  We define \(Y:=X\times \R\) and
\(T^{1,0}Y=T^{1,0}X\oplus\C \{\frac{\partial}{\partial
  s}-i\mathcal{T}\}\). Then \((Y,T^{1,0}Y)\) is a complex manifold and
\(X\ni x\mapsto (x,0)\in Y\) defines a CR embedding of \(X\) into
\(Y\). Hence \(X\) can be considered as a CR submanifold of \(Y\). Let
\(\varphi\colon X\to \R\) be a smooth function with
\(\mathcal{T}\varphi\equiv 0\). Consider
\(X_\varphi=\{(x,\varphi(x))\mid x\in X\}=\{(x,s)\in Y\mid
s=\varphi(x)\}\subset Y\) and
\(T^{1,0}X_\varphi:=\C TX_\varphi\cap T^{1,0}Y\). Since
\(\mathcal{T}\varphi\equiv 0\) we find that
\(\mathcal{T}\in \vbunsec{X_\varphi,TX_\varphi}\) defines a
transversal CR vector field.
\begin{lemma}\label{lem:IsoGraphDeformation}
  The map \(\operatorname{Pr}\colon Y\to X\),
  \(\operatorname{Pr}(x,s)=x\), defines a CR isomorphism between
  \((X_\varphi,T^{1,0}X_\varphi)\) and \((X,\mathcal{V}(\varphi))\)
  with \(Pr_*\mathcal{T}=\mathcal{T}\).
\end{lemma}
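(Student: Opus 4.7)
The plan is to check directly that $\operatorname{Pr}|_{X_\varphi}$ is a diffeomorphism between $X_\varphi$ and $X$, that it carries $T^{1,0}X_\varphi$ onto $\mathcal{V}(\varphi)$, and that it intertwines $\mathcal{T}|_{X_\varphi}$ with $\mathcal{T}$ on $X$. The first point is immediate: $\operatorname{Pr}|_{X_\varphi}$ has the smooth inverse $\kappa_\varphi\colon X\to X_\varphi$, $\kappa_\varphi(x)=(x,\varphi(x))$, so it is a diffeomorphism.

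For the Reeb vector field, recall that $\mathcal{T}$ has been extended trivially to $Y=X\times\R$, i.e.\ with vanishing $\partial_s$-component. A vector $v+t\,\partial_s\in T_{(x,\varphi(x))}Y$ is tangent to $X_\varphi$ iff $t=d\varphi_x(v)$, since $X_\varphi$ is cut out by $s-\varphi(x)=0$. For $v=\mathcal{T}_x$ and $t=0$, this amounts to $\mathcal{T}\varphi\equiv 0$, which holds by hypothesis. Hence $\mathcal{T}$ restricts to a vector field on $X_\varphi$, and $\operatorname{Pr}_*\mathcal{T}_{(x,\varphi(x))}=\mathcal{T}_x$.

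The core of the proof is identifying $T^{1,0}X_\varphi=\C TX_\varphi\cap T^{1,0}Y$ with $\mathcal{V}(\varphi)$ under $\operatorname{Pr}_*$. Complexifying the tangency condition, an arbitrary element of $\C T_{(x,\varphi(x))}X_\varphi$ has the form $V+V(\varphi)\,\partial_s$ with $V\in\C T_xX$ (treating $\varphi$ as a function on $X$ and extending $V$ as a derivation). This vector lies in $T^{1,0}Y=T^{1,0}X\oplus\C\!\left(\partial_s-i\mathcal{T}\right)$ iff there exist $W\in T^{1,0}_xX$ and $c\in\C$ with
\[
V+V(\varphi)\,\partial_s \;=\; W + c\bigl(\partial_s - i\mathcal{T}\bigr).
\]
Matching the $\partial_s$-component gives $c=V(\varphi)$, while matching the $TX$-component gives $V=W-ic\mathcal{T}=W-iV(\varphi)\mathcal{T}$. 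Applying $\mathcal{T}\varphi\equiv 0$ to this last equation yields $V(\varphi)=W(\varphi)$, hence $V=W-iW(\varphi)\mathcal{T}$. Conversely, every $W\in T^{1,0}_xX$ produces such an admissible $V$, so
\[
T^{1,0}_{(x,\varphi(x))}X_\varphi \;=\; \bigl\{\,(W-iW(\varphi)\mathcal{T}) + W(\varphi)\,\partial_s \;:\; W\in T^{1,0}_xX\,\bigr\}.
\]

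Pushing forward by $\operatorname{Pr}$ kills the $\partial_s$-component and returns $W-iW(\varphi)\mathcal{T}$, which is precisely the generic element of $\mathcal{V}(\varphi)_x$ as defined in \eqref{eq:DefDeformedSasakianStructure}. Therefore $\operatorname{Pr}_*T^{1,0}X_\varphi=\mathcal{V}(\varphi)$, and combined with $\operatorname{Pr}_*\mathcal{T}=\mathcal{T}$ this completes the CR isomorphism claim. There is no real obstacle here; the content is entirely a matching of components using the hypothesis $\mathcal{T}\varphi\equiv 0$ to eliminate the crossed terms.
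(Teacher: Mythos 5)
Your proof is correct and follows essentially the same route as the paper: decompose vectors with respect to $T^{1,0}Y=T^{1,0}X\oplus\C(\partial_s-i\mathcal{T})$, match the $\partial_s$-components via the graph tangency condition, and use $\mathcal{T}\varphi\equiv 0$ to identify the result with $\mathcal{V}(\varphi)$. The only (harmless) difference is that you verify both inclusions explicitly, whereas the paper checks only $\operatorname{Pr}_*T^{1,0}X_\varphi\subset\mathcal{V}(\varphi)$ and relies on the fact that both are rank-$n$ bundles carried by a diffeomorphism.
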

\begin{proof}
  Since \(X_\varphi\) is a graph over \(X\) in \(Y=X\times \R\) we
  observe that \(\operatorname{Pr}\mid_{X_\varphi}\) is a
  diffeomorphism between \(X_\varphi\) and \(X\) with
  \(\operatorname{Pr}_*\mathcal{T}=\mathcal{T}\). Then it is enough to
  show that
  \(\operatorname{Pr}_*T^{1,0}X_\varphi\subset
  \mathcal{V}(\varphi)\). Let \(\tilde{p}\in X_\varphi\) and
  \(\tilde{Z}\in T^{1,0}_{\tilde{p}}X_\varphi\) be arbitrary. We can
  write \(\tilde{p}=(p,\varphi(p))\) for some \(p\in X\) and
  \(\tilde{Z}=Z+c(\frac{\partial}{\partial s}-i\mathcal{T})\) for some
  \(Z\in T_p^{1,0}X\), \(c\in \C\).  Since
  \(\tilde{Z}\in\C TX_\varphi\) and \(\mathcal{T}\varphi\equiv 0\) we
  find \(c=Z(\varphi)\). It follows that
  \(\operatorname{Pr}_*\tilde{Z}=Z-iZ(\varphi)\mathcal{T}\in
  \mathcal{V}(\varphi)_p\).
\end{proof}
\begin{lemma}\label{lem:HolomorphicFunctionsOnCylinder}
  Given a function \(f\in H_b^0(X)\cap\mathscr{C}^\infty(X)\) with
  \(\mathcal{T}f=i\lambda f\) for some \(\lambda\in \R\). We have that
  \(\tilde{f}\colon Y\to\C\), \(\tilde{f}(x,s)=e^{\lambda s}f(x)\)
  defines a holomorphic function \(\tilde{f}\in\mathcal{O}(Y)\) on
  \(Y\).
\end{lemma}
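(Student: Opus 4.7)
The plan is to show that $\tilde{f}$ is annihilated by every smooth section of $T^{0,1}Y$, which is equivalent to $\tilde{f}\in\mathcal{O}(Y)$ since $\tilde{f}$ is smooth. By the definition of the complex structure,
\[
T^{0,1}Y = T^{0,1}X \oplus \C\left\{\tfrac{\partial}{\partial s}+i\mathcal{T}\right\},
\]
so it suffices to verify the Cauchy--Riemann equations against the two types of generators separately.

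First I would pick an arbitrary smooth local section $\overline{Z}\in \vbunsec{X,T^{0,1}X}$, extend it trivially in the $s$-variable to a section of $T^{0,1}Y$, and compute
\[
\overline{Z}\,\tilde{f}(x,s) = e^{\lambda s}\,\overline{Z}f(x) = 0,
\]
using that $f\in H^0_b(X)$ is CR. Next I would apply the remaining generator and use the assumption $\mathcal{T}f=i\lambda f$:
\[
\left(\tfrac{\partial}{\partial s}+i\mathcal{T}\right)\tilde{f}(x,s)
= \lambda e^{\lambda s}f(x) + i e^{\lambda s}\,\mathcal{T}f(x)
= \lambda e^{\lambda s}f(x) + i e^{\lambda s}(i\lambda f(x)) = 0.
\]
Since both classes of sections annihilate $\tilde f$ and together they span $T^{0,1}Y$ over $\mathscr{C}^\infty(Y)$, we conclude $\overline{W}\tilde f=0$ for every $W\in\vbunsec{Y,T^{1,0}Y}$, hence $\tilde f\in\mathcal{O}(Y)$.

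There is essentially no obstacle here; the only subtlety is a consistency check that $T^{1,0}Y$ as defined is actually an integrable complex structure (ensuring that annihilation by a spanning set of $T^{0,1}Y$ really is the right holomorphicity criterion), but this was established when $(Y,T^{1,0}Y)$ was introduced as a complex manifold. Smoothness of $\tilde{f}$ is immediate from the smoothness of $f$ and of $s\mapsto e^{\lambda s}$.
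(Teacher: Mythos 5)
Your proof is correct and follows essentially the same route as the paper: decompose $T^{0,1}Y$ into $T^{0,1}X$ and the span of $\frac{\partial}{\partial s}+i\mathcal{T}$, then check that both annihilate $\tilde{f}$ using the CR condition and $\mathcal{T}f=i\lambda f$. The paper phrases this by writing a general $\tilde{Z}\in T^{1,0}_{(x,s)}Y$ as $Z+c(\frac{\partial}{\partial s}-i\mathcal{T})$ and conjugating, but the computation is identical.
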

\begin{proof}
  Since \(\tilde{f}\) is smooth we just need to show that
  \(\overline{\tilde{Z}}\tilde{f}=0\) for all
  \(\tilde{Z}\in T^{1,0}Y\). Given \(\tilde{Z}\in T^{1,0}_{(x,s)}Y\)
  we can write
  \(\tilde{Z}=Z+c(\frac{\partial}{\partial s}-i\mathcal{T})\) for some
  \(Z\in T^{1,0}_xX\) and \(c\in \C\). Since
  \((\overline{Z}\tilde{f})(x,s)=e^{\lambda s}(\overline{Z}f)(x)=0\)
  and
  \(((\frac{\partial}{\partial
    s}+i\mathcal{T})\tilde{f})(x,s)=e^{\lambda s}(\lambda
  f(x)+i(\mathcal{T}f)(x))=0\) the claim follows.
\end{proof}
\subsection{CR Submanifolds of the
  Sphere}\label{sec:SubmanifoldsSphere}
\begin{lemma}\label{lem:CRsubmanifoldsOfSpheresAreStrictlyPseudoconvex}
  Let \((X,T^{1,0}X)\) be a CR submanifold of \(\C^N\) with
  \(X\subset \mathbb{S}^{2N-1}\). For any \(\beta\in \R_+^{N}\) we
  have that \((X,T^{1,0}X,\iota^*\alpha_{\beta})\) is a
  pseudohermitian submanifold of \(\mathbb{S}^{2N-1}\) (see
  Definition~\ref{def:PseudoHermSubManiSphere}). Here,
  \(\iota\colon X\to \C^N\) denotes the inclusion and \(\alpha_\beta\)
  is as in Example~\ref{ex:PseudoHermSpheres}.
\end{lemma}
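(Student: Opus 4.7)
The plan is to obtain the pseudohermitian structure on $X$ by pulling back the one on the ambient sphere. Factor the inclusion as $\iota=\iota_{\mathbb{S}}\circ j$ with $j\colon X\hookrightarrow\mathbb{S}^{2N-1}$ and $\iota_{\mathbb{S}}\colon\mathbb{S}^{2N-1}\hookrightarrow\C^N$, so that $\iota^*\alpha_\beta=j^*(\iota_{\mathbb{S}}^*\alpha_\beta)$. By Example~\ref{ex:PseudoHermSpheres}, $(\mathbb{S}^{2N-1},T^{1,0}\mathbb{S}^{2N-1},\iota_{\mathbb{S}}^*\alpha_\beta)$ is already a pseudohermitian manifold whose Levi form $\mathcal{L}^\mathbb{S}$ is positive-definite. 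Because $X\subset\mathbb{S}^{2N-1}$, we have $\C TX\subset\C T\mathbb{S}^{2N-1}|_X$, whence
\[
T^{1,0}X=\C TX\cap T^{1,0}\C^N=\C TX\cap T^{1,0}\mathbb{S}^{2N-1}\subset T^{1,0}\mathbb{S}^{2N-1}\big|_X.
\]
Consequently $HX\subset H\mathbb{S}^{2N-1}=\ker(\iota_{\mathbb{S}}^*\alpha_\beta)$, so $\iota^*\alpha_\beta$ is a real 1-form on $X$ that annihilates $HX$.

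Next I would identify the Levi form of $\iota^*\alpha_\beta$ as the restriction of $\mathcal{L}^\mathbb{S}$ to the subbundle $T^{1,0}X$. For $U,V\in T^{1,0}X$, naturality of $d$ gives
\[
\mathcal{L}^{\iota^*\alpha_\beta}(U,V)=\tfrac{1}{2i}\,j^*d(\iota_{\mathbb{S}}^*\alpha_\beta)(U,\overline{V})=\mathcal{L}^\mathbb{S}(j_*U,j_*\overline{V}),
\]
with $j_*U,\,j_*V$ viewed as sections of $T^{1,0}\mathbb{S}^{2N-1}$ via the containment above. Positive-definiteness of $\mathcal{L}^\mathbb{S}$ on all of $T^{1,0}\mathbb{S}^{2N-1}$ therefore immediately transfers to $T^{1,0}X$.

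The remaining point is to check that $\iota^*\alpha_\beta$ is non-vanishing on $X$; once this is in hand, its kernel must equal $HX$ by a codimension count, and Definitions~\ref{def:PseudohermitianStructure} and~\ref{def:PseudoHermSubManiSphere} are satisfied. Suppose for contradiction $(\iota^*\alpha_\beta)_p=0$ at some $p\in X$. Since $HX$ has codimension one in $TX$ and $TX/HX$ is orientable (from the $J$-orientation of $HX$ and the orientation of $X$), there exist on a neighborhood of $p$ a characteristic 1-form $\tilde\alpha$ for $(X,T^{1,0}X)$ and a smooth function $f$ with $\iota^*\alpha_\beta=f\tilde\alpha$ and $f(p)=0$. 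Expanding $d(f\tilde\alpha)=df\wedge\tilde\alpha+fd\tilde\alpha$ and using $\tilde\alpha(U)=\tilde\alpha(\overline{V})=0$ for $U,V\in T^{1,0}X$, we obtain
\[
d(\iota^*\alpha_\beta)(U,\overline{V})=f\,d\tilde\alpha(U,\overline{V}),
\]
so $\mathcal{L}^{\iota^*\alpha_\beta}_p\equiv 0$, contradicting the positive-definiteness from the previous step.

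The main obstacle I anticipate is precisely this non-vanishing statement: a naive dimensional argument in $T_p\C^N$ does not rule out the configuration $T_pX\subset H_p\mathbb{S}^{2N-1}$ (which is exactly where $\iota^*\alpha_\beta$ would vanish), so the clean route is the local factorization $\iota^*\alpha_\beta=f\tilde\alpha$ above, which converts a pointwise vanishing of the form into a pointwise vanishing of the Levi form. Once this is settled, the first step gives $T^{1,0}X=\C TX\cap T^{1,0}\mathbb{S}^{2N-1}$, the form has the prescribed shape $\iota^*\alpha_\beta$ with $\beta\in\R_+^N$, and Definition~\ref{def:PseudoHermSubManiSphere} yields the claim.
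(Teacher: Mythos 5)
Your proposal is correct and follows essentially the same route as the paper: both use that $T^{1,0}X=\C TX\cap T^{1,0}\mathbb{S}^{2N-1}$, write $\iota^*\alpha_\beta=f\gamma$ for a characteristic $1$-form $\gamma$, and deduce $f\neq 0$ from the positivity of the sphere's Levi form restricted to $T^{1,0}X$ together with the identity $d(f\gamma)(Z,\overline Z)=f\,d\gamma(Z,\overline Z)$ on $T^{1,0}X$. The paper merely phrases the last step as a direct conclusion rather than a contradiction; the content is identical.
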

\begin{proof}
  Since \((X,T^{1,0}X)\) is a CR submanifold of \(\C^N\) we have
  \(T^{1,0}X=\C TX\cap T^{1,0}\C^N\). Since
  \(TX\subset T\mathbb{S}^{2N-1}\) and
  \(T^{1,0}\mathbb{S}^{2N-1}=\C T\mathbb{S}^{2N-1}\cap T^{1,0}\C^N\)
  we have \(T^{1,0}X=\C TX\cap T^{1,0}\mathbb{S}^{2N-1}\). Let
  \(\gamma\in \Omega^{1}(X)\) be a characteristic $1$-form for
  \((X,T^{1,0}X)\). Since \(\alpha_\beta(Z+\overline{Z})\) for all
  \(Z\in T^{1,0}\mathbb{S}^{2N-1}\) we find
  \(T^{1,0}X\subset \ker\iota^*\alpha_\beta\). Hence there exists a
  smooth function \(f\colon X\to \R\) such that
  \(\iota^*\alpha_\beta=f\gamma\). Given
  \(Z\in T_p^{1,0}X\setminus\{0\}\subset
  T_p^{1,0}\mathbb{S}^{2N-1}\setminus\{0\}\), \(p\in X\), we find
  \[0<\frac{d\alpha_\beta}{2i}(Z,\overline{Z})=\frac{df\wedge
      \gamma}{2i}(Z,\overline{Z})+\frac{f(p)}{2i}d\gamma(Z,\overline{Z})=\frac{f(p)}{2i}d\gamma(Z,\overline{Z}).\]
  It follows that \(f(p)\neq 0\) for any \(p\in X\) and hence that
  \(\iota^*\alpha_\beta\) is a characteristic $1$-form for
  \((X,T^{1,0}X)\) such that the associated Levi form is positive
  definite everywhere.
\end{proof}

\begin{lemma}\label{lem:SasakianSubmanifoldsOfSpheresAreRealAnalytic}
  Let \((X,T^{1,0}X,\mathcal{T})\) be a Sasakian manifold and
  \(F\colon X\to \C^N\) be a CR embedding with
  \(F_*\mathcal{T}=\mathcal{T}_\beta\circ F\) for some
  \(\beta\in \R_+^N\) and \(\mathcal{T}_\beta\) as in
  Example~\ref{ex:PseudoHermSpheres}. If
  \(F(X)\subset \mathbb{S}^{2N-1}\) we have that \(F(X)\) is real
  analytic.
\end{lemma}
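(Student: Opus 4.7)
The plan is to use the Sasakian hypothesis to complexify in the $\mathcal{T}$-direction, producing a holomorphic extension of $F$ to the complex manifold $Y = X\times\mathbb{R}$ from Lemma~\ref{lem:HolomorphicFunctionsOnCylinder}, and then realize $F(X)$ locally as the transversal intersection of a complex submanifold of $\C^N$ with the sphere.

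Write $F=(f_1,\ldots,f_N)$. Applying $F_*\mathcal{T}=\mathcal{T}_\beta\circ F$ to each coordinate function $z_j$ gives $\mathcal{T}f_j=i\beta_j f_j$, so each $f_j$ is a smooth CR eigenfunction of $-i\mathcal{T}$ with eigenvalue $\beta_j>0$. By Lemma~\ref{lem:HolomorphicFunctionsOnCylinder}, the function $\tilde f_j(x,s)=e^{\beta_j s}f_j(x)$ is holomorphic on $Y$, so $\tilde F=(\tilde f_1,\ldots,\tilde f_N)\colon Y\to \C^N$ is a holomorphic map which restricts to $F$ on $X\times\{0\}$.

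Next I would show $\tilde F$ is a holomorphic embedding of some neighborhood $U\subset Y$ of $X\times\{0\}$ onto an $(n+1)$-dimensional complex submanifold $M\subset\C^N$. Since $F$ is a CR embedding, $\tilde F_*$ maps $T^{1,0}_x X$ isomorphically onto $T^{1,0}_{F(x)}F(X)\subset T^{1,0}_{F(x)}\mathbb{S}^{2N-1}$. A direct computation gives $\tilde F_*(\partial_s-i\mathcal{T})|_{(x,0)}=2\mathcal{T}_\beta^{1,0}|_{F(x)}$, where $\mathcal{T}_\beta^{1,0}=i\sum_j\beta_j z_j\partial_{z_j}$. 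Writing $\mathcal{T}_\beta^{1,0}$ in real coordinates $z_j=x_j+iy_j$ shows that its imaginary part is the weighted radial field $\tfrac12\sum_j\beta_j(x_j\partial_{x_j}+y_j\partial_{y_j})$, whose radial component on $\mathbb{S}^{2N-1}$ equals $\tfrac12\sum_j\beta_j|z_j|^2>0$; hence $\mathcal{T}_\beta^{1,0}|_{F(x)}\notin \C T_{F(x)}\mathbb{S}^{2N-1}$, and in particular not in $F_*T^{1,0}_xX$. Thus $\tilde F_*$ has rank $n+1$ along $X\times\{0\}$, so after shrinking $U$ the inverse function theorem gives a biholomorphism $\tilde F\colon U\to M$ onto a complex submanifold.

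The final step identifies $F(X)$ inside $M\cap\mathbb{S}^{2N-1}$. For fixed $x\in X$,
\[
|\tilde F(x,s)|^2=\sum_{j=1}^N e^{2\beta_j s}|f_j(x)|^2
\]
is strictly increasing in $s$ because all $\beta_j>0$ and $F(x)\neq 0$, and it equals $1$ at $s=0$ since $F(X)\subset\mathbb{S}^{2N-1}$. Consequently $s=0$ is the unique solution of $|\tilde F(x,s)|^2=1$, and combined with the injectivity of $\tilde F$ on $U$ this forces $M\cap\mathbb{S}^{2N-1}=F(X)$ locally. Both $M$ and $\mathbb{S}^{2N-1}$ are real analytic submanifolds of $\C^N$, and the transversality $T_pM+T_p\mathbb{S}^{2N-1}=T_p\C^N$ (which also follows from $\mathcal{T}_\beta^{1,0}\in T_pM\setminus\C T_p\mathbb{S}^{2N-1}$) implies that their intersection is a real analytic submanifold. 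Hence $F(X)$ is real analytic.

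The main technical point is verifying that $\mathcal{T}_\beta^{1,0}|_{F(x)}$ is not tangent to $F(X)$, which is needed both to upgrade $\tilde F$ to a local biholomorphism and to obtain the transversal intersection with the sphere; this is handled by the real-coordinate computation above, which uses only the positivity of the weights $\beta_j$.
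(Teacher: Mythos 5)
Your proof is correct and follows essentially the same route as the paper's: extend $F$ holomorphically to the cylinder $X\times\R$ via Lemma~\ref{lem:HolomorphicFunctionsOnCylinder}, verify that the extension $\tilde F$ is a local biholomorphism along $X\times\{0\}$, and exhibit $F(X)$ locally as the transversal intersection of the complex submanifold $\tilde F(U)$ with $\mathbb{S}^{2N-1}$. You actually spell out two points the paper leaves implicit (the transversality of $\sum_j\beta_j f_j(x)\partial_{z_j}$ to the sphere as the reason for full rank, and the strict monotonicity of $s\mapsto|\tilde F(x,s)|^2$ to identify $\tilde F(U)\cap\mathbb{S}^{2N-1}$ with $F(X)$); the only blemish is a harmless nonzero scalar factor in your identity $\tilde F_*(\partial_s-i\mathcal{T})=2\mathcal{T}_\beta^{1,0}\circ F$ (the correct right-hand side is $2\sum_j\beta_j f_j(x)\partial_{z_j}=-2i\,\mathcal{T}_\beta^{1,0}|_{F(x)}$), which does not affect the argument.
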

\begin{proof}
  Consider the complex manifold \((M,T^{1,0}M)\) with \(M=X\times \R\)
  and
  \(T^{1,0}M=T^{1,0}X\oplus\C \{\frac{\partial}{\partial
    s}-i\mathcal{T}\}\). Write \(F=(f_1,\ldots,f_N)\). Since
  \(F_*\mathcal{T}=\mathcal{T}_\beta\circ F\) we find
  \(\mathcal{T}f_j=i\beta_jf_j\), \(1\leq j\leq N\). Hence, by
  Lemma~\ref{lem:HolomorphicFunctionsOnCylinder}, we have that the map
  \(\tilde{F}\colon M\to\C^N\),
  \(\tilde{F}(x,s)=(e^{\beta_1 s}f_1(x),\ldots,e^{\beta_N s}f_N(x))\)
  is holomorphic with \(\tilde{F}(x,0)=F(x)\) for all \(x\in
  X\). Since the differential of \(F\) is injective and
  \(\overline{\partial}\tilde{F}=0\) we have that the differential of
  \(\tilde{F}\) is injective near \(X\times\{0\}\). Since \(F\) is
  injective, it follows that for any point \(x_0\in X\) there is an
  open neighborhood \(U\subset M\) around \((x_0,0)\) such that
  \(\tilde{F}\) restricts to a holomorphic embedding of \(U\) into
  \(\C^N\). We find
  \[\frac{\partial}{\partial
      s}\tilde{F}=(x,0)\sum_{j=1}^{N}\beta_j\operatorname{Re}\left(f_j(x)\frac{\partial}{\partial
        z_j}\right).\] As a consequence, \(F(X)\) is locally the
  transversal intersection of the complex manifold \(\tilde{F}(U)\)
  and \(\mathbb{S}^{2N-1}\). It follows that \(F(X)\) is real
  analytic.
	 
\end{proof}
\begin{remark}
  As a consequence of
  Lemma~\ref{lem:SasakianSubmanifoldsOfSpheresAreRealAnalytic} any
  (smooth) Sasakian submanifold of a \(\beta\)-weighted sphere is real
  analytic.
\end{remark}

\subsection{Microlocal Analysis of the Szeg\H{o} Kernel}\label{sec:CRmanifoldsMicroLocal}
Let \((X,T^{1,0}X,\alpha)\) be a compact pseudohermitian manifold and
let \(\mathcal{T}\) denote the Reeb vector field associated to
\(\alpha\).  We choose a Hermitian metric $\langle\cdot|\cdot\rangle$
on \(\C TX\) so that $\langle\mathcal{T}|\mathcal{T}\rangle=1$ on $X$, \(T^{1,0}X\perp T^{0,1}X\), \(\mathcal{T}\perp T^{1,0}X\oplus T^{0,1}X\) and \(\langle V| W\rangle\in \R\) for \(V,W\in T_xX\), \(x\in X\).  With respect to the given
Hermitian metric $\langle\cdot|\cdot\rangle$, we consider the
orthogonal projection
\begin{equation}
  \pi^{(0,q)}:\Lambda^q\mathbb{C}T^*X\to T^{*0,q}X.
\end{equation}
The tangential Cauchy--Riemann operator is defined to be
\begin{equation}
  \label{tangential Cauchy Riemann operator}
  \overline{\partial}_b:=\pi^{(0,q+1)}\circ d:\Omega^{0,q}(X)\to\Omega^{0,q+1}(X).
\end{equation}
By Cartan's formula, we can check that
\begin{equation}
  \overline{\partial}_b^2=0.  
\end{equation}
We work with two volume forms on $X$.
\begin{itemize}
\item[(\emph{i})] A given smooth positive $(2n+1)$-form $dV(x)$.
\item[(\emph{ii})] The $(2n+1)$-form
  $dV_{\alpha}:=\frac{2^{-n}}{n!}\alpha\wedge\left(d\alpha\right)^n$
  given by the contact form $\xi$.
\end{itemize}
Take the $L^2$-inner product $(\cdot|\cdot)$ on $\Omega^{0,q}(X)$
induced by $dV$ and $\langle\cdot|\cdot\rangle$ via
\begin{equation}
  (f|g):=\int_X\langle f|g\rangle dV,~f,g\in\Omega^{0,q}(X).
\end{equation}
We denote by $L^2_{(0,q)}(X)$ the completion of $\Omega^{0,q}(X)$ with
respect to $(\cdot|\cdot)$, and we write
$L^2(X):=L^2_{(0,0)}(X)$. Furthermore, for \(q=0\) we will also use
the notation \((\cdot,\cdot):=(\cdot|\cdot)\) to denote the
\(L^2\)-inner product.  Let $\|\cdot\|$ denote the corresponding
$L^2$-norm on $L^2_{(0,q)}(X)$.  Let $\overline{\partial}_b^*$ be the
formal adjoint of $\overline{\partial}_b$ with respect to
$(\cdot|\cdot)$, and the Kohn Laplacian is defined by
\begin{equation}\label{eq:DefKohnLaplacian}
  \Box^{(0)}_{b}:=\overline{\partial}_b^*\overline{\partial}_b:\mathscr{C}^\infty(X)\to\mathscr{C}^\infty(X),
\end{equation}
which is not an elliptic operator because the principal symbol
$\sigma_{\Box^{(0)}_{b}}(x,\eta)\in\mathscr{C}^\infty(T^*X)$ has a
non-empty characteristic set
$\{(x,\eta)\in
T^*X:\eta=\lambda\alpha(x),~\lambda\in\mathbb{R}^*\}$. For later use,
we denote the symplectic cone
\begin{equation}
  \Sigma:=\{(x,\eta)\in T^*X:\eta=\lambda\alpha(x),~\lambda>0\}.
\end{equation}
In fact, $\Box^{(0)}_{b}$ can even not be hypoelliptic, that is,
$\Box^{(0)}_{b}u\in\mathscr{C}^\infty(X)$ cannot guarantee
$u\in\mathscr{C}^\infty(X)$. We extend $\overline{\partial}_b$ to the
$L^2$-space by
\begin{equation}\label{e-gue201223yydu}
  \overline{\partial}_b: \Dom\overline{\partial}_b\subset L^2(X)\to 
  L^2_{(0,1)}(X),\quad
  \Dom\overline{\partial}_b:=\{u\in L^2(X):\overline{\partial}_bu
  \in L^2_{(0,1)}(X)\}.
\end{equation} 
Let
$\overline{\partial}^*_{b,H}: \Dom\overline{\partial}^*_{b,H}\subset
L^2_{(0,1)}(X)\to L^2(X)$ be the $L^2$-adjoint of
$\overline{\partial}_{b}$. Let
$\Box^{(0)}_b: \Dom\Box^{(0)}_b\subset L^2(X)\to L^2(X)$, where
\[\Dom\Box^{(0)}_b=\{u\in L^2(X): u\in\Dom\overline{\partial}_b,
  \overline{\partial}_bu\in\Dom\overline{\partial}^*_{b,H}\}\] and
$\Box^{(0)}_bu=\overline{\partial}^*_{b,H}\,\overline{\partial}_{b}u$,
$u\in \Dom\Box^{(0)}_b$.

\begin{definition}
  The space of $L^2$ CR functions is given by
  \begin{equation}
    H_b^{0}(X):=\ker\Box^{(0)}_{b}:=
    \left\{u\in\Dom\Box^{(0)}_{b}:
      \Box^{(0)}_{b} u=0~\text{in the sense of distributions}\right\}.
  \end{equation}
\end{definition}

\begin{definition}
  The orthogonal projection
  \begin{equation}
    \Pi: L^2(X)\to\ker\Box^{(0)}_{b}
  \end{equation}
  is called the Szeg\H{o} projection on $X$, and we call its Schwartz
  kernel $\Pi(x,y)\in\mathscr{D}'(X\times X)$ the Szeg\H{o} kernel on
  $X$.
\end{definition}

In our case, we can understand the singularities of $\Pi$ by
microlocal analysis as in \cite{BS75}, see also \cites{Hs10,HM17}.

\begin{theorem}[\cite{Hs10}*{Part I Theorem 1.2}]
  \label{Boutet-Sjoestrand theorem}
  Let $(X,T^{1,0}X)$ be an orientable compact strictly pseudoconvex
  Cauchy--Riemann manifold with volume form \(dV\) such that the Kohn
  Laplacian on $X$ has closed range in $L^2(X)$. For any coordinate
  chart $(D,x)$ in $X$, we have
  \begin{equation}
    \label{eq:Szego FIO}
    \Pi(x,y)\equiv\int_0^{+\infty} e^{it\varphi(x,y)}s^\varphi(x,y,t)dt,
  \end{equation}
  where
  \begin{equation}\label{eq:varphi(x,y)}
    \begin{split}
      &\varphi(x,y)\in \mathscr{C}^\infty(D\times D),\\
      &\mbox{${\rm Im\,}\varphi(x,y)\geq 0$},\\
      &\mbox{$\varphi(x,y)=0$ if and only if $y=x$},\\
      &\mbox{$d_x\varphi(x,x)=-d_y\varphi(x,x)=\xi(x)$},
    \end{split}
  \end{equation}
  and
  \begin{equation}
    \label{s(x,y,t)}
    \begin{split}
      &s^\varphi(x,y,t)\in S^n_{1,0}(D\times D\times\mathbb R_+),\\
      &s^\varphi(x,y,t)\sim\sum^{+\infty}_{j=0}s^\varphi_j(x,y)t^{n-j}~\text{in}~S^n_{1,0}(D\times D\times\mathbb R_+),\\
      &s^\varphi_j(x,y)\in\mathscr{C}^\infty(D\times D),~j=0,1,2,\ldots,\\
      &s^\varphi_0(x,x)=\frac{1}{2\pi^{n+1}}\frac{dV_{\xi}}{dV}(x)\neq 0.
    \end{split}
  \end{equation}
\end{theorem}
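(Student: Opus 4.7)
The statement is the CR analogue of the Boutet de Monvel--Sj\"ostrand description of the Bergman/Szeg\H{o} kernel as a Fourier integral operator with complex phase. My plan is to follow the strategy of Boutet de Monvel--Sj\"ostrand in \cite{BS75}, adapted to the abstract embeddable CR setting as in \cite{Hs10}. The key ingredients are (a) the fact that $\Box_b^{(0)}$ is a pseudodifferential operator of type $(1/2,1/2)$ whose characteristic variety is exactly the symplectic cone $\Sigma$, (b) the closed range hypothesis, which guarantees that $\Pi$ is a genuine (not merely microlocal) Schwartz-kernel and satisfies $\Box_b^{(0)}\Pi \equiv 0$ as well as $\Pi^2=\Pi$, and (c) the hypoellipticity of $\Box_b^{(0)}$ away from $\Sigma$, which immediately forces $\mathrm{WF}'(\Pi)\subset \mathrm{diag}(\Sigma\times\Sigma)$.

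The first step is to construct the phase $\varphi(x,y)$. On a coordinate chart $D$, one solves the eikonal equation arising from the principal symbol of $\Box_b^{(0)}$ along $\Sigma$ in the complex/almost-analytic category: one looks for $\varphi$ with $\mathrm{Im}\,\varphi\geq 0$, $\varphi(x,x)=0$, $d_x\varphi(x,x)=-d_y\varphi(x,x)=\alpha(x)$, and such that the (almost-analytic continuation of the) principal symbol of $\Box_b^{(0)}$ vanishes to infinite order on the set cut out by $d_x\varphi$. The existence of such $\varphi$, unique modulo $O(|x-y|^\infty)$ and equivalence of phases, is standard Melin--Sj\"ostrand theory applied to the non-degenerate characteristic $\Sigma$, where non-degeneracy comes from strict pseudoconvexity.

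The second step is to construct the symbol $s^\varphi(x,y,t)\in S^n_{1,0}(D\times D\times\mathbb{R}_+)$ as an asymptotic sum $\sum_j s_j^\varphi(x,y)t^{n-j}$. The coefficients $s_j^\varphi$ are obtained by solving, order by order, the transport equations coming from two requirements on the trial operator
\[
\tilde{\Pi}(x,y)=\int_0^\infty e^{it\varphi(x,y)}s^\varphi(x,y,t)\,dt:
\]
namely $\Box_b^{(0)}\tilde{\Pi}\equiv 0$ (smoothing) and $\tilde{\Pi}^2\equiv\tilde{\Pi}$ microlocally. The leading coefficient $s_0^\varphi(x,x)=\frac{1}{2\pi^{n+1}}\frac{dV_\alpha}{dV}(x)$ is then pinned down by computing the stationary-phase expansion of $\tilde{\Pi}\tilde{\Pi}$ on the diagonal and matching it with $\tilde{\Pi}$; the factor $\frac{dV_\alpha}{dV}$ appears because the inner product is defined by $dV$ while the relevant symplectic density on $\Sigma$ is the one induced by $\alpha\wedge(d\alpha)^n$. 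A Borel summation then produces an actual symbol $s^\varphi$ with the prescribed asymptotic expansion.

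The final step, which I expect to be the main obstacle, is to show $\Pi\equiv\tilde{\Pi}$ on $D\times D$. Microlocally on $\Sigma$ this is essentially forced by the two properties of $\tilde{\Pi}$ above together with the uniqueness of an idempotent with the correct wavefront set; away from $\Sigma$ both kernels are smooth by hypoellipticity of $\Box_b^{(0)}$. The globalization uses the closed range assumption: $\Box_b^{(0)}$ has a bounded partial inverse $N$ on $(\ker\Box_b^{(0)})^\perp$ with $\Pi=I-N\Box_b^{(0)}$ on $\Dom\Box_b^{(0)}$, so $\Pi$ is a well-defined continuous operator on $\mathscr{C}^\infty(X)$ and $\Pi-\tilde\Pi$ is a smoothing operator because it is annihilated microlocally on both sides by (a parametrix for) $\Box_b^{(0)}$. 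Assembling these pieces yields the stated representation on any coordinate chart.
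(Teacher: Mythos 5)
This statement is quoted verbatim from Hsiao's memoir (\cite{Hs10}*{Part I, Theorem 1.2}) and the paper offers no proof of it, so there is nothing internal to compare against; I can only measure your sketch against the known proof in the cited source. Your outline is essentially the Boutet de Monvel--Sj\"ostrand/Melin--Sj\"ostrand strategy that \cite{Hs10} implements in the abstract embeddable setting: eikonal equation for a complex phase taming the non-degenerate characteristic cone, transport equations plus the idempotency condition to fix the symbol, Borel summation, and the closed-range hypothesis to pass from a microlocal approximate projector to the genuine $L^2$ projector. The normalization $s_0^\varphi(x,x)=\frac{1}{2\pi^{n+1}}\frac{dV_\alpha}{dV}(x)$ is indeed pinned down exactly as you say, by stationary phase in $\tilde\Pi\circ\tilde\Pi$ against the density $dV$.

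Two points deserve correction or amplification. First, $\Box_b^{(0)}$ is a second-order \emph{differential} operator, not a pseudodifferential operator of type $(1/2,1/2)$; it is the parametrix and the projector that live in that exotic class. Second, and more substantively, the characteristic variety of $\Box_b^{(0)}$ is $\Sigma\cup(-\Sigma)$, i.e.\ $\{\eta=\lambda\alpha(x),\ \lambda\in\R^*\}$, not just the positive cone $\Sigma$. Ellipticity only kills the wavefront set of $\Pi$ off the full characteristic set; to exclude $-\Sigma$ (and hence to get a one-sided $dt$-integral over $\R_+$ with no second phase) one needs the microlocal Hodge decomposition at $-\Sigma$, where $\Box_b^{(0)}$ acting on $(0,0)$-forms is hypoelliptic with loss of one derivative because the Levi form is positive definite and $q=0\neq n$. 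This is precisely the step where the degree of the forms and the sign of the Levi form enter, and where the argument would fail for $q=n$; your phrase ``hypoellipticity of $\Box_b^{(0)}$ away from $\Sigma$'' is true but conceals that this is a non-elliptic estimate requiring its own construction. With that supplied, the identification $\Pi\equiv\tilde\Pi$ under the closed-range hypothesis goes through as you describe.
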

In the local picture, near any point $p\in X$, let $\{W_j\}_{j=1}^n$
be an orthonormal frame of $T^{1,0}X$ in a neighborhood of $p$ such
that $\mathcal{L}_p(W_j,\overline{W}_s)=\delta_{js}\mu_j$, for some
$\mu_j>0$ and $j,s\in\{1,\ldots,n\}$.  Define
\begin{equation}
  \det\mathcal{L}_p:=\Pi_{j=1}^n\mu_j(p).
\end{equation} 
Then
\begin{equation}
  dV_\xi(x)=\det\mathcal{L}_x\sqrt{\det(g)}\,dx.
\end{equation}
where $g=(g_{jk})_{j,k=1}^{2n+1}$,
$g_{jk}=\langle\frac{\partial}{\partial x_j}| \frac{\partial}{\partial
  x_k}\rangle$.

\section{The Sasakian Case}\label{sec:SasakianCase}
\subsection{Proof of Theorem~\ref{thm:MainThmIntro}}\label{sec:ProofOfMainThmSasakian}
In this section we are going to prove Theorem~\ref{thm:MainThmIntro}.
Let \((X,T^{1,0}X)\) be a CR manifold and let \(\mathcal{T}\in \vbunsec{X,TX}\) be real vector field which is transversal and CR.
We define \(Y:=X\times \R\) and \(T^{1,0}Y=T^{1,0}X\oplus\C \{\frac{\partial}{\partial s}-i\mathcal{T}\}\). Then \((Y,T^{1,0}Y)\) is a complex manifold and \(X\ni x\mapsto (x,0)\in Y\) defines a CR embedding of \(X\) into \(Y\). Hence \(X\) can be considered as a CR submanifold of \(Y\). Let \(\varphi\colon X\to \R\) be a smooth function with \(\mathcal{T}\varphi\equiv 0\). Consider \(X_\varphi=\{(x,\varphi(x))\mid x\in X\}=\{(x,s)\in Y\mid s=\varphi(x)\}\subset Y\) and \(T^{1,0}X_\varphi:=\C TX_\varphi\cap T^{1,0}Y\). Since \(\mathcal{T}\varphi\equiv 0\) we find that \(\mathcal{T}\in \vbunsec{X_\varphi,TX_\varphi}\) defines a transversal CR vector field. Recall that by Lemma~\ref{lem:IsoGraphDeformation} we have that \((X_\varphi,T^{1,0}X_\varphi, \mathcal{T})\) and \((X,\mathcal{V}(\varphi),\mathcal{T})\) are isomorphic. Furthermore, it follows from Lemma~\ref{lem:HolomorphicFunctionsOnCylinder} that given a function \(f\in H_b^0(X)\cap\mathscr{C}^\infty(X)\) with \(\mathcal{T}f=i\lambda f\) for some \(\lambda\in \R\) we have that \(\tilde{f}\colon Y\to\C\), \(\tilde{f}(x,s)=e^{\lambda s}f(x)\) defines a holomorphic function \(\tilde{f}\in\mathcal{O}(Y)\) on \(Y\).

From now on we assume that \((X,T^{1,0}X,\mathcal{T})\) is a compact Sasakian manifold with \(\dim_\R X=2n+1\), \(n\geq 1\).
Let \(\alpha:=\alpha^{T^{1,0}X,\mathcal{T}}\in \Omega^1(X)\) be the smooth real one form satisfying
\[\alpha(\mathcal{T})=1 \text{ and } \C\ker \alpha=T^{1,0}X\oplus \overline{T^{1,0}X}\]
and let \(dV\) denote the volume form on \(X\) given by 
\[dV=\frac{1}{(2\pi)^{n+1}n!}\alpha\wedge (d\alpha)^n.\] Then \(dV\) is invariant under the flow of \(\mathcal{T}\). We denote by \(L^2(X)\) the space of \(L^2\)-functions on \(X\) with inner product given by \((f,g)_X=\int_Xf\overline{g}dV\),  \(f,g\in L^2(X)\).
Let \(0<\lambda_1\leq \lambda_2\leq\ldots\) denote the positive eigenvalues of \(-i\mathcal{T}\) acting on the space of CR functions and let \(\{f_j\}_{j\in\N}\) be a respective orthonormal system of smooth CR eigenfunctions that is \(f_j\in H^0_b(X)\cap \mathscr{C}^\infty\), \(-i\mathcal{T}f_j=\lambda_jf_j\), \((f_j,f_j)_X=1\) and \((f_j,f_\ell)_X=0\) for all \(j,\ell\in \N\) with \(j\neq \ell\).
Fix \(0<\delta_1<\delta_2<1\), choose a function \(\chi\in\mathscr{C}^\infty_c((\delta_1,\delta_2))\) with \(\chi\not\equiv 0\) and define \(\chi_k(t):=\chi(k^{-1}t)\) for \(t,k>0\).
For \(k>0\) define a CR map \(\mathcal{F}_k\colon X\to \C^{N_k}\)
\begin{eqnarray}\label{eq:MapFkOnX}
	\mathcal{F}_k(x)=(\chi_k(\lambda_1)f_1(x),\ldots,\chi_k(\lambda_{N_k})f_{N_k}(x))
\end{eqnarray}
with \(N_k=\# \{j\in \N\mid \lambda_j\leq k\}\).
From the results of Herrmann--Hsiao--Li~\cite{HHL20} and Herrmann--Hsiao--Marinescu--Shen~\cite{HHMS23} we obtain the following.
\begin{theorem}[cf. \cite{HHMS23},\cite{HHL20}]\label{thm:HHMSEmbedding}
	Let \((X,T^{1,0}X,\mathcal{T})\) be a compact Sasakian manifold with \(\dim_\R X=2n+1\), \(n\geq 1\). Given \(0<\delta_1<\delta_2<1\) and a function \(\chi\in\mathscr{C}^\infty_c((\delta_1,\delta_2))\) with \(\chi\not\equiv 0\) let \(\mathcal{F}_k\) be the map defined by \eqref{eq:MapFkOnX}. There exists \(k_0>0\) such that \(\mathcal{F}_k\) is a CR embedding for all \(k>k_0\). Furthermore, the function \(x\mapsto|\mathcal{F}_k(x)|^2\) has a smooth asymptotic expansion. More precisely, there exist smooth real functions \(a_1,a_2,\ldots\in \mathscr{C}^\infty(X)\) such that for any \(m,N\in \N\) there exists a constant \(C_{m,N}>0\) with  
	\[\left\||\mathcal{F}_k|^2-\sum_{j=0}^Na_jk^{n+1-j}\right\|_{\mathscr{C}^m(X)}\leq C_{m,N}k^{n-N}.\]
	In addition, we have that \(a_0\) is a constant function, given by \(a_0\equiv \int_0^1|\chi(t)|^2t^ndt\).
\end{theorem}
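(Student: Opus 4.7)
The plan is to identify $|\mathcal{F}_k|^2$ with the restriction to the diagonal of the semi-classical functional calculus for $-i\mathcal{T}$ acting on CR functions, and then read off all three assertions (expansion, leading constant, and embedding property) from the kernel asymptotics of \cite{HHL20,HHMS23} combined with the peak-section embedding argument of \cite{HHMS23}.

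First I would set $\eta := |\chi|^2 \in \mathscr{C}^\infty_c((\delta_1,\delta_2))$ and $\eta_k(\lambda):=\eta(\lambda/k)$. Because $\supp \eta \subset (0,1)$, the cut-off at $\lambda_j\leq k$ is automatic and
\begin{equation*}
|\mathcal{F}_k(x)|^2 \;=\; \sum_{j=1}^{N_k}|\chi_k(\lambda_j)|^2|f_j(x)|^2 \;=\; \sum_{j=1}^\infty \eta_k(\lambda_j)\,|f_j(x)|^2 \;=\; \eta_k(-i\mathcal{T})(x,x).
\end{equation*}
This reduces both the expansion statement and the identification of the leading coefficient to a known result about the semi-classical functional calculus of $-i\mathcal{T}$.

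For the expansion, the Fourier-integral-operator description of $\eta_k(-i\mathcal{T})$ up to $O(k^{-\infty})$ in \cite{HHMS23}, together with the diagonal expansion of \cite{HHL20}, yields
\begin{equation*}
\eta_k(-i\mathcal{T})(x,x) \;\sim\; \sum_{j=0}^\infty a_j(x)\, k^{n+1-j} \quad \text{in } \mathscr{C}^\infty(X),
\end{equation*}
with the usual symbol-class remainder bounds translating to $\mathscr{C}^m$-estimates of exactly the form $C_{m,N} k^{n-N}$ after subtracting the first $N+1$ terms. To pin down $a_0$ I would invoke the explicit formula for the leading coefficient from \cite{CHH24}, namely $a_0(x) = b_0(x)\int_\R \eta(t) t^n\, dt$, where $b_0$ is a universal invariant depending only on the pseudohermitian structure and on the choice of volume form. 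Our normalization $dV = \frac{1}{(2\pi)^{n+1}n!}\alpha\wedge(d\alpha)^n$ is precisely the one that forces $b_0\equiv 1$; combined with $\supp\eta\subset(0,1)$ this gives $a_0\equiv \int_0^1|\chi(t)|^2t^n\, dt$ as stated.

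For the CR embedding statement I would use the peak-section method of \cite{HHMS23} (in the spirit of Bouche): by compactness of $X$ and of $X\times X$ minus the diagonal, it is enough to verify, uniformly for $k$ large, that $\mathcal{F}_k$ separates pairs of distinct points and that $d\mathcal{F}_k$ is injective at every point. Both follow from constructing, for any base point $x_0$ (respectively any pair $(x_0,y_0)$ with $x_0\neq y_0$), a CR eigenfunction $f_{j_k}$ with $\lambda_{j_k}\in(\delta_1 k,\delta_2 k)$ that peaks at $x_0$ at the scale dictated by the near-diagonal asymptotics of $\eta_k(-i\mathcal{T})(x,y)$, combined with the off-diagonal estimate $\eta_k(-i\mathcal{T})(x,y)=O(k^{-\infty})$ away from $\{x=y\}$. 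The technical heart — the Fourier-integral-operator description of $\eta_k(-i\mathcal{T})$ in the Sasakian setting — is already provided by \cite{HHL20,HHMS23}, so the only real obstacle here is bookkeeping: tracking normalizations consistently through \cite{CHH24} to land on the exact constant $\int_0^1|\chi(t)|^2t^n\, dt$, and verifying that the peak-section construction remains uniform when one restricts the spectral window to $(\delta_1 k,\delta_2 k)$ rather than the full interval $(0,k)$.
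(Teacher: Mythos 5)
Your proposal is correct and follows essentially the same route as the paper, which itself only records that the expansion part follows from \cite{HHL20} (via the identification $|\mathcal{F}_k(x)|^2=\eta_k(-i\mathcal{T})(x,x)$ with $\eta=|\chi|^2$) and the embedding part from \cite{HHMS23}; your elaboration of the normalization of $dV$ and of the peak-section argument fills in exactly the details the paper delegates to those references.
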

We note that since \((X,T^{1,0}X,\mathcal{T})\) is Sasakian the expansion part in Theorem~\ref{thm:HHMSEmbedding} follows from~\cite{HHL20} while the embedding part follows from~\cite{HHMS23}. 
Given \(c>0\) we put \(Y_c:=\{(x.s)\in Y\mid |s|<c\}\) which is an open neighborhood of \(X\times\{0\}\) in \(Y\).
\begin{lemma}\label{lem:ExistenceOfG}
	Let \((X,T^{1,0}X.\mathcal{T})\) be a compact Sasakian manifold with \(\dim_\R X=2n+1\), \(n\geq 1\) and let \(\{\lambda_j\}_{j\in\N}\) and \(\{f_j\}_{j\in \N}\) be as above. Define \(\tilde{f}_j(x,s):=e^{\lambda_js}f_j(x)\) for \(j\in \N\). For any \(m_0\in \N\) there exist \(c>0\), \(N\in\N\) such that 
	\[G\colon Y\to \C^{N-m_0+1},\,\, G(x,s)=(\tilde{f}_{m_0}(x,s),\tilde{f}_{m_0+1}(x,s),\ldots,\tilde{f}_N(x,s))\] defines a holomorphic embedding of \(Y_c\) into \(\C^{N-m_0+1}\setminus\{0\}\).
\end{lemma}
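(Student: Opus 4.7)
The plan is to produce $G$ as the holomorphic extension of a CR embedding $F$ of $X$ built from eigenfunctions $f_j$ with $j\geq m_0$, and to verify the embedding property by reducing to a clean identity about the Reeb direction. I would first fix $0<\delta_1<\delta_2<1$ and a nontrivial $\chi\in\mathscr{C}_c^\infty((\delta_1,\delta_2))$ and apply Theorem~\ref{thm:HHMSEmbedding}: for all large enough $k$, $\mathcal{F}_k$ is a CR embedding of $X$ and $|\mathcal{F}_k|^2\geq \tfrac{1}{2}a_0k^{n+1}$ uniformly. Since only indices $j$ with $\lambda_j\in(k\delta_1,k\delta_2)$ contribute with nonzero weight, and $\#\{j:\lambda_j\leq k\delta_1\}\to\infty$ as $k\to\infty$, for $k$ sufficiently large every such $j$ is at least $m_0$. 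Taking $N$ at least as large as the largest such index, the map $F:=(f_{m_0},\ldots,f_N)$ is still a CR embedding (the nontrivially weighted components of $\mathcal{F}_k$ appear, up to nonzero scalars, as a subtuple of $F$, and extra components cannot destroy an embedding), and the lower bound on $|\mathcal{F}_k|^2$ forces $|F(x)|^2>0$ uniformly on $X$.

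By Lemma~\ref{lem:HolomorphicFunctionsOnCylinder}, each $\tilde f_j(x,s)=e^{\lambda_j s}f_j(x)$ is holomorphic on $Y$, so $G:=(\tilde f_{m_0},\ldots,\tilde f_N)\colon Y\to\mathbb{C}^{N-m_0+1}$ is holomorphic with $G|_{X\times\{0\}}=F$. Continuity of $|G|$ together with the uniform lower bound on $|F|$ yields $c>0$ with $G(Y_c)\subset\mathbb{C}^{N-m_0+1}\setminus\{0\}$. To promote $G$ to an embedding on $Y_c$ (after possibly shrinking $c$) I would establish injectivity of $dG$ at every $(x,0)\in X\times\{0\}$ and then appeal to standard compactness/continuity arguments: local immersions near the compact hypersurface $X\times\{0\}$ combined with global injectivity of $F$ propagate to a genuine embedding on a tubular neighborhood, since otherwise one could extract sequences of distinct pairs $(x_n,s_n),(y_n,t_n)$ with common image and $s_n,t_n\to 0$, forcing $x_n,y_n$ to accumulate at a single point and contradicting local injectivity there.

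The crux is therefore the immersion step. Since $G$ is holomorphic and $dG|_{T_xX}=dF$ is already injective, it suffices to show $dG(\partial_s)|_{(x,0)}\notin dF(T_xX)$ at every $x\in X$. A direct computation using $\mathcal{T}f_j=i\lambda_j f_j$ yields
\[dG(\partial_s)|_{(x,0)}=(\lambda_j f_j(x))_{j=m_0}^N=-J\cdot dF(\mathcal{T})|_x,\]
where $J$ denotes the complex structure on $\mathbb{C}^{N-m_0+1}$. This reduces the question to a purely CR-algebraic fact: whenever $F$ is CR and $\mathcal{T}$ is transverse to $HX$, one has $J\cdot dF(\mathcal{T})\notin dF(T_xX)$. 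Indeed, assuming $J\,dF(\mathcal{T})=u+r\,dF(\mathcal{T})$ with $u\in dF(HX)$ and $r\in\mathbb{R}$, applying $J$ again and using that $dF(HX)$ is $J$-invariant (this is the CR condition) gives $-(1+r^2)\,dF(\mathcal{T})=Ju+ru\in dF(HX)$, contradicting $\mathcal{T}\notin HX$ via injectivity of $dF$. The main obstacle I anticipate is exactly this immersion check; the eigenfunction counting, the holomorphicity of the extension, and the final compactness argument for global injectivity are routine once the identity $dG(\partial_s)=-J\,dF(\mathcal{T})$ is in hand.
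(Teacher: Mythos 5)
Your proposal is correct and follows the same route as the paper: restrict to eigenfunctions with index at least $m_0$ using Theorem~\ref{thm:HHMSEmbedding} (for $k$ large the nonzero-weighted components of $\mathcal{F}_k$ all have index $>m_0$, so $(f_{m_0},\ldots,f_N)$ is an embedding into $\C^{N-m_0+1}\setminus\{0\}$), extend holomorphically, check $dG$ is injective along $X\times\{0\}$, and propagate to an embedding of a tube by compactness. The only divergence is the immersion step, which you rightly identify as the crux: the paper disposes of it in one line by the parity argument (the kernel of the real differential of a holomorphic map is $J$-invariant, hence even-dimensional, while injectivity on the $(2n+1)$-dimensional $T_x(X\times\{0\})$ forces that kernel to have dimension at most one, hence zero), whereas you compute $dG(\partial_s)=-J\,dF(\mathcal{T})$ and show $J\,dF(\mathcal{T})\notin dF(T_xX)$ from the $J$-invariance of $dF(H_xX)$ and transversality of $\mathcal{T}$. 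Both are valid; the paper's is shorter, while yours has the minor benefit of making explicit the relation between $\partial_s$ and the Reeb direction that the paper exploits elsewhere (e.g.\ in Lemma~\ref{lem:SasakianSubmanifoldsOfSpheresAreRealAnalytic}).
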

\begin{proof}
	Choose \(k>0\) large enough such that \(\mathcal{F}_k\) in Theorem~\ref{thm:HHMSEmbedding} is an embedding of \(X\) into some \(\C^{N}\setminus\{0\}\) and that \(k\delta_1>m_0\). It follows that  \(x\mapsto G(x,0)\) defines an embedding of \(X\) into \(\C^{N-m_0+1}\setminus\{0\}\). Since, in addition,  \(G\) is holomorphic it follows that \(dG\) is injective at any point \((x,0)\in Y\). Hence there exists \(\tilde{c}>0\) such that \(G|_{Y_{\tilde{c}}}\) is a holomorphic injective immersion. It follows that \(G\) defines a holomorphic embedding of \(Y_c\)  into \(\C^{N-m_0+1}\setminus\{0\}\) for some \(0<c<\tilde{c}\). 
\end{proof}
Let \(c\), \(N\) and \(G\) be as in Lemma~\ref{lem:ExistenceOfG} for some \(m_0\in\N\). To simplify the notation we assume without loss of generality that \(m_0=1\). For \(k>0\) define a holomorphic map 
\begin{eqnarray}\label{eq:MapFkOnM}
	F_k\colon Y\to \C^{N+N_k},\,\,\,\, F_k(x,s)=(e^{-k}G(x,s),H_k(x,s))
\end{eqnarray}
where \(H_k\colon Y\to \C^{N_k}\) is defined by
\[H_k(x,s)=C_\chi k^{\frac{-n-1}{2}}(\chi_k(\lambda_1)\tilde{f}_1(x,s),\ldots,\chi_k(\lambda_{N_k})\tilde{f}_{N_k}(x,s))\]
with  \(\{\tilde{f_j}\}_{j\in\N}\) as in Lemma~\ref{lem:ExistenceOfG} and \(C_\chi:=(\int_0^1|\chi(t)|^2t^ndt)^{-\frac{1}{2}}\).
In the following proposition we verify some properties of the map \(F_k\).
\begin{proposition}\label{pro:PropertiesOfFk}
	With the assumptions and definitions above we have:
	\begin{itemize}
		\item[(i)] For any \(k>0\) the map \(F_k\) defines a holomorphic embedding of \(Y_c\) into \(\C^{N+N_k}\setminus\{0\}\).  Furthermore, we have \(x\mapsto|F_k(x,0)|^2=1+O(k^{-1})\) for \(k\to\infty\) in \(\mathscr{C}^\infty\)-topology on \(X\). 
		\item [(ii)] For any \(k>0\) and \(x\in X\) we have that \( s\mapsto |F_k(x,s)|^2\) defines a positive, strictly increasing function on \(\R\) with
		\[\lim_{s\to -\infty}|F(x,s)|^2=0\text{ and } \lim_{s\to \infty}|F(x,s)|^2=\infty.\]
		\item[(iii)] There exist constants \(C,k_0>0\) such that for any \(k\geq k_0\), \(x\in X\) and \(s\in \R\) with \(|F_k(x,s)|=1\) we have \(|s|\leq Ck^{-2}\).
		\item[(iv)] For any \(C>0\) there exist constants \(A,k_0>0\) such that for any \(k\geq k_0\), \(x\in X\) and \(s\in \R\) with \(|s|\leq Ck^{-1}\) we have
		\[ 	\frac{\partial}{\partial s}|F_k(x,s)|^2\geq k/A.\]
		\item[(v)] For \(k>0\), \(s\in \R\) and \(\ell\in \N_0\) consider the smooth function
		\[r_{k,s,\ell}\colon X\to \R,\,\,\,\,r_{k,s,\ell}(x)=k^{-\ell}\left(\frac{\partial}{\partial s}\right)^\ell|F_k(x,s)|^2-b_\ell\]   with \(b_\ell=(\int_0^1|\chi(t)|^2t^{n+\ell}dt)(\int_0^1|\chi(t)|^2t^ndt)^{-1}\). Fix \(C>0\). There exists \(k_0>0\) such that for any \(\ell\in \N_0\) and \(m\in\N_0\) there is a constant \(C_{\ell,m}\) such that for all \(k\geq k_0\) and \(|s|\leq Ck^{-2}\) one has \(\|r_{k,s,\ell}\|_{\mathscr{C}^m(X)}\leq C_{m,\ell}k^{-1}\).  	 
	\end{itemize}
\end{proposition}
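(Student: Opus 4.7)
All five parts follow from the explicit expansion
\[|F_k(x,s)|^2 = e^{-2k}\sum_{j=1}^{N}e^{2\lambda_j s}|f_j(x)|^2 + C_\chi^2\, k^{-n-1}\sum_{j=1}^{N_k}|\chi_k(\lambda_j)|^2 e^{2\lambda_j s}|f_j(x)|^2\]
combined with the semi-classical Szeg\H{o} expansion in Theorem~\ref{thm:HHMSEmbedding} together with an $s$-parameter version of it. I would verify the claims in the order (i), (ii), (iv), (iii), (v), because (iii) uses the derivative lower bound from (iv).

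For (i), $F_k$ is a holomorphic embedding of $Y_c$ because its first $N$ components are $e^{-k}G$ and $G$ itself embeds $Y_c$ by Lemma~\ref{lem:ExistenceOfG}; adding further holomorphic coordinates preserves injectivity and immersivity. Setting $s=0$ gives $|F_k(x,0)|^2 = e^{-2k}|G(x,0)|^2 + C_\chi^2 k^{-n-1}|\mathcal{F}_k(x)|^2$; the first summand is $O(k^{-\infty})$ in $\mathscr{C}^\infty(X)$, and Theorem~\ref{thm:HHMSEmbedding} together with $C_\chi^{-2}=a_0$ gives $1+O(k^{-1})$ for the second. For (ii), each summand in the displayed expansion is a positive strictly increasing exponential in $s$; since $G(x,0)\in \C^N\setminus\{0\}$ for every $x\in X$ there is always a nonvanishing term, so $|F_k(x,s)|^2>0$ and $\partial_s|F_k(x,s)|^2>0$ everywhere, and the claimed limits follow from $\lambda_j>0$. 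For (iv), Theorem~\ref{thm:HHMSEmbedding} applied with the test function $\eta(t)=t|\chi(t)|^2$ yields $\sum_j \lambda_j|\chi_k(\lambda_j)|^2|f_j(x)|^2 = k^{n+2}\int_0^1 t^{n+1}|\chi|^2\,dt + O(k^{n+1})$ in $\mathscr{C}^\infty(X)$; since $|\chi_k(\lambda_j)|^2$ restricts $\lambda_j$ to $[k\delta_1,k\delta_2]$, for $|s|\leq Ck^{-1}$ the factor $e^{2\lambda_j s}$ is uniformly bounded below by $e^{-2C\delta_2}>0$, and the $e^{-k}G$ contribution is $O(k^{-\infty})$, giving $\partial_s|F_k|^2\geq k/A$ for $k$ large. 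Then (iii) follows from the mean value theorem: $1-|F_k(x,0)|^2=O(k^{-1})$ by (i), and along any path from $0$ to a solution $s$ of $|F_k(x,s)|^2=1$ the derivative lower bound from (iv) applies (verified by a short bootstrap: take (iv) with input constant $1$ to conclude the preliminary bound $|s|\leq \mathrm{const}\cdot k^{-1}$, then iterate), yielding $|s|=O(k^{-2})$.

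The main technical step is (v). Writing
\[\partial_s^\ell|H_k(x,s)|^2 = 2^\ell C_\chi^2\, k^{\ell-n-1}\sum_j \eta_{k,s}(k^{-1}\lambda_j)|f_j(x)|^2,\qquad \eta_{k,s}(t) := t^\ell|\chi(t)|^2 e^{2kst},\]
one observes that for $|s|\leq Ck^{-2}$ one has $|ks|\leq Ck^{-1}$, so the family $\{\eta_{k,s}\}$ is bounded in $\mathscr{C}^\infty(\R_+)$ uniformly in $k$ and satisfies $\eta_{k,s} = t^\ell|\chi(t)|^2 + O(k^{-1})$ in $\mathscr{C}^\infty(\R_+)$ uniformly in the allowed $s$. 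A parameter version of Theorem~\ref{thm:HHMSEmbedding}, adapting the Szeg\H{o} asymptotic to smooth families of compactly supported test functions (essentially Lemma~\ref{lem:TaylorExpansionFunctionalCalculus} announced in the introduction), then gives, uniformly in the allowed $s$ and in $\mathscr{C}^m(X)$,
\[\sum_j \eta_{k,s}(k^{-1}\lambda_j)|f_j(x)|^2 = k^{n+1}\Bigl(\int_0^1 t^{n+\ell}|\chi(t)|^2\,dt + O(k^{-1})\Bigr).\]
Combining with $C_\chi^{-2}=\int_0^1 t^n|\chi|^2\,dt$ and noting that the $e^{-k}G$-term contributes $O(k^{-\infty})$ to every derivative yields $k^{-\ell}\partial_s^\ell|F_k|^2 = b_\ell + O(k^{-1})$ in $\mathscr{C}^m(X)$ uniformly for $|s|\leq Ck^{-2}$, proving (v). The main obstacle is precisely establishing the required $s$-uniform Szeg\H{o} expansion with smoothly parametrised test functions; once this tool is in place, the arithmetic in (i)--(iv) is routine.
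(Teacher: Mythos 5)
Your proposal is correct and, for parts (i), (ii), (iv) and (v), follows essentially the same route as the paper: (i) and (ii) from the explicit exponential sum and the embedding property of \(G\), and (v) by reducing \(k^{-\ell}\partial_s^\ell|F_k|^2\) to a functional-calculus expression with the \(s\)-dependent test function \(t^\ell|\chi(t)|^2e^{2kst}\) and invoking the parametrized expansion (this is exactly Lemma~\ref{lem:TaylorExpansionFunctionalCalculus}, which the paper also defers to and proves via Taylor expansion in \(s\) plus H\"ormander's interpolation trick to avoid losing \(k^m\) in the \(\mathscr{C}^m\)-norms — you correctly flag this as the main obstacle). The one genuine divergence is part (iii): the paper proves it directly by sandwiching \(|F_k(x,s)|^2\) between \(A_1e^{-2k}+e^{2k\delta_1 s}(1-A_2k^{-1})\) and \(B_1e^{-2k}+e^{2k\delta_1 s}(1+B_2k^{-1})\) (using that \(\chi_k(\lambda_j)\neq0\) forces \(\lambda_j\geq\delta_1 k\)) and then taking logarithms, whereas you reorder the claims and deduce (iii) from the derivative lower bound (iv) by a mean-value/bootstrap argument. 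Your route is valid — the bootstrap closes in one step: if the solution \(s^*\geq0\) exceeded \(k^{-1}\), then \(|F_k(x,k^{-1})|^2\geq 1-C_1k^{-1}+1/A>1\) would contradict monotonicity, and then \(s^*\leq A|1-|F_k(x,0)|^2|/k=O(k^{-2})\) — and there is no circularity since (iv) only needs the lower bound \(|H_k(\cdot,0)|^2\geq 1/2\) from Theorem~\ref{thm:HHMSEmbedding}. Your treatment of (iv) invokes a full expansion with test function \(t|\chi(t)|^2\), which is heavier than the paper's crude bound \(\lambda_j\geq\delta_1k\) on \(\supp\chi_k\), but it is equally correct.
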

\begin{proof}\(\,\)\\
	\underline{Claim (i):} Since \(F_k=(e^{-k}G,H_k)\) is holomorphic and \(G\) defines an embedding of \(Y_c\) into \(\C^N\setminus\{0\}\), we find that \(F_k\) defines a holomorphic embedding of \(Y_c\) into \(\C^{N+N_k}\setminus\{0\}\) for any \(k>0\). Since \[|F_k(x,0)|^2=e^{-2k}|G(x,0)|^2+C^2_\chi k^{-n-1}|\mathcal{F}_k(x)|^2\]
	with \(\mathcal{F}_k\) as in \eqref{eq:MapFkOnX}, Claim~(i) follows immediately from Theorem~\ref{thm:HHMSEmbedding}. \\
	\underline{Claim (ii):} We write
	\begin{eqnarray}\label{eq:CalculationModulusF_k}
		|F_k(x,s)|^2&=&e^{-2k}\sum_{j=1}^N|\tilde{f}_j(x,s)|^2+C^2_\chi k^{-n-1}\sum_{j=1}^{N_k}|\chi_k(\lambda_j)|^2|\tilde{f}_j(x,s)|^2 \nonumber\\ 
		&=&e^{-2k}\sum_{j=1}^N|f_j(x)|^2e^{2\lambda_j s}+C^2_\chi k^{-n-1}\sum_{j=1}^{N_k}|\chi_k(\lambda_j)|^2|f_j(x)|^2e^{2\lambda_j s}.
	\end{eqnarray}   
	Let \(x\in X\) and \(k>0\) be arbitrary. Since \(G(x,0)\neq 0\) we find \(|\tilde{f}_{j_0}(x,0)|=|f_{j_0}(x)|\neq 0\) for some \(1\leq j_0\leq N\). Since \(\lambda_j>0\) and \(|f_j(x)|^2\geq 0\) for all \(j\in \N\) Claim (ii) follows from the properties of the exponential function and \eqref{eq:CalculationModulusF_k}.\\
	\underline{Claim (iii):} By Lemma~\ref{lem:ExistenceOfG} and Theorem~\ref{thm:HHMSEmbedding} we can choose constants \(0<A_1<1\), \(A_2>0\),  \(\tilde{k}_1>1\) such that \(|G(x,0)|^2>A_1\), \(|H_k(x,0)|^2>1-A_2/k\), \(1-A_2k^{-1}>0\) and \(A_2-kA_1e^{-2k}>0\) 
	holds for all \(x\in X\) and all \(k\geq \tilde{k}_1\). Since \(\lambda_j>0\), \(j\in \N\), and since \(\chi_k(\lambda_j)\neq 0\) implies \(\lambda_j\geq k\delta_1\) we have for \(s\geq 0\), \(x\in X\) and \(k\geq \tilde{k}_1\) that
	\begin{eqnarray*}
		|F_k(x,s)|^2\geq e^{-2k}|G(x,0)|^2+|H_k(x,0)|^2e^{2k\delta_1s}\geq A_1e^{-2k}+e^{2k\delta_1s}(1-A_2k^{-1}). 
	\end{eqnarray*}
	It follows that for any \(x\in X\), any \(k\geq \tilde{k}_1\) and any \(s\geq 0\) with \(|F_k(x,s)|^2=1\) we have \(1\geq A_1e^{-2k}+e^{2k\delta_1s}(1-A_2k^{-1})\) which leads to 
	\[2\delta_1 ks\leq\log\left(\frac{k-kA_1e^{-2k}}{k-A_2}\right)=\log\left(1+\frac{A_2-kA_1e^{-2k}}{k-A_2}\right)\leq\frac{C_2-kA_1e^{-2k}}{k-A_2}.\]
	Hence there is a constant \(C_1>0\) and \(k_1>\tilde{k}_1\) such that \(s\leq C_1k^{-2}\) holds for all \(x\in X\), \(k\geq k_1\) and \(s\geq 0\) with \(|F_k(x,s)|^2=1\). \\
	We can also choose constants \(B_1>1\), \(B_2>0\), \(\tilde{k}_2>0\) such that \(|G(x,0)|^2>B_1\), \(|H_k(x,0)|^2<1+B_2k^{-1}\),  and \(1/C_2-e^{-2k}>0\) holds for all \(x\in X\) and all \(k\geq k_2\). Since \(\lambda_j>0\), \(j\in \N\), and since \(\chi_k(\lambda_j)\neq 0\) implies \(\lambda_j\geq k\delta_1\) we have for \(s\leq 0\), \(x\in X\) and \(k\geq \tilde{k}_2\) that
	\begin{eqnarray*}
		|F_k(x,s)|^2\leq e^{-2k}|G(x,0)|^2+|H_k(x,0)|^2e^{2k\delta_1s}\leq B_1e^{-2k}+e^{2k\delta_1s}(1+B_2k^{-1}). 
	\end{eqnarray*}
	It follows that for any \(x\in X\), any \(k\geq \tilde{k}_2\) and any \(s\leq 0\) with \(|F_k(x,s)|^2=1\) we have \(1\leq B_1e^{-2k}+e^{2k\delta_1s}(1+B_2k^{-1})\) which leads to
	\begin{eqnarray*}
		2\delta_1 ks\geq\log\left(\frac{1-B_1e^{-2k}}{1+B_2k^{-1}}\right)&=&-\log\left(\frac{1+B_2k^{-1}}{1-B_1e^{-2k}}\right)\\
		&=&-\log\left(1+\frac{B_2k^{-1}+B_1e^{-2k}}{1-B_1e^{-2k}}\right)\geq-\frac{B_2k^{-1}+B_1e^{-2k}}{1-B_1e^{-2k}}.
	\end{eqnarray*} 
	Hence there is a constant \(C_2>0\) and \(k_2>\tilde{k}_2\) such that \(s\geq -C_2k^{-2}\) holds for all \(x\in X\), \(k\geq k_2\) and \(s\leq 0\) with \(|F_k(x,s)|^2=1\). \\
	Now, choosing \(k_0=\max\{k_1,k_2,1\}\) and \(C=\max\{C_1,C_2\}\), Claim (iii) follows.\\
	\underline{Claim (iv):} Let \(C>0\) be arbitrary. Choose \(k_0>1\) such that \(2^{-1}\leq|H_k(x,0)|^2\leq2\) holds for all \(k\geq k_0\) (see Theorem~\ref{thm:HHMSEmbedding}). Since  \(\chi_k(\lambda_j)\neq 0\) implies \(k\delta_1\leq\lambda_j\leq k\delta_2 \)  we find for  \(x\in X\), \(k\geq k_0\) and \(s\geq -C/k\) that
	\begin{eqnarray*}
		\frac{\partial}{\partial s}|F_k(x,s)|^2&=&2e^{-2k}\sum_{j=1}^N\lambda_j|f_j(x)|^2e^{2\lambda_j s}+2C^2_\chi k^{-n-1}\sum_{j=1}^{N_k}\lambda_j|\chi_k(\lambda_j)|^2|f_j(x)|^2e^{2\lambda_j s}\\
		&\geq& 2C^2_\chi k^{-n-1}\sum_{j=1}^{N_k}\lambda_j|\chi_k(\lambda_j)|^2|f_j(x)|^2e^{2\lambda_j s}\\
		&\geq& 2\delta_1 k e^{-2C\delta_2}C^2_\chi k^{-n-1}\sum_{j=1}^{N_k}|\chi_k(\lambda_j)|^2|f_j(x)|^2\\
		&\geq&2\delta_1 k e^{-2C\delta_2}|H_k(x,0)|^2\geq k\delta_1 e^{-2C\delta_2}
	\end{eqnarray*}  
	holds. 
	From the calculation above Claim (iv) follows.\\
	\underline{Claim (v):} Considering the functions \(\eta_\ell\colon \R\times\R\to\R\), \[ \eta_\ell(\tilde{s},t)=t^\ell e^{\tilde{s}t}|\chi(t)|^2\]
	and substituting \(\tilde{s}=ks\) Claim (v) follows from Lemma~\ref{lem:TaylorExpansionFunctionalCalculus} below.
\end{proof}

\begin{lemma}\label{lem:TaylorExpansionFunctionalCalculus}
	We use the notations and assumptions above. For any \(\tau\in\mathscr{C}^\infty(\R_+)\) denote by \(R^{\tau}\colon X\to \C\) the function defined by
	\[R^{\tau}(x)=\sum_{j=1}^\infty\tau(\lambda_j)|f_j(x)|^2.\]
	Given \(0<\delta_1<\delta_2\) and a smooth function \(\chi\colon (-1,1)\times\R\to \C \) with \(\chi(s,t)=0\) for \(|s|<1\) and \(t\in \R\setminus (-\delta_1,\delta_2)\) denote by \(\chi_{s,k}\colon  \R\to \R\), \(k>0\),  the function \(\chi_{s,k}(t):=\chi(s,tk^{-1})\).
	Fix \(C>0\). There exists \(k_0>0\) so that for any \(m\in \N\) there exists \(C_m>0\) such that
	\[\left\|k^{-n-1}R^{\chi_{s,k}}-\int_\R t^{n}\chi(0,t)dt\right\|_{\mathscr{C}^m(X)}\leq C_mk^{-1}\]
	holds for all \(k\geq k_0\) and \(|s|<Ck^{-1}\). 
\end{lemma}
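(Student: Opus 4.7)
The plan is to Taylor expand the two-variable symbol $\chi(s,t)$ in the parameter $s$ about $s=0$ and reduce the statement to the single-variable semi-classical expansion that underlies Theorem~\ref{thm:HHMSEmbedding}. First I would write
\[
\chi(s,t) \;=\; \chi(0,t) \;+\; s\,\psi(s,t), \qquad \psi(s,t) \;:=\; \int_0^1 (\partial_s\chi)(us,t)\,du,
\]
so that $\chi(0,\cdot)$ and every $\psi(s,\cdot)$ with $|s|\le 1/2$ belong to $\mathscr{C}^\infty_c(K)$ for one fixed compact $K\subset(-\delta_1,\delta_2)$, and in fact $\{\psi(s,\cdot) : |s|\le 1/2\}$ is a bounded subset of $\mathscr{C}^\infty_c(K)$. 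Substituting this decomposition into $R^{\chi_{s,k}}(x)=\sum_j \chi(s,\lambda_j/k)|f_j(x)|^2$ yields the splitting
\[
R^{\chi_{s,k}}(x) \;=\; R^{\chi(0,\cdot/k)}(x) \;+\; s\,R^{\psi(s,\cdot/k)}(x).
\]

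The key input is the semi-classical asymptotic expansion for the CR Toeplitz functional calculus in the Sasakian setting from \cite{HHL20,HHMS23}, which underlies the expansion part of Theorem~\ref{thm:HHMSEmbedding}: for any $\eta\in\mathscr{C}^\infty_c(\R)$,
\[
k^{-n-1}\sum_j \eta(\lambda_j/k)\,|f_j(x)|^2 \;=\; \int_\R t^n \eta(t)\,dt \;+\; O(k^{-1}) \quad \text{in } \mathscr{C}^\infty(X).
\]
Moreover this expansion is locally uniform in $\eta$: when $\eta=\eta_\sigma$ depends smoothly on an auxiliary parameter $\sigma$ and $\{\eta_\sigma\}$ is uniformly compactly supported and bounded in all seminorms, the $\mathscr{C}^m$-remainder is $O(k^{-1})$ with a constant independent of $\sigma$. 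This parameter-uniformity is built into the proof, since the expansion arises from composing the semi-classical Szeg\H{o} kernel with the symbol and applying stationary phase, both of which preserve smooth dependence on parameters.

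Applying this to $\eta=\chi(0,\cdot)$ provides the leading contribution
\[
k^{-n-1} R^{\chi(0,\cdot/k)}(x) \;=\; \int_\R t^n\chi(0,t)\,dt \;+\; O(k^{-1}) \quad \text{in } \mathscr{C}^m(X),
\]
while the uniform version applied to $\eta=\psi(s,\cdot)$ yields the bound $|R^{\psi(s,\cdot/k)}(x)|\le C_m k^{n+1}$ in $\mathscr{C}^m(X)$, uniformly for $|s|\le 1/2$. Combining these two estimates with the hypothesis $|s|<Ck^{-1}$ gives, for $k$ sufficiently large,
\[
\Big\| k^{-n-1} R^{\chi_{s,k}} - \int_\R t^n\chi(0,t)\,dt \Big\|_{\mathscr{C}^m(X)} \;\le\; C_m k^{-1} \;+\; C\,k^{-1}\cdot C_m \;\le\; C'_m k^{-1},
\]
which is the desired estimate.

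The principal obstacle is the parameter-uniform version of the semi-classical expansion, since Theorem~\ref{thm:HHMSEmbedding} only states the fixed-symbol case. However, the Szeg\H{o} kernel $\Pi$ of Theorem~\ref{Boutet-Sjoestrand theorem} is a Fourier integral operator with a phase and symbol entirely independent of $\eta$, and the expansion is obtained by stationary-phase analysis of $\eta(-i\mathcal{T}/k)\Pi$ composed with its symbol representation. Smooth dependence on an auxiliary parameter $\sigma$ in $\eta_\sigma$ simply threads through these steps via the standard parameter-dependent symbol calculus; no additional CR-geometric input is required.
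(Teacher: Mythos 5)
Your decomposition $\chi(s,t)=\chi(0,t)+s\,\psi(s,t)$ and the treatment of the leading term are fine, but the estimate of the remainder is where the argument has a genuine gap. For $m=0$ nothing uniform is needed: $|R^{\psi(s,\cdot/k)}(x)|\le \sup|\psi|\cdot R^{\eta_k}(x)=O(k^{n+1})$ with a single fixed majorant $\eta$. For $m\ge 1$, however, your whole proof rests on the claim that $\|R^{\eta_\sigma(\cdot/k)}\|_{\mathscr{C}^m(X)}=O(k^{n+1})$ uniformly over a bounded family $\{\eta_\sigma\}\subset\mathscr{C}^\infty_c(K)$. This is strictly stronger than Theorem~\ref{thm:HHMSEmbedding} (or the results of \cite{HHL20,HHMS23}) as stated, which concern a single fixed symbol; you acknowledge this and wave at ``parameter-dependent symbol calculus,'' but you do not prove it, and it cannot be bypassed cheaply: the only estimate available without reopening the FIO machinery is the crude bound $\||f_j|^2\|_{\mathscr{C}^m(X)}\lesssim 1+\lambda_j^m$, which gives only $\|R^{\psi(s,\cdot/k)}\|_{\mathscr{C}^m}=O(k^{n+1+m})$, hence $\|s\,R^{\psi(s,\cdot/k)}\|_{\mathscr{C}^m}=O(k^{n+m})$ — off by a factor $k^m$. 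Interpolating this against your $\mathscr{C}^0$ bound still only yields $O(k^{m-1})$ after normalization, so a first-order Taylor expansion plus the quoted black box does not close the argument.

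The paper's proof is engineered precisely to avoid the parameter-uniform expansion. It Taylor expands $\chi(s,\cdot)$ in $s$ to \emph{arbitrary} order $N$, so that the pointwise remainder is $\le B_N R^{\eta_k}(x)|s|^N/N!$ with one fixed majorant $\eta$, giving a $\mathscr{C}^0$ error of $O(k^{n+1-N})$ on $|s|\le Ck^{-1}$. The $\mathscr{C}^M$ norm of the same error is bounded crudely by $O(k^{n+1+M})$ using only the eigenfunction bound above, and H\"ormander's interpolation inequality (\cite{Hoermander_2015}*{Lemma~7.7.2}) then converts these two into $O(k^{n+1-N'})$ in $\mathscr{C}^m$ for any prescribed $N'$, by taking $N$ and $M$ large. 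In this way the sharp semi-classical expansion is only ever invoked for the finitely many \emph{fixed} symbols $\partial_s^j\chi(0,\cdot)$, $0\le j\le N-1$, to which Theorem~\ref{thm:HHMSEmbedding} applies verbatim. If you want to keep your first-order decomposition, you must actually establish the uniform-in-$\sigma$ version of the expansion (by tracking the finitely many symbol seminorms through the stationary-phase remainders); otherwise the higher-order Taylor plus interpolation device is the missing idea.
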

\begin{proof}
	Choose an open interval \(I\subset\subset (-1,1)\) containing the origin. For \(k>0\) Denote by \(\chi_k^{(j)}\colon  I\times \R\to \C\), \(j\geq0\),  the function defined by \(\chi_k^{(j)}(s,t):=\left(\frac{\partial}{\partial s}\right)^j\chi(s,tk^{-1})\) and put \(\chi_k^{(j)}(t)=\chi_k^{(j)}(0,t)\), \(\chi_k(s,t)=\chi_k^{(0)}(s,t)\) . We have that for any \(j\in\N_0\) there is \(B_j>0\) such that \(\sup_{(s,t)\in I\times \R}|\chi_k^{(j)}(s,t)|^2\leq B_j\). Choose a smooth function \(\eta\colon \R\to [0,1]\) with \(\supp\eta\subset [2^{-1}\delta_1,2\delta_2]\) and \(\eta|_{[\delta_1,\delta_2]}\equiv 1\). Put \(\eta_k(t)=\eta(k^{-1}t)\) for \(t\in\R\), \(k>0\). By Taylor expansion we obtain that
	\[\left|\chi(s,t)-\sum_{j=0}^{N-1}\chi_1^{(j)}(0,t)\frac{s^j}{j!}\right|\leq B_N\frac{|s|^N}{N!}\]
	holds for all \(N\in\N\), \(s\in I\) and \(t\in \R\). Replacing \(t\) by \(tk^{-1}\) and using the assumptions on \(\chi\) leads to
	\[\left|\chi_{s,k}(t)-\sum_{j=0}^{N-1}\chi_k^{(j)}(t)\frac{s^j}{j!}\right|\leq B_N\eta_k(t)\frac{|s|^N}{N!}\]
	which implies
	\[\left|R^{\chi_{k,s}}(x)-\sum_{j=0}^{N-1}R^{\chi^{(j)}_k}(x)\frac{s^j}{j!}\right|\leq B_NR^{\eta_k}(x)\frac{|s|^N}{N!}.\]
	for all \(N\in\N\), \(s\in I\), \(x\in X\) and \(k>0\).
	Since \(\|R^{\eta_k}\|_{\mathscr{C}^0(X)}=O(k^{n+1})\) there is a constant \(\tilde{C}\) such that 
	\[\left\|R^{\chi_{k,s}}-\sum_{j=0}^{N-1}R^{\chi^{(j)}_k}\frac{s^j}{j!}\right\|_{\mathscr{C}^0(X)}\leq \tilde{C}C^NB_N\frac{k^{n+1-N}}{N!}\] holds for all \(N\in \N\) and \(s\in I\) with \(|s|\leq Ck^{-1}\).
	From Theorem~\ref{thm:HHMSEmbedding} we obtain \(\|R^{\chi_k^{(j)}}\|_{\mathscr{C}^m(X)}=O(k^{n+1})\). Furthermore, for any \(m\in\N\) there exists \(\tilde{C}_m>0\) such that \(\||f_j|^2\|_{\mathscr{C}^m(X)}\leq \tilde{C}_m(1+\lambda_j^m)\) holds for all \(j\in \N\). This leads to the a priori asymptotic
	\[\left\|R^{\chi_{k,s}}-\sum_{j=0}^{N-1}R^{\chi^{(j)}_k}\frac{s^j}{j!}\right\|_{\mathscr{C}^m(X)}=O(k^{n+1+m})\]
	uniformly in \(s\in I\) when \(k\to\infty\) for all \(m,N\in\N\). 
	From Hörmander's trick (see~\cite{Hoermander_2015}*{Lemma~7.7.2} and \cite{He18}*{Lemma~3.11}) we obtain that for any \(m,N\in\N\) there is a constant \(C_{m,N}>0\) such that
	\[\left\|R^{\chi_{k,s}}-\sum_{j=0}^{N-1}R^{\chi^{(j)}_k}\frac{s^j}{j!}\right\|_{\mathscr{C}^m(X)}\leq C_{m,N}k^{n+1-N}\]
	holds for all \(s\in I\) with \(|s|\leq Ck^{-1}\). Since \(k^{-n-1}R^{\chi^{(0)}_k}-\int_\R\chi(0,t)t^ndt=O(k^{-1})\) holds in \(\mathscr{C}^\infty\)-topology the claim follows.
	
\end{proof}
\begin{theorem}\label{thm:ConstructionOfVarphik}
	Let \((X,T^{1,0}X,\mathcal{T})\) be a compact Sasakian manifold with \(\dim_\R X=2n+1\), \(n\geq 1\), and let \(F_k\) be the map defined in \eqref{eq:MapFkOnM}. There exists \(k_0>0\) such that for any \(k\geq k_0\) there is a smooth function \(\varphi_k\colon X\to (-c,c)\) with \(\mathcal{T}\varphi_k\equiv 0\) uniquely defined by the equation \(|F_k(x,\varphi_k(x))|=1\), \(x\in X\). Furthermore, for any \(m\in \N_0\) there exists  a constant \(C_m>0\) such that for all \(k\geq k_0\) we have \(\|\varphi_k\|_{\mathscr{C}^m(X)}\leq C_mk^{-2}\). In addition, \((x,k)\mapsto\varphi_k(x)\) is smooth on \(X\times [k_0,\infty)\). 
\end{theorem}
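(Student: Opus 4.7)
Everything should follow from Proposition~\ref{pro:PropertiesOfFk}. For each fixed $x\in X$ and $k\geq k_0$, part~(ii) says that $s\mapsto |F_k(x,s)|^2$ is a strictly increasing continuous bijection from $\R$ onto $(0,\infty)$, so there is a unique $s=\varphi_k(x)\in\R$ solving $|F_k(x,s)|^2=1$. Part~(iii) then forces $|\varphi_k(x)|\leq Ck^{-2}$, so $\varphi_k(x)\in(-c,c)$ after enlarging $k_0$. For the $\mathcal{T}$-invariance, each CR eigenfunction satisfies $\mathcal{T}f_j=i\lambda_j f_j$ and therefore $\mathcal{T}|f_j|^2=0$; inspecting the closed form \eqref{eq:CalculationModulusF_k} shows that $|F_k(\cdot,s)|^2$ is $\mathcal{T}$-invariant for every $s$, hence $\mathcal{T}\varphi_k\equiv 0$.

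For joint smoothness in $(x,k)$, the map $(x,s,k)\mapsto G_k(x,s):=|F_k(x,s)|^2-1$ is smooth on $X\times\R\times[k_0,\infty)$, since locally in $k$ only finitely many indices $j$ satisfy $\chi_k(\lambda_j)\neq 0$ and each summand is smooth in $k$. Part~(iv) gives $(\partial_s G_k)(x,\varphi_k(x))\geq k/A>0$, so the implicit function theorem applied to $G_k$ viewed as a function of $(x,s,k)$ yields that $\varphi_k$ is $\mathscr{C}^\infty$ on $X\times[k_0,\infty)$.

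The main work is the estimate $\|\varphi_k\|_{\mathscr{C}^m(X)}=O(k^{-2})$. The case $m=0$ is part~(iii). For $m\geq 1$, differentiating $G_k(x,\varphi_k(x))=0$ yields the IFT identity
\begin{equation*}
\partial_{x_i}\varphi_k(x)=-\frac{(\partial_{x_i}G_k)(x,\varphi_k(x))}{(\partial_s G_k)(x,\varphi_k(x))},
\end{equation*}
and iterating via the higher-order chain rule produces a rational expression in mixed derivatives of $G_k$ at $(x,\varphi_k(x))$ together with lower-order $x$-derivatives of $\varphi_k$. The essential input is part~(v): because the constants $b_\ell$ are $x$-independent, for $|s|\leq Ck^{-2}$ and $k\geq k_0$ one has $G_k(x,s)=O(k^{-1})$, $\partial_s^\ell G_k(x,s)=k^\ell b_\ell+O(k^{\ell-1})$ for $\ell\geq 1$, and $\partial_x^\alpha\partial_s^\ell G_k(x,s)=O(k^{\ell-1})$ whenever $|\alpha|\geq 1$. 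Combined with the lower bound $\partial_s G_k\sim k$ from (iv), this gives $|\partial_{x_i}\varphi_k|=O(k^{-1})/O(k)=O(k^{-2})$ in the case $m=1$.

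The main obstacle is then the bookkeeping in the induction on $m$: one must check that every monomial produced by differentiating the IFT formula contributes at most $k^{-2}$. The accounting is that each $\partial_x$ hitting $G_k$ costs a factor $k^{-1}$ compared to a $\partial_s$, each extra $\partial_s$ contributes $k$ in the numerator but requires an additional factor $\partial_s G_k\sim k$ in the denominator, and each inner $\partial_x\varphi_k$ appearing through the chain rule is $O(k^{-2})$ by the induction hypothesis, so all terms balance to $O(k^{-2})$ uniformly in $x\in X$ and $k\geq k_0$.
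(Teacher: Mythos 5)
Your proposal is correct and follows essentially the same route as the paper: existence/uniqueness and the $\mathscr{C}^0$-bound from Proposition~\ref{pro:PropertiesOfFk}~(ii)--(iii), $\mathcal{T}$-invariance from $\mathcal{T}|f_j|^2=0$, joint smoothness via the implicit function theorem using (iv), and the $\mathscr{C}^m$-estimates by an induction on the order of differentiation whose inputs are exactly (iv) and (v). The paper merely writes out the chain-rule bookkeeping explicitly (via the quantities $R_{k,\alpha,\ell}$ and the three simultaneous induction statements), whereas you describe the same accounting verbally; the balance of powers of $k$ you give is the correct one.
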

\begin{proof}
	First, let \(k>1\) and \(x\in X\)  be arbitrary. From Proposition~\ref{pro:PropertiesOfFk}~(ii) we find that there exists an \(s=s(x,k)\) uniquely determined by the equation \(|F_k(x,s)|^2=1\). It follows from Proposition~\ref{pro:PropertiesOfFk}~(iii) that there exist \(C_1,k_1>0\) with \(|s(x,k)|\leq C_1k^{-2}\) for all \(x\in X\) and \(k\geq k_1\). Then, for some \(k_2>k_1\), we have that for any \(k\geq k_2\) the function \(\varphi_k\colon X\to (-c,c)\), \(\varphi_k(x)=s(x,k)\) is well defined. Using Proposition~\ref{pro:PropertiesOfFk}~(iv) we find \(C_3>0\) and \(k_3>k_2\) such that
	\[k/C_3\leq 	\frac{\partial}{\partial s}|F_k(x,s)|^2.\]
	holds for all \(k\geq k_3\) and \(|s|\leq C_1k^{-2}\). Since for fixed \(k\geq k_3\) we have that \(s=\varphi_k(x)\) is the unique solution of \(|F_k(x,s)|^2=1\) and we have by construction that \[\frac{\partial}{\partial s}|F_k(x,s)|^2|_{s=\varphi_k(x)}\geq\frac{k}{C_3}>0\]
	it follows from the implicit function theorem that \(\varphi_k\colon X\to (-c,c)\) is  smooth and since \((x,s,k)\mapsto |F_k(x,s)|^2\) is smooth that \((x,k)\mapsto \varphi_k(x)\) is smooth as well.\\
	We will now show that \(\mathcal{T}\varphi_k\equiv 0\) holds for all \(k\geq k_3\). For any \(j\in \N\) we have \(\mathcal{T}|f_j|^2=i\lambda_jf_j\overline{f_j}-i\lambda_jf_j\overline{f_j}=0\). Hence we conclude \(\mathcal{T}|F_k(x,s)|^2=0\) for any \(s\in \R\), \(x\in X\). Since \(x\mapsto |F_k(x,\varphi_k(x))|^2\) is constant and \(\varphi_k\) is smooth we find using the chain rule  and the definition of partial derivatives that
	\begin{eqnarray*}
		0&=&\mathcal{T} F_k(x,\varphi_k(x))= \mathcal{T}(\varphi_k)(x)\frac{\partial}{\partial s}|F_k(x,s)|^2|_{s=\varphi_k(x)}.
	\end{eqnarray*} 
	Since \(\frac{\partial}{\partial s}|F_k(x,s)|^2|_{s=\varphi_k(x)}\neq 0\) we find \(\mathcal{T}(\varphi_k)(x)=0\) for any \(x\in X\) and \(k>k_3\).\\
	It remains to show that for any \(m\in\N\) we have \(\|\varphi_k\|_{\mathscr{C}^m(X)}=O(k^{-2})\) for \(k\to \infty\). From our construction it follows that \(\|\varphi_k\|_{\mathscr{C}^0(X)}\leq C_1k^{-2}\). Furthermore, from the implicit function theorem we obtain for any \(x_0\in X\)
	\[d_x\varphi_k|_{x=x_0}=-\frac{d_x|F_k(x,s)|^2|_{(x,s)=(x_0,\varphi_k(x_0))}}{\frac{\partial}{\partial s}|F_k(x_0,s)|^2|_{s=\varphi_k(x_0)}}.\]
	Let \(p\in X\) be a point and let \((D,x_1,\ldots,x_{2n+1})\) be local coordinates around \(p\). For \(\alpha\in \N_0^{{2n+1}}\) put \(|\alpha|=\sum_{j=1}^{2n+1}\alpha_j\) and \(d^{\alpha}_x=\left(\frac{\partial}{\partial x_1}\right)^{\alpha_1}\ldots\left(\frac{\partial}{\partial x_{2n+1}}\right)^{\alpha_{2n+1}}\). 
	Furthermore, for \(\alpha\in \N_0^{2n+1}\), \(\ell\in\N_0\) and \(k>0\) define
	\[R_{k,\alpha,\ell}\colon D\to \R,\,\,\,\,R_{k,\alpha,\ell}(x)=\left(d_x^{\alpha}\left(\frac{\partial}{\partial s}\right)^\ell|F(x,s)|^2\right)|_{s=\varphi_k(x)}.\] 
	For \(m\in \N\) and \(\beta\in \N_0^{2n+1}\) we put
	\begin{eqnarray*}
		I(m,\beta)&:=&\left\{(\tau(1),\ldots,\tau(m))\in(\N_0^{2n+1})^m\colon \sum_{j=1}^m\tau(j)=\beta\right\},\\
		I'(m,\beta)&:=&\left\{\tau\in I(m,\beta)\colon |\tau(j)|\geq 1\,\forall 1\leq j\leq m\right\}.
	\end{eqnarray*}
	We note that from Proposition~\ref{pro:PropertiesOfFk}~(iv)  we have that \(x\mapsto 1/R_{k,0,1}\) defines a smooth function when \(k>0\) is large enough. For \(\beta\in \N_0^{2n+1}\) we compute
	\begin{eqnarray}\label{eq:Derivatives1overRk01}
		d_x^\beta\frac{1}{R_{k,0,1}}=\frac{1}{R_{k,0,1}^{|\beta|+1}}\sum_{\tau\in I(|\beta|,\beta)}c_\tau\prod_{\nu=1}^{|\beta|}d^{\tau(\nu)}_xR_{k,0,1}
	\end{eqnarray}
	with integer coefficients \(c_\tau\) independent of \(k\). 
	Furthermore, we find for \(\beta\in \N_0^{2n+1}\) that
	\begin{eqnarray}\label{eq:DerivativesRkalphal}
		d_x^\beta R_{k,\alpha,\ell}=R_{k,\alpha+\beta,\ell}+\sum_{\underset{|\gamma|<|\beta|}{\gamma\leq \beta}}\sum_{m=1}^{|\beta|-|\gamma|}R_{k,\alpha+\gamma,\ell+m}\sum_{\tau\in I'(m,\beta-\gamma)}c_{\gamma,m,\tau}\prod_{\nu=1}^m d_x^{\tau(\nu)}\varphi_k
	\end{eqnarray}
	holds with integer coefficients \(c_{\gamma,m,\tau}\) independent of \(k\). For \(1\leq j\leq 2n+1 \) define \(e(j)\in \N_0^{2n+1}\) by \(e(j)_j=1\) and \(e(j)_\ell=0\) for \(\ell\neq j\). Hence, we can write
	\begin{eqnarray}\label{eq:dvarphiInRks}
		\frac{\partial}{\partial x_j}\varphi_k=\frac{R_{k,e(j),0}}{R_{k,0,1}},\,\forall 1\leq j\leq 2n+1.
	\end{eqnarray} 
	We will show now via induction with respect to \(|\beta|\) that for all \(\beta\in \N^{2n+1}_0\) the following three statements hold true:
	\begin{eqnarray}
		d^\beta_x\frac{\partial}{\partial x_j}\varphi_k&=&O(k^{-2})\, \forall 1\leq j\leq 2n+1, \label{eq:induction1}\\
		d^\beta_x R_{k,0,\ell}&=&O(k^{\ell})\, \forall \ell\in \N_0,\label{eq:induction2}\\
		d^\beta_x R_{k,\alpha,\ell}&=&O(k^{-1+\ell})\, \forall \ell\in \N_0,\,\alpha\in \N^{2n+1}_0, |\alpha|\geq 1.\label{eq:induction3} 
	\end{eqnarray}
	From Proposition~\ref{pro:PropertiesOfFk} (iv) , (v) it follows that for any \(\alpha\in\N_0^{2n+1}\) with \(|\alpha|\geq 1\) and any \(\ell\in\N_0\) one has \(R_{k,0,\ell}=O(k^\ell)\), \(R_{k,\alpha,\ell}=O(k^{-1+\ell})\)  and \((R_{k,0,1})^{-1}=O(k^{-1})\). Then \eqref{eq:induction1}, \eqref{eq:induction2} and \eqref{eq:induction3} follow using~\eqref{eq:dvarphiInRks} in the case \(|\beta|=0\). Now assume that \eqref{eq:induction1}, \eqref{eq:induction2} and \eqref{eq:induction3} hold for all \(\beta\in \N^{2n+1}_0\) with \(|\beta|\leq N\) for some \(N\in\N_0\). Then, we choose \(\beta\in \N^{2n+1}_0\) with \(|\beta|=N+1\). Since for the \(\tau(\nu)\) on the right-hand side of~\eqref{eq:DerivativesRkalphal} we have \(1\leq|\tau(\nu)|\leq N+1\) it follows from the induction hypothesis that \eqref{eq:induction2} and \eqref{eq:induction3} hold true. Since 
	\[	d^\beta_x\frac{\partial}{\partial x_j}\varphi_k=\sum_{\alpha\leq \beta}\binom{\beta}{\alpha}d_x^{\beta-\alpha}R_{k,e(j),0}d_x^{\alpha}\frac{1}{R_{k,0,1}}\]  
	we obtain from~\eqref{eq:Derivatives1overRk01} using the induction hypothesis, \eqref{eq:induction2}, \eqref{eq:induction3} and \((R_{k,0,1})^{-1}=O(k^{-1})\) that \eqref{eq:induction1} holds. 
\end{proof}

\begin{proof}[\textbf{Proof of Theorem~\ref{thm:MainThmIntro}}]
	Let \(C_0>0\) be arbitrary and let \(F_k=(e^{-k}G,H_k)\colon Y\to \C^{N+N_k}\setminus\{0\}\) be the map defined in \eqref{eq:MapFkOnM}. Note that we can choose \(m_0\) for the map \(G\) arbitrary large, meaning that after removing the zero components from \(F_k\) we have for any \(k>0\) that the components of the map \(F_k\) arise from eigenfunctions of \(-i\mathcal{T}\) acting on \(H^0_b(X)\) for eigenvalues in an interval \([C_0,k]\). Furthermore, we have \(N+N_k=O(k^{n+1})\) (see \cite{HHL20}*{Theorem~1.3}). From now on  consider \(k\in\N\). Let \(\{\varphi_k\}_{k\geq k_0}\) be the sequence of functions given by Theorem~\ref{thm:ConstructionOfVarphik}. Put \(X_{\varphi_k}=\{(x,s)\in Y\colon s=\varphi_k(x)\}\subset Y_c\). Then \(X_{\varphi_k}\) is a CR submanifold of \(Y\) contained in \(Y_c\) with a CR transversal vector field given by \(\mathcal{T}\). Since \(F_k\) is holomorphic on \(Y\) we have that the function \(\nu_k\colon Y\to\R\), \(\nu_k(x,s)=|F_k(x,s)|^2-1\) is real analytic on \(Y\) with \(X_{\varphi_k}=\{(x,s)\in Y\colon \nu_k(x,s)=0\}\) by the construction of \(\varphi_k\). From Proposition~\ref{pro:PropertiesOfFk}~(iv) it follows that \(d\nu_k\neq 0\) in a neighborhood of \(X_{\varphi_k}\). Hence \(\nu_k\) is a real analytic defining function for \(X_{\varphi_k}\). It follows that \(X_{\varphi_k}\) is areal analytic submanifold of \(Y\).   Furthermore, the restriction \(\hat{F}_k\) of \(F_k\) to \(X_{\varphi_k}\) defines a CR embedding of \(X_{\varphi_k}\) into the unit sphere in \(\C^{N+N_k}\) by construction. 
	A direct calculation shows that \({\hat{F_k}}_{*}\mathcal{T}=\mathcal{T}_{\beta(k)}\circ \hat{F_k}\) for some \(\beta(k)\in \R_+^{N+N_k}\) where the entries of \(\beta(k)\) lie in the interval \([C_0,k]\). Then Lemma~\ref{lem:IsoGraphDeformation} yields a CR embedding \(\tilde{F}_k\) of \((X,\mathcal{V}(\varphi_k),\mathcal{T})\) into a sphere satisfying the required properties. 
	We note that since \(\tilde{F}_k\) yields a CR embedding of \((X,\mathcal{V}(\varphi_k),\mathcal{T})\) into a sphere it follows from Lemma~\ref{lem:CRsubmanifoldsOfSpheresAreStrictlyPseudoconvex} that  \((X,\mathcal{V}(\varphi_k),\mathcal{T})\) is a Sasakian manifold for any \(k\geq k_0\). 
\end{proof}

\subsection{Proof of Corollary~\ref{cor:LocalEmbedding03Intro}}\label{sec:ProofCorollarySasakainLocal}
	We first consider the following model. Let \(I\subset (-\pi,\pi)\) be an open interval and consider the CR manifold \((\mathbb{B}^n\times I,T^{1,0}\mathbb{B}^n)\) with transversal CR vector field \(\mathcal{T}:= \frac{\partial}{\partial t}\) where we use  coordinates \((z,t)=(z_1,\ldots,z_n,t)\) on \(\mathbb{B}^n\times I\) and \(\mathbb{B}^n\) denotes the unit ball in \(\C^n\). Given  a smooth function \(\varphi\) defined on an open neighborhood of  \(\overline{\mathbb{B}^n}\) consider the CR structure \(\mathcal{V}(\varphi)\) on \(\mathbb{B}^n\times I\) (see~\eqref{eq:DefDeformedSasakianStructure}). Here we  consider  \(\varphi\) as a function on \({\mathbb{B}^n}\times \R\) independent of \(t\) and write \(\varphi\) instead of \(\varphi|_{\mathbb{B}^n\times I}\).  

Under the assumption that \(-\varphi\) is strictly plurisubharmonic we have that \((\mathbb{B}^n\times I,\mathcal{V}(\varphi),\mathcal{T})\) is a Sasakian manifold. Furthermore, it follows from a result due to Baouendi--Rothschild--Treves~\cite{BRT85} that given any Sasakian manifold \((\tilde{X},T^{1,0}\tilde{X},\tilde{\mathcal{T}})\), \(\dim_\R \tilde{X}=2n+1\), \(n\geq 1\), and a point \(p\in \tilde{X}\) there is an open neighborhood \(U\subset \tilde{X}\) around \(p\) such that \((U,T^{1,0}\tilde{X}|_U,\tilde{\mathcal{T}}|_U)\) and \((\mathbb{B}^n\times I,\mathcal{V}(\varphi),\mathcal{T})\) are isomorphic as Sasakian manifolds for some specific choice of \(\varphi\) and \(I\). Hence, in order to prove the local approximation result for Sasakian manifolds (Corollary~\ref{cor:LocalEmbedding03Intro}) it is enough to consider  \((\mathbb{B}^n\times I,\mathcal{V}(\varphi),\mathcal{T})\) as above.
\begin{proof}[\textbf{Proof of Corollary~\ref{cor:LocalEmbedding03Intro}}]
 Let \(L=O(1)\rightarrow \C\mathbb{P}^n\) be the dual of the tautological line bundle over the complex projective space of dimension \(n\geq 1\). We will construct a metric \(h\) on \(L^m\) for some positive integer \(m\geq 1\) which has positive curvature and such that an open set \(U\) in the circle bundle of \((\C\mathbb{P}^n,L^m,h)\) is isomorphic to \((\mathbb{B}^n\times I,\mathcal{V}(\varphi),\mathcal{T})\)  as Sasakian manifold.  Let \(V=\{[z_0,\ldots,z_n]\in \C\mathbb{P}^n\colon z_0\neq 0\}\). Then \(V\) can be identified with \(\C^n\) via \(\C^n\ni z\mapsto [1,z_1,\ldots, z_n]\in V\). Assume that \(\varphi\) is defined on \(B(0,1+4\varepsilon)\) for some \(\varepsilon>0\) where \(B(0,r)\) denotes the open ball in \(\C^n\) centered in the origin  with radius \(r>0\). Let \(\eta\colon \C^n\to [0,1]\) be a smooth function with \(\supp \eta\subset B(0,1+4\varepsilon)\) such that \(\eta\equiv 1\) on \(B(0,1+3\varepsilon)\). Let \(\chi\colon \R \to \R\) be a smooth increasing and convex function such that \(\chi(t)= 0\) for \(t\leq \log(1+(1+\varepsilon)^2)\) and \(\chi(t)=t\) for \(t\geq \log(1+(1+2\varepsilon)^2)\). We have that \(z\mapsto \chi(\log(1+|z|^2))\) defines a smooth plurisubharmonic function on \(\C^n\) which is  strictly plurisubharmonic for \(|z|>1+2\varepsilon\). Since \(z\mapsto-\eta(z)\varphi(z)\) is smooth on \(\C^n\) and strictly plurisubharmonic for \(|z|<1+3\varepsilon\) it follows that for \(m\in \N\) large enough we have that \(\psi(z):=-\eta(z)\varphi(z)+m\chi(\log(1+|z|^2))\) defines a smooth strictly plurisubharmonic function on \(\C^n\) such that \(\psi(z)=-\varphi(z)\) for \(|z|< 1+\varepsilon\) and \(\psi(z)=m\log(1+|z|^2)\) for \(|z|> 1+4\varepsilon\). Since \(z\mapsto \log(1+|z|^2)\) is a Kähler potential for the Fubini-Study metric on \(V\) we find that \(\psi\) defines a positive metric \(h\) on \(L^m\). From the construction it follows that the circle bundle of \((\C\mathbb{P}^n,L^m,h)\) is a compact Sasakian manifold containing an open subset \(U\) which is isomorphic  to \((\mathbb{B}^n\times I,\mathcal{V}(\varphi),\mathcal{T})\)  as Sasakian manifold. Hence, we obtain the conclusion of Corollary~\ref{cor:LocalEmbedding03Intro} from Theorem~\ref{thm:MainThmIntro}.
\end{proof}

\subsection{Proof of Theorem~\ref{thm:ForstnericForSasakian}}\label{sec:NONWeightedSpheres}
In this section we are going to prove Theorem~\ref{thm:ForstnericForSasakian}. The main argument basically follows the idea of Forstneri\v{c}~\cite{Fors86}. However, it should be mentioned that in the Sasakian case the calculations are much more simpler compared to those appearing in the prove of the general result due to Forstneric. Furthermore, as already mentioned in the introduction, we note that for the Sasakian case the conclusion of
Theorem~\ref{thm:ForstnericForSasakian} is less surprising compared to
the result of Forstneri\v{c}~\cite{Fors86} for the general case since
any Sasakian submanifold of a sphere has to be automatically real
analytic (see Lemma~\ref{lem:SasakianSubmanifoldsOfSpheresAreRealAnalytic}). However, since we start with a smooth Sasakian manifold, it is not clear from the beginning how to distinguish between analyticity or non-analyticity in that context. So we provide a proof of Theorem~\ref{thm:ForstnericForSasakian} which do not use the fact that  Sasakian submanifolds of spheres are real analytic. Moreover, the proof will lead to structural insights about the set of Sasakian deformations which will be used to prove Theorem~\ref{thm:NoIsolatedPoint} in the next section.

Let \((X,T^{1,0}X,\mathcal{T})\) be a compact Sasakian manifold of dimension \(2n+1\), \(n\geq 1\). 
We fix a family of \(\mathscr{C}^m\)-norms on \(X\). For \(m\in\N\cup\{\infty\}\) put \(\mathscr{C}^m_\mathcal{T}(X,\R)=\{\varphi\in \mathscr{C}^m(X,\R)\colon \mathcal{T}\varphi\equiv 0\}\) and consider the following sets
\begin{eqnarray*}
	\mathcal{M}'&=&\{\varphi\in \mathscr{C}_\mathcal{T}^\infty(X,\R)\colon \mathcal{V}(\varphi) \text{ is strictly pseudoconvex}\}\\
	\mathcal{S}'&=&\{\varphi\in \mathcal{M}\colon (\mathcal{V}(\varphi),\mathcal{T})\text{ is induced by a weighted sphere}\}.
\end{eqnarray*}
For \(m\in\N_0\cup\{\infty\}\) we consider the \(\mathscr{C}^m\)-topology on \(\mathscr{C}^\infty_\mathcal{T}(X,\R)\) which is the subset topology coming from the \(\mathscr{C}^m\)-norm restricted to \(\mathscr{C}^\infty(X)\). Note that for \(m\geq 2\) we  have that \(\mathcal{M}'\) is open in \(\mathscr{C}^\infty_\mathcal{T}(X,\R)\) with respect to the \(\mathscr{C}^m\)-topology (see Remark~\ref{rmk:StrictlyPseudoconvexIsOpen}). We denote by \(0<\lambda_1< \lambda_2<\ldots\) the positive eigenvalues of \(-i\mathcal{T}\) acting on the space of CR functions and let \(V_j\), \(j\in\N\), be the eigenspace for \(\lambda_j\), that is, we have \(f\in H^0_b(X)\cap \mathscr{C}^\infty(X)\) with \(-i\mathcal{T}f=\lambda_jf\) if and only if \(f\in V_j\).

\begin{lemma}\label{lem:DescriptionOfBetaMaps}
	Given \(N\in\N\), \(\varphi\in \mathcal{M}\), \(\beta\in \R_+^N\) and a smooth map \(F\colon X\to \C^N \) which is CR with respect to the structure \(\mathcal{V}(\varphi)\) and such that \(F_*\mathcal{T}=\mathcal{T}_\beta\circ F\). Then there exist  \(1\leq j_1,\ldots,j_N\) and \(f_{j_\ell}\in V_{j_\ell}\), \(1\leq \ell \leq N\), such that 
	 \(F=(e^{\lambda_{j_1}\varphi}f_{j_1},\ldots,e^{\lambda_{j_N}\varphi}f_{j_N})\).
\end{lemma}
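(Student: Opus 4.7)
The plan is to use the two hypotheses on $F=(F_1,\ldots,F_N)$ separately, componentwise. The equivariance condition $F_*\mathcal{T}=\mathcal{T}_\beta\circ F$ reads componentwise as $\mathcal{T}F_j = i\beta_j F_j$ for each $j$, i.e.\ each $F_j$ is an eigenfunction of $\mathcal{T}$ for the eigenvalue $i\beta_j$ (but with respect to the \emph{deformed} CR structure). The CR condition with respect to $\mathcal{V}(\varphi)$ says that $\overline{W}F_j=0$ for every $W=Z-iZ(\varphi)\mathcal{T}$ with $Z\in T^{1,0}X$. Since $\varphi$ is real, the conjugate is $\overline{W}=\overline{Z}+i\overline{Z}(\varphi)\mathcal{T}$, and substituting in the eigenvalue relation yields
\[
0=\overline{W}F_j=\overline{Z}F_j+i\overline{Z}(\varphi)\,\mathcal{T}F_j=\overline{Z}F_j-\beta_j\overline{Z}(\varphi)F_j,
\]
so $\overline{Z}F_j=\beta_j\overline{Z}(\varphi)F_j$ for every $Z\in T^{1,0}X$.

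This identity says precisely that multiplication by $e^{-\beta_j\varphi}$ gauges the deformation away. So I would set $g_j:=e^{-\beta_j\varphi}F_j$ and verify by direct computation that
\[
\overline{Z}g_j=e^{-\beta_j\varphi}\bigl(\overline{Z}F_j-\beta_j\overline{Z}(\varphi)F_j\bigr)=0,\qquad \mathcal{T}g_j=e^{-\beta_j\varphi}\mathcal{T}F_j=i\beta_j g_j,
\]
where the second computation uses $\mathcal{T}\varphi\equiv 0$. Thus each $g_j$ is a smooth CR function on the \emph{original} Sasakian manifold $(X,T^{1,0}X,\mathcal{T})$ that is a joint eigenfunction of $-i\mathcal{T}$ with eigenvalue $\beta_j$.

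To finish, I would argue as follows. If $F_j\not\equiv 0$, then $g_j$ is a nonzero CR eigenfunction, so $\beta_j$ must coincide with one of the positive eigenvalues $\lambda_{j_\ell}$ of $-i\mathcal{T}$ on $H^0_b(X)\cap\mathscr{C}^\infty(X)$, and $f_{j_\ell}:=g_j\in V_{j_\ell}$ gives $F_j=e^{\lambda_{j_\ell}\varphi}f_{j_\ell}$. If $F_j\equiv 0$, choose any index $j_\ell$ and take $f_{j_\ell}=0\in V_{j_\ell}$; the identity holds trivially. I do not expect any genuine obstacle here: the proof is entirely a matter of unpacking the two defining conditions, and the only step that requires care is the bookkeeping that converts the CR condition for $\mathcal{V}(\varphi)$ into the clean gauge-shift formula above, after which everything reduces to the elementary spectral theory of $-i\mathcal{T}$ acting on $H_b^0(X)$.
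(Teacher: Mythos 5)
Your proof is correct and follows essentially the same route as the paper's: decompose $F$ componentwise, use the equivariance to get $-i\mathcal{T}F_j=\beta_jF_j$, gauge by $e^{-\beta_j\varphi}$ to obtain a CR eigenfunction for the undeformed structure, and conclude from the spectral decomposition of $-i\mathcal{T}$ on $H^0_b(X)$. The only (cosmetic) difference is that you verify the gauge identity by direct computation from the definition of $\mathcal{V}(\varphi)$, whereas the paper cites Lemma~\ref{lem:IsoGraphDeformation} and Lemma~\ref{lem:HolomorphicFunctionsOnCylinder} for the same fact.
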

\begin{proof}
	We can write \(F=(F_1,\ldots,F_N)\) where \(F_\ell\), \(1\leq \ell\leq N\) are smooth CR functions on \(X\) with respect to the CR structure \(\mathcal{V}(\varphi)\). From \(F_*\mathcal{T}=\mathcal{T}_\beta\circ F\) it follows (see Example~\ref{ex:PseudoHermSpheres}) that \(-i\mathcal{T}F_\ell=\beta_\ell F_\ell\). For the case that \(F_\ell= 0\) is the zero function we can choose \(f_{j\ell}=0\). If \(F_\ell\neq0\) is not the zero function it follows that \(F_\ell\) is an eigenvector of \(-i\mathcal{T}\) for the eigenvalue \(\beta_\ell\). From Lemma~\ref{lem:IsoGraphDeformation} and Lemma~\ref{lem:HolomorphicFunctionsOnCylinder} it then follows that \(F_je^{-\beta_j\varphi}\) is a smooth CR function on \(X\) with respect to the CR structure \(T^{1,0}X\). Furthermore, one finds \(-i\mathcal{T}F_je^{-\beta_j\varphi}=\beta_jF_je^{-\beta_j\varphi}\). Hence, there exists an \(j_\ell \in \N\) such that \(\beta_j=\lambda_{j_\ell}\) and we can choose \(f_{j_\ell}:=F_je^{-\beta_j\varphi}\in V_{j_\ell}\).
\end{proof}
Let \(\operatorname{Map}(X,\R)\) denote the space of real valued functions on \(X\) and let \(1\!\!1\) denote the constant function \(x\mapsto 1\).  For \(N\in\N\) we define a map
\begin{eqnarray*}
	P_N\colon  (V_1)^N\times\cdots\times (V_N)^N\times \operatorname{Map}(X,\R)&\to& \operatorname{Map}(X,\R)\\
	(f_{1,1},\ldots,f_{N,N},\varphi)&\mapsto& \sum_{j=1}^N e^{2\lambda_{j}\varphi}\sum_{\ell=1}^N|f_{j,\ell}|^2.
\end{eqnarray*}
It is easy to check that \(P_N\) preserve the spaces \(\mathscr{C}^0(X,\R)\), \(\mathscr{C}^m(X,\R)\), \(\mathscr{C}_\mathcal{T}^m(X,\R)\), \(m\in\N\cup\{\infty\}\).
The following lemma is crucial and follows immediately from Lemma~\ref{lem:DescriptionOfBetaMaps}.
\begin{lemma}\label{lem:ConditionForVarphiInS}
	With the assumptions and notation above, for any \(\varphi\in \mathcal{S}'\) there exist \(N\in \N\) and \(F\in (V_1)^N\times\cdots\times (V_N)^N\) such that \(P_N(F,\varphi)=1\!\!1\).  
\end{lemma}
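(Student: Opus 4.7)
The plan is to unpack the definition of $\mathcal{S}'$ and translate the sphere condition $|F|^2 \equiv 1$ into the equality $P_N(F,\varphi) = 1\!\!1$ via Lemma~\ref{lem:DescriptionOfBetaMaps}. Given $\varphi \in \mathcal{S}'$, Definition~\ref{def:SasakianStructureSimpleDeformation} yields a positive integer $N_0$, a weight vector $\beta \in \mathbb{R}_+^{N_0}$, and a smooth map $F \colon X \to \mathbb{C}^{N_0}$ that is CR with respect to $\mathcal{V}(\varphi)$, satisfies $F(X) \subset \mathbb{S}^{2N_0-1}$, and obeys $F_*\mathcal{T} = \mathcal{T}_\beta \circ F$. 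Applying Lemma~\ref{lem:DescriptionOfBetaMaps} produces indices $j_1, \ldots, j_{N_0} \in \mathbb{N}$ and sections $g_\ell \in V_{j_\ell}$ with
\[
F = (e^{\lambda_{j_1}\varphi} g_1,\, \ldots,\, e^{\lambda_{j_{N_0}}\varphi} g_{N_0}),
\]
so that the sphere condition becomes $\sum_{\ell=1}^{N_0} e^{2\lambda_{j_\ell}\varphi}\,|g_\ell|^2 \equiv 1$.

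What remains is purely combinatorial bookkeeping to match the index structure of $P_N$. For each $j \in \mathbb{N}$ I would set $I_j := \{\ell \in \{1,\ldots,N_0\} : j_\ell = j\}$, so the displayed identity rewrites as $\sum_j e^{2\lambda_j \varphi} \sum_{\ell \in I_j} |g_\ell|^2 \equiv 1$. Now choose $N \geq \max\bigl\{\max_\ell j_\ell,\,\max_j |I_j|\bigr\}$ and arrange the $g_\ell$'s into an $N \times N$ array $(f_{j,\ell})$ with $f_{j,\ell} \in V_j$ as follows: in the $j$-th row place the elements of $\{g_\ell : \ell \in I_j\}$ (in any order) and pad the remaining slots with the zero element of $V_j$. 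Then $F := (f_{j,\ell}) \in (V_1)^N \times \cdots \times (V_N)^N$, and since the zero slots contribute nothing,
\[
P_N(F,\varphi) = \sum_{j=1}^N e^{2\lambda_j \varphi} \sum_{\ell=1}^N |f_{j,\ell}|^2 = \sum_{\ell=1}^{N_0} e^{2\lambda_{j_\ell}\varphi}\,|g_\ell|^2 \equiv 1.
\]

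The proof is essentially organizational: all the analytic content has already been absorbed into Lemma~\ref{lem:DescriptionOfBetaMaps}, which encodes that every component of a Reeb-equivariant CR map into a weighted sphere has the specific form $e^{\lambda_j \varphi}$ times an eigenfunction of $-i\mathcal{T}$ on $(X,T^{1,0}X)$. The only point requiring care is the selection of $N$: it must simultaneously be large enough to cover the largest eigenvalue index $\max_\ell j_\ell$ appearing in the decomposition (because $P_N$ sums only over $j \leq N$) and to accommodate the largest multiplicity $\max_j |I_j|$ with which any single eigenvalue occurs. Taking $N$ as the maximum of these two integers makes the padding-by-zero scheme well-defined, which is the only non-formal step of the argument.
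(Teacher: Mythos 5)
Your proof is correct and follows exactly the route the paper intends: the paper states that Lemma~\ref{lem:ConditionForVarphiInS} ``follows immediately from Lemma~\ref{lem:DescriptionOfBetaMaps}'', and your write-up simply makes explicit the grouping-by-eigenvalue and zero-padding bookkeeping that this ``immediately'' suppresses. The choice $N\geq\max\{\max_\ell j_\ell,\max_j|I_j|\}$ is exactly what is needed for the array $(f_{j,\ell})$ to fit the domain of $P_N$, so nothing is missing.
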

Furthermore, we have the following.
\begin{lemma}\label{lem:RegularityOfSN}
	Let  \(\varphi\colon X\to \R\) be a function and \(N\in\N\), \(F\in (V_1)^N\times\cdots\times (V_N)^N\) so that \(P_N(F,\varphi)= 1\!\!1\).  Then \(\varphi\in \mathscr{C}_\mathcal{T}^\infty(X,\R)\).
\end{lemma}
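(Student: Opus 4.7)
The plan is to rewrite the equation $P_N(F,\varphi)=1\!\!1$ as an implicit equation in a single real variable and deduce the regularity of $\varphi$ from the implicit function theorem. Set $g_j:=\sum_{\ell=1}^N|f_{j,\ell}|^2\in\mathscr{C}^\infty(X,\R_{\geq 0})$ and define
\[
H\colon \R\times X\to \R,\qquad H(t,x)=\sum_{j=1}^N e^{2\lambda_j t}\,g_j(x),
\]
so that the hypothesis reads $H(\varphi(x),x)=1$ for every $x\in X$. Note that $H$ is smooth on $\R\times X$ because each $g_j$ is smooth.

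The first step I would carry out is to observe that for each $x\in X$ the map $t\mapsto H(t,x)$ is strictly increasing. Indeed, the equation $H(\varphi(x),x)=1$ forces at least one $g_{j_0}(x)$ to be strictly positive; since every $\lambda_j>0$, this yields
\[
\partial_t H(t,x)=\sum_{j=1}^N 2\lambda_j e^{2\lambda_j t}g_j(x)\geq 2\lambda_{j_0}e^{2\lambda_{j_0}t}g_{j_0}(x)>0
\]
for every $t\in\R$. In particular $\varphi(x)$ is, a priori, the unique real number with $H(\varphi(x),x)=1$. Now the implicit function theorem applied to the smooth function $(t,x)\mapsto H(t,x)-1$ at any point $(\varphi(x_0),x_0)$ (where $\partial_t H\neq 0$) produces a smooth function $\widetilde\varphi$ on a neighborhood $U$ of $x_0$ satisfying $H(\widetilde\varphi(x),x)=1$; by the uniqueness just noted, $\widetilde\varphi=\varphi$ on $U$. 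This is the main conceptual step, and it is essentially automatic once monotonicity is established; no real obstacle is expected.

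The final step is to verify $\mathcal{T}\varphi\equiv 0$. Since each $f_{j,\ell}\in V_j$ satisfies $\mathcal{T}f_{j,\ell}=i\lambda_j f_{j,\ell}$, the Leibniz rule gives
\[
\mathcal{T}|f_{j,\ell}|^2=(\mathcal{T}f_{j,\ell})\overline{f_{j,\ell}}+f_{j,\ell}\overline{\mathcal{T}f_{j,\ell}}=i\lambda_j|f_{j,\ell}|^2-i\lambda_j|f_{j,\ell}|^2=0,
\]
so $\mathcal{T}g_j\equiv 0$ for every $j$, and hence $\mathcal{T}\bigl(H(t,\cdot)\bigr)\equiv 0$ for every fixed $t$. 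Differentiating $H(\varphi(x),x)=1$ along $\mathcal{T}$ and using the chain rule then yields
\[
0=\partial_t H(\varphi(x),x)\cdot(\mathcal{T}\varphi)(x),
\]
and since $\partial_t H(\varphi(x),x)>0$, we conclude $\mathcal{T}\varphi\equiv 0$. Combined with smoothness, this gives $\varphi\in\mathscr{C}^\infty_{\mathcal T}(X,\R)$, as required.
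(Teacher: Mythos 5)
Your proposal is correct and follows essentially the same route as the paper: establish strict monotonicity of $t\mapsto H(t,x)$ from $\lambda_j>0$ and the fact that $H(\varphi(x),x)=1$ forces some $g_{j_0}(x)>0$, deduce uniqueness of the solution, apply the implicit function theorem to get smoothness, and then differentiate along $\mathcal{T}$ using $\mathcal{T}|f_{j,\ell}|^2=0$ to conclude $\mathcal{T}\varphi\equiv 0$. No gaps.
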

\begin{proof}
	First we prove a uniqueness statement: Given another function \(\tilde{\varphi}\colon X\to \R\)  with \(P_N(F,\tilde{\varphi})= 1\!\!1 \) we have \(\tilde{\varphi}=\varphi\). Let \(x\in X\) be arbitrary. Since \(\lambda_j>0\) and \( |f_{j,\ell}(x)|^2\geq 0\) for all \(j,\ell\) we find that the function \(\R \ni s\mapsto G(x,s):=\sum_{j=1}^N e^{2\lambda_{j}s}\sum_{\ell=1}^N|f_{j,\ell}(x)|^2\) is increasing. From \(P_N(F,\tilde{\varphi})(x)=1\) we conclude that \(|f_{j,\ell}(x)|^2>0\) for at least one pair \( (j,\ell)\in\{1,\ldots,N\}^2\) and hence that \(s\mapsto G(x,s)\) is strictly increasing. Since  \(G(x,\tilde{\varphi}(x))=1=G(x,\varphi(x))\) we find \(\tilde{\varphi}(x)=\varphi(x)\).\\
	Now we know that \(\varphi\colon X\to \R\) is the uniquely defined function which satisfies \(G(x,s)=1\) if and only if \(s=\varphi(x)\). Furthermore, we have that \(G\colon X\times \R\to\R\) is smooth with
	\begin{eqnarray}\label{eq:DerivativeInSNonEmbPart}
		\frac{\partial}{\partial s}G(x,s)=\sum_{j=1}^N 2\lambda_je^{\lambda_j s}\sum_{\ell=1}^N|f_{j,\ell}|^2>0.
	\end{eqnarray}
	(Here we again use \(\lambda_j>0\) for all \(j\in \N\) and the argument that for any \(x\in X\)  there is at least one \( (j,\ell)\in\{1,\ldots,N\}^2\) with \(|f_{j,\ell}(x)|^2>0\) since \(P_N(F,\varphi)(x)=1\).) With \eqref{eq:DerivativeInSNonEmbPart} we conclude \(\varphi\in \mathscr{C}^\infty(X,\R)\) from the implicit function theorem. Since \(-i\mathcal{T}f_{j,\ell}=\lambda_jf_{j,\ell}\) we have  \(\mathcal{T}|f_{j,\ell}|^2=0\). Since \(\mathcal{T}P_N(F,\varphi)=0\) we find for any \(x\in X\) that
	\[0=(\mathcal{T}\varphi)(x)\sum_{j=1}^N 2\lambda_j\sum_{\ell=1}^N|f_{j,\ell}|^2=(\mathcal{T}\varphi)(x)\frac{\partial}{\partial s}G(x,s)|_{s=\varphi(x)}.\]
	Using \eqref{eq:DerivativeInSNonEmbPart} we obtain \(\mathcal{T}\varphi\equiv 0\) and hence  \(\varphi\in \mathscr{C}_\mathcal{T}^\infty(X,\R)\).
\end{proof}
Now for \(N\in\N\) consider the space
\[S_N:=\{\varphi\in \mathscr{C}_\mathcal{T}^\infty(X,\R)\colon \exists F\in (V_1)^N\times\cdots\times (V_N)^N\text{ s.t. } P_N(F,\varphi)=1\!\!1.\}\]
and put \(S=\bigcup_{j=1}^\infty S_N\). 
Since \(\mathcal{S}'\subset S\) (by Lemma~\ref{lem:ConditionForVarphiInS}) and observing that \(\mathcal{M}'\) is open in \(\mathscr{C}_\mathcal{T}^\infty(X,\R)\) we will prove Theorem~\ref{thm:ForstnericForSasakian} by showing that the set \(S_N\) is nowhere dense in \(\mathscr{C}_\mathcal{T}^\infty(X,\R)\) with respect to the \(\mathscr{C}^\infty\)-topology. 
\begin{lemma}\label{lem:SNisClosed}
	With the assumptions and notations above let \(\varphi\colon X\to \R\) be a function. Assume that there is a constant \(C>0\) and a sequence \(\{\varphi_k\}_{k\in \N}\subset S_N\) such that \(|\varphi_k(x)|\leq C\) for all \(k\in \N\), \(x\in X\) and \(\lim_{k\to\infty}\varphi_k(x)=\varphi(x)\) for all  \(x\in X\). Then \(\varphi\in S_N\).  
\end{lemma}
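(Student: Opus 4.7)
The plan is to exploit that each eigenspace $V_j$ is finite dimensional (a standard consequence of the Toeplitz structure of $-i\mathcal{T}$ on $H^0_b(X)$ for a compact Sasakian manifold), hence locally compact, and then extract a convergent subsequence of the witnesses for $\varphi_k \in S_N$ and pass to the limit in $P_N$.

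First, unpack the hypothesis: for each $k\in\N$ there exists
\[
F^{(k)}=(f^{(k)}_{j,\ell})_{1\leq j,\ell\leq N}\in (V_1)^N\times\cdots\times(V_N)^N
\]
such that $P_N(F^{(k)},\varphi_k)=1\!\!1$, i.e.\ for every $x\in X$,
\[
\sum_{j=1}^N e^{2\lambda_j\varphi_k(x)}\sum_{\ell=1}^N |f^{(k)}_{j,\ell}(x)|^2 = 1.
\]
Using $|\varphi_k(x)|\leq C$ we get $e^{2\lambda_j\varphi_k(x)}\geq e^{-2\lambda_j C}$, so for each fixed $(j,\ell)$ and all $x\in X$,
\[
|f^{(k)}_{j,\ell}(x)|^2 \leq e^{2\lambda_j C}.
\]
In particular $\{f^{(k)}_{j,\ell}\}_{k\in\N}$ is bounded in the sup norm on $X$, and thus also bounded in $L^2(X)$, uniformly in $k$.

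Next, for each $j$ the eigenspace $V_j\subset H^0_b(X)\cap \mathscr{C}^\infty(X)$ is finite dimensional; on this finite-dimensional space any two norms are equivalent and bounded sets are relatively compact. By a diagonal extraction we may therefore pass to a subsequence (still indexed by $k$) such that for every pair $(j,\ell)$ we have $f^{(k)}_{j,\ell}\to f_{j,\ell}$ in $V_j$, and convergence in $V_j$ implies uniform (in fact $\mathscr{C}^\infty$) convergence on $X$. Setting $F=(f_{j,\ell})$, we then have $|f^{(k)}_{j,\ell}(x)|^2 \to |f_{j,\ell}(x)|^2$ uniformly in $x$. Combined with $e^{2\lambda_j\varphi_k(x)}\to e^{2\lambda_j\varphi(x)}$ (pointwise, by continuity of the exponential and the pointwise convergence $\varphi_k(x)\to\varphi(x)$), the finite sum defining $P_N(F^{(k)},\varphi_k)$ converges pointwise to $P_N(F,\varphi)$. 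Since $P_N(F^{(k)},\varphi_k)(x)=1$ for all $k$ and $x$, we conclude $P_N(F,\varphi)(x)=1$ for every $x\in X$, i.e.\ $P_N(F,\varphi)=1\!\!1$.

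Finally, Lemma~\ref{lem:RegularityOfSN} applied to this identity yields that the function $\varphi$ (which a priori is only the pointwise limit of the $\varphi_k$) lies in $\mathscr{C}^\infty_\mathcal{T}(X,\R)$, and hence $\varphi\in S_N$ by definition. The only nontrivial point is the finite-dimensionality of the $V_j$; this is standard on a compact Sasakian manifold but should be cited explicitly, after which the remainder of the argument is a straightforward compactness-and-limit argument.
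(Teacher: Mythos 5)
Your proof is correct and follows essentially the same route as the paper: bound the witnesses $f^{(k)}_{j,\ell}$ uniformly using $|\varphi_k|\leq C$, use finite-dimensionality of each $V_j$ to extract a convergent subsequence, pass to the limit pointwise in $P_N$, and invoke Lemma~\ref{lem:RegularityOfSN} to conclude. The only cosmetic difference is that you flag the finite-dimensionality of the $V_j$ as needing a citation, whereas the paper also uses it without further comment.
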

\begin{proof}
	Since \(\varphi_k\in S_N\) there exists \(F^{(k)}\in(V_1)^N\times\cdots\times (V_N)^N\) such that \(P_N(F^{(k)},\varphi_k)=1\!\!1\). Then consider the sequence \(\{(F^{(k)},\varphi_k)\}_{k\in\N}\). For \(1\leq j,\ell\leq N\), \(k\in N\) and \(x\in X\) we have
	\[1=P_N(F^{(k)},\varphi_k)(x)= \sum_{j=1}^N e^{2\lambda_{j}\varphi_k(x)}\sum_{\ell=1}^N|f^{(k)}_{j,\ell}(x)|^2\geq e^{-2\lambda_NC}|f_{j,\ell}^{(k)}(x)|^2.\] 
	Hence, for each  \(1\leq j,\ell\leq N\) we have that \(\{f^{(k)}_{j,\ell}\}\) is a sequence in \(V_j\) which is bounded with respect to the sup-norm for functions. Since each \(V_j\) is finite dimensional we obtain that \(\{f^{(k)}_{j,\ell}\}_{k\in\N}\) has subsequence which converges to some \(f_{j,\ell}\in V_j\) with respect to the sup-norm.   
	It follows that by passing to a subsequence which we also denote by \(\{(F^{(k)},\varphi_k)\}_{k\in\N}\) we can assume that \(\lim_{k\to\infty}F^{(k)}=F \in(V_1)^N\times\cdots\times (V_N)^N\). Given any \(x\in X\) we have
	\begin{eqnarray*}
		1&=&\lim_{k\to\infty}P_N(F^{(k)},\varphi_k)(x)\\
		&=&\lim_{k\to\infty}\sum_{j=1}^N e^{2\lambda_j\varphi_k(x)}\sum_{\ell=1}^N|f^{(k)}_{j,\ell}(x)|^2=\sum_{j=1}^N e^{2\lambda_j\varphi(x)}\sum_{\ell=1}^N|f_{j,\ell}(x)|^2=P_N(F,\varphi)(x).
	\end{eqnarray*}
	It follows that \(P_N(F,\varphi)=1\!\!1\) and hence \(\varphi\in S_N\) by Lemma~\ref{lem:RegularityOfSN}. 
\end{proof}
\begin{lemma}\label{lem:EnoughReebInvFcts}
	With the assumptions and notations above let \(m\in \N\) be arbitrary. We have \(\mathscr{C}_\mathcal{T}^m(X,\R)\setminus\mathscr{C}_\mathcal{T}^\infty(X,\R)\neq \emptyset\).
\end{lemma}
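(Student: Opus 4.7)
The plan is to build $F\in \mathscr{C}^m_\mathcal{T}(X,\R)\setminus \mathscr{C}^\infty_\mathcal{T}(X,\R)$ as a composition $F=\psi\circ g$, where $g$ is a non-constant smooth Reeb-invariant function on $X$ and $\psi\in \mathscr{C}^m(\R)\setminus \mathscr{C}^{m+1}(\R)$ is an ad hoc cut-off chosen to ensure the non-smoothness of $F$ near a regular value of $g$.

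To produce $g$ I exploit the fact that squared moduli of eigenfunctions of $-i\mathcal{T}$ acting on CR functions are automatically Reeb-invariant. Let $\{f_j\}_{j\in \N}\subset H_b^0(X)\cap \mathscr{C}^\infty(X)$ be the orthonormal CR eigenfunctions from Section~\ref{sec:ProofOfMainThmSasakian} with $-i\mathcal{T}f_j=\lambda_j f_j$. Since $\mathcal{T}$ is real,
\[\mathcal{T}|f_j|^2=(\mathcal{T}f_j)\overline{f_j}+f_j\,\overline{\mathcal{T}f_j}=i\lambda_j|f_j|^2-i\lambda_j|f_j|^2=0,\]
so every $|f_j|^2\in \mathscr{C}^\infty_\mathcal{T}(X,\R)$. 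I claim at least one $|f_j|^2$ is non-constant. If, on the contrary, every $|f_j|$ were an absolute constant $c_j\geq 0$, then for $k$ sufficiently large the CR embedding $\mathcal{F}_k\colon X\to \C^{N_k}$ from Theorem~\ref{thm:HHMSEmbedding} (which exists because every compact Sasakian manifold is CR embeddable into the complex Euclidean space) would have image contained in a product of circles, i.e., in a totally real torus $\mathbb{T}^{N_k}\subset \C^{N_k}$. The pushforward would then satisfy $(\mathcal{F}_k)_*T^{1,0}X\subset \C T\mathbb{T}^{N_k}\cap T^{1,0}\C^{N_k}=\{0\}$, contradicting the rank condition $\dim_{\C}(\mathcal{F}_k)_*T^{1,0}X=n\geq 1$. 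Hence some $g:=|f_{j_0}|^2$ is a non-constant element of $\mathscr{C}^\infty_\mathcal{T}(X,\R)$.

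Having fixed $g$, choose a point $p\in X$ with $dg_p\neq 0$ (which exists because $X$ is connected and $g$ is smooth and non-constant), set $c:=g(p)$, and pick a cut-off $\eta\in \mathscr{C}^\infty_c(\R)$ equal to $1$ near $c$. Define
\[\psi(t):=\eta(t)|t-c|^{m+\tfrac{1}{2}},\qquad F:=\psi\circ g.\]
A direct differentiation shows $\psi\in \mathscr{C}^m(\R)\setminus \mathscr{C}^{m+1}(\R)$: the derivatives up to order $m$ are continuous and vanish at $c$, while $\psi^{(m+1)}(t)$ behaves like $|t-c|^{-1/2}$ near $c$. The chain rule gives $\mathcal{T}F=\psi'(g)\,\mathcal{T}g\equiv 0$ and $F\in \mathscr{C}^m(X,\R)$. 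Using the submersion normal form of $g$ around $p$, one produces local coordinates $(y_1,\dots,y_{2n+1})$ in which $g=c+y_1$; then $F=\eta(c+y_1)|y_1|^{m+\tfrac{1}{2}}$ near $p$, whose $(m+1)$-st $y_1$-derivative is unbounded at $p$. Therefore $F\in \mathscr{C}^m_\mathcal{T}(X,\R)\setminus \mathscr{C}^\infty_\mathcal{T}(X,\R)$, as required. The only step that is not purely formal is the non-constancy of some $|f_j|^2$; the totally-real-torus obstruction outlined above handles it, and crucially relies on the existence of a CR embedding of $X$ built from eigenfunctions of $-i\mathcal{T}$, which is exactly what Theorem~\ref{thm:HHMSEmbedding} provides.
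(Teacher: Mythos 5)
Your proof is correct and follows essentially the same strategy as the paper's: produce a smooth Reeb-invariant function $g=|f_{j_0}|^2$ with a point where $dg\neq 0$ (both arguments ultimately rest on the CR embedding of Theorem~\ref{thm:HHMSEmbedding}), then compose with a one-variable function that is $\mathscr{C}^m$ but not $\mathscr{C}^{m+1}$. The only cosmetic difference is that the paper gets the non-degeneracy directly from $Z(|f|^2)=(Zf)\overline{f(p)}\neq 0$ and uses $\tau(t)=t_+^{m+1}$ instead of your totally-real-torus obstruction and $|t-c|^{m+1/2}$; both variants are fine.
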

\begin{proof}
	We first show that there exist a function \(g\in \mathscr{C}^\infty_\mathcal{T}(X,\R)\) and a point \(p\in X\) such that \(g(p)=0\) and \(dg_p\neq 0\). Let \(\tilde{p}\in X\) be arbitrary. By Theorem~\ref{thm:HHMSEmbedding} there exist \(j\in\N\), \(f\in V_j\) and \(\tilde{Z}\in T_{\tilde{p}}^{1,0}X\) such that \(\tilde{Z}(f)\neq 0\). Then we can find \(p\in X\) close to \(\tilde{p}\) and \(Z\in T_p^{1,0}X\) such that \(f(p)\neq 0\) and \(Z(f)\neq 0\). Since \(f\) is smooth with \(-i\mathcal{T}f=\lambda_jf_j\) we find that the function \(g\) defined by \(g(x):=|f_j(x)|^2-|f_j(p)|^2\) belongs to   \(\mathscr{C}_\mathcal{T}^\infty(X)\) and satisfies \(g(p)=0\). Furthermore, we find \(Z(g)=Z(f)\overline{f}(p)\neq 0\). It follows that \(d_pg\neq 0\). Now let \(\tau\colon \R\to \R\) be the function defined by \(\tau(t)=t^{m+1}\) for \(t\geq 0\) and \(\tau(t)=0\) for \(t<0\). It follows that \(\tau\circ g\in\mathscr{C}_\mathcal{T}^m(X,\R)\) but \(\tau\circ g\) is not \(\mathscr{C}^\infty\)-smooth in \(p\). 
\end{proof}
\begin{lemma}\label{lem:CinftyIsDense}
	With the assumptions and notations above let \(m\in \N\) be arbitrary. We have \(\mathscr{C}_\mathcal{T}^\infty(X,\R)\) is dense in \(\mathscr{C}_\mathcal{T}^m(X,\R)\) with respect to the \(\mathscr{C}^m(X,\R)\)-norm.
\end{lemma}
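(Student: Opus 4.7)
The strategy is to first mollify in a standard way (which generally destroys $\mathcal{T}$-invariance) and then restore the $\mathcal{T}$-invariance by averaging over the closure of the Reeb flow in the isometry group of a well-chosen metric. Equip $X$ with the Sasakian Riemannian metric $g:=g^{T^{1,0}X,\alpha}$, where $\alpha=\alpha^{T^{1,0}X,\mathcal{T}}$ is the contact form from Lemma~\ref{lem:ExistenceOfOneFormSasakian}. Since $(X,T^{1,0}X,\mathcal{T})$ is Sasakian, the Reeb field $\mathcal{T}$ is Killing for $g$, so its flow $\{\Phi_t\}_{t\in\R}$ lies in $\mathrm{Isom}(X,g)$. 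By the Myers--Steenrod theorem, $\mathrm{Isom}(X,g)$ is a compact Lie group, hence the closure $G:=\overline{\{\Phi_t:t\in\R\}}\subset \mathrm{Isom}(X,g)$ is a compact abelian Lie group (a torus).

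Now fix $\varphi\in\mathscr{C}^m_{\mathcal{T}}(X,\R)$. By a standard mollification argument on the compact manifold $X$ (e.g.\ a finite partition of unity and Friedrichs mollifiers in local charts, or the heat semigroup of $g$), one obtains $\tilde\varphi_j\in\mathscr{C}^\infty(X,\R)$ with $\|\tilde\varphi_j-\varphi\|_{\mathscr{C}^m(X)}\to0$. Since $\mathcal{T}\varphi\equiv0$ and $\varphi$ is continuous, $\varphi$ is invariant under every $\Phi_t$ and therefore under the closure $G$. Letting $d\mu$ denote the normalized Haar measure on $G$, set
\[
\varphi_j(x):=\int_G \tilde\varphi_j(g\cdot x)\,d\mu(g),\qquad x\in X.
\]
Because $G$ acts smoothly on $X$ and $\tilde\varphi_j$ is smooth, differentiation under the integral shows $\varphi_j\in\mathscr{C}^\infty(X,\R)$; because $\mu$ is $G$-invariant, $\varphi_j$ is $G$-invariant and hence $\Phi_t$-invariant, so $\mathcal{T}\varphi_j\equiv0$, i.e.\ $\varphi_j\in\mathscr{C}^\infty_{\mathcal{T}}(X,\R)$.

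It remains to show $\varphi_j\to\varphi$ in the $\mathscr{C}^m(X)$-norm. Using the $G$-invariance of $\varphi$ we write $\varphi(x)=\int_G\varphi(g\cdot x)\,d\mu(g)$, so
\[
\varphi_j(x)-\varphi(x)=\int_G \bigl(\tilde\varphi_j-\varphi\bigr)(g\cdot x)\,d\mu(g).
\]
Elements of $G$ are isometries of $g$, hence smooth diffeomorphisms with $\mathscr{C}^{m+1}$-norms bounded uniformly in $g\in G$ by compactness of $G$. Therefore the pullback operator $u\mapsto u\circ g$ is uniformly bounded on $\mathscr{C}^m(X)$: there exists $C=C(G,m)>0$ with $\|u\circ g\|_{\mathscr{C}^m(X)}\leq C\|u\|_{\mathscr{C}^m(X)}$ for all $g\in G$. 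Applying this under the integral and using $\mu(G)=1$ gives
\[
\|\varphi_j-\varphi\|_{\mathscr{C}^m(X)}\leq C\,\|\tilde\varphi_j-\varphi\|_{\mathscr{C}^m(X)}\longrightarrow0,
\]
which completes the argument. The only real obstacle in the plan is guaranteeing that smoothing can be made compatible with $\mathcal{T}$-invariance; this is resolved by exploiting the fact that the Sasakian metric makes $\mathcal{T}$ Killing, so the closure of its flow is a compact group on which one can average without losing regularity or control in $\mathscr{C}^m$.
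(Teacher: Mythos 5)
Your proof is correct and follows essentially the same route as the paper: mollify in $\mathscr{C}^m$, then project back to $\mathcal{T}$-invariant functions by averaging over the compact torus generated by the Reeb flow, using that this averaging operator is bounded on $\mathscr{C}^m(X)$ and fixes $\mathscr{C}^m_{\mathcal{T}}(X,\R)$. The only difference is that where the paper cites Rukimbira (and \cite{HHL22}) for the torus action, you derive it directly from the fact that $\mathcal{T}$ is Killing for the Sasakian metric together with Myers--Steenrod, which is a valid, self-contained substitute.
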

\begin{proof}
	From~\cite{Ru95} (see also~\cite{HHL22}) we find that the \(\R\)-action coming from the flow of \(\mathcal{T}\) is induced by a torus Lie group action \(\mu\colon\mathbb{T}^d\times X\to X\). Furthermore, we have that the linear map \(\operatorname{Inv}\colon \mathscr{C}^{0}(X)\to \mathscr{C}^0(X)\), \((\operatorname{Inv}(f))(x):=\int_{\mathbb{T}^d}f(\mu(\theta,x))d\theta\) satisfies \(\operatorname{Inv}\circ\operatorname{Inv}=\operatorname{Inv}\), \(\operatorname{Inv}|_{\mathscr{C}_\mathcal{T}^m(X,\R)}=\operatorname{Id}|_{\mathscr{C}_\mathcal{T}^m(X,\R)}\) and \(\operatorname{Inv}(\mathscr{C}^m(X,\R))\subset \mathscr{C}_\mathcal{T}^m(X,\R)\) for \(m\in\N\cup\{\infty\}\). Furthermore, we have that the restriction \(\operatorname{Inv}|_{\mathscr{C}^m(X)}\colon \mathscr{C}^m(X)\to\mathscr{C}^m(X)\) defines a bounded linear operator between Banach spaces. Now let \(f\in \mathscr{C}_\mathcal{T}^m(X,\R)\) be a function for some \(m\in\N\). There exists a sequence \(\{\tilde{f}_k\}_{k\in\N}\subset \mathscr{C}^\infty(X,\R)\) with \(\|f-\tilde{f}_k\|_{\mathscr{C}^m(X)}\to 0\) for \(k\to\infty\). Put \(f_k:=\operatorname{Inv}(\tilde{f}_k)\in \mathscr{C}_\mathcal{T}^\infty(X,\R)\). We find
	\[\|f-f_k\|_{\mathscr{C}^m(X)}=\|\operatorname{Inv}(f-\tilde{f}_k)\|_{\mathscr{C}^m(X)}\leq C_m\|f-\tilde{f}_k\|_{\mathscr{C}^m(X)}\]
	for some constant \(C_m>0\) independent of \(k\). The claim follows.
\end{proof}

\begin{lemma}\label{lem:NonDensitySN}
	With the assumptions and notations above let \(m\in \N\cup\{\infty\}\) be arbitrary. We have that \(S_N\) is nowhere dense in \(\mathscr{C}_\mathcal{T}^\infty(X)\) with respect to the \(\mathscr{C}^m\)-topology on \(\mathscr{C}_\mathcal{T}^\infty(X)\). 
\end{lemma}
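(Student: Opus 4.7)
The plan is to exhibit $S_N$ as a closed, $\sigma$-compact subset of an infinite-dimensional ambient space and then deduce nowhere-denseness from Baire category. The $\sigma$-compactness will come from parametrising $S_N$ by a finite-dimensional family.

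First I will set $W_N := (V_1)^N \times \cdots \times (V_N)^N$, a finite-dimensional real vector space (each eigenspace $V_j$ of $-i\mathcal{T}$ is finite-dimensional), and let $W_N^* \subset W_N$ denote the open subset where $\sum_{j,\ell}|f_{j,\ell}|^2 > 0$ everywhere on $X$. For each $F \in W_N^*$, the argument of Lemma~\ref{lem:RegularityOfSN} produces a unique $\varphi_F \in \mathscr{C}^\infty_\mathcal{T}(X,\R)$ satisfying $P_N(F,\varphi_F) = 1\!\!1$. A parameter version of the implicit function theorem, which applies because $\partial_s P_N(F_0,\cdot)|_{s=\varphi_{F_0}(x)}$ is bounded below by a positive constant uniformly in $x$ (using compactness of $X$), shows that the map $\Phi : W_N^* \to \mathscr{C}^\infty_\mathcal{T}(X,\R)$, $F \mapsto \varphi_F$, is smooth; in particular, it is continuous when the target is equipped with any $\mathscr{C}^m$-topology. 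By Lemma~\ref{lem:ConditionForVarphiInS} and Lemma~\ref{lem:RegularityOfSN}, $S_N = \Phi(W_N^*)$. Since $W_N^*$ is an open subset of a finite-dimensional Euclidean space, it is $\sigma$-compact, and hence so is $S_N$ in every $\mathscr{C}^m$-topology.

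Next I will observe that $S_N$ is closed in the $\mathscr{C}^m$-topology. Indeed, any $\mathscr{C}^m$-convergent sequence in $S_N$ converges in particular uniformly with a uniform bound (as $X$ is compact), so by Lemma~\ref{lem:SNisClosed} its limit lies in $S_N$; metrisability of the $\mathscr{C}^m$-topology then yields closedness. For the conclusion, first consider the case $m = \infty$: the space $\mathscr{C}^\infty_\mathcal{T}(X,\R)$ is infinite-dimensional (Theorem~\ref{thm:HHMSEmbedding} yields infinitely many linearly independent $\mathcal{T}$-invariant smooth functions $|f|^2$, $f \in V_j$) and a Fr\'echet space, hence a Baire space in which every compact set has empty interior by the Riesz lemma; the $\sigma$-compact set $S_N$ is therefore meager, and being closed it is nowhere dense. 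For finite $m$, I will pass to the Banach completion $\mathscr{C}^m_\mathcal{T}(X,\R)$, which is likewise infinite-dimensional and Baire, and in which $S_N$ is again closed and $\sigma$-compact, hence of empty $\mathscr{C}^m$-interior. If a $\mathscr{C}^m$-open neighbourhood $U \subset \mathscr{C}^\infty_\mathcal{T}(X,\R)$ were contained in $S_N$, then $U = V \cap \mathscr{C}^\infty_\mathcal{T}(X,\R)$ for some $\mathscr{C}^m$-open $V \subset \mathscr{C}^m_\mathcal{T}(X,\R)$; by the density of $\mathscr{C}^\infty_\mathcal{T}$ in $\mathscr{C}^m_\mathcal{T}$ (Lemma~\ref{lem:CinftyIsDense}) and the closedness of $S_N$ in $\mathscr{C}^m_\mathcal{T}$, this forces $V \subset S_N$, contradicting the empty interior property. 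Hence $S_N$ also has empty interior in $\mathscr{C}^\infty_\mathcal{T}(X,\R)$ with the subspace $\mathscr{C}^m$-topology, and nowhere-denseness follows.

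The main technical point, and the only one requiring care beyond what is already in the excerpt, is the continuity (indeed smoothness) of $\Phi$ in the $\mathscr{C}^m$-topologies: one must track the $\mathscr{C}^m$-dependence of $\varphi_F$ on $F$ uniformly on compact subsets of $W_N^*$. This is standard for the parametric implicit function theorem, provided the lower bound on $\partial_s P_N(F,\cdot)$ is uniform over such compact families, which holds by continuity of $F \mapsto \varphi_F$ in $\mathscr{C}^0$ together with compactness of $X$. All other ingredients are formal consequences of the lemmas already established.
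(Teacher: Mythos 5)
Your argument is correct in substance but follows a genuinely different route from the paper. The paper's proof is a regularity argument: it combines the closedness of \(S_N\) under bounded pointwise limits (Lemma~\ref{lem:SNisClosed}) with the facts that \(\mathscr{C}^\infty_\mathcal{T}(X,\R)\) is dense in, yet strictly smaller than, \(\mathscr{C}^m_\mathcal{T}(X,\R)\) (Lemmas~\ref{lem:CinftyIsDense} and~\ref{lem:EnoughReebInvFcts}); if \(S_N\) contained a relatively open set, taking \(\mathscr{C}^m\)-limits inside that set would force a non-smooth invariant function into \(S_N\subset\mathscr{C}^\infty_\mathcal{T}(X,\R)\), a contradiction. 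You instead run the classical Forstneri\v{c}-style category argument: \(S_N\) is the continuous image of the finite-dimensional parameter set \(W_N^*\), hence \(\sigma\)-compact, hence meager in the infinite-dimensional Banach (resp.\ Fr\'echet) space \(\mathscr{C}^m_\mathcal{T}(X,\R)\) by the Riesz lemma, and closed plus meager gives nowhere dense; your passage from empty interior in the completion back to the subspace via density and closedness is handled correctly. What your approach buys is independence from the smooth/non-smooth dichotomy; what it costs is two extra inputs that you should justify more carefully. First, the continuity of \(\Phi\colon W_N^*\to\mathscr{C}^m_\mathcal{T}(X,\R)\): this does follow from the implicit function theorem applied to the jointly smooth map \((F,x,s)\mapsto G_F(x,s)\), whose \(s\)-derivative is positive wherever \(\sum_{j,\ell}|f_{j,\ell}|^2>0\), but it is a genuine additional analytic step beyond Lemma~\ref{lem:RegularityOfSN}. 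Second, the infinite-dimensionality of \(\mathscr{C}^m_\mathcal{T}(X,\R)\): the claim that the functions \(|f|^2\) are linearly independent is not immediate and should not be taken from Theorem~\ref{thm:HHMSEmbedding} without argument; a cleaner route is to observe that the orbit space of the torus action appearing in the proof of Lemma~\ref{lem:CinftyIsDense} is an infinite Hausdorff space (it cannot be a point, since otherwise every \(|f_j|^2\) would be constant and Lemma~\ref{lem:ModuliOfEntriesCantBeConstantDeformation} would contradict the embedding theorem), so averaging bump functions supported near countably many distinct orbits with the operator \(\operatorname{Inv}\) yields infinitely many linearly independent elements of \(\mathscr{C}^\infty_\mathcal{T}(X,\R)\). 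With these two points made precise, your proof is complete.
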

\begin{proof}
	First, assume \(m\in \N\). Let \(\varphi\in S_N\) be arbitrary. By Lemma~\ref{lem:SNisClosed} and Lemma~\ref{lem:RegularityOfSN} we have that \(S_N\) is closed in \(\mathscr{C}^\infty_{\mathcal{T}}(X,\R)\) with respect to the \(\mathscr{C}^{m}\)-topology. Hence,  it is enough to show that for \(\varepsilon \geq 0\) with \(B_{\varepsilon,m}(\varphi)\cap \mathscr{C}_\mathcal{T}^\infty(X,\R)\subset S_N\) we find \(\varepsilon=0\). Here \(B_{\varepsilon,m}(\varphi)\) denotes the open ball in \(\mathscr{C}^m(X)\) with respect to the \(\mathscr{C}^m(X)\)-norm of radius \(\varepsilon\) around \(\varphi\) with \(B_{0,m}(\varphi)=\emptyset\). Assume \(\varepsilon>0\). From Lemma~\ref{lem:EnoughReebInvFcts} we can find \(\psi\in \left(B_{\varepsilon,m}(\varphi)\cap\mathscr{C}_\mathcal{T}^m(X,\R)\right)\setminus\mathscr{C}_\mathcal{T}^\infty(X,\R)\). Since \(\mathscr{C}_\mathcal{T}^\infty(X,\R)\) is dense in \(\mathscr{C}_\mathcal{T}^m(X,\R)\) with respect to the \(\mathscr{C}^m(X,\R)\)-norm (see Lemma~\ref{lem:CinftyIsDense}) we find a sequence \(\{\psi_k\}_{k\in\N}\subset B_{\varepsilon,m}(\varphi)\cap \mathscr{C}_\mathcal{T}^\infty(X,\R)\) with \(\|\psi-\psi_k\|_{\mathscr{C}^m(X)}\to 0\) for \(k\to \infty\). Since \(\{\psi_k\}_{k\in\N}\subset S_N\) we find by Lemma~\ref{lem:SNisClosed} that \(\psi \in \mathscr{C}_\mathcal{T}^\infty(X,\R)\). But \(\psi \notin \mathscr{C}_\mathcal{T}^\infty(X,\R)\) which is a contradiction. As a conclusion we have \(\varepsilon=0\).\\
	Now we consider the case \(m=\infty\). From Lemma~\ref{lem:SNisClosed} we immediately observe that  \(S_N\) is closed in \(\mathscr{C}^\infty_{\mathcal{T}}(X)\) with respect to the \(\mathscr{C}^{\infty}\)-topology. Since \(\{B_{\varepsilon,m}(\varphi)\cap \mathscr{C}^\infty(X)\colon \varepsilon>0,\,m\in\N,\,\varphi\in \mathscr{C}^\infty(X)\}\) is a basis for the \(\mathscr{C}^\infty\)-topology we obtain from the previous considerations that \(S_N\) does not have any interior point.  
\end{proof}

\begin{proof}[\bf{Proof of Theorem~\ref{thm:ForstnericForSasakian}}]
	Let \(\Pi\colon \mathscr{C}^\infty_\mathcal{T}(X,\R)\to \mathscr{C}^\infty_\mathcal{T}(X,\R)/\R\cdot1\!\!1\), \(\varphi\mapsto [\varphi]\) denote the canonical projection onto the quotient space. We have \(\Pi(\mathcal{M}')=\mathcal{M}\). Given \(\varphi\in \mathcal{M}'\) (\(\varphi\in S_N\)) we find \(\varphi+c\in \mathcal{M}'\) (\(\varphi+c\in S_N\)) for all \(c\in \R\) respectively. Hence we have \(\Pi^{-1}(\mathcal{M})=\mathcal{M}'\) which shows that \(\mathcal{M}\) is open in \(\mathscr{C}^\infty_\mathcal{T}(X,\R)/\R\cdot1\!\!1\) since \(\mathcal{M}'\) is open in \(\mathscr{C}^\infty_\mathcal{T}(X,\R)\). Furthermore, since \(\Pi^{-1}(\Pi(S_N))=S_N\) and \(S_N\) is closed in \(\mathscr{C}^\infty_\mathcal{T}(X,\R)\) (see Lemma~\ref{lem:SNisClosed}), we have that \(\Pi(S_N)\cap \mathcal{M}\) is closed in \(\mathcal{M}\). Let \(U\subset \Pi(S_N)\cap \mathcal{M}\) be open in \(\mathcal{M}\). We have that \(\Pi^{-1}(U)\subset S_N\cap \mathcal{M}'\) is open in \(\mathcal{M}'\). Since \(\Pi\) is surjective and \(S_N\cap \mathcal{M}'\) cannot contain any non-empty open set by Lemma~\ref{lem:NonDensitySN}, we find \(U=\emptyset\). As a conclusion we have that \(\Pi(S_N)\cap \mathcal{M}\) is nowhere dense in \(\mathcal{M}\). Since from   Lemma~\ref{lem:ConditionForVarphiInS} we get \(\mathcal{S}\subset \bigcup_{N=1}^\infty \Pi(S_N)\cap \mathcal{M} \)  we conclude that \(\mathcal{S}\) is a set of first category in \(\mathcal{M}\). 
\end{proof}

\subsection{Proof of Theorem~\ref{thm:NoIsolatedPoint}}\label{sec:DeformationWeightedSphere}
In this section we are going to prove Theorem~\ref{thm:NoIsolatedPoint}. Let \((X,T^{1,0}X,\mathcal{T})\) be a compact Sasakian manifold of dimension \(2n+1\), \(n\geq 1\). 
We fix a family of \(\mathscr{C}^m\)-norms on \(X\). As before, for \(m\in\N\cup\{\infty\}\) put \(\mathscr{C}^m_\mathcal{T}(X,\R)=\{\varphi\in \mathscr{C}^m(X,\R)\colon \mathcal{T}\varphi\equiv 0\}\). 
 Let \(f_1,\ldots,f_\nu \in  H_b^0(X)\cap \mathscr{C}^\infty(X)\) and \(\beta_1,\ldots,\beta_\nu\in \R_+\) such that \(-i\mathcal{T}f_j=\beta_jf_j\) and \(f_j\not\equiv 0\) for all \(1\leq j\leq \nu\). We consider the map
\[G\colon \R^{\nu}\times \operatorname{Map}(X,\R)\to \operatorname{Map}(X,\R),\,\, G(r,\varphi)=\sum_{j=1}^\nu r_j^2|f_j|^2e^{2\beta_j\varphi}.\]
	We find that \(G\) restricts to a Fr\'echet differentiable map \(\R^{\nu}\times \mathscr{C}_\mathcal{T}^m(X,\R)\to \mathscr{C}_\mathcal{T}^m(X,\R)\) between Banach spaces for all \(m\in \N\). Here we use the \(\mathscr{C}^m\)-norm on \(\mathscr{C}_\mathcal{T}^m(X,\R)\).
	
	\begin{lemma}\label{lem:ExistenceOfDeformationMap}
		Let \(m\in\N\) be arbitrary. Assume that there exist \(r_0\in \R^\nu\) and \(\varphi_0\in \mathscr{C}^m_\mathcal{T}(X,\R)\) with \(G(r_0,\varphi_0)=1\!\!1\) where \(1\!\! 1\) denotes the constant one function on \(X\). There exist open sets \(U\subset \R^\nu\) around \(r_0\), \(V\subset \mathscr{C}^m_\mathcal{T}(X,\R)\) around \(\varphi_0\) and a Fr\'echet differentiable map \(g\colon U\to V\) such that for  \((r,\varphi)\in U\times V\) we have
		\[G(r,\varphi)=1\!\!1 \Leftrightarrow \varphi=g(r).\]
		\end{lemma}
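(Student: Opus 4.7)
The plan is to apply the implicit function theorem for Fréchet differentiable maps between Banach spaces to the auxiliary map
$$\widetilde G(r,\varphi) := G(r,\varphi) - 1\!\! 1, \qquad \widetilde G\colon \R^\nu \times \mathscr{C}^m_\mathcal{T}(X,\R)\to \mathscr{C}^m_\mathcal{T}(X,\R),$$
at the point $(r_0,\varphi_0)$. The codomain is indeed $\mathscr{C}^m_\mathcal{T}(X,\R)$: each $|f_j|^2$ is smooth and $\mathcal{T}$-invariant, since $-i\mathcal{T}f_j = \beta_j f_j$ implies $\mathcal{T}|f_j|^2 = 0$, while $e^{2\beta_j\varphi_0}\in \mathscr{C}^m_\mathcal{T}(X,\R)$ because $\varphi_0$ is. By the hypothesis, $\widetilde G(r_0,\varphi_0) = 0$, and Fréchet differentiability is stated in the excerpt. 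Hence everything reduces to verifying that the partial Fréchet derivative $D_\varphi \widetilde G(r_0,\varphi_0)\colon \mathscr{C}^m_\mathcal{T}(X,\R)\to\mathscr{C}^m_\mathcal{T}(X,\R)$ is a continuous linear isomorphism.

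First I would compute, for $\psi\in\mathscr{C}^m_\mathcal{T}(X,\R)$,
$$D_\varphi G(r_0,\varphi_0)\cdot \psi \;=\; 2h\,\psi, \qquad h := \sum_{j=1}^\nu \beta_j (r_0)_j^2 |f_j|^2 e^{2\beta_j\varphi_0},$$
so that the partial derivative is simply multiplication by the function $2h$. The key step is to check that $h$ is strictly positive on all of $X$: at any $x\in X$, the identity $G(r_0,\varphi_0)(x)=1$ forces at least one summand $(r_0)_j^2 |f_j(x)|^2 e^{2\beta_j\varphi_0(x)}$ to be strictly positive, and since $\beta_j > 0$ the corresponding term in $h(x)$ is strictly positive as well. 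Moreover $h\in\mathscr{C}^m_\mathcal{T}(X,\R)$ by the same argument that places $G$ in this space.

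By compactness of $X$ together with the strict positivity of $h$ we have $h\geq c > 0$ for some constant $c$, so $1/h\in\mathscr{C}^m_\mathcal{T}(X,\R)$ and multiplication by $h$ is a bounded linear automorphism of $\mathscr{C}^m_\mathcal{T}(X,\R)$ with bounded inverse given by multiplication by $1/h$. Having established the isomorphism property, the implicit function theorem in Banach spaces immediately delivers open neighborhoods $U\subset \R^\nu$ of $r_0$ and $V\subset\mathscr{C}^m_\mathcal{T}(X,\R)$ of $\varphi_0$, together with a Fréchet differentiable map $g\colon U\to V$, such that on $U\times V$ the equation $G(r,\varphi)=1\!\! 1$ holds if and only if $\varphi=g(r)$. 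No step is genuinely hard; the only non-routine point is the pointwise positivity of $h$, which follows transparently from the pointwise equation $G(r_0,\varphi_0)=1\!\! 1$ and the positivity of the weights $\beta_j$.
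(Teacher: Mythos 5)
Your proof is correct and follows essentially the same route as the paper: compute the partial Fr\'echet derivative in $\varphi$, which is multiplication by $2\sum_j \beta_j (r_0)_j^2 |f_j|^2 e^{2\beta_j\varphi_0}$, use $G(r_0,\varphi_0)=1\!\!1$, the positivity of the $\beta_j$, and compactness of $X$ to bound this multiplier below by a positive constant, conclude that the derivative is an automorphism of $\mathscr{C}^m_\mathcal{T}(X,\R)$, and invoke the implicit function theorem. Your write-up is slightly more detailed (explicitly exhibiting the inverse as multiplication by $1/(2h)$ and checking that the codomain is $\mathscr{C}^m_\mathcal{T}(X,\R)$), but the argument is the same.
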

	
\begin{proof}
	Since \(G\) is Fr\'echet differentiable we can compute \[DG(r,\varphi)(0,\psi)= \psi\sum_{j=1}^\nu 2\beta_jr_j^2|f_j|^2e^{2\beta_j\varphi}.\] Since \(G(r_0,\varphi_0)=1\!\!1\), \(\beta_j>0\), \(1\leq j\leq \nu\) and \(X\) is compact we find that 
	\[x\mapsto\sum_{j=1}^\nu \beta_j(r_0)_j^2|f_j(x)|^2e^{2\beta_j\varphi_0(x)}\] is bounded from below by a positive constant. It follows that  \(\psi\mapsto DG(r_0,\varphi)(0,\psi)\) defines an automorphism of the Banach space \(\mathscr{C}_\mathcal{T}^m(X,\R)\). Then the claim follows from the implicit function theorem.
\end{proof}
\begin{lemma}
	Assume that there exist \(r_0\in \R^\nu\) and \(\varphi_0\in \mathscr{C}^m_\mathcal{T}(X,\R)\) with \(G(r_0,\varphi_0)=1\!\!1\). There exist an open set \(U\subset \R^\nu\) around \(r_0\) and a continuous map \(g\colon U\to \mathcal{C}_\mathcal{T}^\infty(X,\R)\) such that \(g(r_0)=\varphi_0\) and \(G(r,g(r))=1\!\!1\) for all \(r\in U\). Furthermore, we have
	\begin{eqnarray}\label{eq:differentialOfDeformation} 
		\lim_{\varepsilon\to 0}\varepsilon^{-1}\left\|g(r_0+\varepsilon v)-\varphi_0- \varepsilon A(v)\right\|_{\mathscr{C}^2(X)}=0
	\end{eqnarray}
 for any \(v\in \R^\nu\) where
\[A(v):=-\left(\sum_{j=1}^\nu 2\beta_j(r_0)_j^2|f_j|^2e^{2\beta_j\varphi_0}\right)^{-1}\sum_{j=1}^\nu 2(r_0)_jv_j|f_j|^2e^{2\beta_j\varphi_0}.\]
\end{lemma}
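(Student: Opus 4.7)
The plan is to combine Lemma~\ref{lem:ExistenceOfDeformationMap} with the regularity argument used in Lemma~\ref{lem:RegularityOfSN}. First, I would apply Lemma~\ref{lem:ExistenceOfDeformationMap} with $m=2$ (one could take any $m\geq 2$, but $m=2$ is the weakest norm demanded by the final conclusion) to obtain open neighborhoods $U\subset\R^\nu$ of $r_0$ and $V\subset \mathscr{C}^2_\mathcal{T}(X,\R)$ of $\varphi_0$, together with a Fr\'echet differentiable map $g\colon U\to V$ such that for $(r,\varphi)\in U\times V$ one has $G(r,\varphi)=1\!\!1$ if and only if $\varphi=g(r)$. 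Shrinking $U$ if necessary, we may assume that $\sum_{j=1}^\nu\beta_jr_j^2|f_j|^2 e^{2\beta_j g(r)}$ is bounded below by a positive constant uniformly for $r\in U$.

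Next, I would upgrade the regularity: fix $r\in U$, set $\varphi:=g(r)\in\mathscr{C}^2_\mathcal{T}(X,\R)$ and define, as in the proof of Lemma~\ref{lem:RegularityOfSN}, the smooth auxiliary function
\[ H(x,s):=\sum_{j=1}^\nu r_j^2|f_j(x)|^2 e^{2\beta_j s},\qquad (x,s)\in X\times\R. \]
Since $G(r,\varphi)=1\!\!1$ gives $H(x,\varphi(x))=1$ for all $x$, and since $\partial_s H(x,\varphi(x))>0$ by the above lower bound, the classical implicit function theorem applied to $H$ produces a smooth solution in a neighborhood of each $x\in X$; by strict monotonicity in $s$ this smooth solution coincides with $\varphi$, so $\varphi\in\mathscr{C}^\infty(X,\R)$. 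The identity $\mathcal{T}H(\cdot,s)\equiv 0$ for every $s$ (because $|f_j|^2$ is $\mathcal{T}$-invariant) combined with the chain rule then forces $\mathcal{T}\varphi\equiv 0$, hence $g(r)\in\mathscr{C}^\infty_\mathcal{T}(X,\R)$.

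For continuity in the $\mathscr{C}^\infty$-topology, I would argue locally on $U$: given any $r_1\in U$, apply Lemma~\ref{lem:ExistenceOfDeformationMap} at the base point $(r_1,g(r_1))$ at level $m$ for arbitrary $m\geq 2$, producing a Fr\'echet differentiable map $\tilde g_m\colon U_m'\to V_m'\subset\mathscr{C}^m_\mathcal{T}(X,\R)$ defined on a neighborhood $U_m'$ of $r_1$. The strict monotonicity of $s\mapsto H(x,s)$ gives uniqueness of the solution of $G(r,\cdot)=1\!\!1$ at each $x$, so after shrinking $U_m'$ inside $U$ we must have $\tilde g_m=g|_{U_m'}$. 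Thus $g$ is continuous (indeed Fr\'echet differentiable) into $\mathscr{C}^m_\mathcal{T}(X,\R)$ near every $r_1\in U$ for every $m$, which is the required continuity into $\mathscr{C}^\infty_\mathcal{T}(X,\R)$. Finally, the explicit form of $A(v)$ and the displayed $\mathscr{C}^2$-limit are obtained by differentiating the identity $G(r,g(r))=1\!\!1$ at $r=r_0$ and inverting $D_\varphi G(r_0,\varphi_0)$, exactly as in the proof of Lemma~\ref{lem:ExistenceOfDeformationMap}.

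The only real subtlety is keeping the neighborhood $U$ independent of $m$: this is handled by the uniqueness trick in the previous paragraph, since without it one would only get a shrinking sequence of neighborhoods $U_m$ whose intersection could fail to be a neighborhood of $r_0$. Once $U$ is fixed (as $U=U_2$, say) and matching $\tilde g_m=g$ is established on neighborhoods, everything else is a straightforward application of the implicit function theorem.
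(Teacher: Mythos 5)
Your proposal is correct and follows essentially the same route as the paper: apply Lemma~\ref{lem:ExistenceOfDeformationMap} at level $m=2$, upgrade the regularity of $g(r)$ via the scalar implicit function theorem (which is exactly the content of Lemma~\ref{lem:RegularityOfSN}, cited rather than re-derived in the paper), identify the level-$m$ solution maps $g_m$ with $g$ by uniqueness to get continuity into every $\mathscr{C}^m_\mathcal{T}(X,\R)$ on a fixed neighborhood, and obtain $A(v)$ by differentiating $G(r,g(r))=1\!\!1$. Your use of the pointwise strict monotonicity of $s\mapsto H(x,s)$ to force $\tilde g_m=g$ is in fact a slightly cleaner version of the paper's matching argument, which instead juggles openness of $V\cap V'\cap\mathscr{C}^\infty_\mathcal{T}(X,\R)$ in the $\mathscr{C}^m$-topology.
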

\begin{proof}
	From Lemma~\ref{lem:ExistenceOfDeformationMap} we find open neighborhoods \(U\subset \R^\nu\) around \(r_0\), \(V\subset \mathscr{C}^2_\mathcal{T}(X,\R)\) around \(\varphi_0\) and a Fr\'echet differentiable map \(g\colon U\to V\) such that for  \((r,\varphi)\in U\times V\) we have
	\[G(r,\varphi)=1\!\!1 \Leftrightarrow \varphi=g(r).\] 
	Furthermore, we obtain~\eqref{eq:differentialOfDeformation} just by differentiating \(G(r,g(r))=1\!\!1\) with respect to \(r\). 
	Note that from Lemma~\ref{lem:RegularityOfSN} it follows that \(g(U)\subset \mathscr{C}^\infty_\mathcal{T}(X,\R)\). We now show that \(g\colon U\to \mathscr{C}^\infty_\mathcal{T}(X,\R)\) is continuous with respect to any \(\mathscr{C}^m\)-norm.  Let \(m\in\N\), \(m\geq 2\), and \(r'\in U\) be arbitrary. Since \(G(r',g(r'))=1\!\! 1\) we find by Lemma~\ref{lem:ExistenceOfDeformationMap} that there exist open sets \(U'\subset \R^\nu\) around \(r'\), \(V'\subset \mathscr{C}^m_\mathcal{T}(X,\R)\) around \(g(r')\) and a Fr\'echet differentiable map \(g_m\colon U'\to V'\) such that for  \((r,\varphi)\in U'\times V'\) we have
	\[G(r,\varphi)=1\!\!1 \Leftrightarrow \varphi=g_m(r).\]
	We have that \(V\cap \mathscr{C}^\infty_\mathcal{T}(X,\R)\) is open in \(\mathscr{C}^\infty_\mathcal{T}(X,\R)\) with respect to the \(\mathscr{C}^m\)-topology. Hence \(V\cap V'\cap \mathscr{C}^\infty_\mathcal{T}(X,\R)\) is a neighborhood around \(g(r')=g_m(r')\) which is open in the \(\mathscr{C}^m\)-topology. Furthermore, with the same argument as above we have \(g_m(U')\subset \mathscr{C}^\infty_\mathcal{T}(X,\R)\). So \(g_m^{-1}(V\cap V')=g_m^{-1}(V\cap V'\cap \mathscr{C}^\infty_\mathcal{T}(X,\R))\) is an open neighborhood around \(r'\). Then by possibly shrinking \(U'\) we can assume that \(U'\subset U\cap g_m^{-1}(V\cap V')\). We find that \(g_m\colon U'\to V\cap V'\) is continuous with respect to the \(\mathscr{C}^m\)-topology and for \((r,\varphi)\in U'\times (V\cap V')\) we have \(G(r,\varphi)=1\!\!1 \Leftrightarrow \varphi=g_m(r)\). Now given \((r,\varphi)\in U'\times V\cap V'\) we have \((r,\varphi)\in U\times V\) and hence that \(G(r,\varphi)=1\!\!1\) implies \(\varphi=g(r)\). For \(r\in U'\) we conclude \((r,g_m(r))\in U\times V \) with \(G(r,g_m(r))=1\!\! 1\) which implies \(g_m(r)=g(r)\). It follows that \(g\colon U\to \mathscr{C}^\infty_\mathcal{T}(X,\R)\) coincides in a neighborhood of \(r'\in U\) with a map which is continuous with respect to the \(\mathscr{C}^m\)-topology. Since \(m\geq 2\) was arbitrary, we conclude that \(g\) is continuous.   
\end{proof}

\begin{lemma}\label{lem:ModuliOfEntriesCantBeConstantDeformation}
	Let \((X,T^{1,0}X,\mathcal{T})\) be a compact Sasakian manifold of dimension \(2n+1\), \(n\geq 1\), and let \(f_1,\ldots,f_\nu \in  H_b^0(X)\cap \mathscr{C}^\infty(X)\), \(\beta_1,\ldots,\beta_\nu\in \R_+\) such that \(-i\mathcal{T}f_j=\beta_jf_j\)  for all \(1\leq j\leq \nu\). Consider the CR map \(F\colon X\to \C^\nu\), \(F=(f_1,\ldots,f_\nu)\). If \(|f_j|^2\) is constant on \(X\) for any \(1\leq j\leq \nu\) the differential of \(F\) at any point \(p\) cannot be injective.
\end{lemma}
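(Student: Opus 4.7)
The plan is to show that the constancy of each $|f_j|^2$ together with the CR condition forces $(dF)_p$ to vanish identically on the horizontal distribution $HX=\operatorname{Re}T^{1,0}X$. Since $\dim_{\R}H_pX=2n\geq 2$, the kernel of $(dF)_p$ will then contain a $2n$-dimensional subspace, ruling out injectivity irrespective of what happens in the Reeb direction.

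The first step is a simple algebraic identity. For any smooth local section $Z$ of $T^{1,0}X$ and any index $j$, the CR condition gives $\overline{Z}f_j=0$ and, by conjugation, $Z\overline{f_j}=0$. Therefore $Z(|f_j|^2)=Z(f_j)\overline{f_j}+f_j\,Z(\overline{f_j})=Z(f_j)\overline{f_j}$, and the hypothesis that $|f_j|^2$ is constant forces $Z(f_j)\overline{f_j}\equiv 0$ on $X$. Since $X$ is connected, $|f_j|^2$ is either a strictly positive constant, in which case $f_j$ is nowhere zero, or identically zero; either way I conclude that $Zf_j\equiv 0$ for every $Z\in\vbunsec{X,T^{1,0}X}$.

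Combined with $\overline{Z}f_j=0$ for $\overline{Z}\in\vbunsec{X,T^{0,1}X}$, this shows that the complex-linear extension of $(df_j)_p$ vanishes on $T^{1,0}_pX\oplus T^{0,1}_pX=\C H_pX$, and hence that the real differential $(df_j)_p$ vanishes on $H_pX$. Doing this for every $j$ yields $(dF)_p(H_pX)=0$. Using the transversal decomposition $T_pX=H_pX\oplus\R\mathcal{T}_p$ supplied by Lemma~\ref{lem:ExistenceOfOneFormSasakian}, any $W\in T_pX$ splits uniquely as $W=h+a\mathcal{T}_p$ with $h\in H_pX$, $a\in\R$, and therefore $(dF)_p(W)=a\,(dF)_p(\mathcal{T}_p)$. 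The image of $(dF)_p|_{T_pX}$ thus lies in the real line $\R\cdot(dF)_p(\mathcal{T}_p)\subset\C^{\nu}$, while $\dim_\R T_pX=2n+1\geq 3$, so the kernel of $(dF)_p$ has real dimension at least $2n\geq 2$ and injectivity fails.

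I do not expect any genuine obstacle: the only point requiring a short comment is the dichotomy in step one between $f_j\equiv 0$ and $f_j$ being nowhere zero, both handled uniformly by connectedness of $X$. Note that the eigenfunction hypothesis $-i\mathcal{T}f_j=\beta_j f_j$ does not enter the proof; only the CR property of the $f_j$ and the constancy of $|f_j|^2$ are used.
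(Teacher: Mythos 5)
Your proof is correct and follows essentially the same route as the paper's: compute $Z|f_j|^2=(Zf_j)\overline{f_j}$ for $Z\in T^{1,0}X$, use the constancy of $|f_j|^2$ (either $f_j$ is nowhere zero or $f_j\equiv 0$) to get $Zf_j\equiv 0$, and conclude that $(dF)_p$ kills $\C H_pX$ so its rank is at most one. Your remark that the eigenfunction hypothesis is not needed is also accurate.
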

\begin{proof}
	Let \(p\in X\) be arbitrary. For any \(Z\in T_p^{1,0}X\) we find \(0=Z|f_j|^2=(Zf_j)f_j(p)\). If \(f_j(p)\neq0\) we have \(Zf_j=0\). If \(f_j(p)=0\) we have \(f_j\equiv 0\) by assumption and hence \(Zf_j=0\). We conclude \(YF=0\) for all \(Y\in T_p^{1,0}X\oplus T_p^{0,1}X\). It follows that the differential of \(F\) hat \(p\) has at most rank one.
\end{proof}
Recall that
\begin{eqnarray*}
	\mathcal{M}&=&\{\varphi\in \mathscr{C}_\mathcal{T}^\infty(X,\R)\colon \mathcal{V}(\varphi) \text{ is strictly pseudoconvex}\}/\R\cdot1\!\!1\\
	\mathcal{S}_\beta&=&\{[\varphi]\in \mathcal{M}\colon (\mathcal{V}(\varphi),\mathcal{T})\text{ is induced by a \(\beta\)-weighted sphere}\}
\end{eqnarray*}
where \(\beta\in \R_+^N\) for some \(N\in \N\). We will now prove Theorem~\ref{thm:NoIsolatedPoint}.

\begin{proof}[\textbf{Proof of Theorem~\ref{thm:NoIsolatedPoint}}]
	Let \([\varphi_0]\in \mathcal{S}_\beta\) for some \(\beta\in \R^N_+\), \(N\in \N\). By definition there exists a smooth embedding \(F\colon X\to \C^N\) which is CR with respect to the CR structure \(\mathcal{V}(\varphi_0)\) such that \(|F|^2\equiv 1\) and \(F_*\mathcal{T}=\mathcal{T}_\beta\circ F\). From Lemma~\ref{lem:DescriptionOfBetaMaps}  we can assume without loss of generality that \(F=(e^{\beta_1\varphi_0}f_1,\ldots,e^{\beta_\nu\varphi_0} f_{\nu},0,\ldots,0)\) for some \(1\leq \nu\leq N\) and \(f_j\in H_b^0(X)\cap \mathscr{C}^\infty(X)\) satisfy \(-i\mathcal{T}f_j=\beta_jf_j\), \(f_j\not\equiv 0\),  \(1\leq j\leq \nu\). Hence we find \(G(r_0,\varphi_0)=1\!\!1\) with \(r_0=(1,\ldots,1)\). From Lemma~\ref{lem:DescriptionOfBetaMaps} we can find an open neighborhood \(U\subset \R^\nu\) around \(r_0\) and a continuous map \(g\colon U\to \mathscr{C}^\infty_\mathcal{T}(X,\R)\) with \(g(r_0)=\varphi_0\) and \(G(r,g(r))=1\!\!1\) for all \(r\in U\). For \(r\in U\) consider the map \(F_r\colon X\to \C^N\), \(F_r=(r_1e^{\beta_1g(r)}f_1,\ldots,r_\nu e^{\beta_\nu g(r)} f_{\nu},0,\ldots,0)\). Then \(F_r\) is a CR map with respect to the CR structure \(\mathcal{V}(g(r))\) with \(|F_r|^2\equiv 1\) and \((F_r)_*\mathcal{T}=\mathcal{T}_\beta\circ F_r\). Furthermore, from the continuity of \(g\) by possibly shrinking \(U\) we can ensure that \(\mathcal{V}(g(r))\) is strictly pseudoconvex and that \(F_r\) is an embedding for all \(r\in U\). It follows that \(\tau\colon U\to \mathcal{M}\), \(\tau(r)=[g(r)]\) is a well-defined continuous map with \(\tau(r_0)=[\varphi_0]\) and \(\tau(U)\subset \mathcal{S}_\beta\). In order to verify Theorem~\ref{thm:NoIsolatedPoint} it remains to show that \(\tau\) is non-constant.\\
	We define the smooth function \(\rho\colon X\to \R\), \(\rho(x)=\sum_{j=1}^\nu 2\beta_j|f_j(x)|^2e^{2\beta_j\varphi_0(x)}\). Since \(\beta_j>0\) and \(|f_j|^2e^{2\beta_j\varphi_0}\geq 0\) for all \(1\leq j\leq \nu\) with \(\sum_{j=1}^\nu |f_j|^2e^{2\beta_j\varphi_0}=1\!\! 1\) we have \(\rho(x)>0\) for all \(x\in X\). \\
	\textbf{Claim:} There exists \(1\leq j\leq \nu\) such that \(|f_j|^2e^{2\beta_j\varphi_0}/\rho\) is a non-constant function.\\
	Assume that  \(|f_j|^2e^{2\beta_j\varphi_0}/\rho\) is constant for any \(j=\{1,\ldots,\nu\}\). By assumption we have \(|f_j|^2\not\equiv 0\). Then we find positive numbers \(c_2,\ldots,c_\nu>0\) with \(c_j|f_1|^2e^{2\beta_j\varphi_0}=|f_j|^2e^{2\beta_j\varphi_0}\) for \(2\leq j\leq \nu\). Since \(\sum_{j=1}^\nu |f_j|^2e^{2\beta_j\varphi_0}=1\!\! 1\) we find that \(|f_j|^2e^{2\beta_j\varphi_0}\) is constant on \(X\) for any \(1\leq j\leq \nu\). Now we apply Lemma~\ref{lem:ModuliOfEntriesCantBeConstantDeformation} to the Sasakian manifold \((X,\mathcal{V}(\varphi_0),\mathcal{T})\) and the map \(\tilde{F}=(f_1e^{\beta_1\varphi_0},\ldots,f_\nu e^{\beta_\nu\varphi_0})\). It follows  that \(\tilde{F}\) cannot be a CR embedding. But \(F=(\tilde{F},0,\ldots,0)\) was a CR embedding by assumption. This verifies the claim.\\
	So let us assume without loss of generality that \(d:=2|f_1|^2e^{2\beta_1\varphi_0}/\rho\) is not a constant. For sufficiently small \(\varepsilon>0\) we have that the map 
	\[h\colon [0,\varepsilon)\to \mathscr{C}^\infty_\mathcal{T}(X,\R),\,\, h(t)=g(r_0+(t,0,\ldots,0))\] is well defined. Put \(\gamma\colon [0,\varepsilon)\to \mathcal{M}\), \(\gamma(t)=[h(t)]\). Since
	 \[\gamma(t)=[g(r_0+(t,0,\ldots,0)]=\tau(r_0+(t,0,\ldots,0))\] the map is well defined and continuous with \(\gamma([0,\varepsilon))\subset  \mathcal{S}_\beta\). We will show by contradiction that \(\gamma\) is not constant. Let \(p_1,p_2\in X\) be two points with \(d(p_1)\neq d(p_2)\). Assuming that \(\gamma\) is constant we find \(h(t)-\varphi_0\in \R\cdot1\!\!1\) for all \(t\in [0,\varepsilon)\). For \(t>0\) we have
	\begin{eqnarray*}
		|d(p_1)-d(p_2)|&\leq& t^{-1}|h(t)(p_1)-\varphi_0(p_1)+td(p_1)|+t^{-1}|h(t)(p_1)-\varphi_0(p_1)+td(p_2)|\\
		&=&t^{-1}|h(t)(p_1)-\varphi_0(p_1)+td(p_1)|+t^{-1}|h(t)(p_2)-\varphi_0(p_2)+td(p_2)|.
	\end{eqnarray*}
	From~\eqref{eq:differentialOfDeformation} we have \(\lim_{t\to0}t^{-1}|h(t)(p_m)-\varphi_0(p_m)+td(p_m)|=0\), \(m=1,2\). But then \(d(p_1)=d(p_2)\) which is a contradiction. Hence \(\gamma\) cannot be constant.
	    
\end{proof}

\section{The Pseudohermitian Case}\label{sec:ProofOfApproximationGeneral}
\subsection{Functional Calculus on Strictly Pseudoconvex Domains}\label{sec:FunCalcOnSpscDom}
In this section we are going to prove Theorem~\ref{thm:MainThmIntroGeneral} and Theorem~\ref{thm:ContrastToForstneric}. We first consider the following set-up. Let \(M'\) be a complex manifold of dimension \(\dim_\C M'=n+1\) with complex structure \(T^{1,0}M'\) and let \(M\subset\subset M'\) be a relatively compact strictly pseudoconvex domain with \(\mathscr{C}^\infty\)-smooth boundary \(X:=bM\). We have that \((X,T^{1,0}X)\) with \(T^{1,0}X:=\C TX\cap T^{1,0}M'\) is a compact connected strictly pseudoconvex CR manifold which is CR embeddable into the complex Euclidean space (see Lemma~\ref{lem:CharacterizationEmbeddable}). Furthermore, we have that \(\operatorname{Re}(T^{1,0}X)\) defines a contact structure. Let \(\alpha\in \Omega^1(X)\) be a contact form such that the associated Levi form is positive definite and denote by \(\mathcal{T}\) the respective Reeb vector field.   Let \(\rho\colon M'\to \R\) be a smooth defining function for \(M\) 
such that \(\iota^*i(\overline{\partial}\rho-\partial\rho)=\alpha\) where \(\iota\colon X\to M'\) denotes the inclusion map. Let \(J\) with \(J^2=-\operatorname{Id}\) denote the section on \(M'\) with values in \(TM'\otimes T^*M'\) associated to the complex structure \(T^{1,0}M'\). 
Choose a Hermitian metric \(\Theta\) on \(M'\) such that
\begin{eqnarray}
&&T^{1,0}X\perp \C(\mathcal{T}-iJ\mathcal{T})\text{ and }|\mathcal{T}|=1 \text{ on } X,\label{eq:propHermMetric1}\\
&&\Theta(Z,W)=\frac{1}{2i\Lambda_n}d\alpha(Z,\overline{W}) \,\,\forall Z,W\in T^{1,0}_pX,\,p\in X\label{eq:propHermMetric2}	\end{eqnarray}
 where we put the constant \(\Lambda_n=\pi^{\frac{n+1}{n}}\) to ensure that the determinant of the Levi form \(\mathcal{L}^\alpha\) on \(X\) with respect to \(\Theta\) equals \(\pi^{n+1}\).  Denote by \(\nabla\rho\) the gradient of \(\rho\) with respect to the Riemannian metric \(g^{TM'}=\operatorname{Re}\Theta\) on \(M'\) and put \(\hat{T}=J\nabla\rho\). We have that \(\hat{T}\) is a vector field on \(M'\) which does not vanish in a neighborhood of \(X\). 
\begin{lemma}
	With the construction above we have \(\mathcal{T}=\hat{T}\) and \(|\nabla\rho|=|d\rho|=1\) on \(X\).
\end{lemma}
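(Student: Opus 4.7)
The plan is to verify that $\hat T$, defined globally as $J\nabla\rho$, satisfies the characterizing properties of the Reeb field $\mathcal T$ when restricted to $X$, namely tangency to $X$, $\alpha(\hat T)=1$, and $\hat T\lrcorner d\alpha=0$ on $TX$. Rather than verifying the last condition directly (which would involve computing $d\alpha=2i\partial\overline\partial\rho$), I will exploit the Hermitian orthogonality hypotheses on $\Theta$ to place $\hat T^{1,0}$ in the one-dimensional Hermitian-orthogonal complement of $T^{1,0}X$ inside $T^{1,0}M'|_X$, and conclude by comparing norms and $\alpha$-values.

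First I would collect the routine observations. Tangency of $\hat T$ to $X$ follows from $\hat T\rho = d\rho(J\nabla\rho)=g^{TM'}(\nabla\rho,J\nabla\rho)$, which vanishes because $g^{TM'}(V,JV)=0$ for any vector $V$ (a consequence of $g^{TM'}(JV,JW)=g^{TM'}(V,W)$ applied with $W=V$). The identity $|\hat T|=|\nabla\rho|=|d\rho|$ is then immediate from the $J$-invariance of $g^{TM'}$ and the metric duality $\nabla\rho\leftrightarrow d\rho$. Second, using $\alpha=i(\overline\partial-\partial)\rho$ on $X$, a short calculation gives $\alpha(\hat T)=2\operatorname{Re}\bigl(\partial\rho(U)\bigr)=d\rho(\nabla\rho)=|\nabla\rho|^2$, where I decompose $\nabla\rho=U+\overline U$ with $U\in T^{1,0}M'|_X$. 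In particular $\alpha(\hat T)=|\hat T|^2$ on $X$.

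The central step is the complex-linear algebra that pins down the direction of $\hat T$. Using that $d\rho$ vanishes on real tangent vectors of the form $Z+\overline Z$ with $Z\in T^{1,0}X$, and that $T^{1,0}X$ is closed under multiplication by $i$ (so the real part of $\Theta(U,Z)$ vanishes for both $Z$ and $iZ$), I conclude $\Theta(U,Z)=0$ for every $Z\in T^{1,0}X$. The hypothesis $T^{1,0}X\perp\mathbb{C}(\mathcal T-iJ\mathcal T)$ says the same thing for $\mathcal T^{1,0}=\tfrac12(\mathcal T-iJ\mathcal T)$. Since $T^{1,0}X$ has complex codimension one inside $T^{1,0}M'|_X$, there exists $c\in\mathbb{C}$ with $U=c\,\mathcal T^{1,0}$, and writing $c=a+bi$ and expanding $\hat T=iU-i\overline U$ yields $\hat T=-b\mathcal T+aJ\mathcal T$.

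It remains to pin down $c$. I claim $J\mathcal T\notin TX$: if both $\mathcal T$ and $J\mathcal T$ were tangent to $X$, then $\mathcal T^{1,0}$ would lie in $T^{1,0}X\cap (T^{1,0}X)^{\perp_\Theta}=\{0\}$, contradicting $\mathcal T\neq 0$. Consequently, tangency of $\hat T$ to $X$ forces $a=0$, so $\hat T=-b\mathcal T$. Combining $\alpha(\hat T)=|\hat T|^2$ with $\alpha(\mathcal T)=1$ and $|\mathcal T|=1$ then gives $-b=b^2$, hence $b=-1$ (the alternative $b=0$ is ruled out by $\nabla\rho\neq 0$ on $X$). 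Therefore $\hat T=\mathcal T$ on $X$, and $|\nabla\rho|=|d\rho|=|\hat T|=|\mathcal T|=1$. I expect the main obstacle to be the identification of $U$ with a complex multiple of $\mathcal T^{1,0}$: the condition $d\rho|_{TX}=0$ is only a real orthogonality statement, and promoting it to the genuinely complex $\Theta(U,Z)=0$ for $Z\in T^{1,0}X$ requires carefully exploiting the $i$-invariance of $T^{1,0}X$.
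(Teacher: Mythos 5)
Your proof is correct, but it takes a genuinely different route from the paper's. The paper guesses the candidate $-J\mathcal{T}$ and verifies the defining identity of the gradient directly: it checks $g^{TM'}(-J\mathcal{T},W)=d\rho(W)$ for $W$ ranging over the spanning set $\operatorname{Re}T^{1,0}X$, $\mathcal{T}$, $J\mathcal{T}$ of $TM'|_X$ (using $J$-invariance of $g^{TM'}$, the orthogonality $\mathcal{T}\perp T^{1,0}X\oplus T^{0,1}X$, and the identity $\alpha=-J^*d\rho$ coming from $\alpha=\iota^*i(\overline\partial-\partial)\rho$), concluding $\nabla\rho=-J\mathcal{T}$ and hence $\hat T=\mathcal{T}$ in one stroke, with $|\nabla\rho|^2=-d\rho(J\mathcal{T})=\alpha(\mathcal{T})=1$ falling out of the same computation. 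You instead derive the direction of $\nabla\rho$ rather than verifying a guess: you place $U=(\nabla\rho)^{1,0}$ in the one-dimensional $\Theta$-orthogonal complement of $T^{1,0}X$ (the promotion of the real condition $d\rho|_{TX}=0$ to the complex condition $\Theta(U,Z)=0$ via $i$-invariance of $T^{1,0}X$ is exactly the right point to be careful about, and you handle it correctly), identify that complement with $\mathbb{C}\mathcal{T}^{1,0}$ using \eqref{eq:propHermMetric1}, and then pin down the complex scalar by tangency of $\hat T$ (which forces the $J\mathcal{T}$-component to vanish, using $J\mathcal{T}\notin TX$) and the identity $\alpha(\hat T)=|\hat T|^2$ combined with the normalizations $\alpha(\mathcal{T})=1$ and $|\mathcal{T}|=1$. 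Your argument is somewhat longer, but it has the merit of not requiring one to know the answer in advance, and it isolates cleanly which hypothesis is responsible for which conclusion (the orthogonality for the direction, the normalizations for the scalar). Both proofs are complete; the only cosmetic quibble is that your justification of $g^{TM'}(V,JV)=0$ should read $g(V,JV)=g(JV,J^2V)=-g(JV,V)$, i.e., it uses $J$-invariance together with $J^2=-\operatorname{Id}$ and symmetry, not merely the substitution $W=V$.
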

\begin{proof}
For the proof we only work on \(X\). Given \(V\in \operatorname{Re}T^{1,0}X\) we have \(JV\in \operatorname{Re}T^{1,0}X\subset TX\) and hence \(g^{TM'}(J\mathcal{T},V)=-g^{TM'}(\mathcal{T},JV)=0=d\rho(V)\). Also, we find \(g^{TM'}(J\mathcal{T},\mathcal{T})=-g^{TM'}(J\mathcal{T},\mathcal{T})\) and hence  \(g^{TM'}(J\mathcal{T},\mathcal{T})=0=d\rho(\mathcal{T})\) since \(\mathcal{T}\in TX\). Furthermore, we have 
\[ g^{TM'}(J\mathcal{T},J\mathcal{T})=g^{TM'}(\mathcal{T},\mathcal{T})=|\mathcal{T}|^2=1=\alpha(\mathcal{T})=-J^*\alpha(J\mathcal{T})=-d\rho(J\mathcal{T}).\]
From the definition of \(\nabla\rho\) we find \(\nabla \rho=-J\mathcal{T}\) and hence \(\mathcal{T}=J\nabla \rho=\hat{T}\). Moreover, \(|\nabla\rho|^2=-d\rho(J\mathcal{T})=\alpha(\mathcal{T})=1\) which shows \(|\nabla\rho|=1\) and hence \(|d\rho|=1\). 
\end{proof}

  We have that \(\hat{T}\) can be seen as a first order partial differential operator acting on smooth functions  on \(M'\). 
  Let \((\cdot,\cdot)_{M}\) and \((\cdot,\cdot)_{M'}\) denote the \(L^2\)-inner product on \(M\) and \(M'\) respectively with respect to the volume form induced by the Hermitian metric \(\Theta\).  Let \((-i\hat{T})^*\) denote the formal adjoint of \(-i\hat{T}\) with respect to \((\cdot,\cdot)_{M'}\) and put
\begin{eqnarray}
	R=\frac{1}{2}\left( -i\hat{T}+ (-i\hat{T})^* \right)
\end{eqnarray} 
  Since $M$ is strictly pseudoconvex it follows that
the Bergman projection preserves the space of
smooth functions up to the boundary $\cC^\infty(\ol M)$ (see \cite{BS75,Fef74}).
Put
\begin{eqnarray}\label{eq:DefToeplitzOperatorGeneral}
T_R:=BRB: \cC^\infty(\ol M)\To\cC^\infty(\ol M).
\end{eqnarray}
We extend $T_R$ to $L^2(M)$: 
\begin{equation}\label{e-gue230528yydq}\begin{split}
		&T_R:\operatorname{Dom}(T_R)\subset L^2(M)\To L^2(M),\\
		&\operatorname{Dom}(T_R)=\set{u\in L^2(M);\, BRBu\in L^2(M)},
\end{split}\end{equation}
where $BRBu$ is defined in the sense of distributions on $M$. We call \(T_R\) a Toeplitz operator on \(\overline{M}\).  It follows from~\cite{HM23}*{Theorem 3.1} that $T_R$ is self-adjoint.  Given \(\eta\in \mathscr{C}^\infty_c(\R_+)\) put \(\eta_k(t)=\eta(k^{-1}t)\), \(t,k>0\). The functional calculus \(\eta_k(T_R)\) was studied by Hsiao--Marinescu~\cite{HM23}. A consequence of their result applied to the set-up introduced above is the following.
\begin{theorem}[cf. \cite{HM23}*{Theorem 1.1}]\label{thm:HMMainResult}
Let \(M',M,X,\rho,T_R\) as above, with \(\Theta\) satisfying~\eqref{eq:propHermMetric1} and~\eqref{eq:propHermMetric2}, choose \(\eta\in \mathscr{C}_c^\infty(\R_+)\) and put \(\eta_k(t)=\eta(k^{-1}t)\), \(t,k>0\). For any $p\in X$ and any open local coordinate patch $U$ around $p$ in $M'$
	we have 
	\begin{equation}
		\label{e-gue230528ycdsz}
		\eta_k(T_R)(x,y)=\int_0^{+\infty} 
		e^{ikt\Psi(x,y)}b(x,y,t,k)dt+O(k^{-\infty})
		\quad\text{on $(U\times U)\cap(\ol M\times\ol M)$},
	\end{equation}
	where $b(x,y,t,k)\in S^{n+2}_{\mathrm{loc}}
	(1;((U\times U)\cap(\ol M\times\ol M))\times{\mathbb R}_+)$,
	\begin{equation}
		\label{e-gue230528ycdtz}
		\begin{split}
			&b(x,y,t,k)\sim\sum_{j=0}^\infty b_{j}(x,y,t)k^{n+2-j}~
			\mathrm{in}~S^{n+2}_{\mathrm{loc}}(1;((U\times U)
			\cap(\ol M\times\ol M))\times{\mathbb R}_+),\\
			&b_j(x,y,t)\in\mathscr{C}^\infty(((U\times U)\cap(\ol M\times\ol M))
			\times{\mathbb R}_+),~j=0,1,2,\ldots,\\
			&b_{0}(x,x,t)=
			\,\eta(t)t^{n+1}\not\equiv 0,\ \ x\in U\cap X,
		\end{split}
	\end{equation}
	and for some compact interval $I\subset\subset\mathbb R_+$, 
	\begin{equation}
		\begin{split}
			\supp_t b(x,y,t,k),\:\:\supp_t b_j(x,y,t)\subset I,\ \ j=0,1,2,\ldots,
		\end{split}
	\end{equation}
	and 
	\begin{equation}\label{e-gue230528yydzz}
		\begin{split}
			&\Psi(z,w)\in\cC^\infty((U\times U)\cap(\ol M\times\ol M)),\ \ {\rm Im\,}\Psi\geq0,\\
			&\Psi(z,z)=0, \ z\in U\cap X,\\
			&\mbox{${\rm Im\,}\Psi(z,w)>0$ if $(z,w)\notin(U\times U)\cap(X\times X)$},\\
			&d_x\Psi(x,x)=-d_y\Psi(x,x)=-2i\partial\rho(x),\ \ x\in U\cap X,\\
			&\Psi|_{(U\times U)\cap(X\times X)}=\varphi_-, 
		\end{split}
	\end{equation}  
	where $\varphi_-$ is a phase function as in~\cite{HM14}*{Theorem 4.1}.

	Moreover, using the local coordinates $z=(x_1,\ldots,x_{2n-1},\rho)$
	on $M'$ near $p$, where
	$x=(x_1,\ldots,x_{2n-1})$ are local coordinates on $X$ near 
	$p$ with $x(p)=0$, we have 
	\begin{equation}\label{e-gue230319ycdaIm}
		\Psi(z,w)=\Psi(x,y)-i\rho(z)(1+f(z))-
		i\rho(w)(1+\overline{f(w)}\,)+O(\abs{(z,w)}^3)\:\:\text{near $(p,p)$},
	\end{equation}
	where $f$ is smooth near $p$ and $f=O(\abs{z})$. 
\end{theorem}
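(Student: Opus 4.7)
The plan is to reduce the statement directly to~\cite{HM23}*{Theorem~1.1}, and then verify that under the particular normalizations chosen here (the metric $\Theta$ adapted to $\alpha$ via~\eqref{eq:propHermMetric1}--\eqref{eq:propHermMetric2}, the defining function $\rho$ satisfying $\iota^*i(\ddbar\rho-\pr\rho)=\alpha$, and the operator $T_R=BRB$ with $R=\frac{1}{2}(-i\hat T+(-i\hat T)^*)$) the leading symbol and phase take the stated clean form. First, one observes that $\hat T=J\nabla\rho$ coincides on $X$ with the Reeb field $\mathcal T$, and $|d\rho|\equiv1$ on $X$. This is the hypothesis of~\cite{HM23} in which the Toeplitz operator is built from a first order differential operator transversal to the boundary with symbol $-2i\pr\rho$ on $X$.

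Second, invoking~\cite{HM23}*{Theorem~1.1} furnishes a semi-classical FIO representation
\[
 \eta_k(T_R)(x,y)=\int_0^\infty e^{ikt\Psi(x,y)}b(x,y,t,k)\,dt+O(k^{-\infty})
\]
with $b\in S^{n+2}_{\mathrm{loc}}(1;\cdot)$ admitting an asymptotic expansion of the stated type and with a phase $\Psi$ whose restriction to $X\times X$ equals the Boutet de Monvel--Sj\"ostrand phase $\varphi_-$ of the Szeg\H o projection. The properties~\eqref{e-gue230528yydzz} of $\Psi$ including $d_x\Psi(x,x)=-2i\pr\rho(x)$ and the almost-analytic extension~\eqref{e-gue230319ycdaIm} are general features of this construction, inherited from the stationary phase analysis that links the Bergman kernel phase on $M$ to $\varphi_-$ on $X$; no additional computation is needed beyond translating the normalizations of~\cite{HM23} to ours. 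The support property of $b$ in the $t$-variable follows from the $\supp\eta$ being compact in $\R_+$ and a standard integration by parts in the oscillatory integral outside a neighborhood of $\supp\eta$.

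The key verification is the identity $b_{0}(x,x,t)=\eta(t)t^{n+1}$ for $x\in U\cap X$. In the general statement of~\cite{HM23} the leading coefficient is of the form $c_n\,\eta(t)t^{n+1}\cdot\frac{dV_\alpha}{dV_\Theta}(x)\cdot\det\mathcal{L}^\alpha_x$ (up to a $\pi$-power normalization coming from the Boutet de Monvel--Sj\"ostrand expansion recalled in Theorem~\ref{Boutet-Sjoestrand theorem}). The point of the specific choice~\eqref{eq:propHermMetric1}--\eqref{eq:propHermMetric2} of $\Theta$, with $|\mathcal T|=1$, $T^{1,0}X\perp\C(\mathcal T-iJ\mathcal T)$ and $\Theta(Z,W)=\frac{1}{2i\Lambda_n}d\alpha(Z,\ol W)$ with $\Lambda_n=\pi^{(n+1)/n}$, is exactly that $\det\mathcal{L}^\alpha_x=\pi^{n+1}$ at every $x\in X$, and simultaneously $dV_\alpha/dV_\Theta\equiv 1$ on $X$. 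Plugging these normalizations into the universal constant from~\cite{HM23} and cancelling the $\pi^{n+1}$ factor (which is the precise role of the choice of $\Lambda_n$) yields $b_0(x,x,t)=\eta(t)t^{n+1}$.

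The main obstacle is the bookkeeping in this last step: one must track all $(2\pi)$-powers arising from the Bergman/Szeg\H o kernel normalization, from the identification of the Reeb field with $\hat T$, and from the formal-adjoint correction $(-i\hat T)^*$ restricted to $X$ via the divergence of $\hat T$ with respect to $dV_\Theta$. These produce at most lower-order contributions to $b$ (subsumed in $b_1,b_2,\ldots$) once one uses $|d\rho|=1$ and $\mathcal T\lrcorner d\alpha=0$. Everything else in~\eqref{e-gue230528yydzz} and~\eqref{e-gue230319ycdaIm} is an immediate restatement of the general phase analysis in~\cite{HM23}, so no new ingredient is required beyond the metric normalization argument above.
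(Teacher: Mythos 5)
Your proposal takes the same route as the paper: the paper offers no proof of this theorem at all, stating it simply as a consequence of \cite{HM23}*{Theorem 1.1} applied to the normalized set-up, so your reduction to that result together with the verification that the choices of $\Theta$ (in particular $\Lambda_n=\pi^{(n+1)/n}$, forcing $\det\mathcal{L}^\alpha\equiv\pi^{n+1}$ on $X$) and of $\rho$ collapse the leading coefficient to $\eta(t)t^{n+1}$ is exactly the content the authors leave implicit. The only caveat is that your intermediate claim $dV_\alpha/dV_\Theta\equiv 1$ on $X$ is not quite what the normalization gives (the ratio of volume forms carries the factor $\det\mathcal{L}^\alpha=\pi^{n+1}$, which is precisely what is designed to cancel the $\pi$-powers coming from the Boutet de Monvel--Sj\"ostrand symbol), but this bookkeeping slip does not affect the validity of the overall argument.
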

\begin{remark}
	Note that in Theorem~\ref{thm:HMMainResult} it follows from the proof that the intervall  \(I\) can be taken to be \(\operatorname{supp}\eta\).
\end{remark}

 Let \(0<\lambda_1\leq \lambda_2\leq \cdots\) denote the positive eigenvalues of \(T_R\) counting multiplicities and let \(\{f_j\}_{j\in \N}\in \mathcal{O}^\infty(\overline{M})\) be a respective orthonormal system of eigenfunctions, that is, \(T_Rf_j=\lambda_jf_j\), \(\|f_j\|=1\), \((f_j,f_m)_M=0\) for all \(j,m\in \N\), \(j\neq m\). For \(0<\delta_1<\delta_2\) put \(\hat{N}_k:=\#\{j\in \N|\delta_1k\leq\lambda_j\leq \delta_2k\}\), \(I^-_k=\min\{j\in \N\colon \delta_1k\leq \lambda_j\}\), \(I^+_k=\max\{j\in \N\colon \lambda_j\leq \delta_2k\}\) and choose  \(\chi\in \mathscr{C}^\infty_c(\R_+)\), \(\chi\not\equiv 0\), \(\operatorname{supp}\chi\subset (\delta_1,\delta_2)\). We put \(\chi_k(t)=\chi(k^{-1}t)\), \(t,k>0\), and consider the map
 \begin{eqnarray}\label{eq:DefHkGeneral}
 	\mathcal{H}_k\colon \overline{M}\to \C^{\hat{N}_k},\,\, \mathcal{H}_k=\left(\chi_k(\lambda_{I^-_k})f_{I^-_k},\chi_k(\lambda_{I^-_k+1})f_{I^-_k+1},\ldots, \chi_k(\lambda_{I^+_k})f_{I^+_k}\right)
 \end{eqnarray}
which is holomorphic on \(M\) and smooth up to the boundary for each \(k>0\).
\begin{lemma}\label{lem:DimensionGrowGeneral}
	With the assumptions and notations above, we have \(\hat{N}_k=O(k^{n+1})\)
\end{lemma}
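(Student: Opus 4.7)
The plan is to control $\hat{N}_k$ by the trace of a spectral cutoff of $T_R$ and then estimate that trace via Theorem~\ref{thm:HMMainResult}. I would pick $\eta \in \mathscr{C}^\infty_c(\R_+)$ with $0 \leq \eta \leq 1$ and $\eta \equiv 1$ on $[\delta_1, \delta_2]$, so that $\eta_k(\lambda_j) \geq 1$ whenever $\delta_1 k \leq \lambda_j \leq \delta_2 k$. Since $T_R$ admits a complete orthonormal system of eigenfunctions on $L^2(M)$, this gives
\[
\hat{N}_k \;\leq\; \sum_{j=1}^\infty \eta_k(\lambda_j) \;=\; \operatorname{Tr}\eta_k(T_R) \;=\; \int_{\overline{M}} \eta_k(T_R)(z,z)\, dV(z),
\]
reducing the task to showing that this trace is $O(k^{n+1})$.

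Next I would cover $\overline{M}$ by finitely many coordinate patches: boundary patches to which Theorem~\ref{thm:HMMainResult} applies, plus interior patches with closure inside $M$. On each boundary patch $U$, that theorem yields
\[
\eta_k(T_R)(z,z) \;=\; \int_0^{+\infty} e^{ikt\Psi(z,z)}\, b(z,z,t,k)\, dt + O(k^{-\infty}),
\]
with $|b(\cdot,\cdot,\cdot,k)| \leq C_0 k^{n+2}$ uniformly and $\supp_t b$ contained in a fixed compact $I \subset\subset \R_+$. The key input is the positivity of $\operatorname{Im}\Psi$: the expansion~\eqref{e-gue230319ycdaIm} in coordinates $(x,\rho)$ gives $\operatorname{Im}\Psi(z,z) = -2\rho(z)(1+\operatorname{Re} f(z)) + O(|z|^3)$, so after shrinking $U$, $\operatorname{Im}\Psi(z,z) \geq c|\rho(z)|$ for some $c > 0$. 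Hence
\[
|\eta_k(T_R)(z,z)| \;\leq\; C_0\, k^{n+2} \int_I e^{-ckt|\rho(z)|}\, dt + O(k^{-\infty}).
\]

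Passing to $(x,\rho)$-coordinates with bounded smooth Jacobian and integrating $\rho \in [-\epsilon,0]$ contributes a factor $(ckt)^{-1}$, and since $I$ is bounded away from $0$ the $t$-integral of $t^{-1}$ is finite. This yields $\int_{U \cap \overline{M}} |\eta_k(T_R)(z,z)|\, dV = O(k^{n+1})$. Summing over the finite cover (with the interior patches contributing $O(k^{-\infty})$) gives $\operatorname{Tr}\eta_k(T_R) = O(k^{n+1})$, as required.

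The main technical subtlety is the interior part: one needs $\eta_k(T_R)(z,z) = O(k^{-\infty})$ uniformly on compact subsets of $M$. This follows either from the off-diagonal $O(k^{-\infty})$ decay of $\eta_k(T_R)(z,w)$ away from $X \times X$ established in~\cite{HM23}, or directly from the fact that on such compact subsets an analogous oscillatory representation has $\operatorname{Im}\Psi$ uniformly bounded below. Beyond this, the argument is the standard trace-cutoff computation: the $O(k^{n+1})$ bound arises from trading one power of $k$ for the boundary-normal integration thanks to the factor $e^{-ckt|\rho|}$.
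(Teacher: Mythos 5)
Your proposal is correct. The paper disposes of this lemma with a one-line citation to Corollary~1.2 of \cite{HM23}, so what you have written is in effect a proof of that corollary from the kernel asymptotics of Theorem~\ref{thm:HMMainResult}, and it is the standard one: majorize the counting function by $\operatorname{Tr}\eta_k(T_R)=\int_{\overline M}\eta_k(T_R)(z,z)\,dV$ for a cutoff $\eta$ dominating the indicator of $[\delta_1,\delta_2]$ (legitimate here, since $\eta_k(T_R)$ is the finite sum $\sum_j\eta_k(\lambda_j)f_j\otimes\overline{f_j}$, exactly as in~\eqref{eq:FundamentalIdentity}), and then trade one power of $k$ for the integration in the boundary-normal direction via the damping factor $e^{-kt\operatorname{Im}\Psi(z,z)}$ with $t$ ranging over a compact subset of $\R_+$. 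Two points deserve the care you give them. First, the bound $\operatorname{Im}\Psi(z,z)\geq c\,|\rho(z)|$ does not follow from nonnegativity of $\operatorname{Im}\Psi$ alone because of the cubic error in~\eqref{e-gue230319ycdaIm}; one needs that on the diagonal $\operatorname{Im}\Psi(z,z)=2|\rho(z)|(1+\operatorname{Re}f(z))+O(|z|^3)$ vanishes on $X$ and has normal derivative close to $2$ near $p$, whence the estimate after shrinking the patch both tangentially and normally --- your appeal to the expansion is exactly this. Second, the interior decay $\eta_k(T_R)(z,z)=O(k^{-\infty})$ on compact subsets of $M$ is not contained in Theorem~\ref{thm:HMMainResult} as quoted (which is localized near boundary points), so it must indeed be imported from \cite{HM23}, as you do; since the paper's own proof consists of a citation to the same source, this reliance is unobjectionable.
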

\begin{proof}
	See \cite{HM23}*{Corollary 1.2}.
\end{proof}
Putting \(\eta:=|\chi|^2\) we have
\begin{eqnarray}\label{eq:FundamentalIdentity}
\langle\mathcal{H}_k(z),\mathcal{H}_k(w)\rangle=\sum_{j=1}^\infty |\chi_k(\lambda_j)|^2f_j(z)\overline{f_j(w)}=\eta_k(T_R)(z,w)\text{ on } \overline{M}.
\end{eqnarray}
Hence the following is a direct consequence of Theorem~\ref{thm:HMMainResult}.

\begin{lemma}\label{lem:ConsequenceOfHM}
	 For \(0<\delta_1<\delta_2\) and \(\chi\in \mathscr{C}^\infty_c(\R_+)\), \(\chi\not\equiv 0\), \(\operatorname{supp}\chi\subset (\delta_1,\delta_2)\) let \(\mathcal{H}_k\) defined as in~\eqref{eq:DefHkGeneral}.  Let \(V\subset M'\) be an open neighborhood  around  \(X\) and \(\tilde{c}>0\) such that there is a diffeomorphism  \(\hat{E}\colon X\times (-\tilde{c},\tilde{c})\to V\) with \(\hat{E}(x,0)=\iota(x)\) and \(\hat{E}(X\times[0,c))=V\cap\overline{M}\).
	  Then there exists a \(0<c<\tilde{c}\) such that the following holds. For any \(x_0\in X\) there exist an open neighborhood \(U\subset X\) around \(x_0\) and smooth functions \(\tau\colon U\times[0,c)\to \C\), \(a_0\colon U\times[0,c)\times\R\to \C\) and \(a_1\colon  U\times[0,c)\times\R\times \R\to \C\)
	 such that
	 \begin{eqnarray}\label{eq:ResultOfHM}
	 	|(\mathcal{H}_k\circ E)(x,s)|^2=k^{n+2}\int_\R e^{-kt\tau(x,s)}\left(a_0(x,s,t)+a_1(x,s,t,k)\right)dt+O(k^{-\infty})
	 \end{eqnarray}
	 in \(\mathscr{C}^{\infty}\)-topology and
	\begin{itemize}
		\item [(i)]\(a_0(x,0,t)=t^{n+1}|\chi(t)|^2\) and \(\tau(x,0)=0\) for all \(x\in U\),
		\item[(ii)]\(a_0(x,s,t)=a_1(x,s,t,k)=0\) for all \((x,s,t,k)\in U\times[0,c)\times\R\times\R\) with \( t\notin(\delta_1,\delta_2)\),
		\item[(iii)] \((x,s,t)\mapsto a_1(x,s,t,k)\) is an \(O(k^{-1})\) in \(C^\infty\)-topology.
		\item [(iv)] there exist \(c_1,c_2>0\) with \(c_1s\leq\operatorname{Re}\tau(x,s)\leq c_2s\) for all \((x,s)\in U\times [0,c)\)
		\item [(v)] there exists \(C>0\) with \(1/C\leq\frac{\partial \operatorname{Re}\tau}{\partial s}(x,s)\leq C\) for all \((x,s)\in U\times[0,c)\)
	\end{itemize}
	Furthermore, we have
	 \begin{eqnarray}
		&&d_{(x,s)}\langle(\mathcal{H}_k\circ E)(x,s),(\mathcal{H}_k\circ E)(y,s')\rangle|_{(x,s)=(y,s')}\label{eq:ResultOfHM1}\\
		&&\,\,\,\,\,\,\,\,\,\,\,\,\,\,\,\,\,\,\,\,\,\,\,\,\,\,\,\,\,\,\,\,\,\,\,\,\,\,\,\,\,\,\,\,
		 =k^{n+3}\int_\R e^{-kt\tau(x,s)}\left(\omega_0(x,s,t)+\omega_1(x,s,t,k)\right)dt+O(k^{-\infty})\nonumber\\
		&&d_{(x,s)}\otimes d_{(y,s')}\langle(\mathcal{H}_k\circ E)(x,s),(\mathcal{H}_k\circ E)(y,s')\rangle|_{(x,s)=(y,s')}\label{eq:ResultOfHM2}\\
		&&\,\,\,\,\,\,\,\,\,\,\,\,\,\,\,\,\,\,\,\,\,\,\,\,\,\,\,\,\,\,\,\,\,\,\,\,\,\,\,\,\,\,\,\,
		 =k^{n+4}\int_\R e^{-kt\tau(x,s)}k^{n+3}\left(\zeta_0(x,s,t)+\zeta_1(x,s,t,k)\right)dt+O(k^{-\infty})\nonumber
	\end{eqnarray}
where
\begin{eqnarray*}
	&&\omega_0\in \vbunsec{U\times [0,c)\times \R,p_1^*\C T\overline{M}^*},\,\,\,\omega_1\in \vbunsec{U\times [0,c)\times \R\times \R,p_2^*\C T\overline{M}^*},\\
	&&\zeta_0\in \vbunsec{U\times [0,c)\times \R,p_1^*\C T\overline{M}^*\otimes p_1^*\C T\overline{M}^*},\\
	&&\zeta_1\in \vbunsec{U\times [0,c)\times \R\times \R,p_2^*\C T\overline{M}^*\otimes p_2^*\C T\overline{M}^*}
\end{eqnarray*} are smooth sections such that
\begin{itemize}
	\item [(vi)]  \((x,s,t)\mapsto (\omega_1)_{x,s,t,k}\) and \((x,s,t)\mapsto (\zeta_1)_{x,s,t,k}\) are \(O(k^{-1})\) in \(\mathscr{C}^\infty\)-topology,
	\item[(vii)] \(\omega_0=-2i\partial\rho t^{n+2}|\chi(t)|^2\) and \(\zeta_0=4\partial\rho\otimes \partial\rho t^{n+3}|\chi(t)|^2 \) on \(U\times\{0\}\times \R\). 
\end{itemize}				
Here \(p_1\colon U\times[0,c) \times \R\to  U\times[0,c)\) and \(p_2\colon U\times[0,c) \times \R\times\R\to  U\times[0,c)\)	denote the projections on the first two factors.		
\end{lemma}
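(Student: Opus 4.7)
The strategy is to unpack Theorem~\ref{thm:HMMainResult} locally in the coordinates supplied by the diffeomorphism $E$, using the fundamental identity~\eqref{eq:FundamentalIdentity}. The plan is to apply Theorem~\ref{thm:HMMainResult} with $\eta=|\chi|^{2}$, which yields
\[
|\mathcal{H}_{k}(z)|^{2}=\eta_{k}(T_{R})(z,z)=\int_{0}^{+\infty}e^{ikt\Psi(z,z)}\,b(z,z,t,k)\,dt+O(k^{-\infty})
\]
on any local coordinate patch meeting $\overline{M}$. Given $x_{0}\in X$ I would shrink $c$ and choose a coordinate neighborhood $U$ around $x_{0}$ small enough that Theorem~\ref{thm:HMMainResult} applies on $E(U\times[0,c))$. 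The natural change of phase is
\[
\tau(x,s):=-i\,\Psi\bigl(E(x,s),E(x,s)\bigr),
\]
which converts $e^{ikt\Psi(z,z)}$ into $e^{-kt\tau(x,s)}$. Setting $a_{0}(x,s,t):=b_{0}(E(x,s),E(x,s),t)$ and defining $a_{1}(x,s,t,k):=k^{-n-2}\bigl(b(E(x,s),E(x,s),t,k)-k^{n+2}b_{0}(E(x,s),E(x,s),t)\bigr)$ gives the representation~\eqref{eq:ResultOfHM} once one extends the integration from $[0,\infty)$ to $\mathbb{R}$, which is harmless because $b$ is $t$-supported in $\operatorname{supp}\eta\subset(\delta_{1},\delta_{2})$. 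Properties (ii) and (iii) then follow from the support condition and the symbol estimates on $b-k^{n+2}b_{0}$ from Theorem~\ref{thm:HMMainResult}.

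For properties (i), (iv), (v) the key input is the Taylor expansion~\eqref{e-gue230319ycdaIm}, which on the diagonal reads
\[
\Psi(z,z)=-i\rho(z)\bigl(2+2\operatorname{Re}f(z)\bigr)+O(|z|^{3}),\qquad f=O(|z|).
\]
Since $\Psi|_{X\times X}=\varphi_{-}$ vanishes on the diagonal, we get $\tau(x,0)=0$ and $a_{0}(x,0,t)=|\chi(t)|^{2}t^{n+1}$, giving (i). For (iv) and (v), observe that because $E$ is a diffeomorphism from $X\times[0,c)$ onto $V\cap\overline{M}$ with $\rho<0$ on $M$, the function $s\mapsto\rho(E(x,s))$ vanishes at $s=0$ and satisfies $\rho(E(x,s))=-s\,\mu(x,s)$ for a smooth $\mu$ with $\mu(x,0)=-\partial_{s}\rho(E(x,s))|_{s=0}>0$ (transversality of $E$ to $X$). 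Combined with $2+2\operatorname{Re}f=2+O(|z|)$, we obtain $\operatorname{Re}\tau(x,s)=2s\,\mu(x,s)+O(s^{2})$ after shrinking $U$ and $c$, which yields the two-sided linear bound in (iv) and the bounded derivative in (v) (the upper bound in (iv) uses that $\operatorname{Re}\tau$ is smooth up to $s=0$ in the compact slice $\overline{U}\times[0,c/2]$).

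For (vi) and (vii) I would differentiate the oscillatory integral representation of $\eta_{k}(T_{R})(z,w)$ before restricting to the diagonal. The derivative $d_{z}(e^{ikt\Psi}b)=e^{ikt\Psi}(ikt\,d_{z}\Psi\cdot b+d_{z}b)$, evaluated at $w=z=E(x,s)$ and pulled back by $E$, produces one power of $k$ from the factor $ikt\,d_{z}\Psi$ and gives the $k^{n+3}$ scaling of~\eqref{eq:ResultOfHM1}. The identity $d_{z}\Psi(z,z)=-2i\partial\rho(z)$ for $z\in X$ from~\eqref{e-gue230528yydzz}, combined with $b_{0}(z,z,t)=|\chi(t)|^{2}t^{n+1}$, gives the leading coefficient $\omega_{0}$ asserted in (vii) at $s=0$; all terms with $d_{z}b$ or higher $b_{j}$ are $O(k^{-1})$ relative to the leading one, yielding $\omega_{1}$ satisfying (vi). The same argument, now differentiating once in each variable, produces $(ikt)^{2}(d_{z}\Psi)\otimes(d_{w}\Psi)\cdot b_{0}$ at leading order, which combined with $d_{w}\Psi(z,z)=2i\partial\rho(z)$ gives $\zeta_{0}=4\,\partial\rho\otimes\partial\rho\cdot t^{n+3}|\chi(t)|^{2}$ on $U\times\{0\}\times\mathbb{R}$, and absorbs the lower-order terms into $\zeta_{1}$.

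The main obstacle is the bookkeeping in (vi), (vii): one must make sure that after differentiating under the integral, all the subleading pieces—$d_{z}b$, $d_{z}^{2}b$, the $O(|z|^{3})$ correction to $d_{z}\Psi(z,z)$, and the chain-rule contributions coming from composing with $E$—assemble into smooth remainders that are genuinely $O(k^{-1})$ in the $\mathscr{C}^{\infty}$-topology. This is handled by the standard symbol calculus from~\cite{HM23}: each $z$-derivative of $b$ costs no powers of $k$, while each factor of $ikt$ brought down by differentiating the phase costs one power, so the relative $k^{-1}$ gap between leading and subleading terms is preserved; everything else is a routine application of differentiation under the integral and the support condition on $\chi$.
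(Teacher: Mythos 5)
Your proposal is correct and follows essentially the same route as the paper: the paper's own proof is a two-line reduction to Theorem~\ref{thm:HMMainResult} (applied with $\eta=|\chi|^2$ via the identity $|\mathcal{H}_k(z)|^2=\eta_k(T_R)(z,z)$) together with the phase expansion~\eqref{e-gue230319ycdaIm} in the coordinates induced by $E$, which is exactly what you carry out, just with the definitions of $\tau$, $a_0$, $a_1$ and the diagonal/derivative computations made explicit. The only caveat is that (iv)--(v) should be obtained by combining $\operatorname{Re}\tau(x,0)=0$ (from ${\rm Im\,}\Psi=0$ exactly on $X\times X$) with positivity of $\partial_s\operatorname{Re}\tau$ at $(x_0,0)$ and continuity, since the $O(|z|^3)$ error in~\eqref{e-gue230319ycdaIm} is cubic in the distance to the base point rather than in $s$; your shrinking of $U$ and $c$ accomplishes this.
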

\begin{proof}
	For any \(x_0\in X\) and any local coordinates \(x=(x_1,\ldots,x_{2n+1})\) in an open neighborhood \(U\) of \(x_0\) the diffeomorphism \(E\) induces local coordinates around \(\iota(x_0)\in \overline{M}\) which are (after possibly shrinking \(c\)) directly linked to coordinates of the form \(z=(x_1,\ldots,x_{2n+1},\rho)\) in Theorem~\ref{thm:HMMainResult}. Then, with \(\eta=|\chi|^2\), the statement follows immediately from~\eqref{e-gue230528ycdsz} and~\ref{e-gue230319ycdaIm} by shrinking \(U\) where \(c\) has to be chosen depending on the open neighborhood \(U\) around the point \(x_0\). Using that \(X\) is compact we find that \(c\) can be actually chosen independent of \(x_0\).
\end{proof}
\begin{lemma}\label{lem:ConsequenceOfHMGoodE}
 For \(0<\delta_1<\delta_2\) and \(\chi\in \mathscr{C}^\infty_c(\R_+)\), \(\chi\not\equiv 0\), \(\operatorname{supp}\chi\subset (\delta_1,\delta_2)\) let \(\mathcal{H}_k\) be  defined as in~\eqref{eq:DefHkGeneral}. There exist \(V\),  \(\hat{E}\), \(c>0\) as in Lemma~\ref{lem:ConsequenceOfHM} such that for each point \(x_0\) there is an open neighborhood \(U\subset X\) around \(x_0\) where \eqref{eq:ResultOfHM}, \eqref{eq:ResultOfHM1}, \eqref{eq:ResultOfHM2} with  (i)-(vii) hold and so that in addition we have
 \begin{itemize}
 	\item [(viii)] \(\frac{\partial \tau}{\partial s}(x,0)=1\) for all \(x\in X\).
 \end{itemize}
\end{lemma}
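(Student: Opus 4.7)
My plan is to construct $(\hat{E}, c)$ by a smooth, $x$-dependent reparametrization of the transverse parameter in any admissible diffeomorphism $\hat{E}_0$ supplied by Lemma~\ref{lem:ConsequenceOfHM}. Writing $\tau_0$ for the associated phase function appearing in~\eqref{eq:ResultOfHM}, I set $A(x) := \frac{\partial \tau_0}{\partial s}(x,0)$ for $x \in X$ and define $\hat{E}(x,s) := \hat{E}_0(x, s/A(x))$ on $X \times [0,c)$ with $c := c_0 \min_X A$. The resulting new phase $\tau(x,s) = \tau_0(x, s/A(x))$ then satisfies $\frac{\partial \tau}{\partial s}(x,0) = A(x)/A(x) = 1$ by the chain rule, which is exactly property~(viii), provided $A$ is a smooth, real, strictly positive function on $X$.

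Showing $A \in \mathscr{C}^\infty(X, \R_+)$ is the core of the argument. Comparing~\eqref{eq:ResultOfHM} against~\eqref{e-gue230528ycdsz} (with $\eta = |\chi|^2$) via the identity $|\mathcal{H}_k(z)|^2 = \eta_k(T_R)(z,z)$ from~\eqref{eq:FundamentalIdentity} yields the intrinsic identification $\tau_0(x,s) = -i\,\Psi(\hat{E}_0(x,s), \hat{E}_0(x,s))$ in local coordinates near any boundary point. Since $\Psi(z,z) = 0$ on $X$ by~\eqref{e-gue230528yydzz}, the smooth function $g(z) := \Psi(z,z)$ factors as $g = \rho \cdot h$ near $X$, so that
\[
  A(x) = -i\, h(\hat{E}_0(x,0))\, \frac{\partial(\rho \circ \hat{E}_0)}{\partial s}(x,0).
\]
The local normal form~\eqref{e-gue230319ycdaIm} yields $h = -2i + O(|z|)$ near every base point, while the global sign constraint $\operatorname{Im}\Psi \geq 0$ combined with $\rho \leq 0$ on $\overline{M}$ forces $\operatorname{Im} h \leq 0$ near $X$. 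These two facts together pin down $h|_X = -2i\mu$ for some smooth real-valued $\mu \in \mathscr{C}^\infty(X, \R_+)$. Since $\hat{E}_0$ points into $\overline{M}$, where $\rho \leq 0$, we have $\partial_s(\rho \circ \hat{E}_0)(x,0) < 0$, and hence $A(x) = -2\mu(x)\, \partial_s(\rho \circ \hat{E}_0)(x,0) > 0$, smoothly depending on $x$.

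With $A > 0$ smooth in hand, the substitution $s \mapsto s/A(x)$ is a smooth diffeomorphism of $X \times [0,c)$ onto a subset of $X \times [0,c_0)$, so the amplitudes transform as $\tilde{a}_0(x,s,t) = a_0(x, s/A(x), t)$ and $\tilde{a}_1(x,s,t,k) = a_1(x, s/A(x), t, k)$, and direct substitution into~\eqref{eq:ResultOfHM} yields the same form in the new parameters. Properties (i) and (vii) only involve $s = 0$, where the rescaling is the identity; (ii) concerns the $t$-support and is immediate; (iii) and (vi) are preserved because the change of variable is $k$-independent, smooth, and has bounded derivatives; and (iv)--(v) hold with new positive constants obtained by dividing the original ones by $\max_X A$ and $\min_X A$, both finite and positive by compactness of $X$. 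The principal difficulty in the whole argument is establishing the reality-positivity of $A$, which requires extracting the transverse behavior of $\Psi$ near its vanishing locus on $X \times X$ from the normal form~\eqref{e-gue230319ycdaIm} together with the global sign condition on $\operatorname{Im}\Psi$; once this is in place, the remaining verifications are essentially routine bookkeeping.
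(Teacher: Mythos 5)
Your overall strategy coincides with the paper's: both proofs obtain (viii) by composing $\hat{E}_0$ with the fibrewise rescaling $(x,s)\mapsto(x,s/A(x))$, where $A(x)=\frac{\partial\tau_0}{\partial s}(x,0)$, and both then observe that properties (i)--(vii) survive this $k$-independent change of variables. The difference lies entirely in how one shows that $A$ is a globally well-defined, smooth, \emph{real} and positive function on $X$, and here your argument has a gap. You claim that the normal form \eqref{e-gue230319ycdaIm} together with the sign constraint $\operatorname{Im}\Psi\geq 0$ and $\rho\leq 0$ ``pins down'' $h|_X=-2i\mu$ with $\mu$ real and positive. But the sign constraint only yields $\operatorname{Im}h\leq 0$ near $X$; it says nothing about $\operatorname{Re}h|_X$, which is exactly what controls $\operatorname{Im}A$. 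The normal form does give $h(p)=-2i$, but only \emph{at the base point} $p$ of the coordinate patch (the error in \eqref{e-gue230319ycdaIm} is $O(|(z,w)|^3)$ measured from $(p,p)$, not $O(\rho)$), so to conclude $h|_X\equiv -2i$ you must let the base point range over $X$ and hence implicitly assume that the chart-by-chart phases $\Psi$, and your diagonal function $g(z)=\Psi(z,z)$, patch together into one globally defined function. Phases of such Fourier integral representations are only determined up to equivalence, and Lemma~\ref{lem:ConsequenceOfHM} is stated purely locally, so this consistency is not free; as written, the reality and chart-independence of $A$ are not established.

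The paper sidesteps all of this. It defines
\[
f(x):=-\Bigl(\int_\R t^{n+2}|\chi(t)|^2\,dt\Bigr)^{-1}\lim_{k\to\infty}k^{-n-3}\tfrac{\partial}{\partial s}\bigl|(\mathcal{H}_k\circ E)(x,s)\bigr|^2\big|_{s=0},
\]
which is manifestly a single smooth real-valued function on all of $X$ (it is a limit of normalized $s$-derivatives of the global real function $|\mathcal{H}_k\circ E|^2$), and then checks from \eqref{eq:ResultOfHM} that $f(x)=\frac{\partial\tau}{\partial s}(x,0)$ on each patch; positivity is immediate from Lemma~\ref{lem:ConsequenceOfHM}(v). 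This gives reality, global smoothness and chart-independence of the rescaling factor in one stroke. Your proof can be repaired either by adopting this limit formula, or by justifying that the diagonal restriction of the phase is chart-independent and then invoking the normal form at every point of $X$ (which also renders your appeal to the sign constraint redundant); the remaining bookkeeping in your last paragraph is fine.
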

\begin{proof}
Since 	the normal bundle of \(X\) in \(M'\) is trivial we can find \(\tilde{c}>0\), an open neighborhood \(\tilde{V}\) of \(X\) in \(M'\) and a diffeomorphism \(\hat{E} \colon X\times (-\tilde{c},\tilde{c})\to V \) such that \(\hat{E}(x,0)=\iota(x)\), \(x\in X\), and \(\hat{E}(X\times[0,\tilde{c}))\subset \overline{M}\). Denote by \(E\) the restriction of \(\hat{E}\) to \(X\times [0,\tilde{c})\). By Lemma~\ref{lem:ConsequenceOfHM} we can choose \(0<c<\tilde{c}\) such that for any \(x_0\) there is an open neighborhood \(U\subset X\) around \(x_0\) where  \eqref{eq:ResultOfHM}, \eqref{eq:ResultOfHM1}, \eqref{eq:ResultOfHM2} with  (i)-(vii)  hold. Let \(x_0\in X\) be arbitrary and choose such an open neighborhood  \(U\subset X\) around \(x_0\). From (v) we find that there exists a constant \(C>0\) such that \(1/C<\frac{\partial \operatorname{Re}\tau}{\partial s}(x,0)<C\) for all \(x\in U\). Since \(x_0\) was arbitrary we find from~\eqref{eq:ResultOfHM} that
	\[f(x):=-\left(\int_\R t^{n+2}|\chi(t)|^2dt\right)^{-1}\lim_{k\to\infty}k^{-n-3}\frac{\partial }{\partial s}|(\mathcal{H}_k\circ E)(x,s)|^2_{s=0}\] defines a smooth real-valued function on \(X\) with \(f(x)=\frac{\partial \tau}{\partial s}(x,0)\) on \(U\). As a consequence \(f\) is positive on \(X\).  By composing \(E\) with  the diffeomorphism \((x,s)\mapsto (x,s/f(x))\) and possibly shrinking \(c\) shows that property (viii) can be satisfied as well. We note since \(X\) is compact we can choose \(c>0\) independent of \(x_0\) and \(U\).   
\end{proof}
In order to handle the integrals appearing in Lemma~\ref{lem:ConsequenceOfHM} the following is quite useful.
\begin{lemma}\label{lem:IntegralLemma}
	Let \(U,V\subset \R^n\) be open sets with \(U\subset \subset V\) and \(c,\delta_1,\delta_2>0\) with \(\delta_1<\delta_2\). Let \(\tau\colon V\times [0,2c)\to \C\), \(a_0\colon V\times [0,2c)\times \R\to \C\) and \(a_1\colon V\times [0,2c)\times \R\times \R\to \C\) be functions with the following properties
	\begin{itemize}
		\item [(1)] \(\tau(x,0)=0\), \(\operatorname{Re}\tau(x,s)\geq 0\) for all \(x\in V\) and \(s\in  [0,2c)\), 
		\item[(2)] \(\tau\), \(a_0\) and \((x,s,t)\mapsto  a_1(x,s,t,k)\) are smooth for each \(k>0\).
		\item[(3)] \((x,s,t)\mapsto a_1(x,s,t,k)\) is an \(O(k^{-1})\) in \(C^\infty\)-topology.
	\end{itemize}
	Put \(A(x,s,t,k)=a_0(x,s,t)+a_1(x,s,t,k)\). For each \(k>0\) consider the smooth function \(B_k\colon V\times [0,2c)\to\C\)
	\begin{eqnarray}\label{eq:DefBkIntegralLemma}
		B_k(x,s)=\int_{\delta_1}^{\delta_2}e^{-kt\tau(x,s)}A(x,s,t,k)dt.
	\end{eqnarray}
	Given \(C>0\) choose \(k_0>0\) such that \(Ck^{-2}\leq c\) for all \(k\geq k_0\). The following holds:
	\begin{itemize}
		\item[(i)]   There is a constant \(C_0>0\) with
		\[\left|B_{k}(x,s)-\int_{\delta_1}^{\delta_2}a_0(x,0,t)dt\right|\leq C_{0}k^{-1}\]
		for all \(k\geq k_0\), \(x\in U\) and \(0\leq s\leq Ck^{-2}\).
		\item [(ii)] For any \(\alpha\in \N^n_0\) there is a constant \(C_\alpha>0\) with
		\[\left|d^\alpha_xB_{k}(x,s)-\int_{\delta_1}^{\delta_2}d^\alpha_xa_0(x,0,t)dt\right|\leq C_{\alpha}k^{-1}\]
		for all \(k\geq k_0\), \(x\in U\) and \(0\leq s\leq Ck^{-2}\).
		\item[(iii)] Given \(C>0\) there exists \( k_0>0\) so that for any \(\alpha\in \N^n_0\) and any \(\ell\in \N_0\) there is a constant \(C_{\alpha,\ell}>0\) with \(\left|d_x^\alpha\left(\frac{\partial}{\partial s}\right)^\ell B_{k}(x,s)\right|\leq C_{\alpha,\ell}k^\ell\) 	for all \(k\geq k_0\), \(x\in U\) and \(0\leq s\leq Ck^{-2}\).
	\end{itemize}
	Now, let \(\{\varphi_k\}_{k\geq k_0}\) be a family of smooth functions \(\varphi_k\colon V\to (0,c)\) such that \(\varphi_k=O(k^{-2})\) in \(\mathscr{C}^\infty\)-topology. Define \(b_k,b_0\colon V\to \R\), \(b_k(x)=B_k(x,\varphi_k(x))\) and \(b_0(x):=\int_{\delta_1}^{\delta_2}a_0(x,0,t)dt\). 
	\begin{itemize}
		\item [(iv)] We have \(b_k=b_0+O(k^{-1})\) in \(\mathscr{C}^\infty\)-topology on \(U\). 
	\end{itemize}
\end{lemma}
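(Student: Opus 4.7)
The plan is to reduce all four assertions to two elementary observations about the exponential integrand, combined with Taylor expansions in $s$ around $s=0$, and the compactness of the $t$-interval $[\delta_1,\delta_2]$.

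The key input will be the bound $|e^{-kt\tau(x,s)}-1|\leq kt|\tau(x,s)|$, valid because $\operatorname{Re}\tau\geq 0$ (indeed $|e^z-1|\leq |z|$ whenever $\operatorname{Re}z\leq 0$). Since $\tau(x,0)=0$ and $\tau$ is smooth, one has $|\tau(x,s)|\leq Cs$ uniformly on compact subsets of $V\times[0,c]$, so the exponential factor equals $1+O(k^{-1})$ whenever $0\leq s\leq Ck^{-2}$. The second observation, needed for the $x$-derivative estimates, is that because $\tau(\cdot,0)\equiv 0$ on $V$, every pure $x$-derivative satisfies $\partial_x^\beta\tau(x,s)=O(s)=O(k^{-2})$ in the range $s\leq Ck^{-2}$, while derivatives involving at least one $\partial_s$ are merely $O(1)$.

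For (i), I would split
$$B_k(x,s)-\int_{\delta_1}^{\delta_2}a_0(x,0,t)\,dt=\int(e^{-kt\tau}-1)a_0(x,0,t)\,dt+\int e^{-kt\tau}(a_0(x,s,t)-a_0(x,0,t))\,dt+\int e^{-kt\tau}a_1(x,s,t,k)\,dt,$$
and bound each piece: the first is $O(kt\cdot k^{-2})=O(k^{-1})$ uniformly in $t\in[\delta_1,\delta_2]$, the second is $O(s)=O(k^{-2})$ by Taylor in $s$, and the third is $O(k^{-1})$ by hypothesis (3). For (ii), I would differentiate under the integral, apply Faà di Bruno to $d_x^\alpha e^{-kt\tau}$, and note that every resulting term except the unit piece on the exponential carries at least one factor $kt\,\partial_x^{\beta}\tau=O(k\cdot k^{-2})=O(k^{-1})$, while the remaining integrand $d_x^\alpha A$ expands as $d_x^\alpha a_0(x,0,t)+O(k^{-1})$ by the same Taylor and (3) arguments.

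For (iii), which is where the main bookkeeping occurs, I would again apply Faà di Bruno to $d_x^\alpha(\partial_s)^\ell e^{-kt\tau}$ and track the powers of $k$ carefully. A typical term takes the form $e^{-kt\tau}\prod_{j=1}^r(kt\,\partial^{\gamma_j}\tau)$ where the multi-indices $(\gamma_j)_{j=1}^r$ refine the total differentiation $(\alpha,\ell)$. Letting $S=\{j:\gamma_j\text{ has no }\partial_s\text{ component}\}$, one has $|S|\leq |\alpha|$ (since each such $\gamma_j$ consumes at least one $x$-derivative) and $r-|S|\leq\ell$ (since each $\gamma_j$ with $j\notin S$ consumes at least one $s$-derivative); combining with the second observation, the product contributes $k^r(k^{-2})^{|S|}=k^{r-2|S|}\leq k^{\ell-|S|}\leq k^\ell$, yielding the desired uniform bound. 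Finally for (iv), I would apply the chain rule to $b_k(x)=B_k(x,\varphi_k(x))$: the $m=0$ term in Faà di Bruno equals $d_x^\alpha B_k(x,\varphi_k(x))=d_x^\alpha b_0(x)+O(k^{-1})$ by (ii) since $\varphi_k=O(k^{-2})$; every other term contains $m\geq 1$ derivative factors of $\varphi_k$ (each $O(k^{-2})$) multiplied by $(d_x^\beta\partial_s^m B_k)(x,\varphi_k(x))=O(k^m)$ from (iii), contributing $O(k^{-m})=O(k^{-1})$. The mild obstacle is the combinatorial bookkeeping in (iii); everything else reduces to the two elementary observations together with Taylor expansion.
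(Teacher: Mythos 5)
Your proposal is correct and follows essentially the same route as the paper: the same three-term splitting with the bound $|e^{-kt\tau}-1|\leq kt|\tau|$ for (i), the same observation that pure $x$-derivatives of $\tau$ are $O(s)=O(k^{-2})$ for (ii), and the identical chain-rule power counting ($O(k^{m})$ from $\partial_s^{m}B_k$ against $O(k^{-2m})$ from the derivatives of $\varphi_k$) for (iv). The only cosmetic difference is in (iii), where the paper avoids your direct Fa\`a di Bruno bookkeeping by noting that $(-1)^{\ell}k^{-\ell}\partial_s^{\ell}B_k$ is again an integral of the same form with new amplitudes $\tilde a_0,\tilde a_1$ of the same type, so that (iii) reduces to (ii); your direct power count ($k^{r-2|S|}\leq k^{\ell-|S|}\leq k^{\ell}$) is a valid equivalent.
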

\begin{proof}
	\underline{Claim (i):} Since \((x,s,t)\mapsto a_1(x,s,t,k)\) is an \(O(k^{-1})\) and \(|e^{-kt\tau(x,s)|}\leq 1\) we find
	\[\int_{\delta_1}^{\delta_2}e^{-kt\tau(x,s)}a_1(x,s,t,k)dt\leq C'k^{-1}\] for all  \(k\geq k_0\), \(x\in U\), \(0\leq s\leq c\). 
	For \(\tau\in \C\) with \(\operatorname{Re}\tau\geq 0\) we have \(|e^{-kt\tau}-1|\leq kt|\tau|\) and \(|e^{-kt\tau}|\leq 1\) for all \(t,k>0\). Furthermore, since \(\tau(x,0)=0\) and \(\tau,a_0\) are smooth we find a constant \(C''>0\) with \(|\tilde{\tau}(x,s)|\leq C''s\) and \(|a_0(x,s,t)-a_0(x,0,t)|\leq C'' s\) for all \(x\in \tilde{U}\), \(0\leq s\leq c\) and \(t\in[\delta_1,\delta_2]\). Then choosing \(k_0>0\) such that \(Ck^{-2}\leq c\) for all \(k\geq k_0\) the claim follows from
	\begin{eqnarray*}
		\int_{\delta_1}^{\delta_2}e^{-kt\tau(x,s)}a_0(x,s,t)dt&-&\int_{\delta_1}^{\delta_2}a_0(x,0,t)dt\\
		&=&\int_{\delta_1}^{\delta_2}(e^{-kt\tau(x,s)}-1)a_0(x,s,t)dt+\int_{\delta_1}^{\delta_2}a_0(x,s,t)-a_1(x,0,t)dt.
	\end{eqnarray*}
	\underline{Claim (ii):}
	Since  \(\tau(x,0)=0\) for all \(x\in V\) we find for any \(\alpha\in\N_0^{2n+1}\), \(|\alpha|\geq 1\) that there is a constant \(C'_\alpha>0\) such that 
	\[\max\{|d_x^\alpha \tau(x,s)|,|d_x^\alpha (a_0(x,s,t)-a_0(x,0,t))|\}\leq C'_\alpha s\]
	for all \(x\in U\), \(s\in [0,c]\) and \(t\in [\delta_1,\delta_2]\) which leads to
	\[\max\{|d_x^\alpha \tau(x,s)|,|d_x^\alpha  (a_0(x,s,t)-a_0(x,0,t))|\} \leq C'_\alpha Ck^{-2}\]
	for all \(k\geq k_0\), \(x\in U\), \(0\leq s\leq Ck^{-2}\) and \(t\in [\delta_1,\delta_2]\). Then, applying derivatives in \(x\) to~\eqref{eq:DefBkIntegralLemma}, we find a constant \(C''_\alpha>0\) such that 
	\[\left|d^\alpha_xB_{k}(x,s)-\int_{\delta_1}^{\delta_2}e^{-kt\tau(x,t)}d^\alpha_xa_0(x,s,t)dt\right| \leq C''_{\alpha}k^{-1}\]
	holds for all \(k\geq k_0\), \(x\in U\), \(0\leq s\leq Ck^{-2}\).  By applying (i) to \(\int_{\delta_1}^{\delta_2}e^{-kt\tau(x,t)}d^\alpha_xa_0(x,s,t)dt\)   the claim follows.\\
	\underline{Claim (iii):}
	We first observe that 
	\[(-1)^\ell k^{-\ell}\left(\frac{\partial}{\partial s}\right)^\ell B_{k}(x,s)=\int_{\delta_1}^{\delta_2}e^{-kt\tau(x,s)}(\tilde{a}_0(x,s,t)+\tilde{a}_1(x,s,t,k)dt\]
	for some smooth functions \(\tilde{a}_0,\tilde{a}_1\) satisfying the same properties as \(a_0\) and \(a_1\). Then the claim follows from (ii).\\
	\underline{Claim (iv):}	
	Given \(k>0\), \(\alpha\in \N_0^{n}\) and \(\ell\in \N_0\) define
	\[R_{k,\alpha,\ell}\colon V \to \C,\,\,\,R_{k,\alpha,\ell}(x)=\left(d_x^\alpha\left(\frac{\partial}{\partial s}\right)^\ell B_{k}(x,s)\right)|_{s=\varphi_k(x)}.\]
	We have \(0<\varphi_k\leq C'k^{-2}\) for some constant \(C'>0\) independent of \(k\).  Choose \(k_0>0\) such that \(C'k^{-2}\leq c\) for all \(k\geq k_0\). 
	For \(m\in \N\) and \(\beta\in \N_0^{2n+1}\) we put
	\begin{eqnarray*}
		I(m,\beta)&:=&\left\{(\tau(1),\ldots,\tau(m))\in(\N_0^{2n+1})^m\colon \sum_{j=1}^m\tau(j)=\beta,\,\,\,|\tau(j)|\geq 1\,\forall 1\leq j\leq m\right\}.
	\end{eqnarray*}
	Applying the chain rule to \(b_k\) we find
	\[d_x^\alpha b_k= R_{k,\alpha,0}+\sum_{\gamma\leq \alpha,|\gamma|<|\alpha|}\sum_{m=1}^{|\alpha|-|\gamma|}R_{k,\gamma,m}\sum_{\tau\in I(m,\alpha-\gamma)}c_{\gamma,m,\tau}\prod_{\nu=1}^{m}d_x^{\tau(\nu)}\varphi_k \]
	with integer coefficients \(c_{\gamma,m,\tau}\) independent of \(k\). Since \(|\tau(\nu)|\geq 1\) one has \(\prod_{\nu=1}^{m}d_x^{\tau(\nu)}\varphi_k=O(k^{-2m})\) by the assumptions on \(\varphi_k\). Furthermore,  we have \(R_{k,\alpha,0}-d_x^\alpha b_0(x)=O(k^{-1})\) by (ii) and  \(R_{k,\gamma,m}=O(k^{m})\) by (iii). The claim follows.  
	
\end{proof}
For \(0<\delta_1<\delta_2\) and \(\chi\in \mathscr{C}^\infty_c(\R_+)\), \(\chi\not\equiv 0\), \(\operatorname{supp}\chi\subset (\delta_1,\delta_2)\) let \(\mathcal{H}_k\) defined as in~\eqref{eq:DefHkGeneral}.
Put \[C^{(j)}_\chi:=\int_{\R}|\chi(t)|^2t^{n+1+j}dt, \,\,\, j=0,1,\ldots.\]
\begin{lemma}\label{lem:EnsuringFkGreaterOne}
	Let \(\mathcal{G}\colon \overline{M}\to \C^N\) be a  smooth map which is holomorphic holomorphic on \(M\). With the assumptions and notations above there exist constants \(k_0,d_0>0\) such that 
	\[\frac{k^{-n-2}}{C^{(0)}_\chi}|(\mathcal{H}_k\circ E)(x,0)|^2+e^{-2k}|(\mathcal{G}\circ E)(x,0)|^2> 1-d/k\]
	 holds for all \(k\geq k_0\) and \(d\geq d_0\), \(x\in X\).	
\end{lemma}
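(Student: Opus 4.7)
The plan is to show that the first summand alone is already $\geq 1 - d/k$ for large $k$; the second summand, being non-negative, only helps. Since $E(x,0) = \iota(x)$, the quantity $|(\mathcal{H}_k\circ E)(x,0)|^2$ is just $|\mathcal{H}_k(x)|^2$ restricted to the boundary $X$, so the real content is extracting the leading asymptotics of $|\mathcal{H}_k|^2$ on $X$.

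First, I would fix a point $x_0 \in X$ and apply Lemma~\ref{lem:ConsequenceOfHMGoodE} in a local coordinate neighborhood $U \subset X$ around $x_0$. Evaluating the expansion \eqref{eq:ResultOfHM} at $s=0$ gives
\[
|(\mathcal{H}_k\circ E)(x,0)|^2 = k^{n+2}\int_{\R} e^{-kt\tau(x,0)}\bigl(a_0(x,0,t)+a_1(x,0,t,k)\bigr)\,dt + O(k^{-\infty}).
\]
By property (i) of Lemma~\ref{lem:ConsequenceOfHM} we have $\tau(x,0) \equiv 0$ and $a_0(x,0,t) = t^{n+1}|\chi(t)|^2$, so the exponential factor is identically $1$ and the leading integral is precisely $C^{(0)}_\chi$. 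Property (iii) says $a_1(x,s,t,k) = O(k^{-1})$ in $\mathscr{C}^\infty$-topology, hence uniformly in $x \in U$ and $t$ in the compact set $\supp\chi \subset (\delta_1,\delta_2)$. Therefore there is a constant $C_{x_0} > 0$ such that
\[
\left| \frac{k^{-n-2}}{C^{(0)}_\chi}|(\mathcal{H}_k\circ E)(x,0)|^2 - 1 \right| \leq \frac{C_{x_0}}{k}
\]
for all $x$ in some neighborhood of $x_0$ and all $k$ sufficiently large.

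Next, using compactness of $X$, I would cover $X$ by finitely many such neighborhoods and take the maximum of the finitely many constants $C_{x_0}$, yielding a single constant $d_0 > 0$ and a threshold $k_0 > 0$ such that
\[
\frac{k^{-n-2}}{C^{(0)}_\chi}|(\mathcal{H}_k\circ E)(x,0)|^2 \geq 1 - \frac{d_0}{k} \quad \text{for all } x \in X,\ k \geq k_0.
\]
Since $e^{-2k}|(\mathcal{G}\circ E)(x,0)|^2 \geq 0$, adding this non-negative term preserves the inequality, and of course replacing $d_0$ by any larger $d$ makes the bound weaker, so the claim follows for all $d \geq d_0$.

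There is no real obstacle: the lemma is essentially a repackaging of the leading-order term in the Hsiao--Marinescu expansion at $s=0$, where the phase vanishes. The only mildly nontrivial point is ensuring uniformity in $x$, which is handled by the uniform $O(k^{-1})$ nature of the remainder $a_1$ together with the compactness of $X$; the role of the $e^{-2k}|\mathcal{G}|^2$ term will only become relevant later, where $\mathcal{G}$ is needed to produce an embedding of a full neighborhood of $X$ in $\overline{M}$ rather than just an immersion of $X$.
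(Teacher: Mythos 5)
Your proposal is correct and follows essentially the same route as the paper: the authors likewise evaluate \eqref{eq:ResultOfHM} at $s=0$, use (i) of Lemma~\ref{lem:ConsequenceOfHM} to get $\frac{k^{-n-2}}{C^{(0)}_\chi}|(\mathcal{H}_k\circ E)(x,0)|^2=1+O(k^{-1})$, and observe that $e^{-2k}|(\mathcal{G}\circ E)(x,0)|^2=O(k^{-\infty})$ is harmless. Your extra remarks on uniformity via compactness of $X$ are a fine (and implicit in the paper's $O(k^{-1})$ in $\mathscr{C}^\infty$-topology) elaboration.
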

\begin{proof}
	We have \(e^{-2k}|(\mathcal{G}\circ E)(x,0)|^2=O(k^{-\infty})\). With \eqref{eq:ResultOfHM} we obtain \(\frac{k^{-n-2}}{C^{(0)}_\chi}|(\mathcal{H}_k\circ E)(x,0)|^2=1+O(k^{-1})\) from (i) in Lemma~\ref{lem:ConsequenceOfHM}.
\end{proof}
Given a smooth map \(\mathcal{G}\colon \overline{M}\to \C^N\) which is holomorphic on \(M\),   consider the map 
\begin{eqnarray}\label{eq:DefMapFkGeneral}
	\mathcal{F}_k\colon\overline{M}\to \C^{N+N_k},\,\, \mathcal{F}_k=\sqrt{\frac{k}{k-d}} \left(e^{-k}\mathcal{G},\frac{1}{\sqrt{k^{n+1}C^{(0)}_\chi}}\mathcal{H}_k\right).
\end{eqnarray}
for \(d\geq d_0\) where \(k\geq k_0\) is sufficiently large and \(k_0,d_0\) are chosen regarding Lemma~\ref{lem:EnsuringFkGreaterOne}.
 We now fix diffeomorphisms \(\hat{E}, E\) and a constant \(c>0\) as given by Lemma~\ref{lem:ConsequenceOfHMGoodE}. 
 Then the most important object of this section is the function given by
\begin{eqnarray}\label{eq:DefFunctionhk}
	h_k\colon X\times[0,c)\to \R_{\geq0}, \,\, h_k(x,s)=|(\mathcal{F}_k\circ E)(x,s)|^2.
\end{eqnarray}
From the definition it is clear that \((x,s,k)\mapsto h_k(x,s)\) defines a smooth function on \(X\times[0,c)\times [k_1,\infty)\) where \(k_1\geq k_0\) has to be chosen large when \(d\geq d_0\) is large. 
We will now prove a proposition verifying important properties of \(h_k\).

\begin{proposition}\label{pro:PropertiesFkGeneral}
	With the assumptions and notations above we have:
	\begin{itemize}
		\item[(i)] We have that \(x\mapsto h_k(x,0)=1+O(k^{-1})\) in \(\mathscr{C}^\infty\)-topology on \(X\)
		\item[(ii)] For any \(C>0\) there exist constants \(C_0,k_0>0\) such that for any \(k\geq k_0\), \(x\in X\) and \(0\leq s\leq Ck^{-1}\) with \(h_k(x,s)=1\) we have \(s\leq C_0k^{-2}\).
		\item[(iii)] For any \(C>0\) there exist constants \(A,k_0>0\) such that for any \(k\geq k_0\), \(x\in X\) and \(s\in \R\) with \(|s|\leq Ck^{-1}\) we have
		\[ 	\frac{\partial}{\partial s}h_k(x,s)\leq -k/A.\]
		\item[(iv)] For any \(C>0\) there exists a constants \(k_0>0\) such that for any \(k\geq k_0\) and \(x\in X\) the function \(s\mapsto h_k(x,s)\) is strictly decreasing on \([0,Ck^{-1}]\) with \(h_k(x,0)>1\) and \(h_k(x,Ck^{-1})<1\).
		\item[(v)] For \(k>0\), \(s\in [0,c)\) and \(\ell\in \N_0\) consider the smooth function
		\[r_{k,s,\ell}\colon X\to \R,\,\,\,\,r_{k,s,\ell}(x)=k^{-\ell}\left(\frac{\partial}{\partial s}\right)^\ell h_k(x,s)- b_\ell\]   with \(b_\ell=(-1)^\ell C^{(\ell)}_\chi/C^{(0)}_\chi\). Fix \(C>0\). There exists \(k_0>0\) such that for any \(\ell\in \N_0\) and \(m\in\N_0\) there is a constant \(C_{\ell,m}>0\) such that for all \(k\geq k_0\) and \(|s|\leq Ck^{-2}\) one has \(\|r_{k,s,\ell}\|_{\mathscr{C}^m(X)}\leq C_{m,\ell}k^{-1}\).
		\item[(vi)] For any \(C>0\) there exist constants \(k_0,d>0\) with \(d\geq d_0\) and \(d_0\) is as in Lemma~\ref{lem:EnsuringFkGreaterOne} such that for any \(k\geq k_0\), \(x\in X\) and  \(0\leq s\leq Ck^{-2}\) we have for \(h_k\) in~\eqref{eq:DefFunctionhk} defined with respect to this \(d\) that
		\[ 	\frac{\partial}{\partial k}h_k(x,s)\leq -k^{-2}.\]
	\end{itemize}
	
\end{proposition}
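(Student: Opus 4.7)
The plan is to split $h_k(x,s)=\tfrac{k}{k-d}(E_k(x,s)+H_k(x,s))$ into a negligible part $E_k(x,s)=e^{-2k}|(\mathcal{G}\circ E)(x,s)|^2$, which is $O(k^{-\infty})$ in $\mathscr{C}^\infty$-topology, and a principal part $H_k$ coming from $|(\mathcal{H}_k\circ E)|^2$ normalized by the natural power of $k$ (so that $H_k=1+O(1/k)$ at $s=0$). By Lemma~\ref{lem:ConsequenceOfHMGoodE}, $H_k$ admits an oscillatory-integral representation with phase $\tau$ satisfying $\tau(x,0)=0$, $\partial_s\tau(x,0)=1$, and $\operatorname{Re}\tau\geq c_1 s$. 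Every claim will be obtained by analyzing this integral at two scales: $|s|\leq Ck^{-2}$ for (i), (ii), (v), (vi) and $|s|\leq Ck^{-1}$ for (iii), (iv).

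For (v) I will differentiate under the integral sign; each $\partial_s$ applied to $e^{-kt\tau}$ brings down a factor $-kt\,\partial_s\tau$, so isolating the dominant contribution gives
\begin{equation*}
k^{-\ell}(\partial_s)^\ell H_k(x,s)=\frac{1}{C^{(0)}_\chi}\int_{\delta_1}^{\delta_2}(-t\,\partial_s\tau(x,s))^\ell e^{-kt\tau(x,s)}a_0(x,s,t)\,dt+O(k^{-1})
\end{equation*}
in $\mathscr{C}^m(X)$-topology uniformly for $|s|\le Ck^{-2}$, the terms coming from $\partial_s a_0$ and $\partial_s a_1$ contributing only $O(k^{-1})$. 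Applying Lemma~\ref{lem:IntegralLemma} and using $\tau(x,0)=0$, $\partial_s\tau(x,0)=1$, $a_0(x,0,t)=t^{n+1}|\chi(t)|^2$ yields the limit $(-1)^\ell C^{(\ell)}_\chi/C^{(0)}_\chi=b_\ell$. Part (i) is the case $\ell=0$, and (ii) then follows from (i) together with (iii) by the mean value theorem: if $h_k(x,s)=1$ with $0\le s\le C/k$, then $s\cdot|\partial_s h_k(x,\xi)|=h_k(x,0)-1=O(1/k)$, and the lower bound in (iii) forces $s=O(k^{-2})$.

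For (iii) the range $|s|\leq C/k$ lies outside the regime of Lemma~\ref{lem:IntegralLemma}, so I will rescale $s=r/k$ with $r\in[0,C]$. Using $\tau(x,r/k)=r/k+O(r^2/k^2)$ and $\partial_s\tau(x,r/k)=1+O(1/k)$, one has $e^{-kt\tau(x,r/k)}=e^{-tr}(1+O(1/k))$ uniformly in $(x,r,t)$, and the leading order of $\partial_s h_k(x,r/k)$ becomes
\begin{equation*}
\frac{k}{C^{(0)}_\chi}\phi'(r),\qquad \phi'(r)=-\int_{\delta_1}^{\delta_2}e^{-tr}t^{n+2}|\chi(t)|^2\,dt.
\end{equation*}
Since $\phi'$ is continuous, strictly negative, and bounded away from $0$ on $[0,C]$, this gives $\partial_s h_k\leq -k/A$, establishing (iii). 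Claim (iv) then combines (iii) with Lemma~\ref{lem:EnsuringFkGreaterOne}: the latter together with the prefactor $k/(k-d)$ produces $h_k(x,0)>1$ strictly, and integrating (iii) over $[0,Ck^{-1}]$ forces $h_k(x,Ck^{-1})\le h_k(x,0)-C/A<1$ for $k$ large.

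The main obstacle is (vi). Writing
\begin{equation*}
\partial_k h_k=-\frac{d}{(k-d)^2}(E_k+H_k)+\frac{k}{k-d}\partial_k H_k+O(k^{-\infty}),
\end{equation*}
the first summand contributes $-d/k^2+O(k^{-3})$ via $E_k+H_k=1+O(1/k)$. For $\partial_k H_k$ I will use $\partial_k|\chi_k(\lambda_j)|^2=-k^{-1}\tilde\eta(\lambda_j/k)$ with $\tilde\eta(t)=t(|\chi|^2)'(t)$, so that $\partial_k|\mathcal{H}_k|^2=-k^{-1}\tilde\eta_k(T_R)(x,x)$. Applying Theorem~\ref{thm:HMMainResult} to $\tilde\eta$ gives the leading coefficient $\int\tilde\eta(t)t^{n+1}\,dt=-(n+2)C^{(0)}_\chi$ after integration by parts, the boundary terms vanishing since $\chi$ has compact support in $(\delta_1,\delta_2)$. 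This cancels precisely against the contribution from differentiating the normalizing power of $k$, leaving $\partial_k H_k=O(k^{-2})$ with an implicit constant independent of $d$. Choosing $d$ larger than this constant (and than $d_0$) then forces $\partial_k h_k\leq -k^{-2}$ uniformly for $0\leq s\leq Ck^{-2}$, since at this scale $\tau=O(k^{-2})$ and evaluation away from $s=0$ differs from $s=0$ only by $O(k^{-1})$. The subtlety is exactly this cancellation: without it one would only obtain $\partial_k H_k=O(k^{-1})$, and no choice of $d$ would produce the required lower bound on $|\partial_k h_k|$.
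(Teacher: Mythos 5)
Your proposal is correct and follows essentially the same route as the paper: the same decomposition into a negligible $\mathcal{G}$-part and the oscillatory-integral representation of the $\mathcal{H}_k$-part from Lemma~\ref{lem:ConsequenceOfHMGoodE}, the same treatment of (i), (iii)--(v) via Lemma~\ref{lem:IntegralLemma} and the rescaled/lower-bounded exponential in (iii), and, crucially, the same cancellation $\int t^{n+1}\tilde{\eta}(t)\,dt=-(n+2)C^{(0)}_\chi$ in (vi) that makes the $k$-derivative of the normalized $\mathcal{H}_k$-term an $O(k^{-2})$ with a $d$-independent constant, so that taking $d$ large wins. The only deviation is (ii), which you derive from (i) and (iii) by the mean value theorem, whereas the paper argues directly from the upper bound $h_k(x,s)\le e^{-k\delta_1 c_1 s}+B_2k^{-1}$ coming from $\operatorname{Re}\tau\ge c_1 s$; both arguments are valid and yours is slightly shorter.
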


\begin{proof}
	\underline{Claim (i):} The claim follows immediately from Lemma~\ref{lem:ConsequenceOfHM} using that use that \(x\mapsto\frac{k}{k-d}e^{-2k}|(\mathcal{G}\circ E)(x,0)|^2=O(k^{-\infty})\) in \(\mathscr{C}^\infty\)-topology on \(X\) and \(k/(k-d)=1+O(k^{-1})\).\\
	\underline{Claim (ii):} Let \(x_0\in X\) be a point. We can find an open neighborhood \(U\) around \(x_0\) such that on \(U\times[0,c)\) we have 
	\begin{eqnarray}\label{eq:h_kInCoordinatesProposition}
		h_k(x,s)=\frac{1}{(1-d/k)C^{(0)}_\chi}\int_\R e^{-kt\tau(x,s)}A(x,s,k,t)dt+O(k^{-\infty}) 
	\end{eqnarray}
	in \(C^{\infty}\)-topology where \(A=a_0+a_1\) and \(\tau, a_0,a_1\) are as in Lemma~\ref{lem:ConsequenceOfHM}.  Here we use that \(\frac{k}{k-d}e^{-2k}|(\mathcal{G}\circ E)(x,s)|^2=O(k^{-\infty})\) in \(\mathscr{C}^\infty\)-topology on \(X\times [0,c)\).  Let \(U_0\subset\subset U\) be a smaller neighborhood around \(x_0\). Since \(a_0(x,0,t)= t^{n+1}|\chi(t)|^2\) we find  a constant \(B_1>0\) with \(|a_0(x,s,t)-t^{n+1}|\chi(t)|^2|\leq B_1s\) for \((x,s)\in U_0\times [0,c)\) and \(t\in \R\). Since \((1-d/k)^{-1}=1+O(k^{-1})\) we conclude \(h_k(x,s)-(C^{(0)}_\chi)^{-1}\int_{\delta_1}^{\delta_2}e^{-kt\tau(x,s)}t^{n+1}|\chi(t)|^2dt=O(k^{-1})\) for \(0\leq s\leq Ck^{-1}\). Hence there is a constant \(B_2>0\) such that \(h_k(x,s)\leq (C^{(0)}_\chi)^{-1}\left|\int_{\delta_1}^{\delta_2}e^{-kt\tau(x,s)}t^{n+1}|\chi(t)|^2dt\right| +B_2k^{-1}\) for all \(k\geq k_0\) and \(0\leq s\leq Ck^{-1}\). Since \(\operatorname{Re}\tau(x,s)\geq c_1s\) for \((x,s)\in U_0\times [0,c)\) we conclude  
	\begin{eqnarray*}
		h_k(x,s)\leq (C^{(0)}_\chi)^{-1}\left|\int_{\delta_1}^{\delta_2}e^{-kt\tau(x,s)}t^{n+1}|\chi(t)|^2dt\right| +B_2k^{-1}\leq e^{-k\delta_1c_1s} +B_2k^{-1}
	\end{eqnarray*}
	for all \(k\geq k_0\) and \(x\in U_0\) and \(0\leq s\leq Ck^{-1} \). Assuming \(h_k(x,s)=1\) with \(k\geq k_0\), \(0\leq s\leq Ck^{-1}\) and \(x\in U_0\) we find \(1\leq e^{-k\delta_1 c_1 s}+B_2k^{-1}\) and hence
	\[s\leq -\frac{1}{k\delta_1c_1}\log(1-B_2k^{-1})\leq \frac{1}{k\delta_1c_1}\log\left(1+\frac{B_2}{k-B_2}\right)\leq \frac{B_2}{k\delta_1c_1(k-B_2)}.\]
	Since \(X\) is compact the claim follows.\\
	\underline{Claim (iii):} Let \(x_0\in X\), \(U_0\subset \subset U\subset X\) be as above. We will compute the derivative of~\eqref{eq:h_kInCoordinatesProposition} with respect to \(s\). From Lemma~\ref{lem:ConsequenceOfHM} we find a constant \(B_1>0\) such that \(\frac{\partial \operatorname{Re}\tau}{\partial s}(x,s)>B_1\) for all \(x\in U\) and \(0\leq s<c\).  From~\eqref{eq:h_kInCoordinatesProposition} we find
	\[\frac{\partial h_k}{\partial s}(x,s)=-k\frac{\partial \tau}{\partial s}(x,s) \int_{\delta_1}^{\delta_2}e^{-kt\tau(x,s)}\frac{a_0(x,s,t)}{C_\chi^{(0)}}tdt+O(1).\] 
	Since \(a_0(x,0,t)= t^{n+1}|\chi(t)|^2\) we find  a constant \(B_2>0\) with
	 \[|\operatorname{Re}a_0(x,s,t)-t^{n+1}|\chi(t)|^2|  \leq B_2s, |\operatorname{Im}a_0(x,s,t)| \leq B_2s.\]
	   Since \(\frac{\partial \tau}{\partial s}(x,0)=1\)  and \(\tau(x,0)=0\) we find a constant \(B_3>0\) such that for all \(k\geq 1\) we have \(|\operatorname{Im}\tau(x,s)|\leq B_3k^{-2}\) and \(|\frac{\partial \operatorname{Im}\tau}{\partial s}(x,s)|\leq B_3k^{-1}\) for all \(x\in U_0\), \(0\leq s\leq Ck^{-1}\). Using \(|1-e^{-kti\operatorname{Im}\tau}|\leq kt|\operatorname{Im}\tau|\) and \(|e^{-kt\tau}|\leq 1\) we conclude
	  \begin{eqnarray*}
	  \frac{\partial h_k}{\partial s}(x,s)&=&-k\frac{\partial \tau}{\partial s}(x,s) \int_{\delta_1}^{\delta_2}e^{-kt\operatorname{Re}\tau(x,s)}\operatorname{Re}a_0(x,s,t)tdt+O(1)\\
	  &=&-k\frac{\partial \tau}{\partial s}(x,s) \int_{\delta_1}^{\delta_2}e^{-kt\operatorname{Re}\tau(x,s)}t^{n+2}|\chi(t)|^2dt+O(1)\\
	  \end{eqnarray*}
  Because \(\chi\not\equiv 0\) we  then find \(B_4>0\) and \(\delta_1\leq\delta_1'< \delta_2'\leq\delta_2\) such that for all sufficiently large \(k>1\) we have \(t^{n+2}|\chi(t)|^2>B_4\) for all \(t\in [\delta_1',\delta_2']\)  .
	Since \(\frac{\partial h_k}{\partial s}(x,s)\in \R\) and \(t^{n+2}|\chi(t)|^2\geq 0\) we conclude that for some \(B_5>0\) there exists \(k_0>0\) such that
	\[\frac{\partial h_k}{\partial s}(x,s)-B_5\leq -kB_1B_4\int_{\delta_1'}^{\delta_2'}e^{-c_1Ct}dt\leq -kB_1B_4e^{-c_1C\delta_2'}(\delta_2'-\delta_1')\]
	holds for all \(x\in U_0\), \(0\leq s\leq Ck^{-1}\) and \(k\geq k_0\).  Since \(X\) is compact the claim follows.\\
	\underline{Claim (iv):} The first part of the claim follows immediately from Claim~(iii). Furthermore, from the choice of the constant \(d>0\) in Lemma~\ref{lem:EnsuringFkGreaterOne} we find \(k_1>0\) such that \(h_k(x,0)>1\) holds  for all \(x\in X\) and all \(k\geq k_1\). Using Claim~(iii) we find \(k_2\geq k_1\) with
	\[h_k(x,Ck^{-1})-h_k(x,0)=\int_{0}^{Ck^{-1}}\frac{\partial h_k}{\partial s}(x,s)ds\leq-kBCk^{-1}=-CB\]
	for all \(x\in X\) and \(k\geq k_2\) where \(B>0\) is a constant independent of \(x\) and \(k\). From Claim~(i) we conclude
	\(x\mapsto h_k(x,Ck^{-1})\leq1-CB+O(k^{-1})\) on \(X\) and the claim follows.\\
	\underline{Claim (v):} Let \(x_0\in X\), \(U_0\subset \subset U\subset X\) be as above. From Lemma~\ref{lem:EnsuringFkGreaterOne} we find that there exists \(k_0>0\) such that  \(Ck^{-2}<c/2\) and
	\((x,s)\mapsto r_{k,s,\ell}(x)\) is well defined on \(U\times [0,c)\) for all \(\ell\in \N_0\) and \(k\geq k_0\).  From~\eqref{eq:h_kInCoordinatesProposition} we find
	\[(x,s)\mapsto r_{k,s,\ell}(x)+ b_\ell= \int_\R e^{-kt\tau(x,s)}A^{(\ell)}(x,s,t,k)dt+O(k^{-\infty})\]     
	in \(C^{\infty}\)-topology on \(U\times [0,c)\) with \(A^{(\ell)}(x,s,t,k)=a^{(\ell)}_0(x,s,t)+a^{(\ell)}_1(x,s,t,k)\) where \(a^{(\ell)}_0,a^{(\ell)}_1\) satisfy the assumptions in Lemma~\ref{lem:IntegralLemma} and
	\[a^{(\ell)}_0(x,s,t)=\frac{(-1)^\ell}{C^{(0)}_\chi}\left(\frac{\partial \tau}{\partial s}(x,s)\right)^\ell t^\ell a_0(x,s,t).\] 
	Since \(\frac{\partial \tau}{\partial s}(x,0)=1\) and \(a_0(x,0,t)=t^{n+1}|\chi(t)|^2\) for \(x\in U\) we find \[a^{(\ell)}_0(x,0,t)=(-1)^\ell(C^{(0)}_\chi)^{-1}t^{n+1+\ell}|\chi(t)|^2\]
	 is independent of \(x\). Observing that \(\int_\R a^{(\ell)}_0(x,0,t)dt=b_\ell\) the claim follows from (i), (ii) in Lemma~\ref{lem:IntegralLemma} and the compactness of \(X\).\\
	\underline{Claim (vi):} For fixed \(d\geq d_0\) in the definition of \(\mathcal{F}_k\) in~\eqref{eq:DefMapFkGeneral} we can choose \(k(d)>0\) such that \(h_k\) is well defined an smooth in \(k\) for all \(k\geq k(d)\).   A direct calculation shows
	\begin{eqnarray}
		\frac{\partial}{\partial k}h_k(x,s)&=&\frac{1}{(1-d/k)C_\chi^{(0)}k^{n+2}}\left(\frac{\partial}{\partial k}|\mathcal{H}_k(E(x,s))|^2-\frac{(n+2)}{k}|\mathcal{H}_k(E(x,s))|^2\right)\nonumber\\
		&&-\frac{d}{(1-d/k)k^2}h_k(x,s)+O(k^{-\infty}).\label{eq:DerivativeOfhk}
	\end{eqnarray}
Using  \(|\mathcal{H}_k(z)|^2=\sum_{j=1}^\infty\chi(k^{-1}\lambda_j)|f_j(z)|^2=\eta_k(T_R)(z,z)\) (see \eqref{eq:FundamentalIdentity}) where \(\eta(t)=|\chi(t)|^2\) we find
\[\frac{\partial}{\partial k}|\mathcal{H}_k(z)|^2=-\frac{1}{k}\sum_{j=1}^\infty k^{-1}\lambda_j2(\operatorname{Re}\overline{\chi}\chi')(k^{-1}\lambda_j)|f_j(z)|^2=-k^{-1}\tilde{\eta}_k(T_R)(z,z)\]
with \(\tilde{\eta}(t)=2t\operatorname{Re}\overline{\chi(t)}\chi'(t)\)
 it follows from Theorem~\ref{thm:HMMainResult} (see also Lemma~\ref{lem:ConsequenceOfHM} and Lemma~\ref{lem:IntegralLemma}) that there are constants \(C_1,k_1>0\) such that 
\[\max\left\{\left|\eta_k(T_R)(z,z)- k^{n+2}\int_\R t^{n+1} |\chi(t)|^2dt\right|,\left|\tilde{\eta}_k(T_R)(z,z)- k^{n+2}\int_\R t^{n+1}\tilde{\eta}(t)dt\right|\right\}\leq C_1k^{n+1}\]
for all \(k\geq k_1\), \(0\leq s\leq Ck^{-2}\) and \(x\in X\) with \(z=E(x,s)\). Since \[\int_\R t^{n+1}\tilde{\eta}(t)dt=-(n+2)\int_\R t^{n+1}|\chi(t)|^2dt\] it follows from~\eqref{eq:DerivativeOfhk} that there are constants \(C_2,k_2\) such that
\[\frac{\partial}{\partial k}h_k(x,s)\leq \frac{1}{(1-d/k)k^2}(-h_k(x,s)\cdot d+C_2)\]
holds for all \(k\geq k_1\), \(0\leq s\leq Ck^{-2}\) and \(x\in X\). We note that from the construction it follows that \(C_2\) is independent of \(d\). Then,   choosing \(d\geq d_0\)
 sufficiently large and using (v) with \(\ell=m=0\) proves the claim.
\end{proof}

\begin{theorem}\label{thm:ConstructionPhiGeneral}
	Let \(X\) be as above and \(h_k\) be the function defined in~\eqref{eq:DefFunctionhk} where the constant \(d\) (see \eqref{eq:DefMapFkGeneral}) is chosen large enough so that~\ref{pro:PropertiesFkGeneral}~(vi) holds. There exists \(k_0,C_0,\tilde{C}>0\) such that for any \(k\geq k_0\) there is a smooth function \(\varphi_k\colon X\to (0,C_0k^{-1})\) uniquely defined by the equation \(h_k(x,\varphi_k(x))=1\), \(x\in X\). Furthermore, for any \(m\in \N_0\) there exists  a constant \(C_m>0\) such that for all \(k\geq k_0\) we have \(\|\varphi_k\|_{\mathscr{C}^m(X)}\leq C_mk^{-2}\). 
	 In addition, we have that the function \((x,k)\mapsto \varphi_k(x)\) is smooth on \(X\times[k_0,\infty)\) with \(\frac{\partial\varphi_{k}(x)}{\partial k}\leq -\tilde{C}k^{-3}\) for all \(k\geq k_0\) and \(x\in X\). 
\end{theorem}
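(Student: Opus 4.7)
The plan is to construct $\varphi_k$ directly from Proposition~\ref{pro:PropertiesFkGeneral} via the implicit function theorem, then bootstrap the bounds through an inductive argument essentially identical to the one used in Theorem~\ref{thm:ConstructionOfVarphik}.

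First I would establish existence, uniqueness, and joint smoothness. Fix any $C>0$. By Proposition~\ref{pro:PropertiesFkGeneral}(iv), for $k$ sufficiently large and every $x\in X$ the function $s\mapsto h_k(x,s)$ is strictly decreasing on $[0,Ck^{-1}]$ with $h_k(x,0)>1$ and $h_k(x,Ck^{-1})<1$, so the intermediate value theorem yields a unique $\varphi_k(x)\in(0,Ck^{-1})$ with $h_k(x,\varphi_k(x))=1$. Part~(iii) gives $\partial_s h_k(x,s)\leq -k/A<0$ on this interval, so applying the implicit function theorem to the smooth map $(x,s,k)\mapsto h_k(x,s)-1$ shows that $(x,k)\mapsto \varphi_k(x)$ is smooth on $X\times [k_0,\infty)$. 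Now apply Proposition~\ref{pro:PropertiesFkGeneral}(ii) with the above constant $C$: any solution of $h_k(x,s)=1$ with $0\leq s\leq Ck^{-1}$ automatically satisfies $s\leq C_0 k^{-2}$, upgrading the a priori bound to $\|\varphi_k\|_{\mathscr{C}^0(X)}=O(k^{-2})$.

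Next I would derive the $\mathscr{C}^m$-estimates by induction on $|\beta|$, verbatim copying the argument from the proof of Theorem~\ref{thm:ConstructionOfVarphik}. Working in local coordinates $x=(x_1,\ldots,x_{2n+1})$ on $X$ and setting $R_{k,\alpha,\ell}(x)=\bigl(d_x^\alpha (\partial_s)^\ell h_k\bigr)(x,\varphi_k(x))$ and $e(j)\in\N_0^{2n+1}$ the $j$-th unit vector, the implicit function theorem gives
\[
\frac{\partial \varphi_k}{\partial x_j}=-\frac{R_{k,e(j),0}}{R_{k,0,1}},
\]
and the general chain-rule formulae \eqref{eq:Derivatives1overRk01}, \eqref{eq:DerivativesRkalphal}, \eqref{eq:dvarphiInRks} of Theorem~\ref{thm:ConstructionOfVarphik} still apply. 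The seed estimates are: $R_{k,0,\ell}=O(k^\ell)$ and $R_{k,\alpha,\ell}=O(k^{\ell-1})$ for $|\alpha|\geq 1$, both furnished by Proposition~\ref{pro:PropertiesFkGeneral}(v) (noting that horizontal derivatives kill the constant $b_\ell$), together with $(R_{k,0,1})^{-1}=O(k^{-1})$ from (iii). The induction step then proves simultaneously that $d_x^\beta R_{k,0,\ell}=O(k^\ell)$, $d_x^\beta R_{k,\alpha,\ell}=O(k^{\ell-1})$ for $|\alpha|\geq 1$, and $d_x^\beta \partial_{x_j}\varphi_k=O(k^{-2})$, yielding $\|\varphi_k\|_{\mathscr{C}^m(X)}=O(k^{-2})$ for every $m\in \N_0$.

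Finally, for the monotonicity in $k$, differentiate the defining relation $h_k(x,\varphi_k(x))=1$ with respect to $k$ to obtain
\[
\partial_k \varphi_k(x)=-\frac{(\partial_k h_k)(x,\varphi_k(x))}{(\partial_s h_k)(x,\varphi_k(x))}.
\]
Proposition~\ref{pro:PropertiesFkGeneral}(vi) supplies $\partial_k h_k\leq -k^{-2}$ (provided the constant $d$ in \eqref{eq:DefMapFkGeneral} was chosen sufficiently large), while (v) with $\ell=1$ gives $\partial_s h_k=kb_1+O(1)$ with $b_1=-C_\chi^{(1)}/C_\chi^{(0)}<0$, hence $|\partial_s h_k|\leq 2|b_1|k$ for large $k$. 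Both numerator and denominator being negative, the ratio has the correct sign and
\[
\partial_k \varphi_k(x)\leq -\frac{k^{-2}}{2|b_1|k}=-\tilde{C}k^{-3}
\]
with $\tilde{C}:=(2|b_1|)^{-1}$, as claimed. The only genuinely non-routine step is the simultaneous induction for the $\mathscr{C}^m$-bounds, but this has already been carried out in detail in Theorem~\ref{thm:ConstructionOfVarphik} and the present hypotheses (v)--(vi) are engineered to make the same bookkeeping go through unchanged.
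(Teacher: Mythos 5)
Your proposal is correct and follows essentially the same route as the paper's proof: Proposition~\ref{pro:PropertiesFkGeneral}(iv) plus the implicit function theorem for existence, uniqueness and joint smoothness, part (ii) to upgrade the bound to $O(k^{-2})$, the induction scheme from Theorem~\ref{thm:ConstructionOfVarphik} seeded by parts (iii) and (v) for the $\mathscr{C}^m$-estimates, and differentiation of the defining relation together with part (vi) for the $k$-monotonicity. Your use of part (v) with $\ell=1$ to obtain the \emph{upper} bound $|\partial_s h_k|\leq 2|b_1|k$ in the last step is exactly the right ingredient (part (iii) alone only bounds $|\partial_s h_k|$ from below), so the argument is complete.
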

\begin{proof}
	Let \(C_0>0\)  be arbitrary. From Proposition~\ref{pro:PropertiesFkGeneral}~(iv) we find that there is \(k_1>0\) such that for any \(x\in X\) and \(k\geq k_1\) there is unique solution \(s=s(x,k)\) for equation \(h_k(x,s)=1\), \(0<s<C_0k^{-1}\). It follows from Proposition~\ref{pro:PropertiesFkGeneral}~(ii) that there exist \(C_2,k_2>0\), \(k_2\geq k_1\) with \(0<s(x,k)\leq C_2k^{-2}\) for all \(x\in X\) and \(k\geq k_2\). Then, for some \(k_3>k_2\), we have that for any \(k\geq k_3\) the function \(\varphi_k\colon X\to (0,C_0k^{-1})\), \(\varphi_k(x)=s(x,k)\) is well defined. Using Proposition~\ref{pro:PropertiesFkGeneral}~(iii) we find \(C_4>0\) and \(k_4>k_3\) such that
	\[ 	\frac{\partial}{\partial s}h_k(x,s)\leq -k/C_4\]
	holds for all \(k\geq k_4\) and \(0\leq s\leq C_0k^{-1}\). Since for fixed \(k\geq k_4\) we have that \(s=\varphi_k(x)\) is the unique solution of \(h_k(x,s)=1\) and we have by construction that 
	\[\frac{\partial}{\partial s}h_k(x,s)|_{s=\varphi_k(x)}\leq-\frac{k}{C_4}<0\]
	it follows from the implicit function theorem that \(\varphi_k\colon X\to (0,C_0k^{-1})\) is smooth. Furthermore, since \((x,s,k)\mapsto h_k(x,s)\) is smooth it also follows that \((x,k)\mapsto \varphi_k(x)\) is smooth with
	\[\frac{\partial \varphi_k(x_0)}{\partial k}=-\frac{\frac{\partial }{\partial k}h_k(x,s)|_{(x,s)=(x_0,\varphi_k(x_0))}}{\frac{\partial}{\partial s}h_k(x_0,s)|_{s=\varphi_k(x_0)}}.\]
	Since \(\|\varphi_k\|_{\mathscr{C}^0(X)}\leq C_2k^{-2}\) by construction it follows from  Proposition~\ref{pro:PropertiesFkGeneral}~(iii),(vi) that there is a constant \(\tilde{C}>0\) and \(k_5\geq k_4\) such that \(\frac{\partial\varphi_{k}(x)}{\partial k}\leq -\tilde{C}k^{-3}\) for all \(k\geq k_5\) and \(x\in X\).
	It remains to show that for any \(m\in\N\) we have \(\|\varphi_k\|_{\mathscr{C}^m(X)}=O(k^{-2})\) for \(k\to \infty\). From our construction it follows that \(\|\varphi_k\|_{\mathscr{C}^0(X)}\leq C_2k^{-2}\). Furthermore, from the implicit function theorem we obtain for any \(x_0\in X\) one has
	\[d_x\varphi_k|_{x=x_0}=-\frac{d_xh_k(x,s)|_{(x,s)=(x_0,\varphi_k(x_0))}}{\frac{\partial}{\partial s}h_k(x_0,s)|_{s=\varphi_k(x_0)}}.\]
	
	Let \(p\in X\) be a point and let \((U,x_1,\ldots,x_{2n+1})\) be local coordinates around \(p\). For \(\alpha\in \N_0^{{2n+1}}\) put \(|\alpha|=\sum_{j=1}^{2n+1}\alpha_j\) and \(d^{\alpha}_x=\left(\frac{\partial}{\partial x_1}\right)^{\alpha_1}\ldots\left(\frac{\partial}{\partial x_{2n+1}}\right)^{\alpha_{2n+1}}\).
	We need to show that  for any \(\alpha\in\N_0^{2n+1}\) and any \(1\leq j\leq 2n+1\) we have
	\begin{eqnarray}\label{eq:estimatesVarphiGeneral}
		d_x^\alpha \frac{\partial}{\partial x_j}\varphi_k=O(k^{-2})
	\end{eqnarray}
	on \(U\). Using Proposition~\ref{pro:PropertiesFkGeneral}~(v) we obtain~\eqref{eq:estimatesVarphiGeneral} by repeating the induction argument for~\eqref{eq:induction1} in the proof of Theorem~\ref{thm:ConstructionOfVarphik}.

\end{proof}

\subsection{Proof of Theorem~\ref{thm:ContrastToForstneric}}
In this section we are going to prove Theorem~\ref{thm:ContrastToForstneric}. The result will be deduced from the following theorem. 
\begin{theorem}\label{thm:ProofOnExhaustionBySphericalDom}
	Let \(M'\) be a Stein manifold of dimension \(\dim_\C M'=n+1\). Let \(M\subset \subset M'\) be a relatively compact strictly pseudoconvex domain with smooth boundary. There is \(k_0>0\) so that for each  \(k\geq k_0\) there is \(N_k\in \N\), a smooth embedding \(\mathcal{F}_k\colon \overline{M}\to \C^{N_k}\) which is holomorphic on \(M\) and a strictly pseudoconvex  domain \(D_k\subset\subset M\) with real analytic boundary satisfying \(\mathcal{F}_k(D_k)\subset \mathbb{B}^{N_k}\) and \(\mathcal{F}_k(bD_k)\subset \mathbb{S}^{2N_k-1}\) such that  
	\(\overline{D_k}\to \overline{M}\) for \(k\to \infty\) in the \(\mathscr{C}^\infty\)-topology for domains. In particular,  one has \(\bigcup_{k\in \N, k\geq k_0}D_k=M\). Moreover, the family of domains \(\{D_k\}_{k\geq k_0}\) can be chosen to depend smoothly on \(k\) with \(D_{k'}\subset\subset D_{k''}\) for all \(k_0\leq k'<k''\).
\end{theorem}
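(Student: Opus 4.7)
The plan is to apply the construction from Section~\ref{sec:FunCalcOnSpscDom} and define $D_k$ directly as the sublevel set of $|\mathcal{F}_k|^2$, then read off every property from the estimates on $\varphi_k$ and $h_k$ established in Proposition~\ref{pro:PropertiesFkGeneral} and Theorem~\ref{thm:ConstructionPhiGeneral}.

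First I would fix the auxiliary data required by Section~\ref{sec:FunCalcOnSpscDom}. Since $M'$ is Stein, there exists a proper holomorphic embedding $\mathcal{G}\colon M'\to\C^{N}$ for some $N$. Equip $X=bM$ with any pseudohermitian structure $(T^{1,0}X,\alpha)$ with positive Levi form and, using Lemma~\ref{lem:GlobalDefiningFunction} together with a rescaling near $X$, choose a smooth defining function $\rho$ for $M$ with $\iota^*i(\ol\partial-\partial)\rho=\alpha$. Pick a Hermitian metric $\Theta$ on $M'$ satisfying \eqref{eq:propHermMetric1}–\eqref{eq:propHermMetric2}, and form the Toeplitz operator $T_R$ as in \eqref{eq:DefToeplitzOperatorGeneral}, with eigendata $\{\lambda_j,f_j\}$. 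Pick $0<\delta_1<\delta_2$ and $\chi\in\mathscr{C}^\infty_c((\delta_1,\delta_2))$ with $\chi\not\equiv 0$, and build $\mathcal{H}_k$ via \eqref{eq:DefHkGeneral} and $\mathcal{F}_k$ via \eqref{eq:DefMapFkGeneral}, choosing the constant $d$ large enough that Proposition~\ref{pro:PropertiesFkGeneral}~(vi) holds.

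Second, I would fix the tubular diffeomorphism $E\colon X\times[0,c)\to V\cap\ol{M}$ provided by Lemma~\ref{lem:ConsequenceOfHMGoodE}, set $h_k(x,s)=|(\mathcal{F}_k\circ E)(x,s)|^2$, and invoke Theorem~\ref{thm:ConstructionPhiGeneral} to obtain $k_0>0$ and a family $\{\varphi_k\}_{k\ge k_0}$ of smooth functions $\varphi_k\colon X\to(0,C_0k^{-1})$ satisfying $h_k(x,\varphi_k(x))=1$, $\|\varphi_k\|_{\mathscr{C}^m(X)}=O(k^{-2})$, and $\partial_k\varphi_k\leq -\tilde C k^{-3}$, with $(x,k)\mapsto\varphi_k(x)$ smooth. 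Define
\[
X_k=E\bigl(\{(x,\varphi_k(x)):x\in X\}\bigr)\subset M,\qquad D_k=\{z\in M:|\mathcal{F}_k(z)|^2<1\},
\]
and verify that (for $k$ large) $bD_k=X_k$ and $D_k$ is connected. Now the required properties drop out: $\mathcal{F}_k$ is a smooth embedding of $\ol{M}$, holomorphic on $M$, since injectivity and the immersion property are inherited from the proper embedding $\mathcal{G}$ through the $e^{-k}\mathcal{G}$ component; the hypersurface $X_k$ is real analytic because $|\mathcal{F}_k|^2$ is real analytic on $M$ (it is the squared modulus of a holomorphic map) and its differential is nonzero on $X_k$ by the transversality statement Proposition~\ref{pro:PropertiesFkGeneral}~(iii) transported through $E$; the inclusions $\mathcal{F}_k(D_k)\subset\mathbb{B}^{N_k}$ and $\mathcal{F}_k(bD_k)\subset\mathbb{S}^{2N_k-1}$ hold by construction; strict pseudoconvexity of $bD_k$ follows from Lemma~\ref{lem:CRsubmanifoldsOfSpheresAreStrictlyPseudoconvex} applied to $\mathcal{F}_k(X_k)\subset\mathbb{S}^{2N_k-1}$. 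Convergence $\ol{D_k}\to\ol{M}$ in $\mathscr{C}^\infty$-topology is immediate from $\|\varphi_k\|_{\mathscr{C}^m(X)}=O(k^{-2})$, while $\partial_k\varphi_k\leq -\tilde{C}k^{-3}$ gives pointwise strict monotonicity $\varphi_{k''}<\varphi_{k'}$ for $k'<k''$, hence $\ol{D_{k'}}\subset D_{k''}$, and simultaneously witnesses the smooth dependence on $k$.

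The main obstacle I anticipate is in confirming that the level set $\{|\mathcal{F}_k|^2=1\}$ equals $X_k$ globally in $M$, not merely inside the collar $V\cap M$; equivalently, that $|\mathcal{F}_k|^2$ stays strictly below $1$ on the ``core'' $M\setminus V$. For this I would use the identity $|\mathcal{H}_k|^2=\eta_k(T_R)(\cdot,\cdot)$ with $\eta=|\chi|^2$ together with the FIO representation \eqref{e-gue230528ycdsz} of Theorem~\ref{thm:HMMainResult}: since $\mathrm{Im}\,\Psi(z,z)>0$ for $z\notin X$, one gets $|\mathcal{H}_k(z)|^2=O(k^{-\infty})$ uniformly on compact subsets of $M$, so that $|\mathcal{F}_k|^2\leq\frac{k}{k-d}\bigl(e^{-2k}\|\mathcal{G}\|^2_{L^\infty}+O(k^{-\infty})\bigr)<1$ on $M\setminus V$ for $k\ge k_0$, after which connectedness of $D_k$ and $bD_k=X_k$ are immediate.
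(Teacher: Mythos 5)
Your proposal is essentially correct and follows the paper's construction for all the analytic input ($\mathcal{G}$ from the Stein embedding, $\mathcal{F}_k$ from \eqref{eq:DefMapFkGeneral}, $\varphi_k$ from Theorem~\ref{thm:ConstructionPhiGeneral}), but it organizes the definition of $D_k$ differently, and the difference matters at exactly the step you flag as the main obstacle. The paper does \emph{not} define $D_k$ as the sublevel set $\{|\mathcal{F}_k|^2<1\}$; it first builds $D_k$ geometrically, gluing the region below the graph of $\varphi_k$ in the collar to the core $M\setminus V$ via a cutoff defining function $\rho_k$, verifies all the domain properties ($D_k\subset\subset M$, smooth dependence on $k$, monotonicity, convergence) from $\rho_k$, and only afterwards shows that $\nu_k=|\mathcal{F}_k|^2-1$ is a strictly plurisubharmonic defining function for this $D_k$. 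The key trick is that $\nu_k<0$ on $D_k$ is obtained from the strict maximum principle for the strictly plurisubharmonic function $\nu_k$ (which vanishes on $bD_k$ by construction of $\varphi_k$), so no control of $|\mathcal{H}_k|^2$ in the interior of $M$ is ever needed. Your route instead requires the global bound $|\mathcal{H}_k(z)|^2=O(k^{-\infty})$ on compact subsets of the open interior $M$. Be aware that Theorem~\ref{thm:HMMainResult} as quoted in the paper only describes $\eta_k(T_R)$ on coordinate patches around \emph{boundary} points, so the interior decay is not a formal consequence of what is stated here; it does hold and is part of the results of \cite{HM23}, but you would have to cite that explicitly (and also check the intermediate collar range $Ck^{-1}\leq s<c$, where the phase bound $\operatorname{Re}\tau(x,s)\geq c_1 s$ from Lemma~\ref{lem:ConsequenceOfHM} gives $h_k(x,s)\lesssim e^{-\delta_1 c_1 ks}+O(k^{-1})<1$ for $C$ large). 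With that input supplied, your sublevel-set definition yields the same $D_k$, and the remaining verifications (real analyticity of $bD_k$ from $d\nu_k\neq 0$ via Proposition~\ref{pro:PropertiesFkGeneral}, strict pseudoconvexity, connectedness, monotonicity from $\partial_k\varphi_k<0$) go through as you describe. In short: your approach buys a more direct definition of $D_k$ at the cost of an extra global estimate; the paper's maximum-principle argument buys independence from any interior asymptotics at the cost of a more elaborate cut-and-paste construction of the defining function.
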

\begin{proof}
 Since \(M'\) is Stein we can find a holomorphic embedding \(G\colon M'\to \C^N\) for some \(N\in \N\). Then define \(\mathcal{G}\) in~\eqref{eq:DefMapFkGeneral} to be \(\mathcal{G}=G|_{\overline{M}}\). It follows that for any sufficiently large \(k>0\) the map \(\mathcal{F}_k\colon \overline{M}\to \C^{\hat{N}_k+N}\) (see~\eqref{eq:DefMapFkGeneral}) is a smooth embedding which is holomorphic on \(M\). We note that we choose the constant \(d\) in the definition of \(\mathcal{F}_k\) (see \eqref{eq:DefMapFkGeneral}) large enough so that~\ref{pro:PropertiesFkGeneral}~(vi) holds. Put \(N_k:=\hat{N}_k+N\). Note that \(N_k=O(k^{n+1})\) from Corollary~\ref{lem:DimensionGrowGeneral}. Let \(\{\varphi_k\}_{k\in k_0}\) be the family of smooth functions from Theorem~\ref{thm:ConstructionPhiGeneral}. Let \(V\subset M'\) be an open neighborhood around \(X=bM\),  \(c>0\) and \(\hat{E}\colon X\times(-c,c)\to V\) a diffeomorphism as in Lemma~\ref{lem:ConsequenceOfHMGoodE}. Let \(\tilde{\eta}_1\colon (-c,c)\to [0,1]\) be a smooth function such that \(\tilde{\eta}_1(s)=1\) for \(t\leq c/2\) and \(\tilde{\eta}_1(s)=0\) for \(t>3c/4\). Define \(\eta_1\colon V\cup M\to [0,1]\) by \(\eta_1(z):=\tilde{\eta_1}\circ\hat{E}^{-1}(z)\) for \(z\in V\) and \(\eta_1(z)=0\) otherwise. Put \(\eta_2\colon V\cup M\to [0,1]\), \(\eta_2(z)=1-\eta_1(z)\). We have that \(\eta_2\) is smooth on \(M\cup V\) with \(\operatorname{supp}\eta_2\subset M\). 
 For  \(k\geq k_0\) define \(\tilde{\rho}_k\colon X\times (-c,c)\to \R\), \(\tilde{\rho}_k(x,s)=\varphi_k(x)-s\) and put 
 \[\rho_k\colon M\cup V\to \R,\,\, \rho_k(z)=\eta_1(z)(\tilde{\rho}_k\circ \hat{E}^{-1})(z)-\eta_2(z)c/2\]
 and \(D_k=\{z\in M\cup V\colon \rho_k(z)<0\}\). We will show now that for \(k_0\) large enough we have that \(\{D_k\}_{k\geq k_0}\)  satisfies the desired properties with respect to the family of maps \(\{\mathcal{F}_k\}_{k\geq k_0}\). Define \(\tilde{\rho}_0\colon X\times (-c,c)\to \R\), \(\tilde{\rho}_0(x,s)=-s\), and put 
 \[\rho_0\colon M\cup V\to \R,\,\, \rho_k(z)=\eta_1(z)(\tilde{\rho}_0\circ \hat{E}^{-1})(z)-\eta_2(z)c/2.\]\\
 \textbf{Claim (i):} We have that \(\rho_0\) is a smooth defining function for \(M\).\\
 From the construction it follows that \(\rho_0\) is smooth. We first observe that since \(ds\neq 0\) on \(X\times (-c,c)\) and \(\hat{E}\) is a diffeomorphism we have that \(d\rho_0\) does not vanish in a neighborhood of \(X=bM\). Furthermore, since \(\hat{E}(x,0)=\iota(x)\), \(x\in X\), we find that \(\rho_0\) vanishes on \(bM\). Given \(z\in M\setminus V\) we have that \(\rho_0(z)=-c/2<0\). Given \(z\in M\cap V\) we have for \((x,s)=\hat{E}^{-1}(z)\) that \(0<s<c\) and \(\rho_0(z)=-\tilde{\eta}_1(s)s-c/2(1-\tilde{\eta}_1(s))=-\tilde{\eta}_1(s)(s-c/2)-c/2<0\). For \(0<s\leq c/2\) we find \(\rho_0(z)=-s<0\). For \(c/2<s<c\) we find \(\rho_0(z)<-c/2<0\). Hence \(\rho_0(z)<0\) for all \(z\in M\). Given \(z\in V\setminus \overline{M}\) we have for \((x,s)=\hat{E}^{-1}(z)\) that \(-c<s<0\), \(\tilde{\eta}_1(s)=1\) and hence \(\rho_0(z)=-\tilde{\eta}_1(s)s-c/2(1-\tilde{\eta}_1(s))=-s>0\). If follows that \(\rho_0(z)>0\) for all \(z\in V\setminus \overline{M}\). As a conclusion we obtain \(M=\{z\in M\cap V\colon \rho_0(z)<0\}\) and the claim follows.    \\
 \textbf{Claim (ii):} We have \(D_k\subset \subset M\) for all \(k\geq k_0\). \\
 Fix \(k\geq k_0\). It is enough to show that \(\rho_0\leq \rho_k\) on \(M\cap V\) and \(\rho_0+\varepsilon<\rho_k\) on \(X=bM\) for some \(\varepsilon >0\). Since \(\varphi_k>0\) we have \(\tilde{\rho}_0=-s\leq -s+\varphi_k=\tilde{\rho}_k\). It follows that \(\rho_0\leq \rho_k\) holds on \(M\cup V\). Furthermore, since \(X\) is compact, we find \(\varepsilon (=\varepsilon_k)>0\)  with  \(\varepsilon <\varphi_k\) and hence \(\tilde{\rho}_0(x,0)+\varepsilon=\varepsilon<\varphi_k(x)=\tilde{\rho}_k(x,0)\) for all \(x\in X\). As a consequence \(\rho_0+\varepsilon<\rho_k\) on \(X=bM\).  The claim follows.\\
 \textbf{Claim (iii):} For \(k\to \infty\) we have \(\rho_k\to \rho_0 \) on \(M\cup V\) in \(\mathscr{C}^\infty\)-topology. \\
 Since \(\varphi_k\to 0\) on \(X\) in \(\mathscr{C}^\infty\)-topology we obtain \(\tilde{\rho}_k\to \tilde{\rho}_0\) on \(X\times (-c,c)\) in \(\mathscr{C}^\infty\)-topology. From the fact that \(\hat{E}\) is independent of \(k\) the claim follows.\\
 \textbf{Claim (iv):} We have that \((z,k)\mapsto \rho_{k}(z)\) is a smooth function on \((M\cup V)\times [k_0,\infty)\).\\
 Since \((x,k)\mapsto \varphi_k(x)\) is smooth on \(X\times [k_0,\infty)\) the claim follows.\\
  \textbf{Claim (v):} We can choose \(k_0\) large enough such that for all \(k\geq k_0\) we have that \(\rho_k\) is a smooth defining function for \(D_k\). In particular,   \(D_k\subset\subset M\) is a smoothly bounded domain for all \(k\geq k_0\). \\
  This is a direct consequence of Claim~(iii). \\ 
   \textbf{Claim (vi):} For \(k\to \infty\) we have \(\overline{D_k}\to \overline{M} \) in the \(\mathscr{C}^\infty\)-topology for domains. Furthermore, the family \(\{D_k\}_{k\geq k_0}\) depends smoothly on \(k\)\\
   This immediately  follows from Claim~(iii), Claim~(iv) and Claim~(v).\\
    \textbf{Claim (vii):} We can choose \(k_0\) large enough such that for each \(k''>k'\geq k_0\) we have that \(D_{k'}\subset \subset D_{k''}\).\\
    From the properties of \(\{\varphi_k\}_{k\geq k_0}\) (see Theorem~\ref{thm:ConstructionPhiGeneral}) it follows that \(\varphi_k\) is strictly decreasing in \(k\) for \(k\geq k_0\) when \(k_0\) is large enough. From the construction of \(\rho_k\) we find that for all \(k''>k'\geq k_0\) we have \(\rho_{k''}\leq\rho_{k'}\) on \(M\cup V\) and \(\rho_{k''}<\rho_{k'}\) on \(V'\cap \overline{M}\) where \(V'\subset M'\) is an open neighborhood of \(bM\). From Claim~(vi) it follows that \(bD_k\subset V'\cap \overline{M}\) for all sufficiently large \(k\). The claim follows.\\   
   \textbf{Claim (viii):} We can choose \(k_0\) large enough such that for each \(k\geq k_0\) we have that \(\nu_k\colon M\to \R\), \(\nu_k(z):=|\mathcal{F}_k(z)|^2-1\), is a strictly plurisubharmonic defining function for \(D_k\).\\
   Fix \(k\geq k_0\). Given \(Z\in T^{1,0}M\) we have that \(d\mathcal{F}_kZ\neq 0\) since \(\mathcal{F}_k\) is a embedding. Since \(\mathcal{F}_k\) is holomorphic on \(M\) we observe that \(\partial\overline{\partial}\nu_k(Z,\overline{Z})=|dF_kZ|^2>0\). It follows that \(\nu_k\) is strictly plurisubharmonic for each \(k\geq k_0\).  From Proposition~\ref{pro:PropertiesFkGeneral}~(iii) it follows that for \(k_0\) large enough we have that \((d \nu_k\circ \hat{E})(x,s)\neq 0\) for all \(k\geq k_0\) and \(0\leq s<Ck^{-1}\) for some constant \(C>0\) independent of \(k\). Since \(\hat{E}\) is a diffeomorphism, \(\varphi_k=O(k^{-2})\) and \(z\in bD_k\) if and only if \(z\in V\) with \(\tilde{\rho}_k\circ\hat{E}^{-1}(z)=0\) we conclude that \(d\nu_k\neq 0\) on \(bD_k\) for all \(k\geq k_0\). For each \(k\geq k_0\) it follows from the strict maximum principle for strictly plurisubharmonic functions that we have \(\nu_k(z)<0\) for all \(z\in D_k\). Furthermore,  from the construction of \(\varphi_k\) we obtain that for any \(z\in M\) we have \(\rho_k(z)>0\) implies \(\nu_k(z)>0\). We conclude that \(\nu_k\) is positive on \(M\setminus\overline{D_k}\). We have shown that for each \(k\geq k_0\) we have \(D_k=\{z\in M\colon \nu_k(z)<0\}\) where \(\nu_k\) is a smooth strictly plurisubharmonic on \(M\) with \(d\nu_k\neq 0\) on \(bD_k\).\\
   \textbf{Claim (ix):} For each \(k\geq k_0\) we have that \(D_k\subset \subset M\) is a strictly pseudoconvex domain with real analytic boundary such that  \(\mathcal{F}_k(D_k)\subset \mathbb{B}^{N_k}\) and \(\mathcal{F}_k(bD_k)\subset \mathbb{S}^{2N_k-1}\).\\
   This immediately follows from Claim~(viii) since \(\nu_k\) is real analytic on \(M\). 
\end{proof}
As a direct consequence of Theorem~\ref{thm:ProofOnExhaustionBySphericalDom} we have the following.
\begin{corollary}\label{cor:ContrastToForstneric}
		Let \(Y\) be a Stein manifold of dimension \(\dim_\C Y=n+1\). Let \(M\subset \subset Y\) be a relatively compact strictly pseudoconvex domain with smooth boundary. In every neighborhood of \(M\) in the \(\mathscr{C}^\infty\)-topology for domains there exist a strictly pseudoconvex domain \(D\subset \subset M\) with real analytic boundary and a smooth embedding \(F\colon \overline{M}\to \C^N\) for some \(N\in \N\) which is holomorphic on \(M\) such that \(F\) restricts to a proper holomorphic map of \(D\) into the unit ball of \(\C^N\).  In particular, \(F\) defines a smooth embedding of \(\overline{D}\) into \(\C^N\) which is holomorphic on \(D\) and such that \(bD\) is mapped into the unit sphere.
	\end{corollary}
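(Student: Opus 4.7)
The plan is to deduce Corollary~\ref{cor:ContrastToForstneric} essentially directly from Theorem~\ref{thm:ProofOnExhaustionBySphericalDom}. The theorem already produces, for each large enough $k$, a strictly pseudoconvex domain $D_k \subset\subset M$ with real analytic boundary together with a smooth embedding $\mathcal{F}_k \colon \overline{M} \to \mathbb{C}^{N_k}$, holomorphic on $M$, satisfying $\mathcal{F}_k(D_k) \subset \mathbb{B}^{N_k}$ and $\mathcal{F}_k(bD_k) \subset \mathbb{S}^{2N_k-1}$. The conclusion $\overline{D_k} \to \overline{M}$ in $\mathscr{C}^\infty$-topology for domains lets one choose $k$ so that $D_k$ lies in any prescribed $\mathscr{C}^\infty$-neighborhood of $M$.

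First, I would appeal to Theorem~\ref{thm:ProofOnExhaustionBySphericalDom} to obtain the family $\{D_k\}_{k \geq k_0}$ and the maps $\mathcal{F}_k$. Given any $\mathscr{C}^\infty$-neighborhood $\mathcal{U}$ of $M$ in the space of domains, the convergence $\overline{D_k} \to \overline{M}$ yields some $k_1 \geq k_0$ with $D_k \in \mathcal{U}$ for all $k \geq k_1$. Fix one such $k$, and set $D := D_k$, $F := \mathcal{F}_k$, $N := N_k$.

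The remaining verification is that $F$ restricts to a \emph{proper} holomorphic map $D \to \mathbb{B}^N$. Holomorphy on $D$ and the containment $F(D) \subset \mathbb{B}^N$ are built into the conclusion of Theorem~\ref{thm:ProofOnExhaustionBySphericalDom}. For properness, given a sequence $\{z_j\} \subset D$ with no accumulation point in $D$, passing to a subsequence we may assume $z_j \to z_\infty \in bD$ by compactness of $\overline{D}$; since $F$ is continuous on $\overline{M}$ and $F(bD) \subset \mathbb{S}^{2N-1}$, we obtain $|F(z_j)| \to |F(z_\infty)| = 1$, so $\{F(z_j)\}$ has no accumulation point in $\mathbb{B}^N$. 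This shows $F|_D \colon D \to \mathbb{B}^N$ is proper.

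There is no substantive obstacle here — all of the hard analytic work (the semi-classical functional calculus on $\overline{M}$, the construction of the real analytic strictly plurisubharmonic defining functions $\nu_k = |\mathcal{F}_k|^2 - 1$, and the $\mathscr{C}^\infty$-convergence $\overline{D_k} \to \overline{M}$) has been carried out in Theorem~\ref{thm:ProofOnExhaustionBySphericalDom}. The corollary is simply the qualitative reformulation of that theorem in the language of proper maps into the unit ball.
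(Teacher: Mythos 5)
Your proof is correct and matches the paper's approach: the paper simply records this corollary as a direct consequence of Theorem~\ref{thm:ProofOnExhaustionBySphericalDom}, and you supply exactly the routine details (choosing $k$ large via the $\mathscr{C}^\infty$-convergence $\overline{D_k}\to\overline{M}$, and the standard properness argument from $F(D)\subset\mathbb{B}^N$ together with $F(bD)\subset\mathbb{S}^{2N-1}$).
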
	
\begin{proof}[\textbf{Proof of Theorem~\ref{thm:ContrastToForstneric}}]
	Theorem~\ref{thm:ContrastToForstneric} follows immediately from  Lemma~\ref{lem:ApproximationBySmoothDomains} and Corollary~\ref{cor:ContrastToForstneric}.
\end{proof}
\subsection{Proof of Theorem~\ref{thm:MainThmIntroGeneral}}
In order to prove  Theorem~\ref{thm:MainThmIntroGeneral} we need the following.
\begin{lemma}\label{lem:ConvergenceOfAlphaT}
	Let \(\mathcal{F}_k\colon \overline{M}\to \C^{\hat{N}_k+N}\), \(k\geq k_0\), be the map defined in~\eqref{eq:DefMapFkGeneral} where \(k_0\) is large enough. For \(k\geq k_0\) put \(N_k=N+\tilde{N}_k\) and define \(\beta(k)\in \R_+^{N_k}\) by
	\begin{eqnarray}\label{eq:DefBetakGeneral}
		\beta(k)=(\delta_1k,\ldots,\delta_1k, \lambda_{I^-_k},\ldots,\lambda_{I^+_k}).
	\end{eqnarray}
	where \(\hat{N}_k,  \lambda_{I^-_k},  \lambda_{I^+_k}\) are as in~\eqref{eq:DefHkGeneral}.
	Let \(\hat{E}, E\) be as in Lemma~\ref{lem:ConsequenceOfHMGoodE}, \(\alpha_{\beta(k)}, \mathcal{T}_{\beta(k)}\) be as Example~\ref{ex:PseudoHermSpheres} and \(\{\varphi_k\}_{k\geq k_0}\) as in Theorem~\ref{thm:ConstructionPhiGeneral}. Define \(G_k\colon X\to \C^{N_k}\), \(G_k(x)=\mathcal{F}_k(E(x,\varphi_k(x)))\) and put \(\alpha^k=G_k^{*}\alpha_{\beta(k)}\). Recall that \(\mathcal{T}\) denotes the Reeb vector field associated to \(\alpha\) as explained in the beginning of Section~\ref{sec:FunCalcOnSpscDom}. We have that  \(\alpha^k\) is well defined for all \(k\geq k_0\) and \((x,k)\mapsto (\alpha_{k})_x\) is smooth on \(X\times[k_0,\infty)\). Furthermore, one has
	\begin{itemize}
		\item [(i)]  \(\alpha^k=\alpha+O(k^{-1})\),
		\item[(ii)]  \(|k^{-1}\mathcal{T}_{\beta(k)}\circ G_k|^2=C^{(2)}_\chi/C^{(0)}_\chi + O(k^{-1})\),
		\item [(iii)] \(|k^{-1}(G_k)_*\mathcal{T}|^2=C^{(2)}_\chi/C^{(0)}_\chi + O(k^{-1})\), 
			\item[(iv) ]\(|k^{-1}(G_k)_*\mathcal{T}-k^{-1}\mathcal{T}_{\beta(k)}\circ G_k|^2=O(k^{-1})\) 
	\end{itemize}	
in \(\mathscr{C}^{\infty}\)-topology on \(X\). Moreover, one has
\begin{itemize}
	\item [(v)] For any smooth families of vector fields \(\{Y_k\}_{k\geq k_0},\{Z_k\}_{k\geq k_0}\in \vbunsec{X,\C TX} \) with \(Y_k=O(k^{-1})\) and \(Z_k=O(1)\) in \(\mathscr{C}^{\infty}\)-topology one has
	 \[\langle k^{-1}(G_k)_*Y_k,k^{-1}(G_k)_*Z_k\rangle=O(k^{-1}) \text{ and }  \langle k^{-1}(G_k)_*Y_k,k^{-1}\mathcal{T}_{\beta(k)}\circ G_k\rangle=O(k^{-1})\]   in \(\mathscr{C}^{\infty}\)-topology. 
\end{itemize}
\end{lemma}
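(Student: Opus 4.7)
The plan is to reduce every assertion to integral-kernel asymptotics. Write \(\hat F_k:=\mathcal F_k\circ E\colon X\times[0,c)\to\C^{N_k}\), so that \(\hat F_k\) is holomorphic on \(X\times(0,c)\) with respect to the complex structure transported from \(\overline M\) by \(E\), and \(G_k=\hat F_k\circ\kappa_{\varphi_k}\) with \(\kappa_{\varphi_k}(x)=(x,\varphi_k(x))\). The first \(N\) components of \(\mathcal F_k\) carry the prefactor \(e^{-k}\), so their contribution to every quantity in (i)--(v) is \(O(k^{-\infty})\) in \(\mathscr C^\infty\)-topology and may be ignored throughout. Well-definedness and smoothness in \(k\) of \(\alpha^k\) follow from Theorem~\ref{thm:ConstructionPhiGeneral} combined with the smooth dependence of \(\mathcal F_k\) on \(k\).

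For (ii), expand \(|\mathcal T_{\beta(k)}\circ G_k|^2=\sum_j\beta(k)_j^2|(G_k)_j|^2\); up to an \(O(k^{-\infty})\)-error this equals \(\frac{k^3}{(k-d)k^{n+2}C^{(0)}_\chi}\tilde\eta_k(T_R)\bigl(E(x,\varphi_k(x)),E(x,\varphi_k(x))\bigr)\) with \(\tilde\eta(t)=t^2|\chi(t)|^2\). Theorem~\ref{thm:HMMainResult} applied to \(\tilde\eta\) (leading symbol \(t^{n+3}|\chi(t)|^2\)), combined with Lemma~\ref{lem:IntegralLemma}(iv) evaluated at \(s=\varphi_k(x)=O(k^{-2})\), gives \(\tilde\eta_k(T_R)=k^{n+2}C^{(2)}_\chi+O(k^{n+1})\) in \(\mathscr C^\infty(X)\); combining with the prefactor and dividing by \(k^2\) yields (ii).

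Part (i) is the main technical point. Holomorphy of \(\hat F_k\) rewrites the pullback as
\[
  \hat F_k^*\alpha_{\beta(k)}=\frac{i}{2}\cdot\frac{(\bar\partial-\partial)|\hat F_k|^2}{\sum_j\beta(k)_j|\hat F_{k,j}|^2}.
\]
The denominator pulled back to \(X\) via \(\kappa_{\varphi_k}\) equals \(kC^{(1)}_\chi/C^{(0)}_\chi+O(1)\) in \(\mathscr C^\infty(X)\) by Theorem~\ref{thm:HMMainResult} applied to \(\eta^\ast(t)=t|\chi(t)|^2\) and Lemma~\ref{lem:IntegralLemma}(iv). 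For the numerator, Proposition~\ref{pro:PropertiesFkGeneral}(v) with \(\ell=1\) gives \(\partial_s h_k=-kC^{(1)}_\chi/C^{(0)}_\chi+O(1)\) while \(x\)-derivatives of \(h_k-1\) are \(O(k^{-1})\) for \(|s|\leq Ck^{-2}\). Consequently \(h_k-1\) is, to leading order, proportional to \(k\) times a smooth defining function for \(X_{\varphi_k}\); after normalizing by the prefactor of the denominator (using the relation between \(\tau\) and \(\tilde\rho=\rho\circ E\) implicit in Theorem~\ref{thm:HMMainResult} together with the normalization \(\partial_s\tau(x,0)=1\) from Lemma~\ref{lem:ConsequenceOfHMGoodE}), the limiting defining function matches that used to define \(\alpha=i\iota^*(\bar\partial-\partial)\rho\). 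The factors of \(k\) cancel against the denominator, leaving \(\alpha^k=\alpha+O(k^{-1})\) in every \(\mathscr C^m(X)\); higher-order bounds come from Lemma~\ref{lem:IntegralLemma}(iv) applied uniformly in derivatives.

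Parts (iii)--(v) use the polarization identity \(\langle(G_k)_*V,(G_k)_*W\rangle=V_xW_y\langle G_k(x),G_k(y)\rangle|_{y=x}\). For (iii), take \(V=W=(\kappa_{\varphi_k})_*\mathcal T=\mathcal T+(\mathcal T\varphi_k)\partial_s\); the second summand is \(O(k^{-2})\) in \(\mathscr C^\infty\) by Theorem~\ref{thm:ConstructionPhiGeneral}, so only the first term contributes to leading order. The double derivative is governed by \eqref{eq:ResultOfHM2}, and the leading symbol \(\zeta_0\) evaluated on \((\mathcal T,\mathcal T)\) is computed using \(\mathcal T=J\nabla\rho\) on \(X\), \(|d\rho|=1\), and \(d\rho(J\mathcal T)=-1\) (all established in Section~\ref{sec:FunCalcOnSpscDom}), which give \(|\partial\rho(\mathcal T)|^2=1/4\) and hence the leading order \(k^2C^{(2)}_\chi/C^{(0)}_\chi\), matching (ii) and proving (iii). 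The cross term in (iv) is handled analogously: apply polarization to the mixed kernel \(K'(x,y)=\sum_j\beta(k)_jG_{k,j}(x)\overline{G_{k,j}(y)}\), whose first-derivative asymptotic is supplied by \eqref{eq:ResultOfHM1} applied to \(\eta^\ast\), and use the identity \(\mathcal T_{\beta(k)}(z)_j=i\beta(k)_jz_j\) to obtain \(\operatorname{Re}\langle(G_k)_*\mathcal T,\mathcal T_{\beta(k)}\circ G_k\rangle=k^2C^{(2)}_\chi/C^{(0)}_\chi+O(k)\); expanding \(|(G_k)_*\mathcal T-\mathcal T_{\beta(k)}\circ G_k|^2\) gives (iv). Part (v) is the bilinear version of the same computation, the extra \(O(k^{-1})\)-smallness of \(Y_k\) producing the stated gain. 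The main obstacle throughout is bookkeeping the four sources of \(k\)-scaling---the prefactor of \(\mathcal F_k\), the \(k^{n+2}\) from HM-asymptotics, one factor of \(k\) per derivative of the phase \(\Psi\), and the \(O(k^{-2})\)-smallness of \(\varphi_k,\mathcal T\varphi_k\)---which must combine exactly to leave the claimed \(O(k^{-1})\)-remainders.
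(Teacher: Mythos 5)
Your treatment of the well-definedness, the smoothness in \(k\), and of parts (ii)--(v) follows the paper's proof essentially verbatim: reduce to \(\eta_k(T_R)\) for the auxiliary symbols \(t|\chi(t)|^2\) and \(t^2|\chi(t)|^2\), discard the \(e^{-k}\mathcal{G}\)-block as \(O(k^{-\infty})\), use polarization together with \eqref{eq:ResultOfHM1}--\eqref{eq:ResultOfHM2} for the pushforwards, write \((\kappa_{\varphi_k})_*V=V+(V\varphi_k)\tfrac{\partial}{\partial s}\), and evaluate at \(s=\varphi_k(x)\) via Lemma~\ref{lem:IntegralLemma}(iv). Your leading-symbol computation \(4|\partial\rho(\mathcal{T})|^2=1\) for (iii) and the expansion of \(|A-B|^2\) for (iv) are exactly the paper's steps.

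Part (i) is where you deviate, and where your argument has a gap. The paper computes the numerator \(H_k^*\alpha_{N_k}\) directly from the one-derivative kernel expansion \eqref{eq:ResultOfHM1}, whose leading symbol \(\omega_0=-2i\partial\rho\,t^{n+2}|\chi(t)|^2\) hands you the \((1,0)\)-form \(\partial\rho\) --- and hence \(\alpha=-i\iota^*(\partial-\overline\partial)\rho\) --- \emph{exactly} on the boundary diagonal, since \(d_x\Psi(x,x)=-2i\partial\rho(x)\) there with no correction. You instead write the numerator as \(\tfrac{i}{2}(\overline\partial-\partial)h_k\) and argue that \(h_k-1\) is asymptotically \(k\) times a defining function of \(X_{\varphi_k}\) that ``matches'' \(\rho\). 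The missing step is the passage from the asymptotics of \(dh_k\) (which is what Proposition~\ref{pro:PropertiesFkGeneral}(v) gives you, via \(\partial_s h_k\) and \(d_xh_k\)) to the asymptotics of its \((1,0)\)-part: \(\tfrac{i}{2}(\overline\partial-\partial)h_k\) at a point is \(J^*dh_k\) up to normalization, so you must control \(dh_k\) in the normal direction and then relate \(ds\), \(d\tilde\rho\) and \(\partial\rho\) at boundary points. Here the relation you invoke ``implicitly'' from Theorem~\ref{thm:HMMainResult} is \eqref{e-gue230319ycdaIm}, which gives \(\tau=-2\tilde\rho(1+f)+O(|z|^3)\) with a \emph{non-constant} factor \(1+f\) (only \(f=O(|z|)\) at the center of each chart); together with the normalization \(\partial_s\tau(x,0)=1\) of Lemma~\ref{lem:ConsequenceOfHMGoodE} this makes \(\partial_s\tilde\rho(x,0)\) a nontrivial function of \(x\), so ``the limiting defining function matches \(\rho\)'' is not on the nose and the scalar by which you must normalize is not obviously the same one appearing in your denominator \(kC^{(1)}_\chi/C^{(0)}_\chi\). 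You need to either carry out this comparison explicitly (tracking the \(1+f\) factor and the cubic error, and doing so uniformly in all \(x\)-derivatives, not just pointwise) or switch to the paper's route through \(\omega_0\) in \eqref{eq:ResultOfHM1}, which bypasses the issue entirely.
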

\begin{proof}
	Recall that \(\mathcal{F}_k=\sqrt{\frac{k}{(k-d)}}(e^{-k}\mathcal{G},(C_\chi^{(0)}k^{n+2})^{-1/2}\mathcal{H}_k)\). We first observe that \(\alpha^k\) is well defined for all \(k\geq k_0\) since \(|\mathcal{F}_k(E(x,\varphi_k(x)))|^2=1\neq 0\), \(x\in X\), by construction. Put \(\alpha_{N_k}=\frac{1}{2i}\sum_{j=1}^{N_k}  \overline{z}_jdz_j-z_jd\overline{z}_j\). It follows that 
	\begin{eqnarray*}
		(\mathcal{F}_k^*\alpha_{N_k})_z=\frac{k}{(k-d)}\operatorname{Im}\left(e^{-2k}d_z\langle \mathcal{G}_k(z),\mathcal{G}_k(w)\rangle|_{z=w}+(C_\chi^{(0)}k^{n+2})^{-1}d_z\langle \mathcal{H}_k(z),\mathcal{H}_k(w)\rangle|_{z=w}\right).
	\end{eqnarray*}
	Using \eqref{eq:FundamentalIdentity} and \(d_z\langle \mathcal{H}_k(z),\mathcal{H}_k(w)\rangle|_{z=w}=d_z\eta_k(T_R)(z,w)|_{z=w}\) with \(\eta=|\chi|^2\) we immediately find that \((z,k)\mapsto(\mathcal{F}_k^*\alpha_{N_k})_z\) is smooth. With a similar argument we find that \((z,k)\mapsto r_{\beta(k)}(\mathcal{F}_k(z))\) is smooth where \(r_{\beta(k)}\colon \C^{N_k}\to \R\) is defined by \(r_{\beta(k)}=\langle \beta(k),(|z_1|^2,\ldots,|z_{N_k}|^2)\rangle\). Since
	\begin{eqnarray}\label{eq:DecomposePullbackalphaAsQuotient}
		\mathcal{F}_k^*\alpha_{\beta(k)}=\frac{\mathcal{F}_k^*\alpha_{N_k}}{r_{\beta(k)}(\mathcal{F}_k)}
	\end{eqnarray}
it follows from the smoothness of \((x,k)\mapsto \varphi_k(x)\) and the smoothness of \(E\) that  \((x,k)\mapsto (\alpha_{k})_x\) is smooth on \(X\times[k_0,\infty)\).
	  Put \(H_k\colon X\to \C^{N_k}\), \(H_k(x)=(0,\ldots,0,\mathcal{H}_k(E(x,\varphi_k(x))\). From Lemma~\ref{lem:IntegralLemma} we find that \(|H_k|^2=k^{n+2}(C_\chi^{(0)}+O(k^{-1}))\) in \(\mathscr{C}^\infty\)-topology. Possibly increasing \(k_0\) then shows that \(|H_k(x)|^2\neq 0\) for all \(x\in X\) and \(k\geq k_0\).\\
	\underline{Claim (i):}
	Since \(\mathcal{G}\) is independent of \(k\), from the properties of \(\{\varphi_k\}_{k\geq k_0}\) and \(\alpha_{\beta(k)}\)  using~\eqref{eq:DecomposePullbackalphaAsQuotient} we immediately observe that 
	\[\alpha^k=G_k^*\alpha_{\beta(k)}=H_k^*\alpha_{\beta(k)}+O(k^{-\infty})\]
	in \(\mathscr{C}^\infty\)-topology.
	Define (see~\eqref{eq:DefHkGeneral})  \(b_k(x)=\sum_{j=1}^{\infty}k^{-1}\lambda_j|\chi_{k}(\lambda_j)|^2|f_j(E(x,\varphi_k))|^2\), \(x\in X\), and put \(\alpha_{N_k}=\frac{1}{2i}\sum_{j=1}^{N_k}  \overline{z}_jdz_j-z_jd\overline{z}_j\). We have \(H_k^*\alpha_{\beta(k)}=(kb_k)^{-1}H_k^*\alpha_{N_k}\). 
	Applying Lemma~\ref{lem:ConsequenceOfHM} to the function \(t\mapsto \sqrt{t}|\chi(t)|^2\) and using  Lemma~\ref{lem:IntegralLemma} shows that \(k^{-n-2}b_k(x)=C_{\chi}^{(1)}+O(k^{-1})\) in \(\mathcal{C}^\infty\)-topology. Furthermore, we find
	\[H_k^*\alpha_{N_k}=\kappa_{\varphi_k}^*\operatorname{Im}d_{(x,s)}\langle \mathcal{H}_k(E (x,s)),\mathcal{H}_k(E(x',s'))\rangle|_{(x,s)=(x',s')}\]
	where \(\kappa_{\varphi_k}\colon X\to X\times[0,c)\), \(\kappa_{\varphi_k}(x)=(x,\varphi_k)\). Now let \(V\) be a smooth vector field on \(X\). We have that the vector field \((\kappa_{\varphi_k})_*V\) on \(X\times [0,c)\) can be written as  \((\kappa_{\varphi_k})_*V=V+(V\varphi_k)\frac{\partial}{\partial s}\). Then, from the properties of \(\{\varphi_k\}_{k\geq k_0}\), Lemma~\ref{lem:ConsequenceOfHM} and Lemma~\ref{lem:IntegralLemma} we find
	\[k^{-n-3}(H_k^*\alpha_{N_k})(V)=C_{\chi}^{(1)}(-i)\partial\rho(V+0\frac{\partial}{\partial s})+O(k^{-1})=C_{\chi}^{(1)}\alpha(V)+O(k^{-1})\]
	in \(\mathcal{C}^\infty\)-topology.  We conclude that \(H_k^*\alpha_{N_k}=k^{n+3}C_{\chi}^{(1)}\alpha+O(k^{-1})\) in \(\mathcal{C}^\infty\)-topology. The claim follows.\\
	\underline{Claim (ii):} 	Since \(\mathcal{G}\) is independent of \(k\), from the properties of \(\{\varphi_k\}_{k\geq k_0}\) and \(\mathcal{T}_{\beta(k)}\) we find that  
	\[|k^{-1}\mathcal{T}_{\beta(k)}\circ G_k|^2=\frac{1}{(k-d)k^{n+3}C_\chi^{(0)}}|\mathcal{T}_{\beta(k)}\circ H_k|^2+O(k^{-\infty})\]
	in \(\mathscr{C}^\infty\)-topology. Define \(b_k=|\mathcal{T}_{\beta(k)}\circ H_k|^2\). We find
	\[b_k(x)=\sum_{j=1}^{\infty}|\chi_k(\lambda_j)|^2\lambda_j^2|f_j(E(x,\varphi_k(x)))|^2.\]
	Then applying Lemma~\ref{lem:ConsequenceOfHM} to the function \(t\mapsto t\chi(t)\) and using Lemma~\ref{lem:IntegralLemma} shows that \(b_k=k^{n+4}(C^{(2)}_\chi+O(k^{-1})) \) in  \(\mathscr{C}^\infty\)-topology. The claim follows.\\
	 \underline{Claim (iii):} 	Since \(\mathcal{G}\) is independent of \(k\), from the properties of \(\{\varphi_k\}_{k\geq k_0}\) it follows that \(|(G_k)_*\mathcal{T}|^2=\frac{1}{(k-d)k^{n+1}C_\chi^{(0)}}|(H_k)_*\mathcal{T}|^2+O(k^{-\infty})\) in  \(\mathscr{C}^\infty\)-topology. We find
	 \[|(H_k)_*\mathcal{T}|^2(x)=d_{(x,s)}\otimes d_{(x',s')}\langle \mathcal{H}_k(E (x,s)),\mathcal{H}_k(E(x',s'))\rangle|_{(x,s)=(x',s')=(x,\varphi_k(x))}((\kappa_{\varphi_k})_*\mathcal{T}\otimes (\kappa_{\varphi_k})_*\mathcal{T}).\]
	 with \(\kappa_{\varphi_k}\colon X\to X\times[0,c)\), \(\kappa_{\varphi_k}(x)=(x,\varphi_k)\). We can write \((\kappa_{\varphi_k})_*\mathcal{T}=\mathcal{T}+(\mathcal{T}\varphi_k)\frac{\partial}{\partial s}\). Then it follows from Lemma~\ref{lem:ConsequenceOfHM} and Lemma~\ref{lem:IntegralLemma} that
	 \begin{eqnarray*}
	 	k^{-n-4}|(H_k)_*\mathcal{T}|^2&=&-C_\chi^{(2)}\partial\rho(\mathcal{T}+0\frac{\partial}{\partial s})\cdot\overline{\partial}\rho(\mathcal{T}+0\frac{\partial}{\partial s})+O(k^{-1})\\
	 	&=& C_\chi^{(2)}\alpha(\mathcal{T})\alpha(\mathcal{T})+O(k^{-1})=C_\chi^{(2)}+O(k^{-1})
	 \end{eqnarray*}
	 in \(\mathscr{C}^\infty\)-topology. The claim follows.\\
	    \underline{Claim (iv):} We have \[|k^{-1}(G_k)_*\mathcal{T}-k^{-1}\mathcal{T}_{\beta(k)}\circ G_k|^2=|k^{-1}(G_k)_*\mathcal{T}|^2-|k^{-1}\mathcal{T}_{\beta(k)}\circ G_k|^2-2\operatorname{Re}\langle k^{-1}(G_k)_*\mathcal{T} , k^{-1}\mathcal{T}_{\beta(k)}\circ G_k\rangle.\]
	    Using Claim (ii) and Claim (iii) it is enough to show that
	    \[\langle k^{-1}(G_k)_*\mathcal{T} , k^{-1}\mathcal{T}_{\beta(k)}\circ G_k\rangle=C^{(2)}_\chi/C^{(0)}_\chi + O(k^{-1})\]
	    in \(\mathscr{C}^\infty\)-topology. 
	    	Since \(\mathcal{G}\) is independent of \(k\), from the properties of \(\{\varphi_k\}_{k\geq k_0}\) it follows that
	    	\[\langle k^{-1}(G_k)_*\mathcal{T} , k^{-1}\mathcal{T}_{\beta(k)}\circ G_k\rangle= \frac{1}{(k-d)k^{n+3}C_\chi^{(0)}}\langle (H_k)_*\mathcal{T} , \mathcal{T}_{\beta(k)}\circ H_k\rangle+ O(k^{-\infty})\]
	    	in \(\mathscr{C}^\infty\)-topology. As above, applying Lemma~\ref{lem:ConsequenceOfHM} to the function \(t\mapsto \sqrt{t}|\chi(t)|^2\) and using Lemma~\ref{lem:IntegralLemma}, we find
	    	\[\langle (H_k)_*\mathcal{T} , \mathcal{T}_{\beta(k)}\circ H_k\rangle=k^{n+4}C_\chi^{(2)}+O(k^{-1})\]
	    	in \(\mathscr{C}^\infty\)-topology. The claim follows.  \\
	    \underline{Claim (v):} Since \(\mathcal{G}\) is independent of \(k\) and from the properties of \(\{\varphi_k\}_{k\geq k_0}\), \(\{Y_k\}_{k\geq k_0}\),  \(\{Z_k\}_{k\geq k_0}\) it is enough to prove the statement for \(k^{-n-4}\langle (H_k)_*Y_k,(H_k)_*Z_k\rangle\). We have that \(\langle (H_k)_*Y_k,(H_k)_*Z_k\rangle(x)=\)
	    \begin{eqnarray*}
	    d_{(x,s)}\otimes d_{(x',s')}\langle \mathcal{H}_k(E (x,s)),\mathcal{H}_k(E(x',s'))\rangle|_{(x,s)=(x',s')=(x,\varphi_k(x))}((\kappa_{\varphi_k})_*Y_k\otimes (\kappa_{\varphi_k})_*Z_k).
	    \end{eqnarray*}
	    with \(\kappa_{\varphi_k}\colon X\to X\times[0,c)\), \(\kappa_{\varphi_k}(x)=(x,\varphi_k(x))\). Since \((\kappa_{\varphi_k})_*Y_k=Y_k+(Y_k\varphi_k)\frac{\partial}{\partial s}\), \((\kappa_{\varphi_k})_*Z_k=Z_k+(Z_k\varphi_k)\frac{\partial}{\partial s}\) and \(Y_k=O(k^{-1}),\, Z_k=O(1)\) it follows from Lemma~\ref{lem:ConsequenceOfHM}  and Lemma~\ref{lem:IntegralLemma} that \(k^{-n-4}\langle (H_k)_*Y_k,(H_k)_*Z_k\rangle=0+O(k^{-1})\) in \(\mathscr{C}^\infty\)-topology. That \(\langle k^{-1}(G_k)_*Y_k,k^{-1}\mathcal{T}_{\beta(k)}\circ G_k\rangle=O(k^{-1})\)  in \(\mathscr{C}^\infty\)-topology follows analogously. 
\end{proof}

\begin{proof}[\textbf{Proof of Theorem~\ref{thm:MainThmIntroGeneral}}]
	Since \((X,T^{1,0}X)\) is CR embeddable into the complex Euclidean space it follows from Lemma~\ref{lem:CharacterizationEmbeddable} that there exists a complex manifold \(M'\) of complex dimension \(\dim_\C M'=n+1\) such that \((X,T^{1,0}X)\) is CR isomorphic to the boundary of a smoothly bounded strictly pseudoconvex domain \(M\subset\subset M'\). Hence it is enough to consider the case \(X=bM\) for \(M',M, \alpha,\mathcal{T}\) as introduced in the beginning of Section~\ref{sec:FunCalcOnSpscDom}.\\ 
	  There exist \(N\in \N\) and a CR embedding \(\tilde{G}\colon X\to \C^N\). Extend \(\tilde{G}\) holomorphically to a smooth map \(\mathcal{G}\colon \overline{M}\to \C^N \) which is holomorphic on \(M\) (see Kohn--Rossi~\cite{KR65}, also  Hörmander~\cite{Hoermander_2000}). Since \(d\tilde{G}\) is injective, \(\overline{\partial}\mathcal{G}|_X=0\) and \(\mathcal{G}|_X=\tilde{G}\) we find that there exists an open neighborhood \(U\subset M'\) around \(X\) such that \(\mathcal{G}\) is a smooth embedding of \(\overline{M}\cap U\) which is holomorphic on \(M\cap U\). Let \(\mathcal{F}_k\colon \overline{M}\to \C^{\hat{N}_k+N}\) be the map defined in~\ref{eq:DefMapFkGeneral} with respect to this \(\mathcal{G}\).  We choose the constant \(d\) in the definition of \(\mathcal{F}_k\) large enough so that Proposition~\ref{pro:PropertiesFkGeneral}~(vi) holds. It follows that there is \(k_1>0\) such that for all \(k\geq k_1\) the map \(\mathcal{F}_k\colon \overline{M}\to \C^{\hat{N}_k+N}\) is well defined and defines a smooth embedding of \(\overline{M}\cap U\) into \(\C^{N+N_k}\) which is holomorphic on \(M\cap U\) . With \(N_k=\hat{N}_k+N\) we have \(N_k=O(k^{n+1})\) by Lemma~\ref{lem:DimensionGrowGeneral}. Let \(V\subset M'\) be an open neighborhood around \(X=bM\),  \(c>0\) and \(\hat{E}\colon X\times(-c,c)\to V\) a diffeomorphism as in Lemma~\ref{lem:ConsequenceOfHMGoodE} and \(E=\hat{E}|_{X\times[0,c)}\) its restriction. Without loss of generality we can assume that \(V\subset\subset U\) holds. Then \(\hat{E}\) induces a complex structure \(T^{1,0}Y\) on \(Y:=X\times(-c,c)\). For each smooth function \(\varphi\colon X\to (-c,c)\) consider the graph \(X_\varphi:=\{(x,s)\in X\times (-c,c)\colon s=\varphi(x)\}\) over \(X\) with induced CR structure given by \(T^{1,0}X_\varphi=\C TX_\varphi\cap T^{1,0}Y\). We have that the projection \(\operatorname{pr}(x,s)= x\) restricts to a diffeomorphism between \(X_\varphi\) and \(X\).  Let \(\iota_\varphi\colon X_\varphi\to Y\) denote the inclusion and \(\kappa_\varphi\colon X\to X_\varphi\) the diffeomorphism \(\kappa_\varphi(x)=(x,\varphi(x)) \).  Let \(\{\varphi_k\}_{k\geq k_2}\) be the family of smooth functions from Theorem~\ref{thm:ConstructionPhiGeneral} where \(k_2\geq k_1\). Define \(F_k=\mathcal{F}_k\circ E\). Since \(\varphi_k>0\) it follows from the considerations above and the properties of \(\varphi_k\), \(\mathcal{F}_k\colon X\to \C^{N_k}\), \(\hat{E}\) and \(E\) that
	  \begin{itemize}
	  	\item[-] \(\kappa_0\) is a CR diffeomorphism between \((X,T^{1,0}X)\) and \((X_0,T^{1,0}X_0)\), 
	  	\item[-] \(F_k\) is a smooth embedding of \(X\times[0,c)\) into \(\C^{N_k}\) for all \(k\geq k_2\)
	  	\item[-] \(F_k\) is holomorphic on \(X\times (0,c)\) for all \(k\geq k_2\),
	  	\item[-] \(\|\varphi_k\|_{\mathscr{C}^m(X)}=O(k^{-2})\) for all \(m\in \N\),
	  	\item[-] \(N_k=O(k^{n+1})\),
	  	\item[-]  \((x,k)\mapsto \varphi_k(x)\) is smooth \(X\times[k_0,\infty)\),
	  	\item[-] \(\frac{\partial \varphi_k(x)}{\partial k}\leq -k^{-4}\) for all \(x\in X\) and \(k\geq k_0\).
	  \end{itemize}
	  For each \(k\geq k_2\) define \(\beta(k)\in \R^{N_k}_+\) by
	  \[\beta(k)=(\delta_1k,\ldots,\delta_1k, \lambda_{I^-_k},\ldots,\lambda_{I^+_k})\] with \(I^-_k,I^+_k\) as in~\eqref{eq:DefHkGeneral} and put \(\gamma_k=\iota_{\varphi_k}^*F_k^*\alpha_{\beta(k)}\). We have to show that \((X_{\varphi_k},T^{1,0}X_{\varphi_k},\gamma_k )\) is a real analytic pseudohermitian manifold with \(F_k(X_{\varphi_k})\subset \mathbb{S}^{2N_k-1}\) for all sufficiently large \(k\). Since \(X_{\varphi_k}\) is a smooth real hypersurface of \(Y\) it follows from the definition of \(T^{1,0}X_{\varphi_k}\) and the properties of \(\varphi_k\) that \((X_{\varphi_k},T^{1,0}X_{\varphi_k})\) is a (codimension one) CR manifold. Hence \((\mathcal{F}_k(X_{\varphi_k}),(\mathcal{F}_k)_*T^{1,0}X_{\varphi_k})\) is a CR submanifold of \(\C^{N_k}\)  with \(\mathcal{F}_k(X_{\varphi_k})\subset \mathbb{S}^{2N_k-1}\). It follows from Lemma~\ref{lem:CRsubmanifoldsOfSpheresAreStrictlyPseudoconvex} that \((F_k(X_{\varphi_k}),(\mathcal{F}_k)_*T^{1,0}X_{\varphi_k})\) is strictly pseudoconvex and that \(\alpha_{\beta(k)}\) defines a pseudohermitian structure on \((X_{\varphi_k},T^{1,0}X_{\varphi_k})\). As a conclusion we have that \((X_{\varphi_k},T^{1,0}X_{\varphi_k},\gamma_k )\) is a pseudohermitian submanifold.\\
	   Note that the complex structure \(T^{1,0}Y\) on \(Y\) automatically fixes a real analytic structure on \(Y\). In order to show that \((X_{\varphi_k},T^{1,0}X_{\varphi_k})\) is a real analytic CR submanifold of \(Y\) it is enough to show that \(\nu_k\colon X\times (0,c)\to \R\), \(\nu_k(x,s)=|F_k(x,s)|^2-1\), is a real analytic defining function for \(X_{\varphi_k}\) near \(X_{\varphi_k}\).   Since \(\mathcal{F}_k\circ E\) defines a holomorphic embedding of \(X\times(0,c)\) into \(\C^{N_k}\) we have that \(\nu_k\) is real analytic. 
	   Furthermore, we have \(\nu_k(x,s)=0\) for all \((x,s)\in X_{\varphi_k}\). Since \(\|\varphi_k\|_{\mathscr{C}^0(X)}=O(k^{-2})\) it follows from Proposition~\ref{pro:PropertiesFkGeneral} (since \(\nu_k=h_k-1\)) that we can choose \(k_3\geq k_2\) large enough such that \(\frac{\partial \nu_k}{\partial s}(x,\varphi_k(x))\neq 0\) for all \(x\in X\) and  \(k\geq k_3\).  Hence, we find \(d\nu_k\neq 0\) on \(X_{\varphi_k}\) for all \(k\geq k_0\).  Since \(F_k\) is holomorphic in a neighborhood of \(X_{\varphi_k}\) and \(\alpha_{\beta(k)}\) is real analytic on \(C^{N_k}\setminus\{0\}\) we find that \(\gamma_k\) is real analytic on \(X_{\varphi_k}\).\\
	  Now we consider the CR structure \(\mathcal{V}(\varphi)=\operatorname{pr}_*T^{1,0}X_\varphi=(\kappa_\varphi)^{-1}_*T^{1,0}X_\varphi\) on \(X\). By the properties of \(\hat{E}\) we have that \(\mathcal{V}(0)=T^{1,0}X\). For \(k\geq k_3\) put \(\mathcal{V}^k:=\mathcal{V}(\varphi_k)\) and \(\alpha_k:=\kappa^*_{\varphi_k}\gamma_k\). From the construction it follows that \(\kappa_{\varphi_k}\) is a CR diffeomorphism between \((X,\mathcal{V}^k)\) and \((X_{\varphi_k},T^{1,0}X_{\varphi_{k}})\) with \(\alpha^k=\kappa^*_{\varphi_k}\gamma_k\). Hence \((X,\mathcal{V}^k, \alpha^k)\) is pseudohermitian manifold. 
	  From Lemma~\ref{lem:CRStructureConvergenceGeneral} we find that \(\mathcal{V}^k\to T^{1,0}X\) in \(\mathscr{C}^\infty\)-topology for \(k\to\infty\) and that \(\mathcal{V}^k\) depends smoothly on \(k\).\\
	  For \(k\geq k_3\) define \(G_k\colon X\to \C^{N_k}\), \(G_k=F_k\circ \kappa_{\varphi_k}\). From the construction we have that \(G_k\) is a CR embedding of \((X,\mathcal{V}^k)\) into \(\C^{N_k}\) with \(G_k(X)\subset \mathbb{S}^{2N_k-1}\),  \(\alpha^k=G_k^*\alpha_{\beta(k)}\). 
	  From Lemma~\ref{lem:ConvergenceOfAlphaT} it follows that \(\alpha_k=\alpha+O(k^{-1})\) in \(\mathscr{C}^\infty\)-topology. 
	  Summing up, we have shown so far that for each \(k\geq k_3\) we have that  \((X,\mathcal{V}^k,\alpha_k)\) is a pseudohermitian manifold and \(G_k\colon X\to \C^{N_k}\) defines an isomorphism to a real analytic pseudohermitian submanifold of the \(\beta(k)\)-sphere. Furthermore, we have \(\alpha^k=\alpha+O(k^{-1})\) in \(\mathscr{C}^\infty\)-topology and \(\alpha_k\) depends smoothly on \(k\).  Let \(\mathcal{T}^k\) denote the Reeb vector field associated to \(\alpha^{k}\). We note that \(\mathcal{T}^k\) is uniquely determined by
	  \[\mathcal{T}^k\lrcorner\alpha^k\equiv1,\,\,\text{ and } \mathcal{T}^k\lrcorner d\alpha^k\equiv0.\] 
	   This defines a pointwise  non-degenerated system of linear equations since \(d\alpha_k\) is non-degenerated on \(\operatorname{Re}\mathcal{V}^k\). Then with \(\alpha^k=\alpha+O(k^{-1})\) in \(\mathscr{C}^\infty\)-topology using Lemma~\ref{lem:SmoothInverseGeneral} below we find that \(\mathcal{T}^k=\mathcal{T}+O(k^{-1})\)  in \(\mathscr{C}^\infty\)-topology and \(\mathcal{T}^k\) depends smoothly on \(k\).\\
	   We have 
	   \[|(G_k)_*\mathcal{T}^k|^2=|(G_k)_*\mathcal{T}^k|^2+|(G_k)_*(\mathcal{T}^k-\mathcal{T})|^2+2\operatorname{Re}\langle (G_k)_*\mathcal{T},(G_k)_*(\mathcal{T}^k-\mathcal{T})\rangle\]
	   and 
	   \[\langle(G_k)_*\mathcal{T}^k,\mathcal{T}_{\beta(k)}\circ G_k\rangle=\langle(G_k)_*(\mathcal{T}^k-\mathcal{T}),\mathcal{T}_{\beta(k)}\circ G_k\rangle + \langle(G_k)_*\mathcal{T},\mathcal{T}_{\beta(k)}\circ G_k\rangle.\]
	   Since \(\mathcal{T}^k-\mathcal{T}=O(k^{-1})\) in \(\mathscr{C}^\infty\)-topology we obtain (iv) in Theorem~\ref{thm:MainThmIntroGeneral} from Lemma~\ref{lem:ConvergenceOfAlphaT}. 
\end{proof}
\begin{remark}\label{rmk:RemarksProofGeneral}.
	\begin{itemize}
		\item [(1)] Since  \(\nu_k\)  constructed in in the proof of Theorem~\ref{thm:MainThmIntroGeneral} is a defining function for \(X_{\varphi_k}\) it follows that there is an open neighborhood \(U_k\subset X\times (0,c)\) of \(X_{\varphi_k}\) such that \(F_k(U_k)\) is an \((n+1)\)-dimensional complex submanifold of \(\C^{N_k}\) and \(F_k(X_{\varphi_k})=G_k(X)\) is the transversal intersection of \(F_k(U_k)\) with \(\mathbb{S}^{2N_k-1}\).
		\item[(2)] Let \(\ell^2(\C)=\{(a_j)_{j\in\N}\colon \sum_{j=1}^\infty|a_j|^2<\infty\}\).
		Given \(N\geq 1\), \(1\leq m^-\leq m^+\), define the projection \(\operatorname{Pr}_{m^-,m^+}\colon \ell^2(\C)\to \C^{N+m^+-m^-+1}\) as in Remark~\ref{rmk:MainThmGeneral}. 
		We define a family of maps 
		\[\{\hat{F}_k\colon X\times[0,c)\to \ell^2(\C)\}_{k\geq k_0}\]
		 by
		\[\hat{F}_k=\sqrt{\frac{k}{k-d}}\left(e^{-k}\mathcal{G}\circ E,\frac{\chi_{k}(\lambda_1)}{\sqrt{k^{n+2}C^{(0)}_\chi}}f_1\circ E,\frac{\chi_{k}(\lambda_2)}{\sqrt{k^{n+2}C^{(0)}_\chi}}f_2\circ E,\ldots\right).\]
		 We have that almost all entries of \(\hat{F}_k\) are zero, \(F_k=\operatorname{Pr}_{I_k^-,I_k^+}\circ \hat{F}_k\) (with \(I_k^-,I_k^+\) as in~\eqref{eq:DefHkGeneral}) and \(|F_k|^2=|\hat{F}_k|^2\). Furthermore, for any entry \((\hat{F}_k)_j\), \(j\in \N\), of \(\hat{F}_k\) we have that \((x,s,k)\mapsto (\hat{F}_k)_j(x,s)\) is smooth on \(X\times [0,c)\times [k_0,\infty)\). Roughly speaking, we have that the maps \(F_k\) and \(G_k\) in Theorem~\ref{thm:MainThmIntroGeneral} are induced by maps which depend smoothly on \(k\). 
	\end{itemize}
\end{remark}
	
 \begin{lemma}\label{lem:SmoothInverseGeneral}
 	Let \(\mathbb{K}\in \{\R,\C\}\), \(U,V\subset \R^n \) be open with \(U\subset\subset V\), \(A\colon V\to \operatorname{Mat}_{n\times n}(\mathbb{K})\) and \(b\colon V\to \mathbb{K}^n\) be smooth maps such that \(A(x)\) is invertible for any \(x\in V\). Given a smooth map \(E\colon V\times \R\to \operatorname{Mat}_{n\times n}( \mathbb{K})\) such that \(x\mapsto E_k(x):=E(x,k)\) is an \(O(k^{-1})\) in \(\mathscr{C}^\infty\)-topology on \(V\) as \(k\to \infty\) there exist \(k_0>0\) and a smooth map \(v\colon U\times (k_0,\infty) \to \mathbb{K}^n\) such that \((A(x)+E_k(x))v(x,k)=b(x)\) for all \(x\in U\), \(k>k_0\) and for \(v_k(x):=v(x,k)\) we have that \(v_k=A^{-1}b+O(k^{-1})\) in \(\mathcal{C}^\infty\)-topology on \(U\) as \(k\to \infty\).
 \end{lemma}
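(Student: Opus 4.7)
The plan is to establish invertibility of $A+E_k$ uniformly in $k$ on $U$ by a Neumann-type argument, then differentiate the resulting explicit formula for $v_k$ to get the asymptotic.

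First, since $A$ is smooth and invertible on $V$, the inverse $A^{-1}\colon V\to \operatorname{Mat}_{n\times n}(\mathbb{K})$ is smooth. As $E_k=O(k^{-1})$ in $\mathscr{C}^\infty$-topology on $V$, so is $A^{-1}E_k$. In particular there exists $k_0$ with $\|A^{-1}E_k\|_{\mathscr{C}^0(\overline{U})}\le 1/2$ for $k\ge k_0$, so $I+A^{-1}(x)E_k(x)$ is invertible for all $(x,k)\in \overline{U}\times(k_0,\infty)$, and thus so is $A(x)+E_k(x)=A(x)(I+A^{-1}(x)E_k(x))$, with $\|(A+E_k)^{-1}\|_{\mathscr{C}^0(\overline{U})}\le 2\|A^{-1}\|_{\mathscr{C}^0(\overline{U})}$. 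Define
$$v(x,k)=(A(x)+E(x,k))^{-1}b(x),\qquad (x,k)\in U\times(k_0,\infty).$$
Since matrix inversion is smooth on $\operatorname{GL}_n(\mathbb{K})$ and $(x,k)\mapsto A(x)+E(x,k)$ takes values in $\operatorname{GL}_n(\mathbb{K})$ on $U\times(k_0,\infty)$, the map $v$ is smooth there and solves $(A+E_k)v_k=b$ by construction.

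For the asymptotic, start from the algebraic identity
$$v_k-A^{-1}b=-(A+E_k)^{-1}E_k A^{-1}b.$$
I would next show by induction on $|\alpha|$ that for every multi-index $\alpha$ there is a constant $K_\alpha$, independent of $k$, with $\|\partial^\alpha (A+E_k)^{-1}\|_{\mathscr{C}^0(\overline{U})}\le K_\alpha$ for all $k\ge k_0$. This uses the standard formula $\partial_j M^{-1}=-M^{-1}(\partial_j M)M^{-1}$ applied iteratively with $M=A+E_k$: each $\partial^\alpha(A+E_k)^{-1}$ is a finite sum of products of factors $(A+E_k)^{-1}$ and $\partial^\beta(A+E_k)$ with $|\beta|\le|\alpha|$. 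The first kind is uniformly bounded by the Neumann estimate; the second kind is uniformly bounded because $A$ is fixed and $E_k=O(1)$ in $\mathscr{C}^\infty$.

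Applying the Leibniz rule to $-(A+E_k)^{-1}E_k A^{-1}b$ then expresses $\partial^\alpha(v_k-A^{-1}b)$ as a finite sum of products in which a factor of the form $\partial^\gamma E_k$ always appears. Since $\partial^\gamma E_k=O(k^{-1})$ uniformly on $\overline{U}$, and all remaining factors are uniformly bounded by the previous step and the smoothness of $A^{-1}b$, we obtain $\|\partial^\alpha(v_k-A^{-1}b)\|_{\mathscr{C}^0(\overline{U})}=O(k^{-1})$, i.e.\ $v_k=A^{-1}b+O(k^{-1})$ in $\mathscr{C}^\infty$-topology on $U$.

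The only genuine technical point is the inductive bound on $\partial^\alpha(A+E_k)^{-1}$; this is entirely formal and routine, so no real obstacle arises. Everything else is immediate from smoothness of matrix inversion and the hypothesis $E_k=O(k^{-1})$ in $\mathscr{C}^\infty$.
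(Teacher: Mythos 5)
Your proof is correct and follows essentially the same route as the paper's: uniform invertibility of $A+E_k$ via a Neumann-series bound, smoothness of $v$ from smoothness of matrix inversion on $\operatorname{GL}_n(\mathbb{K})$, and an induction controlling all $x$-derivatives of the inverse. The only organizational difference is that the paper first reduces to $A\equiv\operatorname{Id}$ and runs its induction to prove $d_x^\alpha(R_k-\operatorname{Id})=O(k^{-1})$ directly, whereas you need only the weaker uniform bound $\partial^\alpha(A+E_k)^{-1}=O(1)$ because the resolvent identity $v_k-A^{-1}b=-(A+E_k)^{-1}E_kA^{-1}b$ already exhibits the $O(k^{-1})$ factor explicitly --- a slightly cleaner piece of bookkeeping, but not a different argument.
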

 \begin{proof}
 	From the implicit function theorem it follows that \(x\mapsto(A(x))^{-1}\) is smooth on \(V\). Since \((A(x)+E_k(x))v(x,k)=b(x)\) if and only if  \((\operatorname{Id}+(A(x))^{-1}E_k(x))v(x,k)=(A(x))^{-1}b(x)\) it is enough to show the statement for the case \(A\equiv \operatorname{Id}\). Let \(W\subset\subset V\) be open with \(U\subset\subset W\). Since \(E_k=O(k^{-1})\) it follows from the continuity of the determinant that there exists \(k_1>0\) such that \(\operatorname{Id}+E_k(x)\) is invertible for all \(k\geq k_1\) and all \(x\in W\). Since \((x,k)\mapsto E(x,k)\) is smooth it follows with \(R(x,k):=(\operatorname{Id}+E_k(x))^{-1}\) that \(R\) is smooth on \(W\times (k_1,\infty)\). Hence \(v\colon W\times  (k_1,\infty)\to \mathbb{K}^n\)  \(v(x,k):=R(x,k)b(x)\) is smooth with    \((\operatorname{Id}+E_k(x))v(x,k)=b(x)\) for all \(x\in W\) and \(k>k_1\). We will now show that for \(R_k(x):=R(x,k)\) we have that \(R_k=\operatorname{Id}+O(k^{-1})\)  in \(\mathscr{C}^\infty\)-topology on \(U\). Then it follows that for \(v_k(x):=v(x,k)\) we have that \(v_k=b+O(k^{-1})\) in \(\mathscr{C}^\infty\)-topology on \(U\). We will show by induction with respect to \(|\alpha|\) that there exists \(k_2>k_1\) such that for any \(\alpha\in \N^{n}_0\) there is \(C_\alpha>0\) with  \(|d_x^{\alpha}(R_k-\operatorname{Id})|\leq C_\alpha k^{-1}\) on \(U\) for all \(k\geq k_2\) . Choose \(k_2>k_1\) such that \(\|E_k(x)\|_{\operatorname{op}}\leq 1/2\) for all \(x\in W\) and \(k\geq k_2\).
 	 Then \(\|(\operatorname{Id}+E_k(x))-\operatorname{Id}\|_{\operatorname{op}}\leq 1/2<1\) and hence \(\|R_k(x)\|_{\operatorname{op}}\leq \frac{1}{1-\|E_k(x)\|_{\operatorname{op}}}\leq 2\) for all \(x\in U\) and \(k\geq k_2\). For \(\alpha=0\) it follows from \(R_k(\operatorname{Id}+E_k)=\operatorname{Id}\) that \(R_k-\operatorname{Id}=-R_kE_k\). We conclude that there exists \(C_0>0\) with  \(|(R_k-\operatorname{Id})|\leq C_0 k^{-1}\) on \(U\) for all \(k\geq k_2\). Now assume that for some \(N\geq 0\) the statement holds for all \(\alpha\in \N_0^n\) with \(|\alpha|\leq N\). Let \(\alpha\in  \N_0^n\) with \(|\alpha|=N+1\) be arbitrary.
 	 From the Leibniz rule we obtain
 	 \[d^\alpha_x (R_k(\operatorname{Id}+E_k))=(d^\alpha_x R_k)(\operatorname{Id}+E_k)+\sum_{|\beta|\leq N}c_{\beta,\alpha}d^{\beta}_xR_k(d_x^{\alpha-\beta}(\operatorname{Id}+E_k))\]
 	 for integer coefficients \(c_{\alpha,\beta}\) independent of \(k\) and \(x\).
 	 Since \(|\alpha|\geq 1\) we find \(d^\alpha_x (R_k(\operatorname{Id}+E_k))=0\) and \(d^\alpha_x R_k=d^\alpha_x (R_k-\operatorname{Id})\). Then, using the induction hypothesis, the assumptions on \(E_k\) and \(\|R_k(x)\|_{\operatorname{op}}\leq 2\) for \(x\in U\) and \(k\geq k_2\), the claim follows.  
 \end{proof}
\begin{definition}\label{def:BundleConvergence}
	 Let \(E\) be a Hermitian vector bundle over a compact manifold \(X\) and let \(\mathcal{V}\) be a complex subbundle in \(E\). Given a family of  complex subbundles \(\{\mathcal{V}^k\}_{k\geq k_0}\) in \(E\) we say that \(\mathcal{V}^k\to \mathcal{V}\) in \(\mathscr{C}^\infty\)-topology if there exist a Hermitian vector bundle \(Q\) over \(X\), a smooth section \(S \in \vbunsec{X,\operatorname{Lin}(E,Q)}\) and for each \(k\geq k_0\)  a smooth section \(S^k\in \vbunsec{X,\operatorname{Lin}(E,Q)}\) such that \(S_x,S^k_x\colon E_x\to Q_x\) are surjective, \(\ker S_x=\mathcal{V}\), \(\ker S^k_x=\mathcal{V}^k\) for all \(x\in X\) and \(S_k\to S\) in \(\mathscr{C}^\infty\)-topology as \(k\to\infty\). We say that the family \(\{\mathcal{V}^k\}_{k\geq k_0}\) is smooth in \(k\) if \((x,k)\mapsto S_x^k\) is smooth.   
\end{definition}
\begin{lemma}\label{lem:CRStructureConvergenceGeneral}
		Let \(X\) be a compact manifold with \(\dim_\R X=2n+1\), \(n\geq 1\), and let \(T^{1,0}Y\) be an almost complex structure on \(Y=X\times(-3c,3c)\) for some \(c>0\).
	Given a smooth function \(\varphi\colon X\to (0,c) \) put \(X_\varphi=\{(x,\varphi(x))\colon x\in X\}\), \(T^{1,0}X_{\varphi}=\C TX_\varphi\cap T^{1,0}Y\), \(\kappa_\varphi\colon X\to X_{\varphi}\), \(\kappa_{\varphi}(x)=(x,\varphi(x))\).  Define \(\mathcal{V}(\varphi)=(\kappa_\varphi)^{-1}_*T^{1,0}X_\varphi\). Now let \(\{\varphi_k\colon X\to (0,c)\}_{k\geq k_1}\) for some \(k_1>0\) be a family of smooth functions such that \((x,k)\mapsto\varphi_k(x)\) is smooth on \(X\times (k_1,\infty)\) and \(\|\varphi_k\|_{\mathscr{C}^m(X)}=O(k^{-1})\) for all \(m\in \N\). There exists \(k_0>k_1\) such that for \(\mathcal{V}^k:=\mathcal{V}(\varphi_k)\)  we have that \(\{\mathcal{V}^k\}_{k> k_0}\) is a smooth family of subbundles in \(\C TX\) with \(\mathcal{V}^k\to \mathcal{V}(0)\) in \(\mathscr{C}^\infty\)-topology. 
\end{lemma}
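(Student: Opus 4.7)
The plan is to realize both $\mathcal{V}^k$ and $\mathcal{V}(0)=T^{1,0}X$ as kernels of explicit bundle morphisms from $\C TX$ to a common Hermitian bundle $Q$, where the morphisms depend smoothly on the $1$-jet of $\varphi_k$, and then to deduce the $\mathscr{C}^\infty$-convergence and smooth $k$-dependence from the analogous properties of $\varphi_k$.

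First I would fix a Hermitian metric on $\C TY$ compatible with the complex structure $T^{1,0}Y$, so that $T^{1,0}Y\perp T^{0,1}Y$, and write $\pi^{0,1}_{(x,s)}\colon \C T_{(x,s)}Y\to T^{0,1}_{(x,s)}Y$ for the resulting orthogonal projection. Set $Q:=\kappa_0^*\bigl(T^{0,1}Y|_{X_0}\bigr)$, a Hermitian vector bundle over $X$ of complex rank $n+1$, and define the reference section $S\in\vbunsec{X,\operatorname{Lin}(\C TX,Q)}$ by $S_x(Z)=\pi^{0,1}_{(x,0)}\bigl((\kappa_0)_*Z\bigr)$. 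A dimension count ($\dim_{\C}\C T_xX=2n+1$, while $\ker\pi^{0,1}_{(x,0)}\cap (\kappa_0)_*\C T_xX$ equals $T^{1,0}X_0$ of complex dimension $n$) shows that $S_x$ is surjective onto $Q_x$ with $\ker S_x=\mathcal{V}(0)_x$.

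For $S^k$ I would exploit the product structure of $Y$. The translation flow $\Phi_s(x,t)=(x,t+s)$ supplies canonical linear isomorphisms $(\Phi_{-s})_*\colon \C T_{(x,s)}Y\to\C T_{(x,0)}Y$, and I define
\begin{equation*}
S^k_x(Z)\;:=\;\pi^{0,1}_{(x,0)}\circ(\Phi_{-\varphi_k(x)})_*\circ\pi^{0,1}_{(x,\varphi_k(x))}\circ(\kappa_{\varphi_k})_*(Z).
\end{equation*}
In local coordinates $(\kappa_{\varphi_k})_*Z=Z+(Z\varphi_k)\partial_s$, so $(\kappa_{\varphi_k})_*\C T_xX=\C T_{(x,\varphi_k(x))}X_{\varphi_k}$, a codimension-one complex subspace of $\C T_{(x,\varphi_k(x))}Y$ which is mapped surjectively onto $T^{0,1}_{(x,\varphi_k(x))}Y$ by $\pi^{0,1}_{(x,\varphi_k(x))}$ with kernel exactly $\mathcal{V}^k_x$. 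The subsequent map $V\mapsto \pi^{0,1}_{(x,0)}\bigl((\Phi_{-\varphi_k(x)})_*V\bigr)$ from $T^{0,1}_{(x,\varphi_k(x))}Y$ to $T^{0,1}_{(x,0)}Y$ specializes to the identity at $\varphi_k=0$, hence, by a uniform continuity argument using compactness of $X$, is invertible once $\|\varphi_k\|_{\mathscr{C}^0(X)}$ is small enough; by the hypothesis $\|\varphi_k\|_{\mathscr{C}^0(X)}=O(k^{-1})$, this is achieved for all $k\geq k_0$ once $k_0$ is sufficiently large. Therefore $S^k_x$ is surjective with $\ker S^k_x=\mathcal{V}^k_x$ for $k\geq k_0$.

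The final step is to verify that $S^k\to S$ in $\mathscr{C}^\infty$-topology and that $(x,k)\mapsto S^k_x$ is smooth on $X\times(k_0,\infty)$. Each factor in the composition defining $S^k$ depends smoothly on the $1$-jet $(\varphi_k,d\varphi_k)$ and on the base point $(x,\varphi_k(x))$; combining this with smoothness of $(x,k)\mapsto\varphi_k(x)$ on $X\times(k_1,\infty)$ gives smoothness of $(x,k)\mapsto S^k_x$, and Taylor expansion of the composition around $\varphi_k=0$ together with $\|\varphi_k\|_{\mathscr{C}^m(X)}=O(k^{-1})$ for every $m\in\N_0$ yields $\|S^k-S\|_{\mathscr{C}^m(X)}=O(k^{-1})$, which a fortiori gives convergence in $\mathscr{C}^\infty$. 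The main technical burden will be the bookkeeping in local charts for the asymptotic estimate: one must track how the family $\pi^{0,1}_{(x,s)}$ depends smoothly on $s$ near $s=0$ (which reduces to smoothness of the complex structure $T^{1,0}Y$) and combine this with the chain rule applied to $(\kappa_{\varphi_k})_*$, but this is a routine computation once the construction above is in place.
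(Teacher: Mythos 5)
Your proposal is correct, and it follows the same basic strategy as the paper's proof: exhibit $\mathcal{V}(0)$ and $\mathcal{V}^k$ as kernels of surjective bundle morphisms built from the projection associated with the splitting $\C TY=T^{1,0}Y\oplus T^{0,1}Y$, evaluated at $(x,0)$ and at $(x,\varphi_k(x))$ respectively, push vectors onto the graph via $Z\mapsto Z+(Z\varphi_k)\frac{\partial}{\partial s}$, and then read off both the smooth $k$-dependence and the $O(k^{-1})$ convergence in every $\mathscr{C}^m$-norm from the smoothness of the projection in $s$ and the hypotheses on $\varphi_k$, exactly in the spirit of Definition~\ref{def:BundleConvergence}.

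The packaging differs in a way worth noting. The paper enlarges the domain to the auxiliary bundle $E=\C TX\times\C\cong\C TY|_{X_0}$, takes $Q=\ker P\times\C$ (so the $ds$-component is recorded separately by the functional $r(V,a)=a$), and uses the composition $(1-P)(1-P^k)$ of the two idempotents; this forces the separate transversality claim $\mathcal{C}\cap\overline{\mathcal{C}^k}=0$, proved by a compactness--contradiction argument, to guarantee that the kernel of $S^k$ is not larger than $\mathcal{V}^k$. You instead keep the domain equal to $\C TX$, take $Q=\kappa_0^*(T^{0,1}Y|_{X_0})$ of rank $n+1$, and splice the fibres at height $\varphi_k(x)$ back to height $0$ using the translation flow of the product $Y=X\times(-3c,3c)$; the role of the paper's transversality claim is then played by the invertibility of $\pi^{0,1}_{(x,0)}\circ(\Phi_{-\varphi_k(x)})_*$ on $T^{0,1}_{(x,\varphi_k(x))}Y$, which you correctly obtain from continuity, the identity at $s=0$, compactness of $X$, and $\|\varphi_k\|_{\mathscr{C}^0(X)}=O(k^{-1})$. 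Your version is somewhat leaner; the one fact you use without comment, namely that $\pi^{0,1}_{(x,\varphi_k(x))}\circ(\kappa_{\varphi_k})_*$ maps $\C T_xX$ \emph{onto} $T^{0,1}_{(x,\varphi_k(x))}Y$ with kernel exactly $\mathcal{V}^k_x$, rests on $\dim_\C\bigl(\C T X_{\varphi_k}\cap T^{1,0}Y\bigr)=n$ for a real hypersurface in an almost complex manifold of complex dimension $n+1$; this is the same dimension count the paper invokes when it says $\mathcal{V}^k$ is an almost CR structure, so it is a legitimate ingredient, but you should state it explicitly in a final write-up. With that addition, and with the local-chart bookkeeping for the $\mathscr{C}^m$ estimates carried out as you indicate, your argument is complete.
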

 \begin{proof}
 	    Consider the bundle \(E=\C TX\times \C\) over \(X\) and choose any Hermitian metric on \(E\). Denote by \(\operatorname{pr}\colon Y\to X\), \((x,s)\mapsto x\), the projection onto the first factor. The isomorphism \((V,a)\mapsto V+a\frac{\partial }{\partial s}\)  between \(\C T_xX\times \C\) and \(\C T_{(x,s)}Y\) defines an isomorphism between the vector bundles \(\operatorname{pr}^*E\) and \(\C TY\)  and leads to an identification of \(E\) with \(\C TY|_{X\times\{0\}}\). Since \(T^{1,0}Y\) is an almost complex structure we have \(\C TY=T^{1,0}Y\oplus T^{0,1}Y\) where \(T^{0,1}Y=\overline{T^{1,0}Y}\) and hence \(\operatorname{rank}_\C T^{1,0}Y=\operatorname{rank}_\C T^{0,1}Y=n+1\). Let \(\hat{P}\in \vbunsec{Y,\operatorname{End}(\C TY)}\) denote the projection onto \(T^{1,0}Y\) with respect to the decomposition \(\C TY=T^{1,0}Y\oplus T^{0,1}Y\). In particular, we have for any \((x,s)\in Y\) that \(\hat{P}_{(x,s)}\hat{P}_{(x,s)}=\hat{P}_{(x,s)}\) with \(\operatorname{ran}\hat{P}_{(x,s)}=T^{1,0}_{(x,s)}Y\) and \(\ker\hat{P}_{(x,s)}=T^{0,1}_{(x,s)}Y\).   Define \(P,P^k\in \vbunsec{X,\operatorname{End}(E)}\) as follows: For \(x\in X\), \((V,a)\in E_x \) and \(k\geq k_1\) put \(P_x(V,a)=(W,b)\) where \(W+b\frac{\partial}{\partial s}=\hat{P}_{x,0}(V+a\frac{\partial}{\partial s})\) and \(P^k_x(V,a)=(W,b)\) where \(W+(b+V\varphi_k)\frac{\partial}{\partial s}=\hat{P}_{(x,\varphi_k(x))}(V+(a+V\varphi_k)\frac{\partial}{\partial s})\). From the construction it follows that \(P_xP_x=P_x\) and \(P^k_xP^k_x=P^k_x\) for all \(x\in X\) and \(k>k_1\).\\
 	    \textbf{Claim (i):} We have that \((x,k)\to P_k\) is smooth on \(X\times (k_1,\infty)\) and \(P_k=P+O(k^{-1})\) in \(\mathscr{C}^\infty\)-topology.\\
 	    To see this, we take a point \(x_0\) and choose a local frame \(V_1,\ldots,V_{2n+1}\) of \(\C TX\) around \(x_0\). Then \(V_1,\ldots,V_{2n+1},\frac{\partial}{\partial s}\) defines a frame for \(\C TY\) in an open neighborhood around \((x_0,0)\) in \(Y\).  With respect to this frame we can write \(\hat{P}_{(x,s)}\) as a matrix \((\hat{P}_{l,j}(x,s))\). From the smoothness of \(\hat{P}_{l,j}\) and the properties of \(\varphi_k\) we find \(\hat{P}_{l,j}(x,\varphi_k(x))=\hat{P}_{l,j}(x,0)+O(k^{-1})\) in \(\mathscr{C}^\infty\)-topology on \(X\) and that \((x,k)\mapsto \hat{P}_{l,j}(x,\varphi_k(x))\) is smooth on \(X\times (k_1,\infty)\). Writing \(P^k_x\) as a matrix \((P^k_{l,j}(x))\) with respect to the local frame \((V_1,0),\ldots,(V_{2n+1},0),(0,1)\) of \(E\) around \(x_0\)  we find  
 	    \[(P^k_{l,j}(x))=   \begin{pmatrix} \operatorname{Id}& 0\\
 	    	-(V_1\varphi_k,\ldots,V_{2n+1}\varphi_k)& 1\end{pmatrix}(\hat{P}_{l,j}(x,\varphi_k(x)))\begin{pmatrix} \operatorname{Id}& 0\\
 	     (V_1\varphi_k,\ldots,V_{2n+1}\varphi_k)& 1\end{pmatrix}.\]
      Then the claim follows from the properties of \(\varphi_k\) and the compactness of \(X\).\\
      Consider the section \(r\in \vbunsec{X,\operatorname{Lin}(E,\C)}\),  defined by \(r_x(V,a)=a\) for \(x\in X\), \((V,a)\in E_x\).
      We then consider  \(\C TX\) as the  subbundle \(\{(V,a)\in E\colon a=0\}=\ker r\) of \(E\) using the identification \(V\mapsto (V,0)\).\\
      \textbf{Claim (ii):} We have  \(\mathcal{V}(0)=\ker (1-P)\cap \ker r\) and for each \(k>k_1\) we have \(\mathcal{V}^k=\ker (1-P^k)\cap \ker r\).\\
      Let \(x\in X\) and \((V,a)\in E_x\) be arbitrary. We find \((V,a)\in \mathcal{V}(0)_x\) if and only if \(V+a\frac{\partial}{\partial s}\in T^{1,0}_{(x,0)}Y\) and \(V+a\frac{\partial}{\partial s}\in \C T_xX\). This is equivalent to \((1-\hat{P}_{(x,0)}) (V+a\frac{\partial}{\partial s})=0\) and \(a=0\) which is true if and only if  \((1-P_x)(V,a)=0\) and \(r(V,a)=0\). Fix \(k>k_1\). We find \((V,a)\in \mathcal{V}^k_x\) if and only if \(V+(a+V\varphi_k)\frac{\partial}{\partial s}\in T^{1,0}_{(x,\varphi_k(x))}Y\) and \(V+(a+V\varphi_k)\frac{\partial}{\partial s}\in \C TX_{\varphi_k}\). Note that a vector \(W\in \C T_{(x,\varphi_k(x))}Y\) is in \(\C T_{(x,\varphi_k(x))}X_{\varphi_k}\) if and only if \(W(s-\varphi_k)=0\). Moreover, a vector \(W\in \C T_{(x,\varphi_k(x))}Y\) is in \(\C T^{1,0}_{(x,\varphi_k(x))}Y\) if and only if \((1-\hat{P}_{(x,\varphi_k(x))})W=0\). We conclude \((V,a)\in \mathcal{V}^k_x\) if and only if \((1-P^k_x)(V,a)=0\)  and \(r(V,a)=0\).\\
      Put \(\mathcal{C}=\ker (1-P)\), \(\mathcal{C}^k=\ker (1-P^k)\).\\
       \textbf{Claim (iii)}: We have \(\overline{\mathcal{C}}=\ker P\) and \(\overline{\mathcal{C}^k}=\ker P^k\) for all \(k> k_1\)\\
       Let \(x\in X\) and \((V,a)\in E_x\) be arbitrary. We have \(P^k_x(V,a)=0\) if and only if \(\hat{P}_{(x,\varphi_k(x))}(V+(a+V\varphi_k)\frac{\partial}{\partial s})=0\) which is equivalent to \(V+(a+V\varphi_k)\frac{\partial}{\partial s}\in T^{0,1}_{(x,\varphi_k(x))}Y\). Hence \(P^k(V,a)=0\) if and only if \(\overline{V}+(\overline{a}+\overline{V}\varphi_k)\frac{\partial}{\partial s}\in T^{1,0}_{(x,\varphi_k(x))}Y\) which means \((1-P^k_x)(\overline{V},\overline{a})\)=0 and is equivalent to \((\overline{V},\overline{a})\in \mathcal{C}^k\). That \(\overline{\mathcal{C}}=\ker P\) follows analogously.\\
      \textbf{Claim (iv)}: There exists \(k_2>k_1\) such that \(\mathcal{C}\cap\overline{\mathcal{C}^k}=0\) for all \(k\geq k_2\).\\
      Assume that for any \(j\in \N\) there exist \(l_j\in \R\), \(V_j\in \mathcal{C}\cap\overline{\mathcal{C}^{l_j}}\) such that \(k_1<l_1<l_2<\ldots\) with \(\lim_{j\to\infty}l_j=\infty\) and \(|V_j|=1\) for all \(j\in \N\). Since the unit sphere bundle of \(E\) is compact we can assume (by possibly passing to a subsequence) that \(\lim_{j\to \infty}V_j=V\in E\) with \(|V|=1\). Since \(V_j\in\overline{\mathcal{C}^{l_j}}\) we have \(V_j=(1-P_{l_j})V_j=(1-P)V_j+R_jV_j\) with \(R_j=O({l_j}^{-1})\). Since \(V_j\in \mathcal{C}\) we find \(V_j=PV_j\) and hence
      with \(P(1-P)=0\) that \(V_j=R_jV_j\) for all \(j\in \N\). It follows that \(0=\lim_{j\to\infty}|V_j|=|V|=1\) which is a contradiction.\\
      \textbf{Claim (v)}: There exists \(k_0>k_1\) such that \(\{\mathcal{V}^k\}_{k> k_0}\) is a smooth family of subbundles in \(E\) with \(\mathcal{V}^k\to \mathcal{V}(0)\) in \(\mathscr{C}^\infty\)-topology.\\ 
      Put \(Q=\overline{\mathcal{C}}\times \C\). Since \(\overline{\mathcal{C}}=\ker P\) it follows from the properties of \(P\) that \(Q\) defines a Hermitian vector bundle  over \(X\). Let \(k>k_1\) be arbitrary. Define \(S,S^k\in \vbunsec{X,\operatorname{Lin}(E,Q)}\), \(S(V,a)=((1-P)(V,a),r(V,a))\) and \(S^k(V,a)=((1-P)(1-P_k)(V,a),r(V,a))\). Recall that \(r(V,a)=a\). From Claim~(iii) it follows that \(S,S_k\) are well defined. 
      From Claim~(ii) it follows that  \(\ker S= \mathcal{V}(0)\). Let \(x\in X\) be arbitrary. From Claim~(iii), using \(P_xP_x=P_x\) it follows that \(E_x=\mathcal{C}_x\oplus\overline{\mathcal{C}}_x\) and hence that \(\dim_\C\overline{\mathcal{C}}_x=n+1\). Since \(\mathcal{V}(0)\) is an almost CR structure on \(X\) we have \(\dim_\C \mathcal{V}(0)_x=n\). Using \(\ker S_x= \mathcal{V}(0)_x\) it follows from the dimension formula that \(S_x\) is surjective.
      It follows from Claim~(vi) that we can choose \(k_2>k_1\) large enough such that   \(\mathcal{C}\cap\overline{\mathcal{C}^k}=0\) for all \(k\geq k_2\). Let \(x\in X\) and \(k\geq k_2\) be arbitrary. From Claim~(ii) we find \(\mathcal{V}^k_x\subset \ker S^k_x\). Given \((V,a)\in \ker S^k_x\) it follows \((1-P^k_x)(V,a)\in\ker (1-P_x)\) which implies \((1-P^k_x)(V,a)\in\overline{ \mathcal{C}^k}_x \cap\mathcal{C}_x\) and hence \((1-P^k_x)(V,a)=0\). We conclude that \((V,a)\in \ker S^k_x\) implies that \((V,a)\in \ker (1-P^k_x)\cap \ker r=\mathcal{V}^k\). As a conclusion \(\mathcal{V}^k_x= \ker S^k_x\).  Since \(\mathcal{V}^k\) is an almost CR structure we have  \(\dim_\C \ker S^k_x=\dim_\C \mathcal{V}^k_x=n\). By the dimension formula it follows that \(S_x^k\) is surjective. From Claim~(i), using \((1-P)(1-P)=1-P\), it follows that \((x,k)\mapsto S^k_x\) is smooth and \(S^k\to S\)  in \(\mathscr{C}^\infty\)-topology as \(k\to\infty\).  The claim follows.\\
      We have that \(\C TX\) is identified with the subbundle \(\ker r\subset E\) via \(V\mapsto (V,0)\). This finishes the proof.
 	     
 \end{proof}

\bibliographystyle{plain}

\begin{bibdiv}
	\begin{biblist}
		
		\bib{Amar84}{article}{
			author={Amar, Eric},
			title={Cohomologie Complexe Et Applications},
			date={1984},
			journal={Journal of the London Mathematical Society},
			volume={s2-29},
			number={1},
			pages={127\ndash 140},
		}
		
		\bib{AS70}{article}{
			author={Andreotti, Aldo},
			author={Siu, Yum-Tong},
			title={Projective embedding of pseudoconcave spaces},
			date={1970},
			ISSN={0391-173X},
			journal={Ann. Scuola Norm. Sup. Pisa Cl. Sci. (3)},
			volume={24},
			pages={231\ndash 278},
			review={\MR{265633}},
		}
		
		\bib{BRT85}{article}{
			author={Baouendi, M. S.},
			author={Rothschild, Linda Preiss},
			author={Treves, E.},
			title={CR structures with group action and extendability of CR functions},
			date={1985},
			ISSN={1432-1297},
			journal={Inventiones mathematicae},
			volume={82},
			number={2},
			pages={359\ndash 396},
			url={https://doi.org/10.1007/BF01388808},
		}
		
		\bib{Berg33}{article}{
			title = {Über die Kernfunktion eines Bereiches und ihr Verhalten am Rande. I.},
			author = {Bergmann, Stefan},
			pages = {1--42},
			volume = {169},
			journal = {Journal für die reine und angewandte Mathematik},
			doi = {doi:10.1515/crll.1933.169.1},
			url = {https://doi.org/10.1515/crll.1933.169.1},
			year = {1933},
			
		}
		
		\bib{BBS08}{article}{
			title = {A direct approach to Bergman kernel asymptotics for positive line bundles},
			author = {Berman, Robert},
			author = {Berndtsson, Bo},
			author = {Sjöstrand, Johannes},
			pages = {197--217},
			ISSN={1871-2487},
			volume = {46},
			number = {2},
			journal = {Arkiv för Matematik},
			doi = {doi:10.1007/s11512-008-0077-x},
			url = {https://doi.org/10.1007/s11512-008-0077-x},
			year = {2008},
		}

		\bib{Bch96}{incollection}{
			author={Bouche, Thierry},
			title={Asymptotic results for {H}ermitian line bundles over complex
				manifolds: the heat kernel approach},
			date={1996},
			booktitle={Higher-dimensional complex varieties ({T}rento, 1994)},
			publisher={de Gruyter, Berlin},
			pages={67\ndash 81},
			review={\MR{1463174}},
		}

		
		\bib{Bou75}{incollection}{
			author={Boutet~de Monvel, Louis},
			title={Int\'{e}gration des \'{e}quations de {C}auchy-{R}iemann induites
				formelles},
			date={1975},
			booktitle={S\'{e}minaire {G}oulaouic-{L}ions-{S}chwartz 1974--1975:
				\'{E}quations aux d\'{e}riv\'{e}es partielles lin\'{e}aires et non
				lin\'{e}aires},
			publisher={\'{E}cole Polytech., Centre de Math., Paris},
			pages={Exp. No. 9, 14},
			review={\MR{0409893}},
		}
		
		\bib{BG81}{book}{
			author={Boutet~de Monvel, Louis},
			author={Guillemin, Victor},
			title={The spectral theory of {T}oeplitz operators},
			series={Annals of Mathematics Studies},
			publisher={Princeton University Press, Princeton, NJ; University of Tokyo
				Press, Tokyo},
			date={1981},
			volume={99},
			ISBN={0-691-08284-7; 0-691-08279-0},
			url={https://doi.org/10.1515/9781400881444},
			review={\MR{620794}},
		}
		
		\bib{BS75}{incollection}{
			author={Boutet~de Monvel, Louis},
			author={Sj\"{o}strand, Johannes},
			title={Sur la singularit\'{e} des noyaux de {B}ergman et de {S}zeg{\H
					o}},
			date={1976},
			booktitle={Journ\'{e}es: \'{E}quations aux {D}\'{e}riv\'{e}es {P}artielles
				de {R}ennes (1975)},
			series={Ast\'{e}risque, No. 34-35},
			publisher={Soc. Math. France, Paris},
			pages={123\ndash 164},
			review={\MR{0590106}},
		}
		
		\bib{Bu:77}{inproceedings}{
			author={Burns, Daniel~M., Jr.},
			title={Global behavior of some tangential {C}auchy-{R}iemann equations},
			date={1979},
			booktitle={Partial differential equations and geometry ({P}roc. {C}onf.,
				{P}ark {C}ity, {U}tah, 1977)},
			series={Lect. Notes Pure Appl. Math.},
			volume={48},
			publisher={Dekker, New York},
			pages={51\ndash 56},
			review={\MR{535588}},
		}
		
		\bib{MR1128581}{incollection}{
			author={Catlin, David},
			title={Extension of {CR} structures},
			date={1991},
			booktitle={Several complex variables and complex geometry, {P}art 3 ({S}anta
				{C}ruz, {CA}, 1989)},
			series={Proc. Sympos. Pure Math.},
			volume={52, Part 3},
			publisher={Amer. Math. Soc., Providence, RI},
			pages={27\ndash 34},
			url={https://doi.org/10.1090/pspum/052.3/1128581},
			review={\MR{1128581}},
		}
		
		\bib{Catlin1999}{incollection}{
			author={Catlin, David},
			editor={Komatsu, Gen and Kuranishi, Masatake},
			title={The Bergman Kernel and a Theorem of Tian},
			booktitle={Analysis and Geometry in Several Complex Variables: Proceedings of the 40th Taniguchi Symposium},
			date={1999},
			publisher={Birkhäuser Boston},
			pages={1--23},
			isbn={978-1-4612-2166-1},
			doi={10.1007/978-1-4612-2166-1-1},
			url={https://doi.org/10.1007/978-1-4612-2166-1-1}, 
		}

\bib{Cartan:1932}{article}{
	author = {Cartan, Elie},
	journal = {Annali della Scuola Normale Superiore di Pisa. Classe di Scienze. Serie II},
	number = {4},
	pages = {333--354},
	title = {{Sur la g{\'e}om{\'e}trie pseudo-conforme des hypersurfaces de l'espace de deux variables complexes II}},
	volume = {1},
	year = {1932}}

\bib{Cartan:1933}{article}{
	author = {Cartan, Elie},
	journal = {Annali di Matematica Pura ed Applicata, Series 4},
	number = {1},
	pages = {17--90},
	title = {{Sur la g{\'e}om{\'e}trie pseudo-conforme des hypersurfaces de l'espace de deux variables complexes}},
	volume = {11},
	year = {1933}}

		\bib{CHH24}{article}{
			author={Chang, Chin-Chia},
			author={Herrmann, Hendrik},
			author={Hsiao, Chin-Yu},
			title={On the second coefficient in the semi-classical expansion of Toeplitz Operators on CR manifolds},
			date={2024},
			journal={ArXiv e-prints},
			eprint={arXiv:2412.11697},
		}
		
		\bib{CS01}{book}{
			author={Chen, So-Chin},
			author={Shaw, Mei-Chi},
			title={Partial Differential Equations in Several Complex Variables},
			series={Studies in Advanced Mathematics},
			publisher={AMS/IP},
			date={2001},
			volume={19},
			ISBN={978-0821829615},
                      }

\bib{ChernMoser}{article}{
	author = {Chern, Shiing Shen and Moser, J{\"u}rgen K},
	journal = {Acta Mathematica},
	pages = {219--271},
	title = {{Real hypersurfaces in complex manifolds}},
	volume = {133},
	year = {1974}}

		\bib{DLM06}{article}{
			author={Dai, Xianzhe},
			author={Liu, Kefeng},
			author={Ma, Xiaonan},
			title={On the asymptotic expansion of {Bergman} kernel},
			date={2006},
			journal={J. Differ. Geom.},
			issn = {0022-040X},
			volume={72},
			number={1},
			pages={1\ndash 41},
			doi = {10.4310/jdg/1143593124},
		}
		
		\bib{DLM12}{article}{
			author={Dai, Xianzhe},
			author={Liu, Kefeng},
			author={Ma, Xiaonan},
			title={A remark on Weighted Bergman kernels on orbifolds},
			date={2012},
			journal={Math. Res. Lett.},
			volume={19},
			number={1},
			pages={143\ndash 148},
			url={https://dx.doi.org/10.4310/MRL.2012.v19.n1.a11},
		}

		\bib{Di70}{article}{
			author={Diederich, Klas},
			title={Das Randverhalten der Bergmanschen Kernfunktion und Metrik in streng pseudo-konvexen Gebieten},
			date={1970},
			ISSN={1432-1807},
			journal={Mathematische Annalen},
			volume={187},
			number={1},
			pages={9\ndash 36},
			url={https://doi.org/10.1007/BF01368157},
		}

		\bib{DS99}{book}{
			author={Dimassi, Mouez},
			author={Sj\"{o}strand, Johannes},
			title={Spectral asymptotics in the semi-classical limit},
			series={London Mathematical Society Lecture Note Series},
			publisher={Cambridge University Press, Cambridge},
			date={1999},
			volume={268},
			ISBN={0-521-66544-2},
			url={https://doi.org/10.1017/CBO9780511662195},
			review={\MR{1735654}},
		}

		\bib{Do01}{article}{
			author={Donaldson, Simon K.},
			title={Scalar curvature and projective embeddings, {I}},
			date={2001},
			journal={J. Differ. Geom.},
			issn = {0022-040X},
			volume={59},
			number={3},
			pages={479\ndash 522},
			doi = {10.4310/jdg/1090349449},
		}
	
		\bib{Do05}{article}{
			author={Donaldson, Simon K.},
			title={Scalar curvature and projective embeddings, {II}},
			date={2005},
			journal={The Quarterly Journal of Mathematics},
			issn = {0033-5606},
			volume={56},
			number={3},
			pages={345\ndash 356},
			doi = {10.1093/qmath/hah044},
			url = {https://doi.org/10.1093/qmath/hah044},
		}

		\bib{Do09}{article}{
			author={Donaldson, Simon K.},
			title={Some numerical results in complex differential geometry},
			date={2009},
			journal={Pure and Applied Mathematics Quarterly},
			volume={5},
			number={2},
			pages={571\ndash 618},
			url={https://doi.org/10.4310/PAMQ.2009.v5.n2.a2},
		}

		\bib{Far88}{inproceedings}{
			title={The nonimbeddability of real hypersurfaces in spheres},
			author={James J. Faran},
			year={1988},
			url={https://api.semanticscholar.org/CorpusID:2211144}
		}
		
		\bib{Fef74}{article}{
			title={The Bergman kernel and biholomorphic mappings of pseudoconvex domains},
			author={Fefferman, Charles},
			journal={Inventiones mathematicae},
			ISSN={1432-1297},
			year={1974},
			volume={26},
			number={1},
			pages={1\ndash 65},
			url={https://doi.org/10.1007/BF01406845}
		}

		\bib{For76}{article}{
			title={Embedding Strictly Pseudoconvex Domains in Convex Domains},
			author={John Erik Forn{\ae}ss},
			journal={American Journal of Mathematics},
			year={1976},
			volume={98},
			pages={529\ndash 569},
			url={https://api.semanticscholar.org/CorpusID:123967593}
		}
	\bib{FoSt87}{book}{
		author={Fornæss, John Erik},
		author={Stensønes, Berit},
		title={	Lectures on counterexamples in several complex variables},
		series={Mathematical Notes},
		publisher={Princeton University Press, University of Tokyo Press},
		date={1987},
		volume={33},
		ISBN={0-691-08456-4},
	}

	\bib{Fors86}{article}{
		ISSN = {00029947},
		URL = {http://www.jstor.org/stable/2000160},
		author = {Franc Forstneri\v{c}},
		journal = {Transactions of the American Mathematical Society},
		number = {1},
		pages = {347--368},
		publisher = {American Mathematical Society},
		title = {Embedding Strictly Pseudoconvex Domains Into Balls},
		volume = {295},
		year = {1986}
	}

	\bib{FHH22}{article}{
		author = {Fritsch, Kevin},
		author = {Herrmann, Hendrik},
		author = {Hsiao, Chin-Yu},
		title = {{G-Equivariant Embedding Theorems for CR Manifolds of High Codimension}},
		volume = {71},
		journal = {Michigan Mathematical Journal},
		number = {4},
		publisher = {University of Michigan, Department of Mathematics},
		pages = {765 -- 808},
		year = {2022},
		doi = {10.1307/mmj/20205864},
		URL = {https://doi.org/10.1307/mmj/20205864}
	}
	
	\bib{GP18}{article}{
		author = {Galasso, Andrea},
		author = {Paoletti, Roberto},
		title = {Equivariant asymptotics of Szeg\H{o} kernels under Hamiltonian \(U(2)\)-actions},
		volume = {198},
		journal = {Annali di Matematica Pura ed Applicata (1923 -)},
		pages = {639 -- 683},
		year = {2018},
		URL = {https://api.semanticscholar.org/CorpusID:254167086}
	}

		\bib{Gei08}{book}{
			author={Geiges, Hansj\"{o}rg},
			title={An introduction to contact topology},
			series={Cambridge Studies in Advanced Mathematics},
			publisher={Cambridge University Press, Cambridge},
			date={2008},
			volume={109},
			ISBN={978-0-521-86585-2},
			url={https://doi.org/10.1017/CBO9780511611438},
			review={\MR{2397738}},
		}
		
		\bib{Gr58}{article}{
			author={Grauert, Hans},
			title={On {L}evi's problem and the imbedding of real-analytic
				manifolds},
			date={1958},
			ISSN={0003-486X},
			journal={Ann. of Math. (2)},
			volume={68},
			pages={460\ndash 472},
			url={https://doi.org/10.2307/1970257},
			review={\MR{98847}},
		}
		
		\bib{Gr94}{incollection}{
			author={Grauert, Hans},
			title={Theory of {$q$}-convexity and {$q$}-concavity},
			date={1994},
			booktitle={Several complex variables, {VII}},
			series={Encyclopaedia Math. Sci.},
			volume={74},
			publisher={Springer, Berlin},
			pages={259\ndash 284},
			url={https://doi.org/10.1007/978-3-662-09873-8_7},
			review={\MR{1326623}},
		}
		
			\bib{GKK11}{book}{
			title={The Geometry of Complex Domains},
			author={Greene, Robert E.},
			author={Kim, Kang-Tae},
			author={Krantz, Steven G.},
			isbn={978-0-8176-4139-9},
			series={Progress in Mathematics},
			url={https://doi.org/10.1007/978-0-8176-4622-6},
			year={2011},
			publisher={Birkhäuser Boston, MA}
		}

		\bib{GS94}{book}{
			author={Grigis, Alain},
			author={Sj\"{o}strand, Johannes},
			title={Microlocal analysis for differential operators},
			series={London Mathematical Society Lecture Note Series},
			publisher={Cambridge University Press, Cambridge},
			date={1994},
			volume={196},
			ISBN={0-521-44986-3},
			url={https://doi.org/10.1017/CBO9780511721441},
			note={An introduction},
			review={\MR{1269107}},
		}

		\bib{HL75}{article}{
			author={Harvey, F.~Reese},
			author={Lawson, H.~Blaine, Jr.},
			title={On boundaries of complex analytic varieties. {I}},
			date={1975},
			ISSN={0003-486X},
			journal={Ann. of Math. (2)},
			volume={102},
			number={2},
			pages={223\ndash 290},
			url={https://doi.org/10.2307/1971032},
			review={\MR{425173}},
		}
		
		\bib{HL00}{article}{
			author={Harvey, F.~Reese},
			author={Lawson, H.~Blaine, Jr.},
			title={Addendum to Theorem 10.4 in ``Boundaries of Analytic Varietie''},
			date={2000},
			journal={ArXiv e-prints},
			eprint={arXiv:math/0002195},
		}
		
		\bib{HST21}{article}{
			author = {Harz, Tobias}, 
			author = {Shcherbina, Nikolay}, 
			author = {Tomassini,  Giuseppe},
			title = {On defining functions and cores for unbounded domains. {III}},
			journal = {Sb. Math.},
			issn = {1064-5616},
			volume = {212},
			number = {6},
			pages = {859--885},
			year = {2021},
			language = {English},
			doi = {10.1070/SM8898},
		}

		\bib{He18}{article}{
			author={Herrmann, Hendrik},
			title={Bergman Kernel Asymptotics for Partially Positive Line Bundles (PhD Thesis)},
			date={2018},
			journal={Kölner UniversitätsPublikationsServer, KUPS},
			url={http://kups.ub.uni-koeln.de/id/eprint/9868},
		}
		
		\bib{HHL16}{article}{
			title={Szeg\H{o} kernel asymptotic expansion on strongly pseudoconvex CR manifolds with \(S^1\) action},
			author={Herrmann, Hendrik},
			author={Hsiao, Chin-Yu},
			author={Li, Xiaoshan},
			journal={International Journal of Mathematics},
			year={2016},
			url={https://api.semanticscholar.org/CorpusID:119302774}
		}
		
		\bib{HHL20}{article}{
			author={Herrmann, Hendrik},
			author={Hsiao, Chin-Yu},
			author={Li, Xiaoshan},
			title={Torus Equivariant Szeg\H{o} Kernel Asymptotics on Strongly Pseudoconvex CR Manifolds},
			date={2020},
			ISSN={2315-4144},
			journal={Acta Mathematica Vietnamica},
			volume={45},
			number={1},
			pages={113\ndash 135},
			url={https://doi.org/10.1007/s40306-020-00359-1},
		}

		\bib{HHL22}{article}{
			author={Herrmann, Hendrik},
			author={Hsiao, Chin-Yu},
			author={Li, Xiaoshan},
			title={Szeg{\H o} kernels and equivariant embedding theorems for {CR}
				manifolds},
			date={2022},
			ISSN={1073-2780,1945-001X},
			journal={Math. Res. Lett.},
			volume={29},
			number={1},
			pages={193\ndash 246},
			url={https://doi.org/10.4310/mrl.2022.v29.n1.a6},
			review={\MR{4477684}},
		}
		
		\bib{HHMS23}{article}{
			author={Herrmann, Hendrik},
			author={Hsiao, Chin-Yu},
			author={Marinescu, George},
			author={Shen, Wei-Chuan},
			title={Semi-classical spectral asymptotics of {T}oeplitz operators on
				{CR} manifolds},
			date={2023},
			journal={ArXiv e-prints},
			eprint={arXiv:2303.17319v2},
		}
		
		\bib{HHMS24}{article}{
			author={Herrmann, Hendrik},
			author={Hsiao, Chin-Yu},
			author={Marinescu, George},
			author={Shen, Wei-Chuan},
			title={Induced Fubini-Study metrics on strictly pseudoconvex CR manifolds and zeros of random CR functions},
			date={2024},
			journal={ArXiv e-prints},
			eprint={arXiv:2401.09143},
		}

		\bib{Heu86}{article}{
			author={Heunemann, Dieter},
			title={Extension of the complex structure from stein manifolds with
				strictly pseudoconvex boundary},
			date={1986},
			journal={Mathematische Nachrichten},
			volume={128},
			pages={57\ndash 64},
			url={https://api.semanticscholar.org/CorpusID:122061259},
		}
		
		\bib{MR1289628}{article}{
			author={Hill, C.~Denson},
			author={Nacinovich, Mauro},
			title={A collar neighborhood theorem for a complex manifold},
			date={1994},
			ISSN={0041-8994},
			journal={Rend. Sem. Mat. Univ. Padova},
			volume={91},
			pages={24\ndash 30},
			review={\MR{1289628}},
		}
		
		\bib{Ho65}{article}{
			author={Hörmander, Lars},
			title={\(L^2\) estimates and existence theorems for the $\bar \partial $ operator},
			date={1965},
			ISSN={1871-2509},
			journal={Acta Mathematica},
			volume={113},
			pages={89\ndash 152},
			number={1},
			url={https://doi.org/10.1007/BF02391775},
		}

		\bib{Hoermander_2000}{book}{
			author={H\"{o}rmander, Lars},
			title={An introduction to complex analysis in several variables},
			edition={revised},
			series={North-Holland Mathematical Library, Vol. 7},
			publisher={North-Holland Publishing Co., Amsterdam-London; American Elsevier
				Publishing Co., Inc., New York},
			date={1973},
			review={\MR{0344507}},
		}

		\bib{Hoermander_2015}{book}{
			title={The Analysis of Linear Partial Differential Operators I: Distribution Theory and Fourier Analysis},
			author={H{\"o}rmander, L.},
			isbn={9783642614972},
			series={Classics in Mathematics},
			url={https://books.google.at/books?id=aaLrCAAAQBAJ},
			year={2015},
			publisher={Springer Berlin Heidelberg}
		}
		
		\bib{Hs10}{article}{
			author={Hsiao, Chin-Yu},
			title={Projections in several complex variables},
			date={2010},
			ISSN={0249-633X},
			journal={M\'{e}m. Soc. Math. Fr. (N.S.)},
			number={123},
			pages={131},
			review={\MR{2780123}},
		}

		\bib{Hs15}{incollection}{
			author={Hsiao, Chin-Yu},
			title={Bergman kernel asymptotics and a pure analytic proof of the
				{K}odaira embedding theorem},
			date={2015},
			booktitle={Complex analysis and geometry},
			series={Springer Proc. Math. Stat.},
			volume={144},
			publisher={Springer, Tokyo},
			pages={161\ndash 173},
			url={https://doi.org/10.1007/978-4-431-55744-9_11},
			review={\MR{3446754}},
		}
		
		\bib{HLM21}{article}{
			author={Hsiao, Chin-Yu},
			author={Li, Xiaoshan},
			author={Marinescu, George},
			title = {{Equivariant Kodaira Embedding for CR Manifolds with Circle Action}},
			volume = {70},
			journal = {Michigan Mathematical Journal},
			number = {1},
			publisher = {University of Michigan, Department of Mathematics},
			pages = {55\ndash 113},
			year = {2021},
			doi = {10.1307/mmj/1587628815},
			URL = {https://doi.org/10.1307/mmj/1587628815}
		}
		
		\bib{HM17}{article}{
			author={Hsiao, Chin-Yu},
			author={Marinescu, George},
			title = {On the singularities of the {Szeg{\H{o}}} projections on lower energy forms},
			journal = {J. Differ. Geom.},
			issn = {0022-040X},
			volume = {107},
			number = {1},
			pages = {83\ndash 155},
			year = {2017},
			language = {English},
			doi = {10.4310/jdg/1505268030},
		}

			\bib{HM14}{article}{
			author={Hsiao, Chin-Yu},
			author={Marinescu, George},
			title={Asymptotics of spectral function of lower energy forms 
				and Bergman kernel of semi-positive and big line bundles},
			date={2014},
			journal={Communications in Analysis and Geometry},
			volume={22},
			number={1},
			pages={1\ndash 108},
		}
		
		\bib{HM23}{article}{
			author={Hsiao, Chin-Yu},
			author={Marinescu, George},
			title={Semi-classical spectral asymptotics of {T}oeplitz operators on
				strictly pseudodonvex domains},
			date={2023},
			journal={ArXiv e-prints},
			eprint={arXiv:2308.09820},
			note={To appear in \emph{The Bergman Kernel and Related Topics}, Hayama
				Symposium on SCV XXIII, Kanagawa, Japan, July 2022},
		}
		
		\bib{HS20}{article}{
			author={Hsiao, Chin-Yu},
			author={Shen, Wei-Chuan},
			title={On the second coefficient of the asymptotic expansion of Boutet de Monvel-Sj\"ostrand},
			date={2020},
			journal={Bulletin of the Institute of Mathematics Academia Sinica},
			volume={15},
			number={4},
		}
		
		\bib{HuangZaitsev13}{article}{
			author={Huang, Xiaojun},
			author={Zaitsev, Dmitri},
			title={Non-embeddable real algebraic hypersurfaces},
			date={2013},
			journal={Mathematische Zeitschrift},
			issn = {1432-1823},
			volume={275},
			number={3},
			pages={657\ndash 671},
			doi = {10.1007/s00209-013-1153-x},
			URL = {https://doi.org/10.1007/s00209-013-1153-x},
		}
	
		\bib{HuLiXi15}{article}{
			author={Huang, Xiaojun},
			author={Li, Xiaoshan},
			author={Xiao, Ming},
			title={Nonembeddability into a Fixed Sphere for a Family of Compact Real Algebraic Hypersurfaces},
			date={2015},
			journal={International Mathematics Research Notices},
			volume={2015},
			number={16},
			pages={7382\ndash 7393},
			doi = {10.1093/imrn/rnu167},
		}

		\bib{HuangXiao17}{incollection}{
			author={Huang, Xiaojun},
			author={Xiao, Ming},
			title={Chern-Moser-Weyl Tensor and Embeddings into Hyperquadrics},
			date={2017},
			booktitle={Harmonic Analysis, Partial Differential Equations and Applications: In Honor of Richard L. Wheeden},
			series={Applied and Numerical Harmonic Analysis},
			publisher={Birkh\"auser, Cham},
			pages={79\ndash 95},
			url={https://doi.org/10.1007/978-3-319-52742-0_6},
			isbn={978-3-319-52742-0},
		}

		\bib{KER71}{article}{
			author={Kerzman, Norberto},
			title={The Bergman kernel function. Differentiability at the boundary},
			date={1971},
			ISSN={1432-1807},
			journal={Mathematische Annalen},
			volume={195},
			number={3},
			pages={149\ndash 158},
			url={https://doi.org/10.1007/BF01419622},
		}

		\bib{Ko54}{article}{
			author={Kodaira, Kunihiko},
			title={On {K}\"{a}hler varieties of restricted type (an intrinsic
				characterization of algebraic varieties)},
			date={1954},
			ISSN={0003-486X},
			journal={Ann. of Math.},
			volume={60},
			number={2},
			pages={28\ndash 48},
			url={https://doi.org/10.2307/1969701},
			review={\MR{68871}},
		}
		
		\bib{Koh63}{article}{
			title={Harmonic Integrals on Strongly Pseudo-Convex Manifolds, I},
			author={Kohn, Joseph~J.},
			journal={Ann. of Math.},
			volume={78},
			pages={112\ndash 148},
			year={1963}
			
		}
		\bib{Koh64}{article}{
			title={Harmonic Integrals on Strongly Pseudo-Convex Manifolds, II},
			author={Kohn, Joseph~J.},
			journal={Ann. of Math.},
			volume={79},
			pages={450\ndash 472},
			year={1964}
			
		}
		
		\bib{Koh85}{article}{
			title={Estimates for $\overline{\partial}_b $ on pseudoconvex CR manifolds},
			author={Kohn, Joseph~J.},
			journal={Pseudo-differential operators and applications. Providence: Am. Math. Soc},
			year={1985}
		}
		
		\bib{Koh86}{article}{
			author={Kohn, Joseph~J.},
			title={The range of the tangential {C}auchy-{R}iemann operator},
			date={1986},
			ISSN={0012-7094},
			journal={Duke Math. J.},
			volume={53},
			number={2},
			pages={525\ndash 545},
			url={https://doi.org/10.1215/S0012-7094-86-05330-5},
			review={\MR{850548}},
		}
		
		\bib{KR65}{article}{
			author={Kohn, Joseph~J.},
			author={Rossi, Hugo},
			title={On the extension of holomorphic functions from the boundary of a
				complex manifold},
			date={1965},
			ISSN={0003-486X},
			journal={Ann. of Math. (2)},
			volume={81},
			pages={451\ndash 472},
			url={https://doi.org/10.2307/1970624},
			review={\MR{177135}},
		}
	

	\bib{Lem82}{article}{
		title={Imbedding Strictly Pseudo Convex Domains Into a Ball},
		author={L{\'a}szl{\'o} Lempert},
		journal={American Journal of Mathematics},
		year={1982},
		volume={104},
		pages={901},
		url={https://api.semanticscholar.org/CorpusID:125073657}
	}
	
		\bib{Lem91}{inproceedings}{
			author={Lempert, L{\'a}szl{\'o}},
			editor={Diederich, Klas},
			title={Imbedding pseudohermitian manifolds into a sphere},
			booktitle={Complex Analysis},
			year={1991},
			publisher={Vieweg+Teubner Verlag},
			address={Wiesbaden},
			pages={194--199},
			isbn={978-3-322-86856-5},
		}
		
		\bib{Lem92}{article}{
			author={Lempert, L\'{a}szl\'{o}},
			title={On three-dimensional {C}auchy-{R}iemann manifolds},
			date={1992},
			ISSN={0894-0347},
			journal={J. Amer. Math. Soc.},
			volume={5},
			number={4},
			pages={923\ndash 969},
			url={https://doi.org/10.2307/2152715},
			review={\MR{1157290}},
		}
		
		\bib{LP22}{article}{
			author={Loi, Andrea},
			author={Placini,  Giovanni},
			title={Any Sasakian structure is approximated by embeddings into spheres},
			date={2022},
			journal={ArXiv e-prints},
			eprint={arXiv:2210.00790},
		}
	
		\bib{Low85}{article}{
			author={L{\o}w, Erik},
			title={Embeddings and proper holomorphic maps of strictly pseudoconvex domains into polydiscs and balls},
			date={1985},
			ISSN={1432-1823},
			journal={Mathematische Zeitschrift},
			volume={190},
			number={3},
			pages={401\ndash 410},
			url={https://doi.org/10.1007/BF01215140},
		}
		
		\bib{LY98}{article}{
			author={Luk, H.~S.},
			author={Yau, S.-T.},
			title={Counterexample to Boundary Regularity of a Strongly Pseudoconvex CR Submanifold: An Addendum to the Paper of Harvey-Lawson},
			date={1998},
			ISSN={0003486X},
			journal={Annals of Mathematics},
			volume={148},
			number={3},
			pages={1153\ndash1154},
			url={http://www.jstor.org/stable/121038},
		}

		\bib{MM07}{book}{
			author={Ma, Xiaonan},
			author={Marinescu, George},
			title={Holomorphic {M}orse inequalities and {B}ergman kernels},
			series={Progress in Mathematics},
			publisher={Birkh\"{a}user Verlag, Basel},
			date={2007},
			volume={254},
			ISBN={978-3-7643-8096-0},
			url={https://doi.org/10.1007/978-3-7643-8115-8},
			review={\MR{2339952}},
		}
		
		\bib{MY07}{article}{
			author={Marinescu, George},
			author={Yeganefar, Nader},
			title={Embeddability of some strongly pseudoconvex {CR} manifolds},
			date={2007},
			ISSN={0002-9947},
			journal={Trans. Amer. Math. Soc.},
			volume={359},
			number={10},
			pages={4757\ndash 4771},
			url={https://doi.org/10.1090/S0002-9947-07-04047-0},
			review={\MR{2320650}},
		}
		
		\bib{McNeal92}{article}{
			author={McNeal, Jeffery D.},
			title={Lower Bounds on the Bergman Metric Near a Point of Finite Type},
			date={1992},
			ISSN={0003486X, 19398980},
			journal={Annals of Mathematics},
			volume={136},
			number={2},
			pages={339\ndash 360},
			url={http://www.jstor.org/stable/2946608},

		}

		\bib{O84}{incollection}{
			author={Ohsawa, Takeo},
			title={Holomorphic embedding of compact s.p.c. manifolds into complex
				manifolds as real hypersurfaces},
			date={1984},
			booktitle={Differential geometry of submanifolds ({K}yoto, 1984)},
			series={Lecture Notes in Math.},
			volume={1090},
			publisher={Springer, Berlin},
			pages={64\ndash 76},
			url={https://doi.org/10.1007/BFb0101567},
			review={\MR{775145}},
		}
	
		\bib{Ohsawa2018}{incollection}{
			author={Ohsawa, Takeo},
			title={Bergman Kernels},
			bookTitle={\(L^2\) Approaches in Several Complex Variables: Towards the Oka--Cartan Theory with Precise Bounds},
			year={2018},
			publisher={Springer Japan},
			address={Tokyo},
			pages={165--204},
			isbn={978-4-431-56852-0},
			doi={10.1007/978-4-431-56852-0-4},
			url={https://doi.org/10.1007/978-4-431-56852-0-4}
		}

		\bib{OV07}{article}{
			author={Ornea, Liviu},
			author={Verbitsky, Misha},
			title={Embeddings of compact {S}asakian manifolds},
			date={2007},
			ISSN={1073-2780},
			journal={Math. Res. Lett.},
			volume={14},
			number={4},
			pages={703\ndash 710},
			url={https://doi.org/10.4310/MRL.2007.v14.n4.a15},
			review={\MR{2335996}},
		}
		
		\bib{RT11-1}{article}{
			author={Ross, Julius},
			author={Thomas, Richard},
			title={Weighted Bergman kernels on Orbifolds},
			date={2011},
			journal={J. Differential Geom.},
			volume={88},
			pages={87\ndash 108},
		}
	
	\bib{RT11-2}{article}{
		author={Ross, Julius},
		author={Thomas, Richard},
		title={Weighted projective embeddings, stability of orbifolds, and constant
			scalar curvature K\"ahler metrics},
		date={2011},
		journal={J. Differential Geom.},
		volume={88},
		pages={190\ndash 160},
	}

		\bib{R65}{inproceedings}{
			author={Rossi, Hugo},
			title={Attaching analytic spaces to an analytic space along a
				pseudoconcave boundary},
			date={1965},
			booktitle={Proc. {C}onf. {C}omplex {A}nalysis ({M}inneapolis, 1964)},
			publisher={Springer, Berlin},
			pages={242\ndash 256},
			review={\MR{0176106}},
		}
		
		\bib{Ruan98}{article}{
			title={Canonical coordinates and Bergmann metrics},
			author={Ruan, Wei-Dong},
			journal={Communications in Analysis and Geometry},
			volume={6},
			number={3},
			pages={589--631},
			year={1998}
		}
		
		\bib{Ru95}{article}{
			title={Chern-Hamilton’s conjecture and K-contactness},
			author={Rukimbira, Philippe},
			journal={Houston J. Math},
			volume={21},
			number={4},
			pages={709--718},
			year={1995}
		}
		
		\bib{Sh19}{article}{
			author={Shen, Wei-Chuan},
			title={Asymptotics of torus equivariant Szeg\H{o} kernel on a compact CR manifold},
			date={2019},
			journal={Bull. Inst. Math. Acad. Sin. (N.S.)},
			volume={14},
			number={3},
			pages={331\ndash 383},
		}

		\bib{SZ02}{article}{
			author={Shiffman, Bernard},
			author={Zelditch, Steve},
			title={Asymptotics of almost holomorphic sections of ample line bundles
				on symplectic manifolds},
			date={2002},
			ISSN={0075-4102},
			journal={J. Reine Angew. Math.},
			volume={544},
			pages={181\ndash 222},
			url={https://doi.org/10.1515/crll.2002.023},
			review={\MR{1887895}},
                      }

                      \bib{Tanaka:1962}{article}{
	author = {Tanaka, Noboru},
	journal = {{J. Math. Soc. Japan}},
	pages = {397--429},
	title = {{On the pseudo-conformal geometry of hypersurfaces of the space of $n$ complex variables}},
	volume = {14},
	year = {1962}}
		
		\bib{Ti90}{article}{
			author={Tian, Gang},
			title={On a set of polarized {K}\"{a}hler metrics on algebraic
				manifolds},
			date={1990},
			ISSN={0022-040X},
			journal={J. Differential Geom.},
			volume={32},
			number={1},
			pages={99\ndash 130},
			url={http://projecteuclid.org/euclid.jdg/1214445039},
			review={\MR{1064867}},
		}
		
		\bib{vCoe11}{article}{
			author={van Coevering, Craig},
			title={Examples of asymptotically conical {R}icci-flat {K}\"{a}hler
				manifolds},
			date={2011},
			ISSN={0025-5874,1432-1823},
			journal={Math. Z.},
			volume={267},
			number={1-2},
			pages={465\ndash 496},
			url={https://doi.org/10.1007/s00209-009-0631-7},
			review={\MR{2772262}},
		}
		
		\bib{XH12}{article}{
			author = {Xu, Hao},
			title = {A Closed Formula for the Asymptotic Expansion of the Bergman Kernel},
			journal = {Communications in Mathematical Physics},
			volume = {314},
			number = {3},
			pages = {555--585},
			year = {2012},
			issn = {1432-0916},
			doi = {doi:10.1007/s00220-012-1531-y},
			url = {https://doi.org/10.1007/s00220-012-1531-y},
		}

\bib{zaitsev:2008}{article}{
	author = {Zaitsev, Dmitri},
	journal = {Math. Ann.},
	number = {3},
	pages = {695--726},
	title = {Obstructions to embeddability into hyperquadrics and explicit examples},
	volume = {342},
	year = {2008}}

		\bib{Zel98}{article}{
			author = {Zelditch, Steve},
			title = {Szeg\H{o} kernels and a theorem of Tian},
			journal = {International Mathematics Research Notices},
			volume = {1998},
			number = {6},
			pages = {317--331},
			year = {1998},
			issn = {1073-7928},
			doi = {10.1155/S107379289800021X},
			url = {https://doi.org/10.1155/S107379289800021X},
			eprint = {https://academic.oup.com/imrn/article-pdf/1998/6/317/2123138/1998-6-317.pdf},
		}
		
	\end{biblist}
\end{bibdiv}

\end{document}